\newcommand{\oldReps}{\ocirc{R}_\eps}
\newcommand{\newReps}{{R}_\eps}
\newcommand{\newMepshat}{\widehat{{M}}_\eps}
\newcommand{\newMepstilde}{\widetilde{M}_\eps}
\newcommand{\pa}{\partial}
\newtheorem{assumption}[theorem]{Assumption}
\numberwithin{equation}{section}
\newcommand\delc[1]{}
\newcommand{\rA}{\mathrm{ A}}
\newcommand{\dom}{\mathcal{O}}
\newcommand{\rH}{\mathrm{ H}}
\newcommand{\rV}{\mathrm{ V}}
\newcommand{\rU}{\mathrm{ U}}
\newcommand{\rD}{\mathrm{D}}
\def\v{{\mathrm{v}}}
\def\u{{\mathrm{u}}}
\def\w{{\mathrm{w}}}
\newcommand{\test}{{\oldReps \phi}}
\newcommand{\tMe}{\widetilde{M}_\eps}
\newcommand{\oMe}{\ocirc{M}_\eps}
\newcommand{\tNe}{\widetilde{N}_\eps}
\newcommand{\tue}{\widetilde{u}_\eps}
\newcommand{\tWe}{\wtd{W}_\eps}
\newcommand{\tge}{\widetilde{g}_\eps}
\newcommand{\tfe}{\widetilde{f}_\eps}
\newcommand{\hE}{\widehat{\E}}
\newcommand{\inteps}{\int_{1}^{1+\eps}}
\newcommand{\LS}{{\mathbb{L}^2(\bbS)}}
\newcommand{\tale}{\widetilde{\alpha}_\eps}
\newcommand{\tble}{\widetilde{\beta}_\eps}
\newcommand{\ale}{{\alpha}_\eps}
\newcommand{\ddivS}{{\rm{div}^\prime}}
\newcommand{\curlS}{{\rm{curl}^\prime}}
\newcommand{\deltaS}{{\Delta^\prime}}
\newcommand{\nablaS}{\nabla^\prime}
\newcommand{\tu}{\widetilde{u}}
\newcommand{\hu}{\widehat{u}}
\newcommand{\hae}{\widehat{\alpha}_\eps}
\newcommand{\hp}{\widehat{\mathbb{P}}}
\newcommand{\homega}{\widehat{\Omega}}
\newcommand{\bbW}{\mathbb{W}}
\newcommand{\hbe}{\widehat{\beta}_\eps}
\newcommand{\be}{\begin{equation}}
\newcommand{\ee}{\end{equation}}
\newcommand{\ba}{\begin{array}}
\newcommand{\ea}{\end{array}}
\newcommand{\beas}{\begin{eqnarray*}}
\newcommand{\eeas}{\end{eqnarray*}}
\newcommand{\bea}{\begin{eqnarray}}
\newcommand{\eea}{\end{eqnarray}}
\newcommand{\nn}{\nonumber}
\newcommand{\lb}{\label}
\newcommand{\hH}{\mathbb{H}}
\newcommand{\lL}{\mathbb{L}}
\newcommand{\N}{\mathbb{N}}
\newcommand{\bbF}{\mathbb{F}}
\newcommand{\bbP}{\mathbb{P}}
\newcommand{\bbS}{{\mathbb{S}^2}}
\newcommand{\E}{\mathbb{E}}
\newcommand{\bbE}{\mathbb{E}}
\newcommand{\R}{\mathbb{R}}
\newcommand{\Rp}{\mathbb{R}_+}
\newcommand{\calA}{\mathcal{A}}
\newcommand{\bcal}{\mathcal{B}}
\newcommand{\calB}{\mathcal{B}}
\newcommand{\ccal}{\mathcal{C}}
\newcommand{\calF}{\mathcal{F}}
\newcommand{\calG}{\mathcal{G}}
\newcommand{\calH}{\mathcal{H}}
\newcommand{\kcal}{\mathcal{K}}
\newcommand{\calT}{\mathcal{T}}
\newcommand{\calV}{\mathcal{V}}
\newcommand{\calZ}{\mathcal{Z}}
\newcommand{\ddiv}{{\rm div\;}}
\newcommand{\curl}{{\rm curl\;}}
\newcommand{\eps}{\varepsilon}
\newcommand{\wtd}{\widetilde}
\newcommand{\what}{\widehat}
\newcommand{\bx}{{\bf x}}
\newcommand{\by}{{\bf y}}
\newcommand{\bz}{{\bf z}}
\newcommand{\vecn}{{\vec{n}}}
\newcommand{\DDelta}{\boldsymbol{\mathrm{\Delta}}^\prime \, }
\newcommand{\LQeps}{{\lL^2(Q_\eps)}}
\newcommand\ocirc[1]{\ensurestackMath{\stackon[1pt]{#1}{\mkern2mu\circ}}}
\begin{document}
\title{Stochastic Navier--Stokes equations on a thin spherical domain
\thanks{The research of all three authors is partially supported by Australian Research Council Discover Project grant DP180100506, ``Uncertainty on Spheres and Shells: Mathematics and Methods for Applications''. Zdzis\l aw Brze{\'z}niak has been supported by the Leverhulme project grant ref no RPG-2012-514 and by Australian Research Council Discover Project grant DP160101755. The research of Gaurav Dhariwal was supported by Department of Mathematics, University of York and is partially supported by the Austrian Science Fund (FWF) grants P30000, W1245, and F65.
}}
\titlerunning{SNSE on a thin spherical domain}
\author{Zdzis\l aw Brze\'{z}niak \and Gaurav Dhariwal \and Quoc Thong Le Gia}
\institute{Z. Brze\'{z}niak \at Department of Mathematics, University of York, Heslington, York, YO10 5DD, UK \\ \email{zdzislaw.brzezniak@york.ac.uk} \and 
G.~Dhariwal \at Institute of Analysis and Scientific Computing, Vienna University of Technology, Vienna, Austria \\ \email{gaurav.dhariwal@tuwien.ac.at} \and
Q.~T.~Le Gia \at School of Mathematics and Statistics, University of New South Wales, Sydney, NSW 2052, Australia \\
\email{qlegia@unsw.edu.au}
}

\authorrunning{Brze{\'z}niak et al.}
\date{Published Online: July 11, 2020}
\maketitle

\begin{abstract}
Incompressible Navier--Stokes equations on a thin spherical domain $Q_\varepsilon$ along with free boundary conditions under a random forcing are considered. The convergence of the martingale solution of these equations to the martingale solution of the stochastic Navier--Stokes equations on a sphere $\mathbb{S}^2$ as the thickness converges to zero is established.
\keywords{Stochastic Navier--Stokes equations, Navier--Stokes equations on a sphere, singular limit}
\subclass{Primary 60H15; Secondary 35R60, 35Q30, 76D05}
\end{abstract}

\section{Introduction}
\label{sec:intro}

For various motivations, partial differential equations in thin domains have been studied extensively in the last few decades;
e.g. Babin and Vishik \cite{[BV83]}, Ciarlet \cite{[Ciarlet90]}, Ghidaglia and Temam \cite{[GT91]}, Marsden \textit{et.al.} \cite{[MRR95]} and references there in. The study of the Navier--Stokes equations (NSE) on thin domains originates in a series of papers by Hale and Raugel \cite{[HR92a]}--\cite{[HR92c]} concerning the reaction-diffusion and damped wave equations on thin domains. Raugel and Sell \cite{[RS93],[RS94]} proved the global existence of strong solutions to NSE on thin domains for large initial data and forcing terms, in the case of purely periodic and periodic-Dirichlet boundary conditions. Later,
by applying a contraction principle argument and carefully analysing the dependence of the solution on the first eigenvalue of the corresponding Laplace operator, Arvin \cite{[Arvin96]} showed global existence of strong solutions of the Navier--Stokes equations on thin three-dimensional domains for large data. Temam and Ziane \cite{[TZ96]} generalised the results of \cite{[RS93],[RS94]} to other boundary conditions. Moise \textit{et.al.} \cite{[MTZ97]} proved global existence of strong solutions for initial data larger than in \cite{[RS94]}. Iftimie \cite{[Iftime99]} showed the existence and uniqueness of solutions for less regular initial data which was further improved by Iftimie and Raugel \cite{[IR01]} by reducing the regularity and increasing the size of initial data and forcing.

In the context of thin spherical shells,
large-scale atmospheric dynamics that play an important role in global climate models and weather
prediction can be described by the 3-dimensional Navier--Stokes equations in a thin rotating spherical
shell \cite{LionTemamWang92,LionTemamWang92a}.
Temam and Ziane in \cite{[TZ97]} gave the mathematical justification for
the primitive equations of the atmosphere and the oceans which are known to be the fundamental equations of
meteorology and oceanography \cite{[LTW95],[Pedlosky87]}. The atmosphere is a compressible fluid occupying a
thin layer around the Earth and whose dynamics can be described by the 3D compressible Navier--Stokes equations
in thin layers. In \cite{[TZ97]} it was assumed that the atmosphere is incompressible and hence a 3D incompressible
NSE on thin spherical shells could be used as a mathematical model.
They proved that the averages in the radial direction of the strong solutions
(whose existence for physically relevant initial data was established in the same article)
to the NSE on the thin spherical shells converge to the solution of the NSE on the sphere as
the thickness converges to zero.
In a recent paper Saito \cite{[Saito05]} studied the 3D Boussinesq equations in thin spherical domains and
proved the convergence of the average of weak solutions of the 3D Boussinesq equations to a 2D problem.
More recent work on incompressible viscous fluid flows
in a thin spherical shell was carried out in \cite{IbraPelin2009,Ibra2011,IbraIbra2011}.

For the deterministic NSE on the sphere, Il'in and Filatov \cite{[Ilin91]} - \cite{[IF89]} considered the existence
and uniqueness of solutions while Temam and Wang \cite{[TW93]} considered inertial forms of NSE on spheres.
Brze\'{z}niak \textit{et. al.} proved the existence and uniqueness of the solutions to the stochastic NSE on the
rotating two dimensional sphere and also proved the existence of an asymptotically compact random dynamical
system \cite{[BGL15]}. Recently, Brze\'{z}niak \textit{et. al.} established \cite{[BGL18]} the existence of
random attractors for the NSE on two dimensional sphere under random forcing irregular in space and time
deducing the existence of an invariant measure.

The main objective of this article is to establish the convergence of the martingale solution of the stochastic
Navier--Stokes equations (SNSE) on a thin spherical domain $Q_\eps$, whose existence can be established as in the forthcoming paper \cite{[BDL19]} to the martingale solution of the stochastic Navier--Stokes equations
on a two dimensional sphere $\bbS$ \cite{[BGL15]} as thickness $\eps$ of the spherical domain converges to zero. In this way we also give another proof for the existence of a martingale solution for stochastic NSE on the unit sphere $\bbS$.

We study the stochastic Navier--Stokes equations (SNSE) for incompressible fluid \begin{align}
 d \tue - [ \nu \Delta \tue - (\tue \cdot \nabla) \tue - \nabla \widetilde{p}_\eps]dt = \tfe dt +  \widetilde{G}_\eps\, d\tWe(t) &\quad \text{ in } Q_\eps \times (0,T), \label{eq:0.1}\\
\ddiv \tue = 0 &\quad \text{ in } Q_\eps \times (0,T),\label{eq:0.2}
\end{align}
in thin spherical shells
\begin{equation}\label{def:Qeps}
Q_\eps := \left\{{\bf y} \in \R^3 : 1 \le |{\bf y}| \le 1+ \eps\right\}, \text{ where } 0< \eps < 1/2,
\end{equation}
along with free boundary conditions
\begin{align}
\tue \cdot \vec{n} = 0, \quad \curl \tue \times \vec{n} = 0 &\quad\text{ on } \partial Q_\eps \times (0,T), \label{eq:0.3} \\
\tue(0, \cdot) = \tu_0^\eps &\quad\text{ in } Q_\eps. \label{eq:0.4}
\end{align}
In the above, $\tue=(\tue^r, \tue^\lambda, \tue^\varphi)$ is the fluid velocity field, $p$ is the pressure, $\nu>0$ is a (fixed) kinematic viscosity, $\tu_0^\eps$ is a divergence free
vector field on $Q_\eps$ and $\vecn$ is the unit outer normal vector to the boundary $\partial Q_\eps$ and $\tWe(t)$, $t \ge 0$ is an $\R^N$-valued Wiener process in some probability space $\left(\Omega, \mathcal{F}, \mathbb{F}, \mathbb{P}\right)$ to be defined precisely later.

The main result of this article is Theorem~\ref{thm:main_thm}, which establishes the convergence of the radial averages of the martingale solution (see Definition~\ref{defn5.2}) of the 3D stochastic equations \eqref{eq:0.1}--\eqref{eq:0.4}, as the thickness of the shell $\eps \rightarrow 0$, to a martingale solution $u$ (see Definition~\ref{defn_mart_SNSE_sphere}) of the following stochastic Navier--Stokes equations on
the unit sphere $\bbS \subset {\mathbb R}^3$:
\begin{align}
du - \left[ \nu \DDelta u - (u \cdot \nablaS) u  - \nablaS p \right]dt = f dt + G\,dW(t) & \quad\text{ in } \bbS \times (0,T), \label{eq:0.5}\\
\ddivS u = 0 & \quad \text{ in } \bbS \times (0,T),\label{eq:0.6} \\
u(0, \cdot ) = u_0 & \quad\text{ in } \bbS,\label{eq:0.7}
\end{align}
where $u=(u_\lambda, u_\varphi)$ and $\DDelta$, $\nablaS$ are
the Laplace--de Rham operator and the surface gradient on $\bbS$ respectively. Assumptions on initial data and external forcing will be specified later.

The paper is organised as follows. We introduce necessary functional spaces in Section~\ref{sec:prelim}. In Section~\ref{sec:operators}, we define some averaging operators and give their properties. Navier--Stokes equations on thin shells $Q_\eps$ and on the unit sphere $\bbS$ driven by a deterministic forcing are introduced in Section~\ref{sec:detNSE} and Section~\ref{sec:NSE_S} respectively. Moreover, in Section~\ref{sec:NSE_S} we show the convergence of the radial average of the weak solution of NSE on thin spherical domain to unique solution of NSE on sphere, which indeed is Temam and Ziane's result \cite{[TZ97]}. Stochastic Navier--Stokes equations on thin spherical domains are introduced in Section~\ref{sec:SNSE_shell} and a priori estimates for the radially averaged velocity are obtained which are later used to prove the convergence of the radial average of a martingale solution of stochastic NSE on thin spherical shell (see \eqref{eq:0.1}--\eqref{eq:0.4}) to a martingale solution of the stochastic NSE on the sphere (see \eqref{eq:0.5}--\eqref{eq:0.7}) with vanishing thickness.

\section{Preliminaries}
\label{sec:prelim}
A point $\by \in Q_\eps$ could be represented by the Cartesian coordinates $\by = \left(x, y, z\right)$
or $\by = \left(r, \lambda, \varphi\right)$ in spherical coordinates, where
\[x = r \sin \lambda \cos \varphi \quad y = r \sin \lambda \sin \varphi \quad z = r \cos \lambda,\]
for $r \in (1, 1+\eps)$, $\lambda \in [0,\pi]$ and $\varphi \in [0, 2 \pi)$.

For $p \in [1, \infty)$, by $L^p(Q_\eps)$, we denote the Banach space of (equivalence-classes of) Lebesgue measurable $\R$-valued $p$th power integrable functions on $Q_\eps$. The $\R^3$-valued $p$th power integrable vector fields will be denoted by $\mathbb{L}^p(Q_\eps)$. The norm in $\mathbb{L}^p(Q_\eps)$ is given by
\[\|u\|_{\mathbb{L}^p(Q_\eps)} := \left(\int_{Q_\eps}|u(\by)|^p\,d\by\right)^{1/p}, \qquad u \in \mathbb{L}^p(Q_\eps).\]
If $p=2$, then $\mathbb{L}^2(Q_\eps)$ is a Hilbert space with the inner product given by
\[\left(u, \v \right)_{\LQeps} := \int_{Q_\eps} u(\by) \cdot \v(\by)\,d\by, \qquad u, \v \in \LQeps.\]
By $\mathbb{H}^1(Q_\eps) = \mathbb{W}^{1,2}(Q_\eps)$, we will denote the Sobolev space consisting of all $u \in \LQeps$ for which there exist weak derivatives $D_i u \in \LQeps$, $i = 1,2,3$. It is a Hilbert space with the inner product given by
\[ \left( u, \v \right)_{\mathbb{H}^1(Q_\eps)} := \left(u, \v \right)_\LQeps + \left(\nabla u, \nabla \v \right)_\LQeps, \qquad u, \v \in \mathbb{H}^1(Q_\eps),\]
where
\[\left(\nabla u, \nabla \v \right)_\LQeps = \sum_{i=1}^3\int_{Q_\eps} D_i u(\by) \cdot D_i \v(\by)\,d\by.\]
The Lebesgue and Sobolev spaces on the sphere $\bbS$ will be denoted by $\mathbb{L}^p(\bbS)$ and $\mathbb{W}^{s,q}(\bbS)$ respectively for $p, q \ge 1$ and $s \ge 0$. In particular, we will write $\mathbb{H}^1(\bbS)$ for $\mathbb{W}^{1,2}(\bbS)$.

\subsection{Functional setting on the shell \texorpdfstring{$Q_\eps$}{}}
\label{sec:functional_setting_shell}

We will use the following classical spaces on $Q_\eps \colon$
\begin{align*}
\rH_\eps &= \left\{ u \in \LQeps \colon \ddiv u = 0 \mbox{ in } Q_\eps,\,\, u \cdot \vec{n} = 0\,\, \mbox{on } \partial Q_\eps\right\},\\
\rV_\eps &= \left\{ u \in \hH^1(Q_\eps) \colon \ddiv u = 0 \mbox{ in } Q_\eps,\,\, u \cdot \vec{n} = 0\,\, \mbox{on } \partial Q_\eps\right\},\\
& = \hH^1(Q_\eps) \cap \rH_\eps.
\end{align*}
On $\rH_\eps$, we consider the inner product and the norm inherited from $\LQeps$ and denote them by $\left( \cdot, \cdot \right)_{\rH_\eps}$ and $\|\cdot\|_{\rH_\eps}$ respectively, that is
\[\left(u, \v \right)_{\rH_\eps} := \left(u, \v \right)_\LQeps, \qquad \|u\|_{\rH_\eps} := \|u\|_\LQeps,\qquad u,\v \in \rH_\eps.\]

Let us define a bilinear map $\mathrm{a}_\eps:\rV_\eps \times \rV_\eps \to \R$ by
\begin{equation}
\label{eq:1.4}
\mathrm{a}_\eps(u,\v) := \left(\curl u, \curl \v \right)_{\lL^2(Q_\eps)},\qquad u,\v \in \rV_\eps,
\end{equation}
where
\[\curl u = \nabla \times u,\]
and for $u \in \rV_\eps$, we define
\begin{equation}
\label{eq:1.5}
\|u\|_{\rV_\eps}^2 := \mathrm{a}_\eps(u,u) = \|\curl u\|^2_{\LQeps}.
\end{equation}
Note that for $u \in \rV_\eps$, $\|u\|_{\rV_\eps} = 0$ implies that $u$ is a constant vector and $u\cdot \vec{n} = 0$ on $\partial Q_\eps$ i.e., $u$ is tangent to $Q_\eps$ for every $\by \in \partial Q_\eps$, and thus must be $0$. Hence $\|\cdot\|_{\rV_\eps}$ is a norm on $\rV_\eps$ (other properties can be verified easily). Under this norm $\rV_\eps$ is a Hilbert space with the inner product given by
\[\left( u, \v \right)_{\rV_\eps} := \left(\curl u, \curl \v \right)_{\LQeps}, \qquad u, \v \in \rV_\eps.\]
We denote the dual pairing between $\rV_\eps$ and $\rV^\prime_\eps$ by $\langle \cdot, \cdot \rangle_\eps$, that is $\langle \cdot, \cdot \rangle_\eps := {}_{\rV_\eps^\prime}\langle \cdot, \cdot \rangle_{\rV_\eps}$. By the Lax--Milgram theorem, there exists a unique bounded linear operator $\mathcal{A}_\eps \colon \rV_\eps \to \rV_\eps^\prime$ such that we have the following equality$\colon$
\begin{equation}
\label{eq:1.6}
\langle \mathcal{A}_\eps u, \v \rangle_\eps = \left(u,\v\right)_{\rV_\eps}, \qquad u, \v \in \rV_\eps.
\end{equation}
The operator $\mathcal{A}_\eps$ is closely related to the Stokes operator $\rA_\eps$ defined by
\begin{equation}
\label{eq:1.7}
\begin{split}
\mathrm{D}(\rA_\eps) &= \left\{ u \in \rV_\eps : \mathcal{A}_\eps u \in \rH_\eps, \; \curl u \times \vec{n} = 0 \mbox{ on } \partial Q_\eps\right\},\\
\mathrm{A}_\eps u &= \mathcal{A}_\eps u, \qquad u \in \mathrm{D}(\rA_\eps).
\end{split}
\end{equation}
The Stokes operator $\rA_\eps$ is a non-negative self-adjoint operator in $\rH_\eps$ (see Appendix~\ref{sec:curl-stokes}). Also note that
\[\rD(\rA_\eps) = \left\{u \in \mathbb{H}^2(Q_\eps) \colon \ddiv u = 0 \mbox{ in } Q_\eps,\,\, u \cdot \vec{n} = 0 \mbox{ and } \curl u \times \vec{n} = 0 \mbox{ on } \partial Q_\eps\right\}.\]
We recall the Leray--Helmholtz projection operator $P_\eps$, which is the orthogonal projector of $\LQeps$ onto $\rH_\eps$. Using this, the Stokes operator $\rA_\eps$ can be characterised as follows$\colon$
\begin{equation}
\label{eq:1.8}
\rA_\eps u = P_\eps (-\Delta u), \qquad u \in \rD(\rA_\eps).
\end{equation}
We also have the following characterisation of the Stokes operator $\rA_\eps$ \cite[Lemma~1.1]{[TZ97]}$\colon$
\begin{equation}
\label{eq:1.9}
\rA_\eps u =\mathrm{curl} \left(\curl u\right), \qquad u \in \rD(\rA_\eps).
\end{equation}
For $u \in \rV_\eps$, $\v \in \rD(\rA_\eps)$, we have the following identity (see Lemma~\ref{lem:curl-stokes})
\begin{equation}
    \label{eq:1.9a}
    \left(\curl u, \curl \v\right)_{\LQeps} = \left(u, \rA_\eps \v \right)_{\LQeps}.
\end{equation}

Let $b_\eps$ be the continuous trilinear from on $\rV_\eps$ defined by$\colon$
\begin{equation}
\label{eq:1.10}
b_\eps(u, \v, \w) = \int_{Q_\eps}\left(u \cdot \nabla \right) \v \cdot \w \,d\by,\qquad u,\v,\w \in \rV_\eps.
\end{equation}
We denote by $B_\eps$ the bilinear mapping from $\rV_\eps\times\rV_\eps$ to $\rV_\eps^\prime$ by
\[\left\langle B_\eps(u,\v), \w \right\rangle_\eps = b_\eps(u,\v,\w), \qquad u,\v, \w \in \rV_\eps,\]
and we set
\[B_\eps(u) = B_\eps(u,u).\]
Let us also recall the following properties of the form $b_\eps$, which directly follows from the definition of $b_\eps$ $\colon$
\begin{equation}
\label{eq:1.11}
b_\eps(u,\v,\w) = - b_\eps(u,\w,\v),\qquad u,\v, \w \in \rV_\eps.
\end{equation}
In particular,
\begin{equation}
\label{eq:1.12}
\left\langle B_\eps(u,\v), \v \right\rangle_\eps = b_\eps(u, \v, \v) = 0, \qquad u,\v \in \rV_\eps.
\end{equation}

\subsection{Functional setting on the sphere \texorpdfstring{$\bbS$}{}}
\label{sec:functional_setting_sphere}
Let $s\ge0$. The Sobolev space $H^s(\bbS)$ is the space of all scalar functions $\psi \in L^2(\bbS)$ such that $(-\deltaS)^{s/2} \psi \in L^2(\bbS)$, where $\deltaS$ is the Laplace--Beltrami operator on the sphere (see \eqref{eq:C.1.2}). We similarly define $\hH^s(\bbS)$ as the space of all vector fields $u \in \lL^2(\bbS)$ such that $(-\DDelta)^{s/2} u \in \LS$, where $\DDelta$ is the Laplace--de Rham operator on the sphere (see \eqref{eq:C.1.5}).

For $s \ge 0$, $\left(H^s(\bbS), \|\cdot\|_{H^{s}(\bbS)}\right)$ and $\left(\hH^s(\bbS), \|\cdot\|_{\hH^{s}(\bbS)}\right)$ are Hilbert spaces under the respective norms, where
\begin{equation}\label{def:Hs}
  \|\psi\|^2_{H^s(\bbS)} = \|\psi\|^2_{L^2(\bbS)} + \| (-\deltaS)^{s/2} \psi\|^2_{L^2(\bbS)}, \qquad \psi \in H^s(\bbS)
\end{equation}
and
\begin{equation}\label{def:hHs}
  \|u\|^2_{\hH^s(\bbS)} = \|u\|^2_{\lL^2(\bbS)} + \|(-\DDelta)^{s/2} u\|^2_{\lL^2(\bbS)}, \qquad u \in \hH^s(\bbS).
\end{equation}

By the Hodge decomposition theorem~\cite[Theorem 1.72]{Aub98} the space of $C^\infty$
smooth  vector fields on $\bbS$ can be decomposed into three components:
\begin{equation} \label{Hodge}
  C^\infty(T\bbS) =   \calG \oplus  \calV \oplus \calH,
\end{equation}
where
\begin{equation} \label{orth_space}
   \calG = \{\nabla^\prime \psi: \psi \in C^\infty(\bbS)\},\quad
   \calV = \{{\rm curl}^\prime \psi: \psi \in C^\infty(\bbS)\},
\end{equation}
and $\calH$ is the finite-dimensional space of harmonic vector fields. Since
the sphere is simply connected, $\calH = \{0\}$.
We introduce the following spaces
\beas
    \rH &=& \mbox{ closure of } \calV
          \mbox{ in } \lL^2(\bbS), \\
    \rV &=& \mbox{ closure of } \calV
          \mbox{ in } \hH^1(\bbS).
\eeas
Note that it is known (see \cite{[Temam00]})
\beas
    \rH &=& \{u \in  \lL^2(\bbS): {\rm div}^\prime u =0\}, \\
    \rV &=& \rH\cap  \hH^1(\bbS).
\eeas

Given a tangential vector field $u$ on $\bbS$, we can find vector field $\tilde{u}$ defined
on some neighbourhood of $\bbS$ such that their restriction to $\bbS$ is equal to $u$,
that is $\tilde{u}\vert_{\bbS} = u \in T\bbS$. Then we define
\begin{equation}\label{def:curl}
  {\rm curl}^\prime u (\bx) := (\bx \cdot (\nabla \times \tilde{u}))\vert_{\bbS}
                     = (\bx \cdot {\rm curl\;} \tilde{u})\vert_{\bbS}.
\end{equation}
Since $\bx$ is orthogonal to the tangent plane $T_{\bx} \bbS$, ${\rm curl}^\prime\; u$ is the normal
component of $\nabla \times \tilde{u}$. It could be identified with a normal vector field when
needed.

We define the bilinear form $a: \rV \times \rV \rightarrow \R$ by
\[
   a(u,\v) := ({\rm curl}^\prime u, {\rm curl}^\prime \v)_{\LS}, \qquad u, \v \in \rV.
\]
The bilinear from $a$ satisfies $a(u,\v) \le \|u\|_{\hH^1(\bbS)} \|\v\|_{\hH^1(\bbS)}$ and hence is
continuous on $\rV$. So by the Riesz representation theorem, there exists a unique operator
$\calA : \rV \rightarrow \rV^\prime$ such that $a(u, \v) = {}_{\rV^\prime}\langle\calA u, \v\rangle_{\rV}$ for $u, \v \in \rV$.
Using the Poincar\'{e} inequality, we also have $a(u,u) \ge \alpha \|u\|^2_{\rV}$, for some
positive constant $\alpha$, which means $a$ is coercive in $\rV$. Hence, by the Lax--Milgram theorem, the operator $\calA : \rV \to \rV^\prime$ is an isomorphism.

Next we define an operator $\rA$ in $\rH$ as follows:
\begin{equation}
\begin{cases}
  \rD(\rA) &:= \{ u \in \rV:  \calA u \in \rH\}, \\
  \rA u &:= \calA u, \quad u \in \rD(\rA).
\end{cases}
\end{equation}
By Cattabriga \cite{[Cattabriga61]}, see also Temam \cite[p.~56]{[Temam97]}, one can show that $\rA$ is a non-negative self-adjoint operator in $\rH$. Moreover, $\rV = \rD(\rA^{1/2})$, see \cite[p.~57]{[Temam97]}.

Let $P$ be the orthogonal projection from $\LS$ to $\rH$, called the Leray--Helmholtz projection. It can be shown, see \cite[p.~104]{[Grigoryan00]}, that
\begin{equation}
\label{eq:stokes_sphere}
\rD(\rA) = \hH^2(\bbS) \cap \rH, \qquad \rA u =  P\left(-\DDelta u\right), \quad u \in \rD(\rA).
\end{equation}
$\rD(\rA)$ along with the graph norm
\[\|u\|_{\rD(\rA)}^2 := \|u\|_{\LS}^2 + \|\rA u\|^2_{\LS}, \quad u \in\rD(\rA),\]
forms a Hilbert space with the inner product
\[\langle u, \v \rangle_{\rD(\rA)} := \left( u, \v\right)_\LS + \left( \rA u, \rA \v\right)_\LS, \quad u, \v \in \rD(\rA).\]
Note that $\rD(\rA)$-norm is equivalent to $\hH^2(\bbS)$-norm. For more details about the Stokes operator on the sphere and fractional power $\rA^{s}$ for $s \ge 0$, see \cite[Sec. 2.2]{[BGL15]}.

Given two tangential vector fields $u$ and $\v$ on $\bbS$, we can find vector fields
$\tilde{u}$ and $\tilde{\v}$ defined on some neighbourhood of $\bbS$ such that
their restrictions to $\bbS$ are equal to, respectively, $u$ and $\v$. Then we define
the covariant derivative
\[
  [\nabla^\prime_{\v} u](\bx) =
  \pi_{\bx} \left( \sum_{i=1}^3 \tilde{\v}_i(\bx) \partial_i \tilde{u}(\bx)\right)
    = \pi_\bx( (\tilde{\v}(\bx) \cdot \nabla) \tilde{u}(\bx) ), \quad \bx \in \bbS,
\]
where $\pi_\bx$ is the orthogonal projection from $\R^3$ onto the tangent space $T_\bx \bbS$ to $\bbS$
at $\bx$. By decomposing $\tilde{u}$ and $\tilde{\v}$ into tangential and normal components and using
orthogonality, one can show that
\begin{equation}\label{eq:identity1}
\pi_\bx( \tilde{u} \times \tilde{\v}) = u \times ( (\bx \cdot \v) x) + (\bx \cdot u)\bx \times \v =
         u \times ( (\bx \cdot v) \bx), \quad \bx \in \bbS,
\end{equation}
where in the last equality, we use the fact that $\bx \cdot \v = 0$ for any tangential vector $\v$.

We set $\v = u$ and use the formula
\[
 (\tilde{u} \cdot \nabla)\tilde{u} = \nabla \frac{|\tilde{u}|^2}{2} - \tilde{u} \times (\nabla \times \tilde{u})
\]
to obtain
\[
 [\nabla^\prime_u u](\bx)
   = \nabla^\prime \frac{|u|^2}{2} - \pi_\bx \left(\tilde{u} \times (\nabla \times \tilde{u})\right).
\]
Using \eqref{eq:identity1} for the vector fields $\tilde{u}$ and
$\tilde{\v} = \nabla \times \tilde{u} ={\rm curl}\;\tilde{u}$, we have
\begin{align*}
\pi_\bx ( \tilde{u} \times (\nabla \times \tilde{u})) = u \times ( (\bx \cdot {\rm curl } u)\bx ) = u \times {\rm curl }^\prime u.
\end{align*}
Thus
\[
  \nabla^\prime_u u = \nabla^\prime \frac{|u|^2}{2}  - u \times {\rm curl }^\prime u.
\]
We consider the trilinear form $b$ on $\rV \times \rV \times \rV$, defined by
\begin{equation}
  b(\v, \w , z) = (\nabla^\prime_{\v} \w,z) = \int_{\bbS} \nabla^\prime_{\v} \w \cdot z\,d\sigma(\bx),
     \quad \v, \w, z \in \rV,
\end{equation}
where $d \sigma(\bx)$ is the surface measure on $\mathbb{S}^2$.
\section{Averaging operators and their properties}
\label{sec:operators}

In this section we recall the averaging operators which were first introduced by Raugel and Sell \cite{[RS93],[RS94]} for thin domains. Later, Temam and Ziane \cite{[TZ97]} adapted those averaging operators to thin spherical domains, introduced some additional operators and proved their properties using the spherical coordinate system. Recently, Saito \cite{[Saito05]} used these averaging operators to study Boussinesq equations in thin spherical domains. We closely follow \cite{[Saito05],[TZ97]} to describe our averaging operators and provide proofs for some of the properties mentioned below.

Let $M_\eps \colon \ccal(Q_\eps, \R) \to \ccal(\bbS, \R)$
be a map that projects functions defined on $Q_\eps$ to functions defined on $\bbS$ and is defined by
\be
\lb{eq:2.1}
M_\eps \psi (\bx) := \dfrac{1}{\eps} \int_1^{1+\eps} r \psi(r \bx)\,dr, \qquad \bx \in \bbS.
\ee

\begin{remark}
\lb{rem2.1}
We will use the Cartesian and spherical coordinates interchangeably in this paper.
For example, if $\bx \in \bbS$ then we will identify it by $\bx = (\lambda, \varphi)$
where $\lambda \in [0, \pi]$ and $\varphi \in [0, 2 \pi)$.
\end{remark}

\begin{lemma}
\lb{lemma2.2}
The map $M_\eps$ as defined in \eqref{eq:2.1} is continuous (and linear) w.r.t norms $L^2(Q_\eps)$ and $L^2(\bbS)$. Moreover,
\begin{equation}
\lb{eq:2.2}
\|M_\eps \psi\|_{L^2(\bbS)}^2 \le \frac{1}{\eps}\|\psi\|^2_{L^2(Q_\eps)}, \qquad \psi \in L^2(Q_\eps).
\end{equation}
\end{lemma}

\begin{proof}
Take $\psi \in \ccal({Q}_\eps)$ then by the definition of $M_\eps$ we have
\[M_\eps \psi(\bx) = \dfrac{1}{\eps} \int_1^{1+\eps} r \psi(r\bx)\,dr.\]
Thus, using the Cauchy-Schwarz inequality we have
\begin{align*}
\|M_\eps \psi\|_{L^2(\bbS)}^2 = \int_\bbS |M_\eps \psi(\bx)|^2\,d\sigma(\bx) &= \int_\bbS \left|\dfrac{1}{\eps} \int_{1}^{1+\eps} r \psi(r \bx)\,dr\right|^2\,d\sigma(\bx) \\
& \le \dfrac{1}{\eps^2} \int_\bbS\left( \inteps r^2 |\psi(r \bx)|^2\,dr \inteps dr \right)\,d \sigma(\bx) \\
& = \dfrac{1}{\eps^2} \cdot \eps \inteps \int_\bbS r^2 |\psi(r \bx)|^2\,dr\,d\sigma(\bx) \\
& = \frac{1}{\eps} \|\psi\|^2_{L^2(Q_\eps)},
\end{align*}
where the last equality follows from the fact that
\[\int_{Q_\eps} d\by = \inteps \int_\bbS r^2\,dr\,d\sigma(\bx)\]
is the volume integral over the spherical shell $Q_\eps$ in spherical coordinates, with \[d\sigma(\bx) = \sin{\lambda}\,d\lambda\,d\varphi,\]
being the Lebesgue measure over a unit sphere. Therefore, we obtain
\begin{equation}
\lb{eq:2.3}
\|M_\eps \psi\|^2_{L^2(\bbS)} \le \frac{1}{\eps}\|\psi\|^2_{L^2(Q_\eps)},
\end{equation}
and hence the map is bounded and we can infer \eqref{eq:2.2}.
\end{proof}

\begin{corollary}
\lb{cor2.3}
The map $M_\eps$ as defined in \eqref{eq:2.1} has a unique extension,
which without the abuse of notation will be denoted by the same symbol $M_\eps \colon L^2(Q_\eps) \to L^2(\bbS)$.
\end{corollary}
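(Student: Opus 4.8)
The plan is to apply the standard bounded linear extension theorem: a bounded linear map defined on a dense linear subspace of a normed space and taking values in a Banach space extends uniquely to a bounded linear map on the whole space, with the same operator norm. Here the domain is $\ccal(Q_\eps, \R)$ sitting inside $L^2(Q_\eps)$, the target Banach space is $L^2(\bbS)$ (complete), and the required boundedness is exactly the estimate \eqref{eq:2.2} of Lemma~\ref{lemma2.2}. Thus the substantive analytic work has already been carried out, and what remains is to assemble it.

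First I would record the density of $\ccal(Q_\eps,\R)$ in $L^2(Q_\eps)$. Since $Q_\eps$ is a bounded (indeed compact) subset of $\R^3$ with finite Lebesgue measure, continuous functions are dense in $L^2(Q_\eps)$ by standard measure theory. Next, given an arbitrary $\psi \in L^2(Q_\eps)$, I would choose a sequence $(\psi_n) \subset \ccal(Q_\eps,\R)$ with $\psi_n \to \psi$ in $L^2(Q_\eps)$. Applying \eqref{eq:2.2} to the differences gives
\[
\|M_\eps \psi_n - M_\eps \psi_m\|_{L^2(\bbS)}^2 = \|M_\eps(\psi_n - \psi_m)\|_{L^2(\bbS)}^2 \le \frac{1}{\eps}\|\psi_n - \psi_m\|_{L^2(Q_\eps)}^2,
\]
so $(M_\eps \psi_n)$ is Cauchy in the complete space $L^2(\bbS)$ and converges to a limit, which I define to be $M_\eps \psi$.

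The remaining verifications are routine consequences of the same bound. Well-definedness (independence of the approximating sequence) follows because if $\psi_n \to \psi$ and $\psi_n' \to \psi$, then $\|M_\eps\psi_n - M_\eps\psi_n'\|_{L^2(\bbS)} \le \eps^{-1/2}\|\psi_n - \psi_n'\|_{L^2(Q_\eps)} \to 0$; linearity passes to the limit from linearity on $\ccal(Q_\eps,\R)$; and letting $n \to \infty$ in \eqref{eq:2.2} applied to $\psi_n$ shows the extension satisfies the same estimate, hence is bounded. Uniqueness is immediate, since any two continuous extensions agree on the dense subspace $\ccal(Q_\eps,\R)$ and therefore everywhere. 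The only point needing even a moment's care is well-definedness, and as noted it is an instant corollary of \eqref{eq:2.2}; there is no genuine obstacle here, as the entire force of the statement is carried by the continuity established in Lemma~\ref{lemma2.2}.
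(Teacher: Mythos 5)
Your proposal is correct and follows essentially the same route as the paper: the paper's proof is a one-line appeal to density of $\ccal(Q_\eps)$ in $L^2(Q_\eps)$ plus the boundedness from Lemma~\ref{lemma2.2}, which you have simply unpacked into the standard Cauchy-sequence construction of the bounded linear extension. If anything, your version is the more precise one, since the paper attributes the extension principle to the ``Riesz representation theorem'' when the tool actually being used is the bounded-linear-transformation extension theorem that you prove explicitly.
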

\begin{proof}
Since $\ccal(Q_\eps)$ is dense in $L^2(Q_\eps)$ and $M_\eps \colon L^2(Q_\eps) \to L^2(\bbS)$ is a bounded map thus by the Riesz representation theorem there exists a unique extension.
\end{proof}

\begin{lemma}
\label{lemma2.4}
The following map
\begin{align}
\label{eq:newReps}
\newReps \colon L^2(\bbS) \ni \psi \mapsto \frac{1}{|\cdot|} \psi\left(\frac{\cdot}{|\cdot|}\right) \in L^2(Q_\eps)
\end{align}
is bounded and
\[\|\newReps\|^2_{\mathcal{L}(L^2(\bbS), L^2(Q_\eps))} = \eps.\]
\end{lemma}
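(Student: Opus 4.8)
The plan is to compute the $L^2(Q_\eps)$ norm of $\newReps \psi$ directly by converting to spherical coordinates, exploiting the fact that the operator sends a function on the sphere to a radially-extended function weighted by $1/|\by|$. First I would fix $\psi \in L^2(\bbS)$ and write $\by = r\bx$ with $r \in (1,1+\eps)$ and $\bx \in \bbS$, so that $(\newReps \psi)(\by) = \frac{1}{r}\psi(\bx)$, using that $|\by| = r$ and $\by/|\by| = \bx$. The volume element in spherical coordinates is $d\by = r^2\,dr\,d\sigma(\bx)$, exactly as recorded in the proof of Lemma~\ref{lemma2.2}.

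Next I would carry out the integration, separating the radial and angular variables:
\begin{align*}
\|\newReps \psi\|^2_{L^2(Q_\eps)}
&= \int_{Q_\eps} |(\newReps \psi)(\by)|^2\,d\by
= \inteps \int_\bbS \frac{1}{r^2}|\psi(\bx)|^2\, r^2\,dr\,d\sigma(\bx) \\
&= \left(\inteps dr\right) \int_\bbS |\psi(\bx)|^2\,d\sigma(\bx)
= \eps\, \|\psi\|^2_{L^2(\bbS)}.
\end{align*}
The crucial cancellation is that the weight $1/r^2$ from $|(\newReps\psi)|^2$ exactly annihilates the Jacobian factor $r^2$, leaving a clean product of the radial length $\eps$ and the spherical norm. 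This shows $\newReps$ is bounded with $\|\newReps \psi\|^2_{L^2(Q_\eps)} = \eps\,\|\psi\|^2_{L^2(\bbS)}$, and since equality holds for every $\psi$, the operator norm satisfies $\|\newReps\|^2_{\mathcal{L}(L^2(\bbS), L^2(Q_\eps))} = \eps$ as claimed.

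I do not anticipate a genuine obstacle here, since the argument is a direct change of variables; the only point demanding care is the bookkeeping of the factor $r$ in the definition \eqref{eq:newReps} versus the Jacobian $r^2$, and verifying that the identity holds with equality (not merely an inequality) so that the operator norm is determined exactly rather than just bounded above. One should also remark briefly that $\newReps\psi$ is genuinely measurable and finite almost everywhere, which follows immediately from the computation above showing its $L^2$ norm is finite whenever $\psi \in L^2(\bbS)$.
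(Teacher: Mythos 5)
Your proof is correct and follows essentially the same route as the paper's: both compute $\|\newReps\psi\|^2_{L^2(Q_\eps)}$ via the spherical change of variables, observe that the weight $1/r^2$ cancels the Jacobian factor $r^2$, and obtain the exact identity $\|\newReps\psi\|^2_{L^2(Q_\eps)} = \eps\,\|\psi\|^2_{L^2(\bbS)}$, which gives the operator norm exactly. The only cosmetic difference is that the paper first restricts to $\psi \in \ccal(\bbS)$ and implicitly uses density, while you work directly with $L^2(\bbS)$ functions, which is equally valid since the computation is a pure change of variables.
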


\begin{proof}
It is sufficient to consider $\psi \in \ccal(\bbS)$. For $\psi \in \ccal(\bbS)$, we have
\[\|\newReps \psi\|_{L^2(Q_\eps)}^2 = \int_{Q_\eps}|\v(\by)|^2\,d\by = \inteps r^2 \int_{\bbS} |\v(r\bx)|^2\,d\sigma(\bx)\,dr,\]
where $\v(\by) = \frac{1}{|\by|}\psi\left(\frac{\by}{|\by|}\right)$. But, for $\bx \in \bbS$
\[\v(r\bx) = \frac{1}{|r\bx|}\psi\left(\frac{r\bx}{|r\bx|}\right) = \frac{1}{r}\psi(\bx).\]
So
\begin{align*}
\|\newReps \psi\|_{L^2(Q_\eps)}^2 & = \inteps r^2 \frac{1}{r^2} \int_\bbS |\psi(\bx)|^2\,d \sigma(\bx)\,dr \\
& = \eps \|\psi\|^2_{L^2(\bbS)},
\end{align*}
thus, showing that the map $\newReps$ is bounded w.r.t. $L^2(\bbS)$ and $L^2(Q_\eps)$ norms.
\end{proof}

\begin{lemma}
\label{lem:grad_ret}
Let $\psi \in W^{1,p}(\bbS)$ for $p \ge 2$. Then for $\eps \in (0,1)$ there exists a constant $C > 0$ independent of $\eps$ such that
\[
\|\nabla \newReps \psi\|^p_{L^p(Q_\eps)} \le C \eps \| \psi\|_{W^{1,p}(\bbS)}^p.
\]
\end{lemma}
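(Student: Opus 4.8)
The plan is to establish the estimate first for smooth $\psi \in C^\infty(\bbS)$ and then extend to all of $W^{1,p}(\bbS)$ by density, exploiting the linearity of $\newReps$ together with the fact that the resulting bound makes the map $\psi \mapsto \nabla \newReps \psi$ continuous from $W^{1,p}(\bbS)$ into $L^p(Q_\eps)$. The crucial observation is that, writing $\by = r\bx$ with $r = |\by| \in [1, 1+\eps]$ and $\bx = \by/|\by| \in \bbS$, the retraction takes the explicit form $(\newReps \psi)(r\bx) = r^{-1}\psi(\bx)$, so that its gradient separates neatly into a radial part and a part tangential to the sphere of radius $r$.

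First I would compute $\nabla \newReps \psi$ by decomposing it along the radial direction $\bx$ and along the tangent space to the sphere of radius $r$. The radial derivative is $\partial_r[r^{-1}\psi(\bx)] = -r^{-2}\psi(\bx)$, while for a function depending only on the angular variable the tangential gradient on the sphere of radius $r$ has length $r^{-1}$ times the unit-sphere gradient; applied to the factor $r^{-1}\psi(\bx)$ (with $r$ constant along that sphere) this yields a tangential component of length $r^{-2}|\nablaS\psi(\bx)|$. Since the radial and tangential components are orthogonal, this gives the pointwise identity
\[
|\nabla \newReps \psi (r\bx)|^2 = \frac{1}{r^4}\left(|\psi(\bx)|^2 + |\nablaS \psi(\bx)|^2\right), \qquad \bx \in \bbS,\ r \in [1,1+\eps].
\]

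Next I would integrate this identity against the spherical volume element $d\by = r^2\,dr\,d\sigma(\bx)$ used in the proof of Lemma~\ref{lemma2.4}. Raising the right-hand side to the power $p/2$ and multiplying by the Jacobian $r^2$ produces the radial weight $r^{2-2p}$, which for $p \ge 2$ and $r \ge 1$ satisfies $r^{2-2p} \le 1$, so that $\inteps r^{2-2p}\,dr \le \eps$. The remaining angular integral of $(|\psi|^2 + |\nablaS\psi|^2)^{p/2}$ is controlled by the elementary inequality $(a^2+b^2)^{p/2} \le 2^{p/2-1}(a^p + b^p)$, valid for $p \ge 2$, which bounds it by a constant multiple of $\int_\bbS |\psi|^p\,d\sigma + \int_\bbS |\nablaS\psi|^p\,d\sigma = \|\psi\|^p_{W^{1,p}(\bbS)}$. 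Combining the two factorised bounds gives $\|\nabla \newReps \psi\|^p_{L^p(Q_\eps)} \le 2^{p/2-1}\,\eps\,\|\psi\|^p_{W^{1,p}(\bbS)}$, so one may take $C = 2^{p/2-1}$, which is independent of $\eps$.

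The only delicate point is the gradient computation: one must correctly account for the $1/r$ scaling that the surface gradient acquires on spheres of radius $r > 1$, since it is precisely the interplay between this factor and the $r^{-1}$ prefactor in $\newReps$ that produces the favourable weight $r^{2-2p}$ after integration. Everything else is a routine combination of the spherical change of variables and the equivalence of the $\ell^2$ and $\ell^p$ norms on $\R^2$.
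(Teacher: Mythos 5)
Your proof is correct and follows essentially the same route as the paper's: the same explicit computation of $\nabla \newReps \psi$ (radial component $-r^{-2}\psi$, tangential component $r^{-2}\nablaS\psi$), followed by integration against the spherical volume element $r^2\,dr\,d\sigma(\bx)$ and the observation that the resulting radial weight yields a factor $\eps$. You are in fact slightly more careful than the paper at one point: the paper writes $|\nabla \newReps \psi|^p = r^{-2p}\left(|\psi|^p + |\nablaS\psi|^p\right)$ as an equality, whereas you correctly note that passing from $\left(|\psi|^2 + |\nablaS\psi|^2\right)^{p/2}$ to $|\psi|^p + |\nablaS\psi|^p$ requires the convexity inequality $(a^2+b^2)^{p/2} \le 2^{p/2-1}(a^p+b^p)$ for $p \ge 2$, which only changes the constant $C$.
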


\begin{proof}
By the definition of the map $\newReps$ (see \eqref{eq:newReps}), and identities \eqref{eq:grad}, \eqref{eq:C.1.1} for the scalar function $\psi \in W^{1,p}(\bbS)$, we have for $Q_\eps \ni \by = r\bx$, $r \in (1,1+\eps)$ and $\bx \in \bbS$,
\begin{align*}
    \nabla (\newReps[\psi](\by)) & = \frac{\pa}{\pa r} \left(\newReps[\psi](\by)\right) \widehat{e_r} + \frac{1}{r} \frac{\pa}{\pa \lambda} \left(\newReps[\psi](\by)\right) \widehat{e_\lambda} + \frac{1}{r \sin \lambda} \frac{\pa}{\pa \varphi} \left(\newReps[\psi](\by)\right) \widehat{e_\varphi} \\
    & = \frac{\pa}{\pa r} \left(\frac{\psi(\bx)}{r}\right)\widehat{e_r} + \frac{1}{r} \frac{\pa}{\pa \lambda} \left(\frac{\psi(\bx)}{r}\right) \widehat{e_\lambda} + \frac{1}{r \sin \lambda} \frac{\pa}{\pa \varphi} \left(\frac{\psi(\bx)}{r}\right) \widehat{e_\varphi} \\
    & = - \frac{1}{r^2} \psi(\bx) \widehat{e_r} + \frac{1}{r^2} \frac{\pa}{\pa \lambda} \left(\psi(\bx)\right) \widehat{e_\lambda} + \frac{1}{r^2 \sin \lambda} \frac{\pa}{\pa \varphi} \left(\psi(\bx)\right) \widehat{e_\varphi}.
\end{align*}
Hence,
\begin{align*}
    \|\nabla \newReps \psi\|^p_{L^p(Q_\eps)} & = \int_{Q_\eps} |\nabla(\newReps[\psi](\by))|^p\,d\by \\
    & = \inteps \int_\bbS \frac{1}{r^{2p}}\left(|\psi(\bx)|^p + |\nablaS \psi(\bx)|^p\right)r^2 d\sigma(\bx)dr \\
    & = -\frac{r^{3 - 2p}}{2p - 3}\bigg\vert^{1+\eps}_1\left(\|\psi\|^p_{L^p(\bbS)} + \|\nablaS \psi\|^p_{L^p(\bbS)}\right)\\
    & \le C(p) \eps \|\psi\|^p_{W^{1,p}(\bbS)}.
\end{align*}
\end{proof}

\begin{lemma}
\label{lem:lap_ret}
Let $\psi \in H^2(\bbS)$. Then for $\eps \in (0,1)$
\begin{equation}
    \label{eq:lap_ret1}
    \|\Delta \newReps\psi \|^2_{L^2(Q_\eps)} \le \eps \|\Delta' \psi\|^2_{L^2(\bbS)}.
\end{equation}
\end{lemma}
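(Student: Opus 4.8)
The plan is to compute the Euclidean Laplacian of $\newReps\psi$ directly in spherical coordinates and to exploit the fact that the radial profile $r\mapsto 1/r$ built into $\newReps$ is harmonic on $\R^3\setminus\{0\}$, so that only the angular (Laplace--Beltrami) part of the Laplacian survives. First I would reduce to smooth data: since $\ccal(\bbS)$ is dense in $H^2(\bbS)$, it suffices to prove the estimate for $\psi\in\ccal(\bbS)$ and then pass to the limit, because the right-hand side $\sqrt{\eps}\,\|\deltaS\psi\|_{L^2(\bbS)}$ is continuous in the $H^2(\bbS)$-norm and the left-hand side will be shown to equal a bounded weight times $\deltaS\psi$. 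For such $\psi$, recall from the proof of Lemma~\ref{lemma2.4} that for $\by=r\bx$ with $r\in(1,1+\eps)$ and $\bx\in\bbS$ one has $\newReps\psi(r\bx)=\tfrac1r\psi(\bx)$, i.e.\ a separated function $g(r)\psi(\bx)$ with $g(r)=1/r$.

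Next I would invoke the spherical-coordinate decomposition of the Laplacian,
\[
\Delta f=\frac{1}{r^2}\frac{\pa}{\pa r}\left(r^2\frac{\pa f}{\pa r}\right)+\frac{1}{r^2}\deltaS f,
\]
which is the same representation underlying the coordinate identities used in Lemma~\ref{lem:grad_ret}. Applied to $f=g(r)\psi(\bx)$ this yields $\Delta f=\psi(\bx)\bigl(g''+\tfrac2r g'\bigr)+\tfrac{g(r)}{r^2}\deltaS\psi(\bx)$. With $g(r)=1/r$ the radial factor is $g''+\tfrac2r g'=\tfrac{2}{r^3}-\tfrac{2}{r^3}=0$, so the radial contribution cancels completely and
\[
\Delta\bigl(\newReps\psi\bigr)(r\bx)=\frac{1}{r^3}\,\deltaS\psi(\bx).
\]
This cancellation is the crux of the argument and is exactly the reason the weight $1/|\by|$ is chosen in the definition of $\newReps$.

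Finally I would integrate over $Q_\eps$ using $d\by=r^2\,dr\,d\sigma(\bx)$, which separates the radial and angular integrals:
\[
\|\Delta\newReps\psi\|^2_{L^2(Q_\eps)}=\inteps\int_\bbS\frac{1}{r^6}\,|\deltaS\psi(\bx)|^2\,r^2\,d\sigma(\bx)\,dr=\left(\inteps\frac{dr}{r^4}\right)\|\deltaS\psi\|^2_{L^2(\bbS)}.
\]
It then remains only to bound the elementary radial integral: $\inteps r^{-4}\,dr=\tfrac13\bigl(1-(1+\eps)^{-3}\bigr)=\tfrac{\eps(3+3\eps+\eps^2)}{3(1+\eps)^3}$, and the direct comparison $3+3\eps+\eps^2\le 3(1+\eps)^3$ (valid for all $\eps>0$, since $3(1+\eps)^3=3+9\eps+9\eps^2+3\eps^3$) shows this quantity is at most $\eps$. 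Recalling $\deltaS=\Delta'$, this gives precisely \eqref{eq:lap_ret1}.

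I do not anticipate a genuine obstacle: the proof is essentially a one-line computation once the Laplacian is split. The only two points demanding care are the \emph{verification} (rather than assumption) that the radial part of $\Delta(\newReps\psi)$ vanishes, and the justification of applying $\deltaS$ termwise to merely $H^2$ data; the latter is dispatched by approximating $\psi$ in $H^2(\bbS)$ by smooth functions, noting that $\Delta\newReps\psi_n=r^{-3}\deltaS\psi_n$ converges in $L^2(Q_\eps)$ to $r^{-3}\deltaS\psi$ while $\newReps\psi_n\to\newReps\psi$ in $L^2(Q_\eps)$ forces this limit to coincide with the distributional Laplacian of $\newReps\psi$.
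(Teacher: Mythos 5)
Your proposal is correct and follows essentially the same route as the paper's proof: write $\newReps\psi(r\bx)=\psi(\bx)/r$, observe that the radial part of the spherical-coordinate Laplacian annihilates $1/r$ so that $\Delta(\newReps\psi)=r^{-3}\deltaS\psi$, and then bound the radial integral $\inteps r^{-4}\,dr=\frac{(1+\eps)^3-1}{3(1+\eps)^3}$ by $\eps$. Your two additions — the explicit density argument justifying the computation for merely $H^2$ data, and the explicit elementary comparison $3+3\eps+\eps^2\le 3(1+\eps)^3$ (which in fact holds for all $\eps>0$, not just $\eps\in(0,1)$) — only make the argument more careful than the paper's, which asserts the final bound without detail.
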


\begin{proof}
Let $\psi \in H^2(\bbS)$, then
\[[\newReps \psi](\by)  = \frac{1}{|\by|} \psi\left(\frac{\by}{|\by|}\right), \quad \by \in Q_\eps.\]
Therefore, for every $Q_\eps \ni \by = r\bx$, $r \in (1, 1+\eps)$ and $x \in \bbS$, we have (see \eqref{eq:lap-bel} and \eqref{eq:C.1.2} for the definition of Laplace--Beltrami operator)
\begin{align*}
    \Delta([\newReps \psi](\by)) & = \frac{\partial^2}{\pa r^2} \left(\newReps \psi(\by)\right) + \frac{2}{r} \frac{\partial}{\pa r} \left(\newReps \psi (\by)\right) + \frac{1}{r^2} \Delta^\prime \left(\newReps \psi(\by)\right) \\
    & = \frac{\partial^2}{\partial r^2}\left(\frac{\psi(\bx)}{r}\right) + \frac{2}{r} \frac{\partial }{\partial r}\left(\frac{\psi(\bx)}{r}\right) + \frac{1}{r^2} \Delta^\prime \left(\frac{\psi(\bx)}{r}\right)\\
    & = \frac{2}{r^3}\psi(\bx) - \frac{2}{r^3}\psi(\bx) + \frac{1}{r^3}\Delta^\prime \psi(\bx)\\
    & = \frac{1}{r^3}\Delta^\prime \psi(\bx).
\end{align*}
Hence
\begin{align*}
    \|\Delta\left(\newReps \psi\right)\|^2_{L^2(Q_\eps)} &= \int_{Q_\eps} |\Delta\left([\newReps \psi](\by)\right)|^2 d\by = \inteps \int_\bbS \frac{1}{r^6} |\Delta^\prime \psi(\bx)|^2 r^2 d\sigma(\bx)\,dr \\
    & = -\frac{1}{3r^3}\Big|_{1}^{1+\eps}\|\Delta^\prime \psi\|^2_{L^2(\bbS)} = \frac{\left((1+\eps)^3 - 1\right)}{3(1+\eps)^3}\|\Delta^\prime \psi\|^2_{L^2(\bbS)} \\
    &= \frac{\eps^3 + 3\eps^2 + 3\eps}{(1+\eps)^3}\|\Delta^\prime \psi\|^2_{L^2(\bbS)}.
\end{align*}
Since $\eps \in (0,1)$, the inequality \eqref{eq:lap_ret1} holds.
\end{proof}

\begin{remark}
\label{rem_averaging_operator_dual}
It is easy to check that the dual operator $M_\eps^\ast \colon L^2(\bbS) \to L^2(Q_\eps)$ is given by
\begin{equation}
\label{eq:avgop_dual}
\left(M_\eps^\ast \psi \right)(\by) = \frac{1}{\eps}(R_\eps \psi)(\by),
\qquad \by \in Q_\eps .
\end{equation}
\end{remark}

Next we define another map
\begin{align}
\label{eqn-newMepshat}
\newMepshat &= \newReps \circ M_\eps\colon L^2(Q_\eps)  \to L^2(Q_\eps).
\end{align}
Courtesy of Corollary~\ref{cor2.3} and Lemma~\ref{lemma2.4}, $\newMepshat$ is well-defined and bounded. Using definitions of maps $R_\eps$ and $M_\eps$ , we have
\begin{align*}
\newMepshat \colon \psi \mapsto \left\{\by \mapsto \frac{1}{|\by|} \left(M_\eps \psi\right)\left(\frac{\by}{|\by|}\right)\right\}.
\end{align*}

\begin{lemma}
\lb{lemma2.6}
Let $\psi \in L^2(Q_\eps)$, then we have the following scaling property
\begin{equation}
\lb{eq:2.4}
\|\newMepshat \psi\|^2_{L^2(Q_\eps)} = \eps \|M_\eps \psi\|^2_{L^2(\bbS)}, \qquad \psi \in L^2(Q_\eps).
\end{equation}
\end{lemma}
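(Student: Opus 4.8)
The plan is to obtain \eqref{eq:2.4} directly from the exact scaling identity for the retraction operator $\newReps$ that is already packaged in Lemma~\ref{lemma2.4}. The key observation is that, although Lemma~\ref{lemma2.4} is stated as the computation of an operator norm $\|\newReps\|^2_{\mathcal{L}(L^2(\bbS),L^2(Q_\eps))}=\eps$, its proof in fact establishes the stronger pointwise-in-$\phi$ \emph{equality}
\[
\|\newReps \phi\|^2_{L^2(Q_\eps)} = \eps\,\|\phi\|^2_{L^2(\bbS)}, \qquad \phi \in L^2(\bbS),
\]
valid for every individual $\phi$ and not merely as an upper bound. This identity is the only nontrivial ingredient; everything else is a substitution.

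First I would recall the defining factorisation $\newMepshat = \newReps \circ M_\eps$ from \eqref{eqn-newMepshat}. For a given $\psi \in L^2(Q_\eps)$, Corollary~\ref{cor2.3} guarantees that $M_\eps \psi$ is a well-defined element of $L^2(\bbS)$, so the composition makes sense and $\newMepshat \psi = \newReps(M_\eps \psi)$. Then I would apply the displayed equality above with the particular choice $\phi = M_\eps \psi$, which yields
\[
\|\newMepshat \psi\|^2_{L^2(Q_\eps)} = \|\newReps(M_\eps \psi)\|^2_{L^2(Q_\eps)} = \eps\,\|M_\eps \psi\|^2_{L^2(\bbS)},
\]
and this is precisely \eqref{eq:2.4}.

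The only real subtlety, and the point I expect to be the main obstacle, is a bookkeeping one: one must extract the genuine equality from the proof of Lemma~\ref{lemma2.4} rather than quoting its stated operator-norm conclusion. Indeed, if one uses only $\|\newReps\|^2=\eps$ one obtains the inequality $\le$ in \eqref{eq:2.4}, not the claimed equality. However, the computation in that proof relies on the exact relations $\v(r\bx) = \tfrac{1}{r}\psi(\bx)$ and $\int_{Q_\eps} d\by = \inteps r^2\,dr\,d\sigma(\bx)$ with no estimation performed anywhere, so the identity is exact; consequently the equality in \eqref{eq:2.4} follows at once. Since the argument is a single substitution, no further regularity or density considerations are needed beyond the boundedness already provided by Corollary~\ref{cor2.3} and Lemma~\ref{lemma2.4}.
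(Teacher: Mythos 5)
Your proof is correct and is essentially the paper's own argument: the paper establishes \eqref{eq:2.4} by redoing, inline, exactly the spherical-coordinates cancellation you invoke (the Jacobian $r^2$ against the factor $1/r^2$ coming from the definition of $\newReps$, leaving $\int_1^{1+\eps}dr=\eps$), with $M_\eps\psi$ substituted for $\psi$ — which is precisely the identity you extract from the proof of Lemma~\ref{lemma2.4}. The only refinement worth making explicit is that Lemma~\ref{lemma2.4}'s proof carries out this computation for $\phi\in\ccal(\bbS)$ while you apply it to $\phi=M_\eps\psi\in L^2(\bbS)$; this is harmless because the computation is an application of Tonelli's theorem to a nonnegative measurable integrand and hence valid verbatim for arbitrary $\phi\in L^2(\bbS)$ (alternatively, the equality $\|\newReps\phi\|^2_{L^2(Q_\eps)}=\eps\|\phi\|^2_{L^2(\bbS)}$ passes from the dense subspace $\ccal(\bbS)$ to all of $L^2(\bbS)$ by continuity of both sides), so your parenthetical claim that no density consideration is needed should be justified by one of these one-line remarks rather than asserted outright.
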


\begin{proof}
Let $\psi \in L^2(Q_\eps)$. Then by the defintion of the map $\newMepshat$, we have
\begin{align*}
\|\newMepshat \psi\|^2_{L^2(Q_\eps)} & = \int_{Q_\eps}|\newMepshat \psi(\by)|^2\,d\by \\
& = \inteps \int_\bbS \frac{1}{r^2}|M_\eps \psi(\bx)|^2 r^2 d\sigma(\bx)\,dr = \eps \|M_\eps \psi\|^2_{L^2(\bbS)}.
\end{align*}
\end{proof}

The normal component of a function $\psi$ defined on $Q_\eps$ when projected to $\bbS$ is given by the map $\widehat{N}_\eps$ which is defined by
\begin{equation}
\label{eq:2.5}
\widehat{N}_\eps = {\rm{Id}} - \newMepshat,
\end{equation}
i.e.
\[
\widehat{N}_\eps \colon L^2(Q_\eps) \ni \psi \mapsto \psi - \newMepshat \psi \in L^2(Q_\eps).
\]
The following result establishes an important property of the map $\widehat{N}_\eps$.
\begin{lemma}
\label{lemma2.7}
Let $\psi \in L^2(Q_\eps)$, then
\begin{equation}
\label{eq:2.6}
\inteps r \widehat{N}_\eps \psi\,dr = 0, \qquad \mbox{a.e. on } \bbS.
\end{equation}
\end{lemma}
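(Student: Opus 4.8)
The plan is to reduce the statement to a direct computation using the explicit pointwise formula for $\widehat{N}_\eps$. First I would fix a direction $\bx \in \bbS$ and record that, for $\by = r\bx$ with $r \in (1,1+\eps)$, one has $|\by| = r$ and $\by/|\by| = \bx$, so that the displayed expression for $\newMepshat$ reads $\newMepshat\psi(r\bx) = \frac{1}{r}(M_\eps\psi)(\bx)$. Combined with the definition \eqref{eq:2.5} of $\widehat{N}_\eps$ this gives
\[
\widehat{N}_\eps\psi(r\bx) = \psi(r\bx) - \frac{1}{r}(M_\eps\psi)(\bx).
\]

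Then I would insert this into the radial integral and split it into two pieces:
\[
\inteps r\,\widehat{N}_\eps\psi(r\bx)\,dr = \inteps r\,\psi(r\bx)\,dr - \inteps r\cdot\frac{1}{r}(M_\eps\psi)(\bx)\,dr.
\]
By the definition \eqref{eq:2.1} of $M_\eps$, the first integral equals $\eps\,(M_\eps\psi)(\bx)$. In the second integral the weight $r$ cancels the factor $1/r$ coming from $\newReps$, so that $(M_\eps\psi)(\bx)$ is constant in $r$ and the integral equals $(M_\eps\psi)(\bx)\inteps dr = \eps\,(M_\eps\psi)(\bx)$. Subtracting, the two copies cancel and the right-hand side is $0$, which is exactly \eqref{eq:2.6}.

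The computation itself is elementary; the only point requiring care is measure-theoretic, and this is where the small amount of real work lies. Since $\psi \in L^2(Q_\eps)$ is defined only up to null sets and $M_\eps$ is the $L^2$-extension from Corollary~\ref{cor2.3}, I would first justify the pointwise representation $(M_\eps\psi)(\bx) = \frac{1}{\eps}\inteps r\psi(r\bx)\,dr$ for a.e. $\bx \in \bbS$. For $\psi \in \ccal(Q_\eps)$ this is immediate from \eqref{eq:2.1}, and by Fubini's theorem (in the spherical decomposition $\int_{Q_\eps}\,d\by = \inteps\int_\bbS r^2\,d\sigma(\bx)\,dr$) the map $\bx \mapsto \inteps r\psi(r\bx)\,dr$ is well defined for a.e. $\bx$ and agrees in $L^2(\bbS)$ with the extended operator; passing to the limit along an approximating sequence in $\ccal(Q_\eps)$, as in Corollary~\ref{cor2.3}, transfers the identity to general $\psi \in L^2(Q_\eps)$. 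Once this a.e. representation is in place, the cancellation above holds for a.e. $\bx \in \bbS$, which is the asserted conclusion. There is no genuine analytic obstacle here: the entire content is the algebraic cancellation of the two terms $\eps\,(M_\eps\psi)(\bx)$, reflecting that $\newMepshat$ is constructed precisely so that its $r$-weighted radial average reproduces that of $\psi$.
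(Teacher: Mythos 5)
Your proof is correct and is essentially the paper's argument: both rest on the same cancellation, namely that $\inteps r\cdot\tfrac{1}{r}(M_\eps\psi)(\bx)\,dr = \eps\,(M_\eps\psi)(\bx) = \inteps r\,\psi(r\bx)\,dr$, together with a density/boundedness argument to pass from $\ccal(Q_\eps)$ to general $\psi \in L^2(Q_\eps)$. The only difference is packaging: the paper phrases the cancellation as the operator identity $M_\eps \circ \newReps \circ M_\eps = M_\eps$, verified on continuous functions and extended by boundedness, whereas you first justify the a.e.\ pointwise integral representation of the extended operator $M_\eps$ and then perform the identical cancellation pointwise; the two formulations are interchangeable.
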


\begin{proof}
Let us choose and fix $\psi \in L^2(Q_\eps)$.  Then by the definitions of the operators involved we have the following equality in $L^2(\bbS)$:
\[\inteps r \widehat{N}_\eps \psi\,dr = \eps M_\eps \widehat{N}_\eps \psi.\]
Therefore ,we deduce that in order to prove equality \eqref{eq:2.6}, it is sufficient to show that
\[M_\eps \widehat{N}_\eps = 0.\]
Hence, by taking into account definitions \eqref{eq:2.5} of $\widehat{N}_\eps$ and \eqref{eqn-newMepshat} of
$\newMepshat$ , we infer that  it is sufficient to prove that
\[M_\eps = M_\eps \circ \newReps \circ M_\eps.\]
Let us choose $\psi \in \ccal(Q_\eps)$  and put  $\phi = M_\eps \psi$, i.e.
\[ \phi(\bx) = \frac{1}{\eps}\inteps r \psi(r \bx)\,dr. \]
Note that
\[\newReps \phi(\rho \bx) = \frac{1}{\rho} \phi(\bx), \quad \rho \in (1, 1+ \eps),\, \bx \in \bbS.\]
Thus, we infer that
\begin{align*}
M_\eps \left[\newReps \phi\right](\bx) & = \frac{1}{\eps}\inteps \rho \newReps \phi(\rho \bx)\,d\rho \\
& = \frac{1}{\eps}\inteps \rho \frac{1}{\rho} \phi(\bx)\,dr  = \phi(\bx) = M_\eps \psi (\bx), \;  \bx \in \bbS.
\end{align*}
Thus, we proved $M_\eps \circ \newReps\circ M_\eps \psi = M_\eps \psi$ for every $\psi \in \ccal({Q}_\eps)$.
Since $\ccal({Q}_\eps)$ is dense in $L^2(Q_\eps)$ and the maps $M_\eps$ and $M_\eps \circ \newReps\circ M_\eps$ are bounded in $L^2(Q_\eps)$, we conclude that we have  proved \eqref{eq:2.6}.
\end{proof}

\begin{lemma}
\label{lemma2.11}
For all $\psi, \xi \in L^2(Q_\eps)$, we have
\begin{equation}
\label{eq:2.13}
\left( \newMepshat \psi , \widehat{N}_\eps \xi \right)_{L^2(Q_\eps)}  = 0.
\end{equation}
\end{lemma}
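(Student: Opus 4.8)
The plan is to reduce the claim to Lemma~\ref{lemma2.7} by writing the $L^2(Q_\eps)$ inner product in spherical coordinates. The key observation is that $\newMepshat\psi$ is \emph{radial} in the following sense: writing $\by = r\bx$ with $r \in (1,1+\eps)$ and $\bx \in \bbS$, the definition of $\newMepshat = \newReps \circ M_\eps$ gives $\newMepshat\psi(r\bx) = \frac{1}{r}(M_\eps\psi)(\bx)$, so that the factor $(M_\eps\psi)(\bx)$ carries no $r$-dependence. This is exactly what is needed to peel off the radial integral and invoke \eqref{eq:2.6}.

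First I would pass to spherical coordinates, using $d\by = r^2\,dr\,d\sigma(\bx)$, to obtain
\[
\left(\newMepshat\psi,\widehat{N}_\eps\xi\right)_{L^2(Q_\eps)}
= \inteps \int_\bbS \frac{1}{r}(M_\eps\psi)(\bx)\,\widehat{N}_\eps\xi(r\bx)\,r^2\,d\sigma(\bx)\,dr.
\]
Next, since $(M_\eps\psi)(\bx)$ is independent of $r$, I would apply Fubini's theorem to exchange the order of integration and factor it out of the radial integral, giving
\[
\left(\newMepshat\psi,\widehat{N}_\eps\xi\right)_{L^2(Q_\eps)}
= \int_\bbS (M_\eps\psi)(\bx)\left(\inteps r\,\widehat{N}_\eps\xi(r\bx)\,dr\right)d\sigma(\bx).
\]
Finally, Lemma~\ref{lemma2.7} states precisely that $\inteps r\,\widehat{N}_\eps\xi\,dr = 0$ for a.e. $\bx\in\bbS$, so the inner integral vanishes identically and the whole expression equals $0$.

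An alternative, more structural argument is worth recording: combining Remark~\ref{rem_averaging_operator_dual} (which yields $\newReps = \eps M_\eps^\ast$) with Lemma~\ref{lemma2.7} (equivalently, $M_\eps\circ\newReps\circ M_\eps = M_\eps$), one sees that $\newMepshat = \eps M_\eps^\ast M_\eps$ is self-adjoint and satisfies $\newMepshat^2 = \newMepshat$; hence $\newMepshat$ is an orthogonal projection in $L^2(Q_\eps)$ and $\widehat{N}_\eps = \mathrm{Id} - \newMepshat$ is the complementary orthogonal projection, whose range is automatically orthogonal to that of $\newMepshat$. Either route is essentially routine; the only point requiring a little care is the justification of Fubini's theorem and of the pointwise-a.e. use of \eqref{eq:2.6}, which I would handle by first establishing the identity for $\psi,\xi\in\ccal(Q_\eps)$ and then extending to all of $L^2(Q_\eps)$ via the boundedness of $\newMepshat$ and $\widehat{N}_\eps$ together with the continuity of the inner product.
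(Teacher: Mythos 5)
Your main argument is exactly the paper's proof: write the inner product in spherical coordinates, use that $\newMepshat\psi(r\bx) = \frac{1}{r}(M_\eps\psi)(\bx)$ to factor the $r$-independent part $(M_\eps\psi)(\bx)$ out of the radial integral, and apply Lemma~\ref{lemma2.7} to conclude that the inner integral vanishes a.e.\ on $\bbS$. The alternative orthogonal-projection argument you record (self-adjointness of $\newMepshat = \eps M_\eps^\ast M_\eps$ together with idempotency, which follows from $M_\eps \circ \newReps \circ M_\eps = M_\eps$) is also correct, though the paper does not take that route.
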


\begin{proof}
Let $\psi, \xi \in L^2(Q_\eps)$, then
\begin{align*}
\left( \newMepshat \psi , \widehat{N}_\eps \xi \right)_{L^2(Q_\eps)} & = \int_{Q_\eps} \newMepshat \psi(\by) \cdot \widehat{N}_\eps \xi (\by)\,d \by \\
& = \int_\bbS \inteps \newMepshat \psi(r\bx) \cdot \widehat{N}_\eps \xi (r\bx) r^2\,dr\,d\sigma(\bx).
\end{align*}
By the definition \eqref{eqn-newMepshat}  of the map $\newMepshat$  and by Lemma~\ref{lemma2.7}, we infer that
\[\left( \newMepshat \psi , \widehat{N}_\eps \xi \right)_{L^2(Q_\eps)} = \int_\bbS M_\eps \psi (\bx) \underbrace{\left(\inteps r \widehat{N}_\eps \xi (r\bx)\,dr \right)}_{= 0\,\, \mbox{\tiny a.e. on } \bbS \mbox{ \tiny by Lemma~\ref{lemma2.7}}} \,d\sigma(\bx) = 0.\]
\end{proof}

Next we define projection operators for $\R^3$-valued vector fields using the above maps (for scalar functions), as follows $\colon$
\begin{align}
\label{eq:2.7}
\newMepstilde \colon \LQeps \ni u  = \left(u_r, u_\lambda, u_\varphi\right) & \mapsto \left(0, \newMepshat u_\lambda, \newMepshat u_\varphi \right) \in \LQeps,\\
\label{eq:2.8}
\widetilde{N}_\eps&= \rm{Id} -  \newMepstilde  \in \mathcal{L}( \LQeps).
\end{align}

\begin{lemma}
\label{lemma2.8}
Let $u \in \LQeps$. Then
\[\newMepstilde u \cdot \vec{n} = 0 \qquad \mbox{on }\, \partial Q_\eps.\]
Moreover, if $u$ satisfies the boundary condition $u \cdot \vec{n} = 0$,
then
\[\widetilde{N}_\eps u \cdot \vec{n} = 0 \qquad \mbox{on }\, \partial Q_\eps.\]
\end{lemma}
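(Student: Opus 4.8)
The plan is to reduce both statements to a bookkeeping of the radial component, using the fact that on each of the two boundary spheres $|\by| = 1$ and $|\by| = 1+\eps$ making up $\partial Q_\eps$ the outward unit normal $\vecn$ is purely radial, i.e. $\vecn = \pm\, \widehat{e_r}$ (with the minus sign on the inner sphere). Consequently, for any vector field written in spherical coordinates as $\v = \v_r\,\widehat{e_r} + \v_\lambda\,\widehat{e_\lambda} + \v_\varphi\,\widehat{e_\varphi}$, its normal trace on $\partial Q_\eps$ is $\v \cdot \vecn = \pm\, \v_r$; in particular it depends only on the radial component $\v_r$.

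First I would treat the claim for $\newMepstilde$. By the definition \eqref{eq:2.7}, $\newMepstilde u = (0, \newMepshat u_\lambda, \newMepshat u_\varphi)$, so the radial component of $\newMepstilde u$ vanishes identically on $Q_\eps$. Hence its normal trace is zero and $\newMepstilde u \cdot \vecn = 0$ on $\partial Q_\eps$, with no assumption on $u$ required. For the second claim I would invoke $\widetilde{N}_\eps = \mathrm{Id} - \newMepstilde$ from \eqref{eq:2.8}: since $\newMepstilde$ modifies only the tangential components $(u_\lambda, u_\varphi)$ and leaves the radial component untouched, the radial component of $\widetilde{N}_\eps u = u - \newMepstilde u$ is exactly $u_r$. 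Therefore
\[
(\widetilde{N}_\eps u)\cdot \vecn = \pm\, u_r = u \cdot \vecn \qquad \text{on } \partial Q_\eps,
\]
and the hypothesis $u \cdot \vecn = 0$ gives $\widetilde{N}_\eps u \cdot \vecn = 0$ at once.

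The argument is structural rather than analytic, so there is no real difficulty to overcome; the only point meriting care is the meaning of the normal trace, since $u$ lies merely in $\LQeps$. The normal trace $\v \cdot \vecn$ is governed solely by $\v_r$, and the operator $\newMepstilde$ contributes nothing to the radial direction, so the normal traces of $u$ and of $\widetilde{N}_\eps u$ coincide as soon as one of them is defined --- which is precisely what the boundary condition $u \cdot \vecn = 0$ presupposes. Making this rigorous amounts to recording that $\newMepstilde u$ has the smooth (indeed vanishing) radial component $0$, so that the decomposition of the normal trace into radial contributions is unambiguous.
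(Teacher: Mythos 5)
Your proposal is correct and follows essentially the same route as the paper: the outward normal on $\partial Q_\eps$ is purely radial, $\newMepstilde u$ has vanishing radial component by its definition \eqref{eq:2.7}, and the second claim follows from $\widetilde{N}_\eps = \mathrm{Id} - \newMepstilde$ together with the hypothesis $u \cdot \vecn = 0$. Your added care about the sign of the normal on the inner sphere and the meaning of the normal trace for $\LQeps$ fields is a minor refinement of, not a departure from, the paper's argument.
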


\begin{proof}
The normal vector $\vec{n}$ to $\partial Q_\eps$ is given by $\vecn = \left(1, 0, 0\right)$. Thus by the definition of $\newMepstilde$ we have
\[\newMepstilde u \cdot \vecn = 0 \qquad \mbox{on }\, \partial Q_\eps.\]
Now for the second part, from the definition of $\widetilde{N}_\eps$ we have
\[\widetilde{N}_\eps u \cdot \vecn = \underbrace{u \cdot \vecn}_{=\,0\,\, \mbox{\tiny from b.c.}} - \underbrace{\newMepstilde u\cdot \vecn}_{=\,0\,\, \mbox{\tiny from first part}} = 0.\]
\end{proof}

We also have the following generalisation of Lemma \ref{lemma2.7}.

\begin{lemma}
\label{lemma2.7b}
Let $u \in \mathbb{L}^2(Q_\eps)$, then
\begin{equation}
\label{eq:2.6b}
\inteps r \widetilde{N}_\eps u\,dr = 0, \qquad \mbox{a.e. on } \bbS.
\end{equation}
\end{lemma}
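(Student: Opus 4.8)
The plan is to obtain the vector identity \eqref{eq:2.6b} from the scalar identity \eqref{eq:2.6} of Lemma~\ref{lemma2.7}, applied separately to each of the three spherical components of $u$. The observation that makes this reduction legitimate is purely geometric: for a fixed $\bx\in\bbS$ the spherical frame $\widehat{e_r}(\bx),\widehat{e_\lambda}(\bx),\widehat{e_\varphi}(\bx)$ at the point $r\bx$ depends only on the angular position $\bx$, so it is constant in $r$ along the radial ray $r\in(1,1+\eps)$ and may be pulled outside the radial integral. Thus the $\R^3$-valued quantity in \eqref{eq:2.6b} decouples into three scalar radial integrals against these fixed directions.

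First I would unwind \eqref{eq:2.7}--\eqref{eq:2.8}: since $\widetilde{N}_\eps=\mathrm{Id}-\widetilde{M}_\eps$ and $\widetilde{M}_\eps$ is assembled from the scalar averaging operator $\newMepshat$ acting on the spherical components, the component of $\widetilde{N}_\eps u$ along each frame direction is the scalar fluctuation $\widehat{N}_\eps(\cdot)$ (with $\widehat{N}_\eps=\mathrm{Id}-\newMepshat$ as in \eqref{eq:2.5}) of the corresponding component of $u$. Regarding $u_r,u_\lambda,u_\varphi$ as scalar functions in $L^2(Q_\eps)$, Lemma~\ref{lemma2.7} applies to each and gives $\inteps r\,\widehat{N}_\eps u_j\,dr=0$ a.e.\ on $\bbS$. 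Combining this with the frame-constancy yields
\[
\inteps r\,\widetilde{N}_\eps u(r\bx)\,dr
= \sum_{j\in\{r,\lambda,\varphi\}}\left(\inteps r\,\widehat{N}_\eps u_j(r\bx)\,dr\right)\widehat{e_j}(\bx) = 0,
\qquad\text{a.e. }\bx\in\bbS,
\]
which is precisely \eqref{eq:2.6b}. To make the pointwise evaluation rigorous I would first establish the identity for $u$ with continuous components and then extend it to all of $\LQeps$ by density and the boundedness of $\widetilde{N}_\eps$, as in Corollary~\ref{cor2.3}.

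The step I expect to be the main obstacle is the radial slot. In the two tangential directions one has verbatim $(\widetilde{N}_\eps u)_\lambda=\widehat{N}_\eps u_\lambda$ and $(\widetilde{N}_\eps u)_\varphi=\widehat{N}_\eps u_\varphi$, so Lemma~\ref{lemma2.7} applies with nothing further to check. The radial direction is the delicate one, because in \eqref{eq:2.7} the operator $\widetilde{M}_\eps$ is set up to return a field tangent to the concentric spheres; here I would apply Lemma~\ref{lemma2.7} to the scalar $u_r\in L^2(Q_\eps)$ and use the cancellation $M_\eps\widehat{N}_\eps=0$ that drives its proof — equivalently the identity $\inteps r\,\psi\,dr=\eps\,M_\eps\psi$ — to conclude that the radial integral vanishes exactly as the two tangential ones do. Assembling the three zero scalar integrals against the $r$-constant frame then completes \eqref{eq:2.6b}.
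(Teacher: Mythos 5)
Your reduction to spherical components is the right frame of reference, and your treatment of the two tangential slots is exactly the intended argument (the paper itself states Lemma~\ref{lemma2.7b} without proof, as an immediate componentwise consequence of Lemma~\ref{lemma2.7}): the frame vectors $\widehat{e_\lambda}(\bx)$, $\widehat{e_\varphi}(\bx)$ are indeed independent of $r$ along a radial ray, and by \eqref{eq:2.7}--\eqref{eq:2.8} one has $(\widetilde{N}_\eps u)_\lambda = \widehat{N}_\eps u_\lambda$ and $(\widetilde{N}_\eps u)_\varphi = \widehat{N}_\eps u_\varphi$, so Lemma~\ref{lemma2.7} applied to the scalars $u_\lambda, u_\varphi \in L^2(Q_\eps)$ kills those two components; the continuity-plus-density step you sketch is also unobjectionable.

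The radial slot, however, is where your proof breaks, and the fix you propose does not work. By \eqref{eq:2.7} the operator $\newMepstilde$ places $0$, not $\newMepshat u_r$, in the radial slot, so $(\widetilde{N}_\eps u)_r = u_r$ and \emph{not} $\widehat{N}_\eps u_r$. Hence the radial component of the left-hand side of \eqref{eq:2.6b} equals $\inteps r\,u_r(r\bx)\,dr = \eps\,(M_\eps u_r)(\bx)$ by \eqref{eq:2.1}, which does not vanish for general $u \in \LQeps$: for $u = \widehat{e_r}$ one has $\widetilde{N}_\eps u = u$ and $\inteps r\,dr = \eps + \eps^2/2 > 0$. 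The cancellation $M_\eps \circ \widehat{N}_\eps = 0$ that you invoke is inapplicable here, because $u_r$ is never replaced by its fluctuation $\widehat{N}_\eps u_r$ anywhere in the definition of $\widetilde{N}_\eps$; your displayed sum silently makes that replacement in the $j=r$ term, and thereby ``proves'' a radial identity that is false as stated. What is true --- and is all the paper ever uses --- is that the two \emph{tangential} components of $\inteps r\,\widetilde{N}_\eps u\,dr$ vanish a.e.\ on $\bbS$: in every application the lemma is paired against a field $\oldReps \phi$ whose radial component is zero (see \eqref{eq:3.19} and \eqref{eq:5.33}), so the radial slot never enters. To make your argument correct you should either restrict the claim \eqref{eq:2.6b} to the tangential components or impose an additional hypothesis forcing $M_\eps u_r = 0$; no componentwise appeal to Lemma~\ref{lemma2.7} can rescue the radial part for arbitrary $u \in \LQeps$.
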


The following Lemma makes sense only for vector fields.

\begin{lemma}
\label{lemma2.9}
Let $u \in \rH_\eps$, then
\[\ddiv \newMepstilde u = 0 \quad \mbox{and} \quad \ddiv \widetilde{N}_\eps u = 0 \qquad \mbox{in }\, Q_\eps.\]
\end{lemma}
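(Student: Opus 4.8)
The plan is to establish the first identity $\ddiv\newMepstilde u=0$ by a direct computation in spherical coordinates; the second identity then follows for free. Indeed, since $\widetilde{N}_\eps=\mathrm{Id}-\newMepstilde$ and $\ddiv u=0$ because $u\in\rH_\eps$, linearity of the divergence gives $\ddiv\widetilde{N}_\eps u=\ddiv u-\ddiv\newMepstilde u=0$ once the first identity is known.

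To compute $\ddiv\newMepstilde u$, I would first record, from the definition \eqref{eq:2.7} of $\newMepstilde$ together with $\newMepshat=\newReps\circ M_\eps$, that the field $v:=\newMepstilde u$ has vanishing radial component and, at a point $\by=r\bx$ with $r\in(1,1+\eps)$ and $\bx\in\bbS$, angular components $v_\lambda(r\bx)=\tfrac1r(M_\eps u_\lambda)(\bx)$ and $v_\varphi(r\bx)=\tfrac1r(M_\eps u_\varphi)(\bx)$, where $M_\eps u_\lambda$ and $M_\eps u_\varphi$ are functions on $\bbS$ independent of $r$. Substituting $v_r=0$ and these components into the spherical-coordinate divergence
\[
\ddiv v=\frac1{r^2}\frac{\pa}{\pa r}(r^2 v_r)+\frac1{r\sin\lambda}\frac{\pa}{\pa\lambda}(\sin\lambda\,v_\lambda)+\frac1{r\sin\lambda}\frac{\pa v_\varphi}{\pa\varphi},
\]
the two $1/r$ factors combine so that $\ddiv v=\tfrac1{r^2}\,\ddivS(M_\eps u_\lambda,M_\eps u_\varphi)$. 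Hence it suffices to prove that the tangential field $(M_\eps u_\lambda,M_\eps u_\varphi)$ is surface-divergence-free on $\bbS$.

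Next I would use that $M_\eps$ integrates only in $r$ and therefore commutes with the angular derivatives $\pa_\lambda$ and $\pa_\varphi$: differentiating under the integral sign in \eqref{eq:2.1} lets the $r$-integration be pulled outside, yielding
\[
\ddivS(M_\eps u_\lambda,M_\eps u_\varphi)(\bx)=\frac1\eps\inteps \frac{r}{\sin\lambda}\Big[\frac{\pa}{\pa\lambda}(\sin\lambda\,u_\lambda)+\frac{\pa u_\varphi}{\pa\varphi}\Big](r\bx)\,dr.
\]
The incompressibility hypothesis $\ddiv u=0$ in $Q_\eps$ rearranges, at each point $r\bx$, to
\[
\frac{1}{\sin\lambda}\Big[\frac{\pa}{\pa\lambda}(\sin\lambda\,u_\lambda)+\frac{\pa u_\varphi}{\pa\varphi}\Big]=-\frac1r\frac{\pa}{\pa r}(r^2 u_r),
\]
and substituting this turns the integrand into a perfect $r$-derivative, so the integral telescopes:
\[
\ddivS(M_\eps u_\lambda,M_\eps u_\varphi)(\bx)=-\frac1\eps\inteps\frac{\pa}{\pa r}(r^2 u_r)\,dr=-\frac1\eps\Big[(1+\eps)^2 u_r\big((1+\eps)\bx\big)-u_r(\bx)\Big].
\]
Finally, the free boundary condition built into $\rH_\eps$ gives $u\cdot\vecn=u_r=0$ on both bounding spheres $\{|\by|=1\}$ and $\{|\by|=1+\eps\}$, so the two boundary terms vanish and $\ddivS(M_\eps u_\lambda,M_\eps u_\varphi)=0$, whence $\ddiv\newMepstilde u=0$.

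I expect the only real difficulty to be the curvilinear bookkeeping: tracking the $1/r$ weight introduced by $\newReps$, justifying the interchange of $M_\eps$ with the angular derivatives, and recognising that after using incompressibility the sole surviving contribution is precisely the telescoping radial boundary term annihilated by the free boundary condition. To make the pointwise manipulations and the normal trace $u_r|_{\pa Q_\eps}$ rigorous I would first run the argument for $u\in\rH_\eps$ smooth up to the boundary and then pass to general $u\in\rH_\eps$ by density, interpreting all divergences distributionally.
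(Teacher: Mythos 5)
Your proposal is correct and follows essentially the same route as the paper's proof: compute $\ddiv \newMepstilde u$ in spherical coordinates for smooth fields, commute the angular derivatives with the radial average, use $\ddiv u = 0$ to turn the integrand into the perfect derivative $\partial_r(r^2 u_r)$, kill the resulting boundary terms via $u\cdot\vecn = 0$, and conclude by density of smooth divergence-free fields in $\rH_\eps$, with the second identity following by linearity. The only cosmetic difference is that you factor the computation through the identity $\ddiv v = \tfrac{1}{r^2}\,\ddivS(M_\eps u_\lambda, M_\eps u_\varphi)$, whereas the paper carries the $1/(r^2\sin\lambda)$ weights through directly; the content is identical.
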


\begin{proof}
Let $u \in {\rm{U}}_\eps := \left\{\v \in \ccal^\infty({Q_\eps}) \colon \ddiv \v = 0\, \mbox{ in }\, Q_\eps\, \mbox{ and }\, \v \cdot \vec{n} = 0\, \mbox{ on } \partial Q_\eps \right\}$, then using the definition of divergence for a vector field $\v = \left(\v_r, \v_\lambda, \v_\varphi\right)$ in spherical co-ordinates (see \eqref{eq:diver}), we get for $Q_\eps \ni \by = r\bx$, $\bx \in \bbS$, $r \in (1, 1+\eps)$,
\begin{align}
\label{eq:2.9}
&\ddiv \left((\newMepstilde u) (\by)\right)  = 0 + \frac{1}{r \sin \lambda} \frac{\partial}{\partial \lambda} \left((\newMepshat u_\lambda)(\by) \sin \lambda\right) + \frac{1}{r \sin \lambda}\frac{\partial}{\partial \varphi} \left((\newMepshat u_\varphi)(\by)\right) \nn \\
&\quad = \frac{1}{r \sin \lambda} \left[\frac{\partial}{\partial \lambda}\left(\frac{1}{|\by|}(M_\eps u_\lambda)\left(\frac{\by}{|\by|}\right)\sin \lambda\right) + \frac{\partial}{\partial \varphi}\left(\frac{1}{|\by|}(M_\eps u_\varphi)\left(\frac{\by}{|\by|}\right)\right)\right]\nn \\
&\quad = \frac{1}{r \sin \lambda} \left[ \frac{\partial}{\partial \lambda} \left(\frac{\sin \lambda}{r \eps} \inteps \rho u_\lambda(\rho, \lambda, \varphi)\,d\rho \right) + \frac{\partial}{\partial \varphi} \left( \frac{1}{r \eps} \inteps \rho u_\varphi(\rho, \lambda, \varphi)\,d\rho \right)\right] \nn \\
&\quad =: \frac{1}{r^2 \sin \lambda}\left[ I + II \right].	
\end{align}
Now considering each of the terms individually, we have
\begin{equation}
\label{eq:2.10}
II = \frac{1}{ \eps} \frac{\partial}{\partial \varphi} \inteps \rho u_\varphi\,d\rho = \frac{1}{\eps} \inteps \rho \frac{\partial u_\varphi}{\partial\varphi}\,d\rho.
\end{equation}
\begin{align}
\label{eq:2.11}
I & = \frac{1}{\eps} \frac{\partial}{\partial \lambda} \left( \sin \lambda \inteps \rho u_\lambda\,d\rho \right) = \frac{1}{\eps} \frac{\partial}{\partial \lambda} \inteps \rho u_\lambda \sin \lambda\,d\rho \nn \\
& = \frac{1}{ \eps} \inteps \rho \frac{\partial }{\partial \lambda} \left(u_\lambda \sin \lambda\right)\,d\rho.
\end{align}
Using \eqref{eq:2.10} and \eqref{eq:2.11} in the equality \eqref{eq:2.9}, we obtain
\begin{equation}
\label{eq:2.12}
\ddiv \newMepstilde u = \frac{1}{r^2 \sin \lambda} \left( \frac{1}{\eps} \inteps \rho \left[\frac{\partial}{\partial \lambda} \left(u_\lambda \sin \lambda\right) + \frac{\partial u_\varphi}{\partial \varphi} \right]\,d\rho \right).
\end{equation}
Since $u \in \rU_\eps$, $\ddiv u = 0$ in $Q_\eps$, which implies
\[\frac{1}{\rho^2} \frac{\partial}{\partial \rho} \left(\rho^2 u_\rho \right) + \frac{1}{\rho \sin \lambda} \frac{\partial}{\partial \lambda} \left(u_\lambda \sin \lambda\right) + \frac{1}{\rho \sin \lambda} \frac{\partial u_\varphi}{\partial \varphi} = 0.\]
Using this in \eqref{eq:2.12}, we get
\begin{align*}
\ddiv \newMepstilde u & = - \frac{1}{r^2 \sin \lambda} \frac{1}{\eps} \inteps \rho \frac{\sin \lambda}{\rho} \frac{\partial}{\partial \rho}\left(\rho^2  u_\rho \right)\,d\rho \\
& = - \frac{1}{\eps r^2} \inteps \frac{\partial}{\partial \rho} \left(\rho^2 u_\rho\right)\,ds = - \frac{1}{\eps r^2} \rho^2 u_\rho\bigg|_1^{1+\eps} \\
& = -  \frac{1}{\eps r^2} \left[ (1+\eps)^2 u_\rho(1+\eps, \cdot, \cdot) - u_\rho(1, \cdot, \cdot) \right] = 0 \quad \mbox{(since $u\cdot \vec{n} = 0$)}.
\end{align*}
Thus, we have proved that $\ddiv \newMepstilde u = 0$, for every $u \in \rU_\eps$. Since, $\rU_\eps$ is dense in $\rH_\eps$, it holds true for every $u \in \rH_\eps$ too.
The second part follows from the definition of $\widetilde{N}_\eps$ and $\rH_\eps$.
\end{proof}

From Lemma~\ref{lemma2.8} and Lemma~\ref{lemma2.9}, we infer the following corollary$\colon$
\begin{corollary}
\lb{cor2.10}
If $u \in \rH_\eps$ then $\newMepstilde u$ and $\widetilde{N}_\eps u$ belong to $\rH_\eps$.
\end{corollary}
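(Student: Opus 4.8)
The plan is to check, for each of the two vector fields, the three conditions that define membership in $\rH_\eps$: square-integrability on $Q_\eps$, vanishing divergence in $Q_\eps$, and vanishing normal trace on $\partial Q_\eps$. The integrability is the cheapest of the three and needs no new computation: $\newMepstilde$ is bounded on $\LQeps$ (it is assembled coordinate-wise from the bounded scalar operator $\newMepshat$), and $\widetilde{N}_\eps = \mathrm{Id} - \newMepstilde$ lies in $\mathcal{L}(\LQeps)$ by \eqref{eq:2.8}; hence both $\newMepstilde u$ and $\widetilde{N}_\eps u$ belong to $\LQeps$ whenever $u$ does, and in particular when $u \in \rH_\eps \subset \LQeps$.

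For $\newMepstilde u$ I would invoke the two structural lemmas already proved. Since $u \in \rH_\eps$, Lemma~\ref{lemma2.9} gives $\ddiv \newMepstilde u = 0$ in $Q_\eps$, while the first assertion of Lemma~\ref{lemma2.8} (valid for any $u \in \LQeps$) gives $\newMepstilde u \cdot \vec{n} = 0$ on $\partial Q_\eps$. Together with the integrability noted above, these are exactly the three defining conditions, so $\newMepstilde u \in \rH_\eps$.

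For $\widetilde{N}_\eps u$ the argument is parallel. Lemma~\ref{lemma2.9} again yields $\ddiv \widetilde{N}_\eps u = 0$ in $Q_\eps$. For the normal trace I would use the second assertion of Lemma~\ref{lemma2.8}, whose hypothesis $u \cdot \vec{n} = 0$ on $\partial Q_\eps$ is precisely guaranteed by $u \in \rH_\eps$; this gives $\widetilde{N}_\eps u \cdot \vec{n} = 0$ on $\partial Q_\eps$, and hence $\widetilde{N}_\eps u \in \rH_\eps$ as well.

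There is no genuine obstacle here: all of the analytic content---most notably the boundary computation showing the radial flux cancels and the divergence identity for the averaged field---was already carried out in Lemmas~\ref{lemma2.8} and~\ref{lemma2.9}. The corollary is therefore a bookkeeping step recording that the averaging projector $\newMepstilde$ and its complement $\widetilde{N}_\eps$ both preserve the solenoidal space $\rH_\eps$; the only point demanding any care is to confirm that the hypothesis of the second part of Lemma~\ref{lemma2.8} is met for $\widetilde{N}_\eps u$, which is immediate from the definition of $\rH_\eps$.
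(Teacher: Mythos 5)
Your proof is correct and follows exactly the paper's route: the paper presents Corollary~\ref{cor2.10} as an immediate consequence of Lemma~\ref{lemma2.8} (normal trace) and Lemma~\ref{lemma2.9} (divergence), which is precisely the inference you make, supplemented by the routine observation that $\newMepstilde$ and $\widetilde{N}_\eps$ are bounded on $\LQeps$.
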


Using the definition of maps $\newMepstilde$ and $\tNe$ and Lemma~\ref{lemma2.11}, we conclude:

\begin{proposition}
\label{prop2.12}
For all $u, \v \in \LQeps$, we have
\begin{equation}
\label{eq:2.14}
\left(\newMepstilde u , \widetilde{N}_\eps \v \right)_{\LQeps} = 0.
\end{equation}
Moreover,
\begin{equation}
\label{eq:2.15}
\|{{u}}\|^2_{\LQeps} = \|\newMepstilde u\|^2_{\LQeps} + \|\widetilde{N}_\eps {{u}}\|^2_{\LQeps}, \qquad {{u}} \in \rH_\eps.
\end{equation}
\end{proposition}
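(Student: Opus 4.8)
The plan is to reduce both identities to the scalar orthogonality already established in Lemma~\ref{lemma2.11}, by working in the orthonormal spherical frame $\{\widehat{e_r}, \widehat{e_\lambda}, \widehat{e_\varphi}\}$. The key observation is that, by the definitions \eqref{eq:2.7}--\eqref{eq:2.8}, the operator $\newMepstilde$ annihilates the radial component and acts as the scalar operator $\newMepshat$ on each of the two tangential components; thus for $u=(u_r,u_\lambda,u_\varphi)$ and $\v=(\v_r,\v_\lambda,\v_\varphi)$ one has $\newMepstilde u = (0,\newMepshat u_\lambda,\newMepshat u_\varphi)$ and $\tNe \v = (\v_r,\widehat{N}_\eps \v_\lambda,\widehat{N}_\eps \v_\varphi)$, where $\widehat{N}_\eps = \mathrm{Id}-\newMepshat$ is the scalar operator of \eqref{eq:2.5}.

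For \eqref{eq:2.14} I would first record that, since the spherical frame is orthonormal, the $\LQeps$ inner product of two vector fields equals the sum of the $L^2(Q_\eps)$ inner products of their corresponding components (the volume element $d\by = r^2 \sin\lambda\, dr\, d\lambda\, d\varphi$ being common to both). Because the radial component of $\newMepstilde u$ vanishes, the radial slot contributes nothing, and what remains is
\[
\left(\newMepstilde u, \tNe \v\right)_{\LQeps}
= \left(\newMepshat u_\lambda, \widehat{N}_\eps \v_\lambda\right)_{L^2(Q_\eps)}
+ \left(\newMepshat u_\varphi, \widehat{N}_\eps \v_\varphi\right)_{L^2(Q_\eps)}.
\]
Each scalar component $u_\lambda, u_\varphi, \v_\lambda, \v_\varphi$ lies in $L^2(Q_\eps)$, so Lemma~\ref{lemma2.11} applies to each summand and shows both vanish, giving \eqref{eq:2.14}.

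For \eqref{eq:2.15} I would use that $\tNe = \mathrm{Id}-\newMepstilde$ forces the splitting $u = \newMepstilde u + \tNe u$, expand
\[
\|u\|^2_{\LQeps} = \|\newMepstilde u\|^2_{\LQeps} + 2\left(\newMepstilde u, \tNe u\right)_{\LQeps} + \|\tNe u\|^2_{\LQeps},
\]
and kill the cross term by applying \eqref{eq:2.14} with $\v = u$. Corollary~\ref{cor2.10} guarantees that for $u \in \rH_\eps$ both pieces again lie in $\rH_\eps$, so that \eqref{eq:2.15} is genuinely an orthogonal decomposition of $\rH_\eps$; this is the role of the hypothesis $u \in \rH_\eps$, even though the Pythagorean identity itself is valid for every $u \in \LQeps$.

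There is no serious obstacle here: the only point requiring care is the componentwise reduction of the vector inner product to scalar inner products, which rests on the orthonormality of the spherical frame and on the fact that $\newMepstilde$ acts diagonally (zero on the radial slot, $\newMepshat$ on each tangential slot). Once this bookkeeping is in place, both statements follow immediately from Lemma~\ref{lemma2.11}.
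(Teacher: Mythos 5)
Your proof is correct and follows exactly the route the paper takes: the paper deduces Proposition~\ref{prop2.12} directly from the componentwise definitions \eqref{eq:2.7}--\eqref{eq:2.8} and the scalar orthogonality of Lemma~\ref{lemma2.11}, which is precisely your argument with the frame-orthonormality bookkeeping and the expansion of $\|u\|^2_{\LQeps}$ spelled out. Your side remark that the Pythagorean identity \eqref{eq:2.15} in fact holds for every $u \in \LQeps$, the hypothesis $u \in \rH_\eps$ serving only to keep both pieces in $\rH_\eps$ via Corollary~\ref{cor2.10}, is also accurate.
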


Finally we define a projection operator that projects $\R^3$-valued vector fields defined on $Q_\eps$ to the ``tangent'' vector fields on sphere $\bbS$.
\begin{equation}
\label{eq:2.16}
\begin{split}
\ocirc{M}_\eps &\colon \LQeps \to \mathbb{L}^2(\bbS),\\
u &\mapsto \newMepstilde u \big|_\bbS = \left(0, M_\eps u_\lambda, M_\eps u_\varphi \right).
\end{split}
\end{equation}

\begin{lemma}
\label{lemma2.13}
Let $u \in \LQeps$, then
\[\|\newMepstilde u\|^2_{\LQeps} = \eps \|\ocirc{M}_\eps u\|^2_{\mathbb{L}^2(\bbS)}.\]
\end{lemma}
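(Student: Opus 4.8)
The plan is to reduce this vector-field identity to the scalar scaling property already established in Lemma~\ref{lemma2.6}. The key observation is that the spherical coordinate frame $\{\widehat{e_r}, \widehat{e_\lambda}, \widehat{e_\varphi}\}$ is orthonormal, so the $\lL^2$-norm of any $\R^3$-valued field splits as the sum of the squared $L^2$-norms of its three scalar components, both over $Q_\eps$ and over the tangent spaces of $\bbS$.

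First I would write out $\newMepstilde u = \left(0, \hMe u_\lambda, \hMe u_\varphi\right)$ from the definition \eqref{eq:2.7} and use the orthonormality of the frame to obtain
\[
\|\newMepstilde u\|^2_{\LQeps} = \|\hMe u_\lambda\|^2_{L^2(Q_\eps)} + \|\hMe u_\varphi\|^2_{L^2(Q_\eps)},
\]
the radial contribution vanishing because the first component is zero.

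Next I would apply the scaling identity \eqref{eq:2.4} of Lemma~\ref{lemma2.6} separately to the two scalar functions $u_\lambda, u_\varphi \in L^2(Q_\eps)$, giving $\|\hMe u_\lambda\|^2_{L^2(Q_\eps)} = \eps \|M_\eps u_\lambda\|^2_{L^2(\bbS)}$ and likewise for $u_\varphi$. Summing and factoring out $\eps$ yields
\[
\|\newMepstilde u\|^2_{\LQeps} = \eps\left(\|M_\eps u_\lambda\|^2_{L^2(\bbS)} + \|M_\eps u_\varphi\|^2_{L^2(\bbS)}\right).
\]
Finally, recalling from \eqref{eq:2.16} that $\oMe u = \left(0, M_\eps u_\lambda, M_\eps u_\varphi\right)$ and again invoking orthonormality of the frame, the bracketed sum is exactly $\|\oMe u\|^2_{\mathbb{L}^2(\bbS)}$, which is the claimed equality.

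There is essentially no analytic obstacle here: the computation is routine once Lemma~\ref{lemma2.6} is available. The only point requiring a word of care is the componentwise splitting of the vector norms, which relies on the orthonormality of the spherical frame together with the fact that the operators $\newMepstilde$ and $\oMe$ act componentwise on the $\lambda$- and $\varphi$-components while annihilating the radial one; this is precisely what makes the scalar scaling property transfer verbatim to each surviving component.
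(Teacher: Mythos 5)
Your proof is correct, and it reaches the identity by a somewhat different route than the paper. You split $\|\newMepstilde u\|^2_{\LQeps}$ into its two scalar components via the orthonormality of the spherical frame and then invoke the scalar scaling property \eqref{eq:2.4} of Lemma~\ref{lemma2.6} componentwise; the paper's proof never cites Lemma~\ref{lemma2.6} at all, but instead expands $|\newMepstilde u(\by)|^2$ directly inside the integral, passes to spherical coordinates, and cancels the Jacobian factor $r^2$ against the $1/|\by|^2$ coming from the definition of $\newMepshat$, recognising the resulting integrand as $|\oMe u(\bx)|^2$. The computational content is identical --- in both arguments the factor $\eps$ arises from $\inteps r^2 \cdot r^{-2}\,dr = \eps$ --- but your version is more modular: it avoids repeating the coordinate computation already carried out for scalar functions, at the modest cost of making explicit the observation (used only implicitly in the paper) that the $\lL^2$-norms on $Q_\eps$ and on $\bbS$ both split componentwise in the orthonormal frame, with the radial component contributing nothing.
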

\begin{proof}
Let $u \in \LQeps$, then
\begin{align*}
\|\newMepstilde u\|^2_{\LQeps} & = \int_{Q_\eps} |\newMepstilde u (\by)|^2\,d\by \\
& = \int_{Q_\eps} \left( \frac{|M_\eps u_\lambda\left(\frac{\by}{|\by|}\right)|^2}{|\by|^2} + \frac{|M_\eps u_\varphi\left(\frac{\by}{|\by|}\right)|^2}{|\by|^2}\right)d\by \\
& = \inteps \int_{\bbS}r^2 \frac{|\ocirc{M}_\eps u(\bx)|^2}{r^2}\,d\sigma(\bx)\,dr
= \eps \|\ocirc{M}_\eps u\|^2_{\mathbb{L}^2(\bbS)}.
\end{align*}
\end{proof}

\begin{remark}
\label{rem_Mcirc_dual}
Similar to the scalar case, one can prove that the dual operator $\ocirc{M}^\ast_\eps : \LS \to \LQeps$ is given by
\begin{equation}
\label{eq:Mcirc_dual}
\left[\ocirc{M}_\eps^\ast u\right](\by) = \left(0, \left[M_\eps^\ast u_\lambda\right](\by) , \left[M_\eps^\ast u_\varphi\right](\by) \right), \quad \by \in Q_\eps.
\end{equation}
Indeed, for $u \in \LQeps, \v \in \LS$
\begin{align*}
\left(\ocirc{M}_\eps u, \v \right)_\LS & = \left(M_\eps u_\lambda, \v_\lambda\right)_{L^2(\bbS)} + \left(M_\eps u_\varphi, \v_\varphi\right)_{L^2(\bbS)} \\
& = \left(u_r, 0\right)_{L^2(Q_\eps)} + \left(u_\lambda, M_\eps^\ast \v_\lambda\right)_{L^2(Q_\eps)} + \left(u_\varphi, M_\eps^\ast \v_\varphi \right)_{L^2(Q_\eps)}.
\end{align*}
\end{remark}

Using the identities \eqref{eq:2.17}--\eqref{eq:2.19}, we can show that for a divergence free smooth vector field $\u$
\begin{equation}
\label{eq:2.20}
\left(-\Delta \u , \u \right)_{\LQeps} = \left(\curl \u, \curl \u \right)_{\LQeps} = \|\curl \u\|^2_{\LQeps}.
\end{equation}

We define a weighted $L^2$-product on $\rH_\eps$ by
\begin{equation}
\label{eq:2.21}
\left(u, \v \right)_r = \int_{Q_\eps} r^2\,u\cdot\v\,d\by, \qquad u, \v \in \rH_\eps,
\end{equation}
and the corresponding norm will be denoted by $\|\cdot\|_r$ which is equivalent to $\|\cdot\|_{\LQeps}$, uniformly for $\eps \in (0, \frac12)$ $\colon$
\begin{equation}
\label{eq:2.22}
\|u\|^2_{\LQeps} \le \|u\|_{r}^2 \le \frac94\|u\|^2_{\LQeps}, \qquad u \in \LQeps.
\end{equation}

We end this section by recalling a lemma and some Poincar\'e type inequalities from \cite{[TZ97]}.
\begin{lemma} \cite[Lemma~1.2]{[TZ97]}
\label{lemma2.14}
For $u, \v \in \rV_\eps$, we have
\[\left(\curl \newMepstilde u, \curl \widetilde{N}_\eps \v \right)_r = 0, \qquad u, \v \in \rV_\eps.\]
Moreover,
\begin{equation}
\label{eq:2.23}
\|\curl u\|^2_r = \|\curl \newMepstilde u\|^2_{r} + \|\curl \widetilde{N}_\eps u\|^2_r,\qquad u \in \rV_\eps.
\end{equation}
\end{lemma}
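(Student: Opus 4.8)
The plan is to prove the orthogonality relation first, since the Pythagorean identity \eqref{eq:2.23} then follows immediately: because $\curl$ is linear and $\tNe = \mathrm{Id} - \newMepstilde$, we have $\curl u = \curl \newMepstilde u + \curl \tNe u$, so that
\[
\|\curl u\|_r^2 = \|\curl \newMepstilde u\|_r^2 + 2\left(\curl \newMepstilde u, \curl \tNe u\right)_r + \|\curl \tNe u\|_r^2,
\]
and the middle term vanishes once we know $\left(\curl \newMepstilde u, \curl \tNe \v\right)_r = 0$ for all $u,\v\in\rV_\eps$ (take $\v = u$). Thus the whole lemma reduces to proving this orthogonality, and it suffices by density to treat smooth fields from the class $\rU_\eps$, the general case following from boundedness of the averaging operators and of $\curl$ in the relevant norms.

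The key structural observation is that $\curl \newMepstilde u$ is a \emph{purely radial} field. Writing $\newMepstilde u = (0, \newMepshat u_\lambda, \newMepshat u_\varphi)$ with $\newMepshat u_\lambda(\by) = \tfrac{1}{r}a(\bx)$ and $\newMepshat u_\varphi(\by) = \tfrac{1}{r}b(\bx)$, where $a = M_\eps u_\lambda$ and $b = M_\eps u_\varphi$ depend only on $\bx=(\lambda,\varphi)$, the spherical-coordinate formula for $\curl$ shows that the $\widehat{e_\lambda}$- and $\widehat{e_\varphi}$-components vanish, since each involves $\pa_r\!\left(r\cdot(\,\cdot\,)/r\right)=\pa_r(\,\cdot\,)=0$, while
\[
\curl \newMepstilde u = \frac{1}{r^2 \sin\lambda}\left[\frac{\pa}{\pa\lambda}\left(b\sin\lambda\right) - \frac{\pa a}{\pa\varphi}\right]\widehat{e_r} =: \frac{c(\lambda,\varphi)}{r^2}\,\widehat{e_r}.
\]
Consequently, in the weighted product $(\cdot,\cdot)_r$ only the radial component of $\curl \tNe \v$ contributes.

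Using $d\by = r^2\,dr\,d\sigma(\bx)$, the cross term becomes
\[
\left(\curl \newMepstilde u, \curl \tNe \v\right)_r = \int_\bbS c(\bx)\left(\inteps r^2\,(\curl \tNe \v)_r\,dr\right)d\sigma(\bx),
\]
so it remains to show the inner radial integral vanishes a.e. on $\bbS$. The radial component of $\curl \tNe\v$ involves only the tangential components $(\tNe\v)_\lambda = \widehat{N}_\eps \v_\lambda$ and $(\tNe\v)_\varphi = \widehat{N}_\eps \v_\varphi$ and their angular derivatives, namely $(\curl \tNe\v)_r = \tfrac{1}{r\sin\lambda}[\pa_\lambda((\tNe\v)_\varphi\sin\lambda) - \pa_\varphi (\tNe\v)_\lambda]$. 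Pulling the factor $r$ (a constant in $\lambda,\varphi$) inside and commuting $\inteps dr$ with the angular derivatives — legitimate for $\v\in\rU_\eps$ — gives
\[
\inteps r^2\,(\curl \tNe\v)_r\,dr = \frac{1}{\sin\lambda}\left[\frac{\pa}{\pa\lambda}\left(\sin\lambda\inteps r\,\widehat{N}_\eps\v_\varphi\,dr\right) - \frac{\pa}{\pa\varphi}\inteps r\,\widehat{N}_\eps \v_\lambda\,dr\right] = 0,
\]
because $\inteps r\,\widehat{N}_\eps\v_\varphi\,dr = \inteps r\,\widehat{N}_\eps\v_\lambda\,dr = 0$ by Lemma~\ref{lemma2.7} (equivalently, the tangential part of Lemma~\ref{lemma2.7b}). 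This establishes the orthogonality, and hence the lemma. I expect the main obstacle to be the spherical-coordinate computation identifying $\curl\newMepstilde u$ as purely radial with the exact $1/r^2$ scaling; once that is in hand, the vanishing of the radial average is precisely the defining property of $\tNe$ encoded in Lemma~\ref{lemma2.7}. The only other delicate point is the differentiation-under-the-integral step, which is handled by the density of $\rU_\eps$.
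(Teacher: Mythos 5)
The paper itself gives no proof of this lemma: it is quoted directly from Temam--Ziane \cite[Lemma~1.2]{[TZ97]}, so there is no internal argument to compare against line by line. Your proof is correct, and it is essentially the classical Temam--Ziane computation. The two key points are exactly right: (i) by the spherical-coordinate formula \eqref{eq:curl}, $\curl \newMepstilde u$ is purely radial and equals $c(\bx)r^{-2}\widehat{e_r}$ with $c=\curlS(\oMe u)$, because the tangential components of $\newMepstilde u$ scale like $1/r$ so that $\pa_r(r\cdot(\cdot)/r)=0$; (ii) the weight $r^2$ in $(\cdot,\cdot)_r$ from \eqref{eq:2.21} cancels the $r^{-2}$ precisely so that the inner radial integral becomes $\inteps r\,\widehat{N}_\eps \v_\lambda\,dr$ and $\inteps r\,\widehat{N}_\eps \v_\varphi\,dr$, which vanish by Lemma~\ref{lemma2.7}. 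This also explains why the lemma is stated for the weighted product rather than for $\LQeps$: with the unweighted product the factor $r$ inside the radial integral would be absent and Lemma~\ref{lemma2.7} would not apply. Your reduction of \eqref{eq:2.23} to the orthogonality relation via bilinearity is likewise fine.

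One caution on the density step. You should not justify the continuity of $(u,\v)\mapsto (\curl\newMepstilde u,\curl\tNe \v)_r$ on $\rV_\eps\times\rV_\eps$ by invoking Corollary~\ref{cor_grad_alpha}: in this paper that corollary is \emph{derived from} \eqref{eq:2.23}, so the argument would be circular. Instead, observe that $\newMepshat=\newReps\circ M_\eps$ is an orthogonal projection in $L^2(Q_\eps)$ (it is idempotent by \eqref{eq:2.27} and self-adjoint by \eqref{eq:avgop_dual}), that it commutes with the angular derivatives, and hence that $\newMepstilde$ and $\tNe$ are bounded on $\hH^1(Q_\eps)$; combining this with the boundedness of $\curl\colon \hH^1(Q_\eps)\to\LQeps$, with \eqref{eq:2.22}, and with the fact that for fixed $\eps$ the norm $\|\curl\cdot\|_{\LQeps}$ is equivalent to the $\hH^1(Q_\eps)$-norm on $\rV_\eps$, gives the continuity needed to pass from $\mathrm{U}_\eps$ to all of $\rV_\eps$. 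With that substitution your proof is complete.
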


\begin{corollary}
\label{cor_grad_alpha}
Let $\eps \in (0, \frac12)$ and $u \in \rV_\eps$. Then
\begin{align*}
\|\curl \newMepstilde u \|_{\LQeps} & \le \frac94 \|\curl u \|^2_\LQeps,\\
\|\curl \tNe u\|_\LQeps &\le \frac94 \|\curl u\|^2_\LQeps.
\end{align*}
\end{corollary}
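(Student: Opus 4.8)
The plan is to read this off directly from the orthogonal splitting in Lemma~\ref{lemma2.14} combined with the norm equivalence \eqref{eq:2.22}. (I first note that the two displayed inequalities are dimensionally inconsistent as written, a first power of the norm on the left against a square on the right; I therefore take the intended statement to be $\|\curl \newMepstilde u\|^2_{\LQeps} \le \frac94 \|\curl u\|^2_{\LQeps}$ and similarly with $\tNe$.)

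First I would invoke identity \eqref{eq:2.23}, which in the weighted norm $\|\cdot\|_r$ states
\[
\|\curl u\|^2_r = \|\curl \newMepstilde u\|^2_r + \|\curl \tNe u\|^2_r .
\]
Because both summands on the right are nonnegative, each is bounded above by the left-hand side, giving $\|\curl \newMepstilde u\|^2_r \le \|\curl u\|^2_r$ and $\|\curl \tNe u\|^2_r \le \|\curl u\|^2_r$.

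Next I would transfer these from the weighted norm to the ambient $\LQeps$ norm by means of the two-sided bound \eqref{eq:2.22}. Applying its lower inequality to the field $\curl \newMepstilde u$ yields $\|\curl \newMepstilde u\|^2_{\LQeps} \le \|\curl \newMepstilde u\|^2_r$, and applying its upper inequality to $\curl u$ yields $\|\curl u\|^2_r \le \frac94 \|\curl u\|^2_{\LQeps}$. Concatenating these with the previous step gives
\[
\|\curl \newMepstilde u\|^2_{\LQeps} \le \|\curl \newMepstilde u\|^2_r \le \|\curl u\|^2_r \le \tfrac94 \|\curl u\|^2_{\LQeps},
\]
and the verbatim argument with $\tNe$ replacing $\newMepstilde$ supplies the second inequality.

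I do not expect a genuine obstacle: once Lemma~\ref{lemma2.14} is in hand the corollary is immediate. The only thing requiring attention is to use the norm equivalence \eqref{eq:2.22} in the correct direction on each side — the constant $1$ to drop from $\|\cdot\|_{\LQeps}$ to $\|\cdot\|_r$ on the left, the constant $\frac94$ to drop from $\|\cdot\|_r$ to $\frac94\|\cdot\|_{\LQeps}$ on the right — and to remember the missing square in the printed statement.
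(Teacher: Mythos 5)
Your proof is correct and is essentially identical to the paper's own argument: the paper chains the same three facts, writing $\|\curl u\|^2_\LQeps \ge \frac49 \|\curl u\|^2_r \ge \frac49 \|\curl \newMepstilde u\|^2_r \ge \frac49 \|\curl \newMepstilde u\|^2_\LQeps$, which is your inequality read in reverse, using Lemma~\ref{lemma2.14} (Eq.~\eqref{eq:2.23}) and the norm equivalence \eqref{eq:2.22} in exactly the directions you describe. Your observation that the printed statement is missing the square on the left-hand sides is also correct, as the paper's own proof yields the squared version.
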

\begin{proof}
Let $\eps \in (0, \frac12)$ and $u \in \rV_\eps$. Then, by relation \eqref{eq:2.20}, equivalence of norms \eqref{eq:2.22} and Eq.~\eqref{eq:2.23}, we have
\begin{align*}
\|\curl u\|^2_\LQeps \ge \frac49 \|\curl u\|^2_r \ge \frac49 \|\curl \newMepstilde u\|^2_r \ge \frac49 \|\curl \newMepstilde u\|^2_\LQeps.
\end{align*}
The second inequality can be proved similarly.
\end{proof}
The following two lemmas are taken from \cite{[TZ97]}. For the sake of completeness and convenience of the reader we have provided the proof in Appendix~\ref{sec:appLadyzhenskaya}.
\begin{lemma}[Poincar\'e inequality in thin spherical shells]
\label{lemma2.15} \cite[Lemma~2.1]{[TZ97]}
For $0 < \eps \le \tfrac12$, we have
\begin{equation}
\label{eq:2.24}
\|\widetilde{N}_\eps u\|_{\LQeps} \le 2 \eps \|\curl \widetilde{N}_\eps u\|_{\LQeps},\qquad \forall\, u\, \in\, \rV_\eps.
\end{equation}
\end{lemma}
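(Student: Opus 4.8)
The plan is to reduce the estimate to a one–dimensional Poincaré inequality in the radial variable. By density of smooth fields in $\rV_\eps$ I would assume $u$, and hence $w:=\tNe u$, is smooth, and first collect the structural facts about $w$: by Lemma~\ref{lemma2.9} and Corollary~\ref{cor2.10} it is divergence free and tangent to $\partial Q_\eps$; by the definition \eqref{eq:2.8} of $\tNe$ together with Lemma~\ref{lemma2.7} its tangential components satisfy the vanishing weighted radial mean $\inteps r\,w_\lambda\,dr=\inteps r\,w_\varphi\,dr=0$ for a.e. $\bx\in\bbS$; and since $w_r=u_r$ and $u\cdot\vec n=0$, its radial component vanishes on both spheres $r=1$ and $r=1+\eps$.

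For each fixed $\bx\in\bbS$ I would then run a one–dimensional Poincaré inequality on $(1,1+\eps)$. For the tangential components the natural choice is $h:=r\,w_\lambda$ (and $r\,w_\varphi$): it has zero mean on $(1,1+\eps)$, so $\inteps h^2\,dr\le\eps^2\inteps (h')^2\,dr$, and because $h^2=r^2w_\lambda^2$ exactly matches the Jacobian this gives $\int_{Q_\eps}w_\lambda^2\,d\by\le\eps^2\int_\bbS\inteps[\partial_r(r\,w_\lambda)]^2\,dr\,d\sigma(\bx)$, and likewise for $w_\varphi$. For the radial component I would use the Dirichlet Poincaré inequality afforded by $w_r|_{\partial Q_\eps}=0$.

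The next step is to trade the radial derivatives for the curl by means of the spherical identities $\partial_r(r\,w_\lambda)=r(\curl w)_\varphi+\partial_\lambda w_r$ and $\partial_r(r\,w_\varphi)=\tfrac{1}{\sin\lambda}\partial_\varphi w_r-r(\curl w)_\lambda$. Substituting and expanding the squares produces the wanted $\int_{Q_\eps}[(\curl w)_\lambda^2+(\curl w)_\varphi^2]\,d\by$ plus a surface–gradient term $\int_\bbS\inteps|\nablaS w_r|^2\,dr\,d\sigma(\bx)$ and cross terms. I would integrate the cross terms by parts over $\bbS$; using $\tfrac{1}{\sin\lambda}[\partial_\lambda((\curl w)_\varphi\sin\lambda)-\partial_\varphi(\curl w)_\lambda]=r(\curl\curl w)_r$ and $\curl\curl w=-\Delta w$ (valid since $\ddiv w=0$), they collapse to a multiple of $\int_{Q_\eps}w_r(\curl\curl w)_r\,d\by$. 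Expanding $(\curl\curl w)_r$ through the vector–Laplacian formula, integrating by parts in $r$ with $w_r|_{\partial Q_\eps}=0$, and invoking the Dirichlet Poincaré bound for $w_r$, one checks that the surface term and all remaining contributions combine into a non-positive quantity. This controls the tangential part by $\eps^2\|\curl w\|_{\LQeps}^2$, and the constant $2$ is recovered at the end from $1\le r^2\le(1+\eps)^2\le\tfrac94$.

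The delicate point I expect to be the main obstacle is the radial component. Here $w_r$ is slaved to the surface divergence of the tangential field through $r^2w_r(r)=-\ddivS\!\int_1^r s\,(w_\lambda,w_\varphi)\,ds$, a consequence of $\ddiv w=0$ and $w_r|_{\partial Q_\eps}=0$; yet the surface divergence is Hodge–orthogonal to the piece of the curl visible in $(\curl w)_r$, so $\|w_r\|$ is not controlled componentwise and must instead be extracted from the full $\|\curl w\|_{\LQeps}^2$ through exactly the integration-by-parts identity used above. Verifying that this coupling yields control of $\|w_r\|$ uniformly as $\eps\to0$—equivalently, that no curvature boundary term degenerates in the thin limit—is the technical heart of the argument, and is where I would concentrate the effort.
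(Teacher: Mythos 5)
Your first half follows the paper's own proof of Lemma~\ref{lemma2.15} (Appendix~\ref{sec:appLadyzhenskaya}) quite closely: a one-dimensional Poincar\'e inequality in the radial variable, applied to $r\widehat{N}_\eps u_\lambda$ and $r\widehat{N}_\eps u_\varphi$ (using the vanishing weighted radial mean from Lemma~\ref{lemma2.7}) and to $u_r$ (which vanishes on $\partial Q_\eps$ by the boundary condition). Up to that point both arguments yield the same intermediate bound, componentwise, of the form $\int_{Q_\eps}|\widetilde{N}_\eps u|^2\,d\by \le C\eps^2\int_{Q_\eps}\bigl|\partial_r \bigl(\widetilde{N}_\eps u\bigr)\bigr|^2\,d\by$ (cf.\ \eqref{eq:poinc10}--\eqref{eq:poinc11}).

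After that there is a genuine gap. The paper closes the argument in one line: pointwise $|\partial_r w|\le|\nabla w|$, and for $w=\widetilde{N}_\eps u$ (divergence free and tangent to $\partial Q_\eps$) the integration-by-parts identity $\|\nabla w\|^2_{\LQeps}=\|\curl w\|^2_{\LQeps}$ holds, proved via the generalised Stokes formula in Appendix~\ref{sec:curl-stokes} (see also \eqref{eq:2.20}). You never invoke this identity. Instead you substitute the coordinate formulas $\partial_r(rw_\lambda)=r(\curl w)_\varphi+\partial_\lambda w_r$, etc., which leaves you with the surface-gradient term $|\nablaS w_r|^2$ plus cross terms, and you dispose of these only by asserting that ``the surface term and all remaining contributions combine into a non-positive quantity.'' That assertion is precisely what remains unproven: carrying it out amounts to re-deriving the identity $\|\nabla w\|^2_{\LQeps}=\|\curl w\|^2_{\LQeps}$ in spherical coordinates, curvature terms and all, and your own closing paragraph concedes that the control of $\|w_r\|_{\LQeps}$ is open --- indeed $\partial_r w_r$ appears in no component of $\curl w$ and is reachable only through $\ddiv w=0$, which produces angular derivatives of the tangential components rather than curl components. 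So as it stands the proposal establishes the reduction to $\|\partial_r w\|_{\LQeps}$ but not the inequality \eqref{eq:2.24} itself; inserting the identity from Appendix~\ref{sec:curl-stokes} would both fill the hole and render the componentwise curl manipulations unnecessary.
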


\begin{lemma}[Ladyzhenskaya's inequality]
\label{lemma2.16} \cite[Lemma~2.3]{[TZ97]}
There exists a constant $c_1$, independent of $\eps$, such that
\begin{equation}
\label{eq:2.25}
\|\widetilde{N}_\eps u\|_{\mathbb{L}^6(Q_\eps)} \le c_1 \|\widetilde{N}_\eps u\|_{\rV_\eps} ,\qquad \forall\, u\, \in\, \rV_\eps.
\end{equation}
\end{lemma}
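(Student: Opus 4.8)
The plan is to reduce the estimate to the dimension-dependent Sobolev inequality $\|f\|_{\mathbb{L}^6(\R^3)}\le C_\ast\|\nabla f\|_{\mathbb{L}^2(\R^3)}$, valid for every $f\in\mathbb{H}^1(\R^3)$ with a constant $C_\ast$ depending only on the dimension, by extending $w:=\widetilde N_\eps u$ to a compactly supported $\mathbb{H}^1(\R^3)$ field whose gradient is controlled by $\|\curl w\|_{\LQeps}=\|w\|_{\rV_\eps}$ uniformly in $\eps$. Three ingredients feed into this. First, the Poincar\'e inequality \eqref{eq:2.24} gives $\|w\|_{\LQeps}\le 2\eps\|\curl w\|_{\LQeps}$, which is the source of all the saving in $\eps$. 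Second, since $w\in\rH_\eps$ is divergence free and tangent to $\partial Q_\eps$, the identity \eqref{eq:2.20} and Green's formula give $\|\nabla w\|_{\LQeps}^2=\|\curl w\|_{\LQeps}^2+\mathcal{B}$, where $\mathcal{B}$ is a boundary term governed by the (bounded) second fundamental form of the spherical faces; bounding $\mathcal{B}$ by a boundary $\mathbb{L}^2$-norm and using the anisotropic trace inequality on the thin shell together with the Poincar\'e bound \eqref{eq:2.24} lets $\mathcal{B}$ be absorbed, yielding $\|\nabla w\|_{\LQeps}\le C\|\curl w\|_{\LQeps}$ with $C$ independent of $\eps$. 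Third, the sharp Sobolev inequality above.

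For the extension, first reflect $w$ across both spherical faces of $Q_\eps$: pulling $w$ back by the bi-Lipschitz radial reflections $r\mapsto 2-r$ and $r\mapsto 2(1+\eps)-r$ produces an $\mathbb{H}^1$ field $w^{\mathrm{refl}}$ on the thicker shell $\{1-\eps<|\by|<1+2\eps\}$ whose $\mathbb{L}^2$- and gradient-norms are comparable, up to the bounded weight $r^2$, to those of $w$ on $Q_\eps$. Then multiply by a radial cut-off $\chi$ that equals $1$ on $Q_\eps$ and vanishes near $r=1-\eps$ and $r=1+2\eps$, chosen with $|\chi'|\le C/\eps$. The field $\bar w:=\chi\,w^{\mathrm{refl}}$ lies in $\mathbb{H}^1$, is supported strictly inside the thicker shell, and --- because $\bbS$ is closed, so there is no lateral boundary --- extends by zero to a compactly supported element of $\mathbb{H}^1(\R^3)$.

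It remains to estimate $\nabla\bar w=\chi\,\nabla w^{\mathrm{refl}}+(\chi'\, w^{\mathrm{refl}})\,\widehat{e_r}$ in $\mathbb{L}^2(\R^3)$. The first term is bounded by $C\|\nabla w\|_{\LQeps}\le C\|\curl w\|_{\LQeps}$ via the second ingredient. The second term is the crux: $\|\chi'\,w^{\mathrm{refl}}\|_{\mathbb{L}^2}\le \tfrac{C}{\eps}\|w\|_{\LQeps}\le \tfrac{C}{\eps}\cdot 2\eps\|\curl w\|_{\LQeps}=2C\|\curl w\|_{\LQeps}$, where the potentially divergent factor $1/\eps$ generated by the cut-off is cancelled exactly by the factor $\eps$ from the Poincar\'e inequality \eqref{eq:2.24}. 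Hence $\|\nabla\bar w\|_{\mathbb{L}^2(\R^3)}\le c\,\|\curl w\|_{\LQeps}$ with $c$ independent of $\eps$, and the Sobolev inequality gives $\|\widetilde N_\eps u\|_{\mathbb{L}^6(Q_\eps)}\le\|\bar w\|_{\mathbb{L}^6(\R^3)}\le C_\ast\|\nabla\bar w\|_{\mathbb{L}^2(\R^3)}\le c_1\|\widetilde N_\eps u\|_{\rV_\eps}$, which is \eqref{eq:2.25}.

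The whole difficulty is uniformity in $\eps$: the plain embedding $\mathbb{H}^1(Q_\eps)\hookrightarrow\mathbb{L}^6(Q_\eps)$ has a constant that blows up like $\eps^{-1/3}$, as one sees already on constant fields, so the argument must exploit the vanishing radial mean of $\widetilde N_\eps u$ (Lemma~\ref{lemma2.7b}) --- precisely what powers the Poincar\'e inequality and lets the $1/\eps$ from the cut-off be absorbed --- while the divergence-free structure is what allows the full gradient to be replaced by the curl. An alternative, more computational route avoids the extension and instead proves an anisotropic Sobolev inequality directly, combining a one-dimensional Gagliardo--Nirenberg estimate in the radial variable (where the zero mean again supplies powers of $\eps$) with the two-dimensional embedding $H^1(\bbS)\hookrightarrow L^6(\bbS)$ on spherical slices; the delicate point there is to arrange the interpolation so that only the available first-order derivatives, and no mixed second derivatives, enter.
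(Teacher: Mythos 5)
Your proof is correct, and it reaches \eqref{eq:2.25} by a genuinely more self-contained route than the paper, whose own proof (in Appendix~C) is essentially an assembly of citations: the paper quotes \cite[Lemma~2.3]{[TZ97]} for the $\eps$-uniform bound $\|\widetilde{N}_\eps u\|_{\mathbb{L}^6(Q_\eps)} \le c_0 \|\nabla(\widetilde{N}_\eps u)\|_{\LQeps}$, quotes \cite[Lemma~6.1]{[DL76]} for the div--curl estimate $\|\nabla \v\|^2_{\LQeps} \le 2\left[\|\ddiv \v\|^2_{\LQeps} + \|\curl \v\|^2_{\LQeps} + c_2\|\v\|^2_{\LQeps}\right]$ valid for fields tangent to $\partial Q_\eps$, and then invokes the Poincar\'e inequality \eqref{eq:2.24} exactly once, to absorb the zeroth-order term via $c_2\|\widetilde{N}_\eps u\|^2_{\LQeps} \le 4c_2\eps^2\|\widetilde{N}_\eps u\|^2_{\rV_\eps}$, yielding $c_1=\sqrt{2}c_0(1+4c_2)^{1/2}$. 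You instead reprove both quoted ingredients: your reflection--cutoff--extension construction, in which the cutoff's factor $1/\eps$ is cancelled by the factor $\eps$ supplied by \eqref{eq:2.24}, is in substance a proof of \cite[Lemma~2.3]{[TZ97]}; and your Green's-formula identity with a second-fundamental-form boundary term, absorbed through the anisotropic trace inequality and \eqref{eq:2.24}, is in substance a proof of the Duvaut--Lions inequality, with the added value of making explicit why its constant can be taken uniform over the family $(Q_\eps)_{0<\eps<1/2}$ (bounded curvature of the two spherical faces) --- a uniformity the paper's citation takes for granted. The paper's route buys brevity; yours buys transparency about the source of the $\eps$-uniformity, namely the vanishing radial mean of $\widetilde{N}_\eps u$, and your observation that the plain embedding constant degenerates like $\eps^{-1/3}$ correctly pinpoints why that structure cannot be dispensed with.

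One point of precision rather than a gap: the identity you attribute to ``\eqref{eq:2.20} and Green's formula'' should not be read off from \eqref{eq:2.20} itself, since \eqref{eq:2.20} is boundary-term free only under the additional condition $\curl u \times \vec{n} = 0$ on $\partial Q_\eps$; an element $w \in \rV_\eps$ satisfies only $w \cdot \vec{n} = 0$, so the derivation must start from the two integrations by parts (for $\|\nabla w\|^2_{\LQeps}$ and for $(\curl\curl w, w)_{\LQeps}$), whose boundary contributions combine into the shape-operator term $\mathcal{B}$. Your write-up does keep $\mathcal{B}$ and treats it as essential, which is exactly right; just be aware that this step, written out in full, is the proof of the cited Duvaut--Lions inequality rather than a consequence of anything already established in the paper.
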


\begin{corollary}
\label{cor_L3ineq}
For $\eps \in (0, \frac12)$, there exists a constant $c_2 > 0$ such that
\begin{equation}
\label{eq:2.26}
\|\widetilde{N}_\eps u\|^2_{\mathbb{L}^3(Q_\eps)} \le c_2 \eps \|\widetilde{N}_\eps u\|^2_{\rV_\eps} ,\qquad \forall\, u\, \in\, \rV_\eps.
\end{equation}
\end{corollary}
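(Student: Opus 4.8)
The plan is to deduce the claimed $\mathbb{L}^3$ estimate from the $\mathbb{L}^6$ bound of Lemma~\ref{lemma2.16} and the Poincar\'e inequality of Lemma~\ref{lemma2.15} by interpolation, with the factor of $\eps$ coming entirely from the Poincar\'e inequality. The key observation is that $3$ sits between $2$ and $6$, so for any measurable vector field $w$ on $Q_\eps$ one has the interpolation inequality
\[
\|w\|_{\mathbb{L}^3(Q_\eps)} \le \|w\|_{\mathbb{L}^2(Q_\eps)}^{1/2}\,\|w\|_{\mathbb{L}^6(Q_\eps)}^{1/2},
\]
which follows from H\"older's inequality with the exponent identity $\tfrac13 = \tfrac{1/2}{2} + \tfrac{1/2}{6}$. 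First I would apply this with $w = \widetilde{N}_\eps u$.

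Next I would feed in the two earlier lemmas. Squaring the interpolation inequality gives
\[
\|\widetilde{N}_\eps u\|_{\mathbb{L}^3(Q_\eps)}^2 \le \|\widetilde{N}_\eps u\|_{\mathbb{L}^2(Q_\eps)}\,\|\widetilde{N}_\eps u\|_{\mathbb{L}^6(Q_\eps)}.
\]
For the first factor I would invoke Lemma~\ref{lemma2.15}, namely $\|\widetilde{N}_\eps u\|_{\mathbb{L}^2(Q_\eps)} \le 2\eps\,\|\curl \widetilde{N}_\eps u\|_{\mathbb{L}^2(Q_\eps)} = 2\eps\,\|\widetilde{N}_\eps u\|_{\rV_\eps}$, where the last equality is just the definition \eqref{eq:1.5} of the $\rV_\eps$-norm applied to $\widetilde{N}_\eps u$ (this is legitimate since, by Corollary~\ref{cor2.10} and Lemma~\ref{lemma2.9}, $\widetilde{N}_\eps u \in \rV_\eps$ whenever $u \in \rV_\eps$). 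For the second factor I would invoke Lemma~\ref{lemma2.16}, namely $\|\widetilde{N}_\eps u\|_{\mathbb{L}^6(Q_\eps)} \le c_1\,\|\widetilde{N}_\eps u\|_{\rV_\eps}$.

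Combining the three displays yields
\[
\|\widetilde{N}_\eps u\|_{\mathbb{L}^3(Q_\eps)}^2 \le 2\eps\,\|\widetilde{N}_\eps u\|_{\rV_\eps}\cdot c_1\,\|\widetilde{N}_\eps u\|_{\rV_\eps} = 2 c_1\,\eps\,\|\widetilde{N}_\eps u\|_{\rV_\eps}^2,
\]
so the claim holds with $c_2 = 2c_1$, uniformly in $\eps \in (0,\tfrac12)$. I expect no serious obstacle here: the argument is a clean interpolation, and both nontrivial inputs are already established. The only points requiring care are confirming that the interpolation constant is genuinely $1$ (so that no hidden $\eps$-dependence creeps in) and verifying that $\widetilde{N}_\eps u$ indeed lies in $\rV_\eps$ so that writing $\|\curl \widetilde{N}_\eps u\|_{\mathbb{L}^2(Q_\eps)}$ as $\|\widetilde{N}_\eps u\|_{\rV_\eps}$ is justified; both are routine given the earlier results.
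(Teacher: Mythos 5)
Your proposal is correct and follows essentially the same route as the paper: the paper also obtains $\|\widetilde{N}_\eps u\|_{\mathbb{L}^3(Q_\eps)} \le \|\widetilde{N}_\eps u\|_{\mathbb{L}^2(Q_\eps)}^{1/2}\|\widetilde{N}_\eps u\|_{\mathbb{L}^6(Q_\eps)}^{1/2}$ via H\"older (written as a splitting of $|\widetilde{N}_\eps u|^3$ with exponents $4$ and $4/3$), and then inserts the Poincar\'e inequality of Lemma~\ref{lemma2.15} and the Ladyzhenskaya inequality of Lemma~\ref{lemma2.16} to conclude with $c_2 = 2c_1$, exactly as you do. Your additional remarks on why $\widetilde{N}_\eps u \in \rV_\eps$ and why the interpolation constant is $1$ are sound and only make explicit what the paper leaves implicit.
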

\begin{proof}
Let $u \in \rV_\eps$, then by the H\"older inequality, we have
\begin{align*}
\|\tNe u\|^3_{\mathbb{L}^3(Q_\eps)} & = \int_{Q_\eps}|\tNe u(\by)|^3\,d\by = \int_{Q_\eps}|\tNe u(\by)|^{3/2}|\tNe u(\by)|^{3/2}\,d\by \\
& \le \left(\int_{Q_\eps}|\tNe u(\by)|^6\,d\by\right)^{1/4}\left(\int_{Q_\eps}|\tNe u(\by)|^2\,d\by\right)^{3/4} \\
& = \|\tNe u\|^{3/2}_{\mathbb{L}^6(Q_\eps)}\|\tNe u\|^{3/2}_\LQeps.
\end{align*}
Thus, by Lemmas~\ref{lemma2.15} and \ref{lemma2.16}, we get
\begin{align*}
\|\tNe u\|^2_{\mathbb{L}^3(Q_\eps)} \le c_1\|\tNe u\|_{\rV_\eps} 2\eps \|\tNe u\|_{\rV_\eps} = c_2 \eps \|\tNe u\|_{\rV_\eps}^2.
\end{align*}
\end{proof}

In the following lemma we enlist some properties of operators $\newMepshat$, $\widehat{N}_\eps$, $\newMepstilde$ and $\widetilde{N}_\eps$.

\begin{lemma}
\label{lemma2.17}
Let $\eps > 0$. Then
\begin{itemize}
\item[i)] for $\psi \in L^2(Q_\eps)$
\begin{equation}
\label{eq:2.27}
\newMepshat \left(\newMepshat \psi\right) = \newMepshat \psi,
\end{equation}
\begin{equation}
\label{eq:2.28}
\widehat{N}_\eps \left(\widehat{N}_\eps \psi \right) = \widehat{N}_\eps \psi,
\end{equation}
\begin{equation}
\label{eq:2.29}
\newMepshat \left(\widehat{N}_\eps \psi\right) = 0, \qquad \mbox{and} \qquad \widehat{N}_\eps \left(\newMepshat \psi\right) = 0,
\end{equation}
\item[ii)] and for $u \in \LQeps$
\begin{equation}
\label{eq:2.30}
\newMepstilde \left(\newMepstilde u\right) = \newMepstilde u,
\end{equation}
\begin{equation}
\label{eq:2.31}
\widetilde{N}_\eps \left(\widetilde{N}_\eps u \right) = \widetilde{N}_\eps u,
\end{equation}
\begin{equation}
\label{eq:2.32}
\newMepstilde \left(\widetilde{N}_\eps u\right) = 0, \qquad \mbox{and} \qquad \widetilde{N}_\eps \left(\newMepstilde u\right) = 0.
\end{equation}
\end{itemize}
\end{lemma}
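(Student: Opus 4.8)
The plan is to reduce the entire lemma to one algebraic fact, namely that the averaging operator $M_\eps$ is a left inverse of the extension operator $\newReps$, i.e.\ $M_\eps \circ \newReps = \mathrm{Id}$ on $L^2(\bbS)$. This is already contained in the computation carried out in the proof of Lemma~\ref{lemma2.7}: for $\phi \in \ccal(\bbS)$ one has $\newReps\phi(\rho\bx) = \tfrac{1}{\rho}\phi(\bx)$, whence
\[
M_\eps[\newReps\phi](\bx) = \frac{1}{\eps}\inteps \rho\cdot\frac{1}{\rho}\phi(\bx)\,d\rho = \phi(\bx),
\]
and the general case follows by density of $\ccal(\bbS)$ and boundedness of both operators. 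Granting this, I would first establish the idempotency \eqref{eq:2.27}: since $\newMepshat = \newReps\circ M_\eps$, associativity gives
\[
\newMepshat\circ\newMepshat = \newReps\circ(M_\eps\circ\newReps)\circ M_\eps = \newReps\circ M_\eps = \newMepshat .
\]

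Everything else in part~(i) then follows by pure operator algebra, with no further analysis. From $\widehat{N}_\eps = \mathrm{Id} - \newMepshat$ and \eqref{eq:2.27} I would compute $\newMepshat\circ\widehat{N}_\eps = \newMepshat - \newMepshat\circ\newMepshat = 0$ and, symmetrically, $\widehat{N}_\eps\circ\newMepshat = \newMepshat - \newMepshat\circ\newMepshat = 0$, which is \eqref{eq:2.29}; expanding $\widehat{N}_\eps\circ\widehat{N}_\eps = (\mathrm{Id}-\newMepshat)\circ(\mathrm{Id}-\newMepshat) = \mathrm{Id} - 2\newMepshat + \newMepshat\circ\newMepshat = \mathrm{Id} - \newMepshat = \widehat{N}_\eps$ yields \eqref{eq:2.28}. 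In short, $\newMepshat$ is a projection and $\widehat{N}_\eps$ its complementary projection.

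For part~(ii) I would transport these scalar identities componentwise. By the definition \eqref{eq:2.7}, $\newMepstilde$ annihilates the radial slot and applies $\newMepshat$ to the $\lambda$- and $\varphi$-components; since $\newMepstilde u$ already has vanishing radial part, a second application of $\newMepstilde$ merely reapplies $\newMepshat$ in each angular slot, so \eqref{eq:2.30} is precisely \eqref{eq:2.27} read coordinatewise:
\[
\newMepstilde(\newMepstilde u) = \left(0,\newMepshat(\newMepshat u_\lambda), \newMepshat(\newMepshat u_\varphi)\right) = \left(0,\newMepshat u_\lambda, \newMepshat u_\varphi\right) = \newMepstilde u .
\]
Identities \eqref{eq:2.31} and \eqref{eq:2.32} then follow from \eqref{eq:2.30} by exactly the same operator-algebra manipulation as in part~(i), now with $\widetilde{N}_\eps = \mathrm{Id} - \newMepstilde$ in place of $\widehat{N}_\eps$.

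I do not expect any genuine obstacle: the only substantive input is the left-inverse identity $M_\eps\circ\newReps = \mathrm{Id}$, which is already available from Lemma~\ref{lemma2.7}, and everything after that is formal. The one point requiring slight care is bookkeeping of the radial component in the vector case, ensuring that the zero radial slot of $\newMepstilde u$ is preserved under a second application of $\newMepstilde$; this is immediate from the definition \eqref{eq:2.7}.
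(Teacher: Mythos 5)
Your proof is correct, and it is more streamlined than the paper's, though built from the same raw material. The paper proves the idempotency \eqref{eq:2.27} by a direct iterated-integral computation (fix $\psi\in\ccal(Q_\eps)$, set $\phi=\newMepshat\psi$, and evaluate $\newMepshat\phi$ pointwise on $Q_\eps$), and proves the first identity in \eqref{eq:2.29} separately by invoking the vanishing radial integral of Lemma~\ref{lemma2.7}; only the remaining identities \eqref{eq:2.28} and $\widehat{N}_\eps(\newMepshat\psi)=0$ are then obtained by the projection algebra you use, and part (ii) is handled componentwise exactly as you do. You instead extract the single analytic fact $M_\eps\circ\newReps=\mathrm{Id}$ on $L^2(\bbS)$ --- correctly observing that the computation inside the paper's proof of Lemma~\ref{lemma2.7} establishes it for every continuous $\phi$, not merely for $\phi$ in the range of $M_\eps$, and that it extends by density because $M_\eps$ and $\newReps$ are bounded (Lemmas~\ref{lemma2.2} and~\ref{lemma2.4}) --- and then, from the factorization $\newMepshat=\newReps\circ M_\eps$, every identity in the lemma, including both halves of \eqref{eq:2.29}, becomes formal operator algebra. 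Your route buys economy and a clearer structural picture: $\newMepshat$ and $\widehat{N}_\eps$ are complementary projections precisely because $\newReps$ admits $M_\eps$ as a left inverse, so one analytic input replaces the paper's two separate integral computations. The paper's route keeps each identity verifiable by hand on concrete functions and never needs to isolate the left-inverse property as a standalone statement, at the cost of repeating integral manipulations that your argument performs only once.
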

\begin{proof}
Let $\psi \in \ccal({Q}_\eps)$. Put
\[\phi = \newMepshat \psi,\]
i.e. for $\by \in Q_\eps$
\begin{align*}
\phi(\by) &  = \frac{1}{|\by|} \left(M_\eps \psi\right)\left(\frac{\by}{|\by|}\right) \\
& = \frac{1}{|\by|} \frac{1}{\eps} \inteps r \psi\left(r \frac{\by}{|\by|}\right)\,dr.
\end{align*}
Next for $\bz \in Q_\eps$
\begin{align*}
\vert \bz \vert  \left(\newMepshat \phi \right)(\bz) & = \left(M_\eps \phi \right)\left(\frac{\bz}{|\bz|}\right) = \frac{1}{\eps} \inteps \rho \phi\left(\rho \frac{\bz}{|\bz|}\right)\,d\rho \\
& = \frac{1}{\eps^2} \inteps \rho \frac{1}{\left|\rho \frac{\bz}{|\bz|}\right|} \left[ \inteps r \psi \left(r \frac{\rho \frac{\bz}{|\bz|}}{\left|\rho \frac{\bz}{|\bz|}\right|} \right)\,dr\right]\,d\rho \\
& = \frac{1}{\eps^2} \inteps \left[ \inteps r \psi \left(r \frac{\bz}{|\bz|}\right)\,dr \right]\,d\rho \\
& = \frac{1}{\eps} \inteps r \psi \left(r \frac{\bz}{|\bz|} \right)\,dr = \left(M_\eps \psi \right)\left(\frac{\bz}{|\bz|}\right) \\
& = \vert \bz \vert  \left(\newMepshat \psi \right)(\bz).
\end{align*}
Hence, we proved \eqref{eq:2.27} for every $\psi \in \ccal({Q}_\eps)$. Since $\ccal({Q}_\eps)$ is dense in $L^2(Q_\eps)$, it holds true for every $\psi \in L^2(Q_\eps)$.

\noindent \textbf{Proof of first part of} \eqref{eq:2.29}. Let $\psi \in \ccal({Q}_\eps)$.
Put $\phi = \widehat{N}_\eps \psi \in \ccal({Q}_\eps)$. By Lemma~\ref{lemma2.7}
\[\inteps r \phi(\by)\,dr = 0, \qquad \by \in Q_\eps.\]
Therefore, for $\by \in Q_\eps$,
\[\left(M_\eps \phi \right)\left(\frac{\by}{|\by|}\right) = \frac{1}{\eps}\inteps r \phi(\by)\,dr = 0.\]
Therefore, we infer that
\[\left(\newMepshat \phi \right)(\by) = \frac{1}{|\by|}\left(M_\eps \phi \right)\left(\frac{\by}{|\by|}\right) = 0,\]
for all $\by \in Q_\eps$. Thus, we have established first part of \eqref{eq:2.29} for all $\psi \in \ccal({Q}_\eps)$. Using the density argument, we can prove it for all $\psi \in L^2(Q_\eps)$.

Now for \eqref{eq:2.28}, by the definition of $\widehat{N}_\eps$ and \eqref{eq:2.29}, we obtain
\[\widehat{N}_\eps \left(\widehat{N}_\eps \psi \right) = \widehat{N}_\eps \psi - \newMepshat \left(\widehat{N}_\eps \psi\right) = \widehat{N}_\eps \psi.\]
Again using the definition of $\widehat{N}_\eps$ and Eq.\eqref{eq:2.27}, we have
\[\qquad \widehat{N}_\eps \left(\newMepshat \psi\right) = \newMepshat \psi - \newMepshat \left(\newMepshat \psi\right) = \newMepshat \psi - \newMepshat \psi =0,\]
concluding the proof of second part of \eqref{eq:2.29}.

\noindent \textbf{Proof of} \eqref{eq:2.30}. Let $u \in \ccal({Q}_\eps, \R^3)$. Write $u = \left(u_r, u_\lambda, u_\varphi\right)$. Put $\v = \newMepstilde u$, i.e.
\[\v = \left(0, \v_\lambda, \v_\varphi\right),\]
where
\[\v_\lambda = \newMepshat u_\lambda,\quad \mbox{and} \quad \v_\varphi = \newMepshat u_\varphi.\]
Thus, by the definition of $\newMepstilde$ and identity \eqref{eq:2.27}
\begin{align*}
\newMepstilde\left(\newMepstilde u \right) & = \newMepstilde \v  = \left(0, \newMepshat \v_\lambda, \newMepshat \v_\varphi\right) \\
 & = \left(0, \newMepshat \left(\newMepshat  u_\lambda\right), \newMepshat \left(\newMepshat  u_\varphi\right)\right) \\
& = \left(0, \newMepshat u_\lambda, \newMepshat u_\varphi\right) = \v = \newMepstilde u .
\end{align*}
We can extend this to $u \in \LQeps$ by the density argument. The remaining identities can be also established similarly as in the case of scalar functions.
\end{proof}

Later in the proof of Theorem~\ref{thm:main_thm}, in order to pass to the limit we will use an operator
$\oldReps$ defined by
\begin{equation}
\label{eq:2.36}
[\oldReps u](\by) = \left(0, [\newReps u_\lambda](\by), [\newReps u_\varphi](\by)\right), \quad Q_\eps \ni \by =r \bx
\end{equation}
where
\[\LS \ni u = \left(0, u_\lambda(\bx), u_\varphi(\bx)\right), \qquad \bx \in \bbS.\]
Using the definition of map $\newReps$ from Lemma~\ref{lemma2.4}, we can rewrite $\oldReps u$ as
\begin{equation}
    \label{eq:oldReps2}
    [\oldReps u](\by) = \left(0, \frac{1}{|\by|} u_\lambda\left(\frac{\by}{|\by|}\right), \frac{1}{|\by|}u_\varphi\left(\frac{\by}{|\by|}\right)\right).
\end{equation}
Note that $\oldReps$ is a bounded linear map from $\LS$ to $\LQeps$.

This operator $\oldReps$ is  retract of $\oMe$, i.e. a map $\oldReps : \LS \to \LQeps$ such that
\begin{equation}
\label{eq:2.35}
\oMe \circ \oldReps = \mathrm{Id} \; \mbox{ on } \LS.
\end{equation}

One can easily show that if $u \in \rD(\rA)$ then $\oldReps u \in \rD(\rA_\eps)$. In particular, for $u \in \rH$, $\oldReps u \in \rH_\eps$. Next we establish certain scaling properties for the map $\oldReps$.

\begin{lemma}
\label{lemma2.22}
Let $\eps > 0$, then
\begin{equation}
\label{eq:2.37}
\|\oldReps u\|_{\LQeps}^2 = \eps \|u\|_{\LS}^2, \quad u \in \LS.
\end{equation}
\end{lemma}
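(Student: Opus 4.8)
The plan is to reduce the identity \eqref{eq:2.37} to the scalar scaling property already established in Lemma~\ref{lemma2.4}, exploiting the fact that $\oldReps$ acts component-wise through the scalar retraction $\newReps$. First I would recall from \eqref{eq:oldReps2} that for $u = (0, u_\lambda, u_\varphi) \in \LS$ the image $\oldReps u$ has vanishing radial component and its two tangential components are precisely $\newReps u_\lambda$ and $\newReps u_\varphi$. Since the spherical-coordinate frame $\{\widehat{e_r}, \widehat{e_\lambda}, \widehat{e_\varphi}\}$ is orthonormal pointwise, the squared Euclidean norm $|\oldReps u(\by)|^2$ splits as a sum of the squares of the components, so that
\[
\|\oldReps u\|_{\LQeps}^2 = \|\newReps u_\lambda\|_{L^2(Q_\eps)}^2 + \|\newReps u_\varphi\|_{L^2(Q_\eps)}^2.
\]

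Next I would apply Lemma~\ref{lemma2.4} to each scalar term separately, which gives $\|\newReps u_\lambda\|_{L^2(Q_\eps)}^2 = \eps \|u_\lambda\|_{L^2(\bbS)}^2$ and likewise for $u_\varphi$. Summing the two and factoring out $\eps$ yields
\[
\|\oldReps u\|_{\LQeps}^2 = \eps\left(\|u_\lambda\|_{L^2(\bbS)}^2 + \|u_\varphi\|_{L^2(\bbS)}^2\right) = \eps \|u\|_{\LS}^2,
\]
where the last equality is just the decomposition of the $\LS$-norm of the tangential field $u = (0, u_\lambda, u_\varphi)$ into its component $L^2(\bbS)$-norms. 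As in the proofs of Lemmas~\ref{lemma2.4} and \ref{lemma2.13}, it suffices to verify the identity for smooth $\psi \in \ccal(\bbS)$ (component-wise) and then extend to all of $\LS$ by density, since both sides are continuous in $u$.

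The argument is essentially bookkeeping, so there is no genuine obstacle; the only point requiring mild care is the orthogonal splitting of the pointwise norm. One must confirm that writing $|\oldReps u(\by)|^2$ as the sum of squared spherical components is legitimate, i.e.\ that the map $u \mapsto \oldReps u$ does not introduce cross terms between the $\widehat{e_\lambda}$ and $\widehat{e_\varphi}$ directions. This is immediate from the orthonormality of the local frame, but it is the step where one could inadvertently conflate the coordinate representation with the Euclidean norm, so I would state it explicitly before invoking Lemma~\ref{lemma2.4}.
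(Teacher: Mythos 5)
Your proof is correct and follows essentially the same route as the paper's, which simply inlines the spherical-coordinate computation (the weight $1/|\by|^2$ cancelling the Jacobian $r^2$) for the vector field instead of splitting component-wise and invoking the scalar lemma. The only point to tighten is the citation: the statement of Lemma~\ref{lemma2.4} asserts only the operator-norm identity $\|\newReps\|^2_{\mathcal{L}(L^2(\bbS),L^2(Q_\eps))}=\eps$, which for a fixed component yields merely $\|\newReps u_\lambda\|^2_{L^2(Q_\eps)}\le \eps\|u_\lambda\|^2_{L^2(\bbS)}$; the exact equality you need, namely $\|\newReps\psi\|^2_{L^2(Q_\eps)}=\eps\|\psi\|^2_{L^2(\bbS)}$ for every $\psi$, is what the computation inside the proof of that lemma actually establishes, so cite that identity (or reproduce the one-line calculation) rather than the lemma's operator-norm statement.
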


\begin{proof}
Let $\eps > 0$ and consider $\LS \ni u = \left(0, u_\lambda, u_\varphi\right)$. Then, by the definition of the retract operator $\oldReps$ and $\LQeps$-norm we have
\begin{align*}
\|\oldReps u\|^2_{\LQeps}& = \int_{Q_\eps}\left|\left[\oldReps u\right](\by)\right|^2 d\by = \int_{\bbS}\inteps \left[\frac{1}{|\by|^2}|u_\lambda(\bx)|^2 + \frac{1}{|\by|^2}|u_\varphi(\bx)|^2 \right] r^2\,dr\,d\sigma(\bx) \\
& = \int_{\bbS} \left(|u_\lambda(\bx)|^2 + |u_\varphi(\bx)|^2\right) d\sigma(\bx)\inteps dr = \eps \|u\|^2_{\LS}.
\end{align*}
\end{proof}
Using the definition of the map $\oldReps$ and Lemmas \ref{lem:grad_ret}, \ref{lem:lap_ret} we can deduce the following two lemmas (we provide the detailed proof of the latter in Appendix~\ref{sec:appLadyzhenskaya}):

\begin{lemma}
\label{lem:grad-ret-vec}
Let $u \in \bbW^{1,p}(\bbS)$ for $p \ge 2$. Then for $\eps \in (0,1)$ there exists a constant $C > 0$ independent of $\eps$ such that
\begin{equation}
    \label{eq:grad-ret-vec1}
    \|\nabla \oldReps u\|_{\lL^p(Q_\eps)} \le C(p)\eps^{1/p}\| u\|_{\bbW^{1,p}(\bbS)}.
\end{equation}
\end{lemma}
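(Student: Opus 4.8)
The plan is to reduce the vector-valued estimate to the scalar estimate of Lemma~\ref{lem:grad_ret}, applied componentwise in Cartesian coordinates. The crucial observation is that the spherical frame $\widehat{e_\lambda},\widehat{e_\varphi}$ does not depend on the radial variable $r$. Hence, if we regard the tangential field $u=(0,u_\lambda,u_\varphi)$ on $\bbS$ as an $\R^3$-valued (Cartesian) field $u=(u_1,u_2,u_3)$ with $u(\bx)\perp\bx$, then the definition \eqref{eq:2.36} unwinds to
\[
[\oldReps u](\by)=\frac{1}{|\by|}\,u\!\left(\frac{\by}{|\by|}\right),\qquad \by\in Q_\eps .
\]
In particular each Cartesian component of $\oldReps u$ is exactly the scalar retract of the corresponding component of $u$, that is $[\oldReps u]_j(\by)=\tfrac{1}{|\by|}u_j(\by/|\by|)=[\newReps u_j](\by)$ for $j=1,2,3$. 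This identity lets the whole argument piggy-back on the scalar lemma.

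First I would record that, writing $\nabla \oldReps u$ as the Jacobian matrix and using the (Frobenius) pointwise norm consistent with the rest of the paper,
\[
|\nabla \oldReps u|^2=\sum_{j=1}^{3}|\nabla \newReps u_j|^2 ,
\]
so that for $p\ge 2$ the elementary convexity inequality $(\sum_{j}a_j)^{p/2}\le 3^{p/2-1}\sum_j a_j^{p/2}$, applied with $a_j=|\nabla\newReps u_j|^2$, yields
\[
\|\nabla \oldReps u\|_{\lL^p(Q_\eps)}^p\le 3^{p/2-1}\sum_{j=1}^3\|\nabla \newReps u_j\|_{L^p(Q_\eps)}^p .
\]
Next I would apply Lemma~\ref{lem:grad_ret} to each scalar function $u_j\in W^{1,p}(\bbS)$ to get $\|\nabla \newReps u_j\|_{L^p(Q_\eps)}^p\le C(p)\,\eps\,\|u_j\|_{W^{1,p}(\bbS)}^p$, and then sum over $j$.

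The final step, and the only one requiring care, is to bound $\sum_{j=1}^3\|u_j\|_{W^{1,p}(\bbS)}^p$ by a constant independent of $\eps$ times $\|u\|_{\bbW^{1,p}(\bbS)}^p$. The $L^p$-part is immediate since $|u_j|\le |u|$; the issue is the surface gradient of the Cartesian components, because differentiating $u_j=u\cdot e_j$ along $\bbS$ is not the covariant derivative of $u$: the two differ by a zeroth-order term coming from the second fundamental form of the unit sphere. Since $\bbS$ is compact and smooth this term is controlled by a purely geometric constant, so $\sum_j\|\nablaS u_j\|_{L^p(\bbS)}^p\le C\big(\|u\|_{\lL^p(\bbS)}^p+\|\nablaS u\|_{\lL^p(\bbS)}^p\big)\le C\|u\|_{\bbW^{1,p}(\bbS)}^p$. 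Combining the three displays and taking $p$-th roots gives $\|\nabla \oldReps u\|_{\lL^p(Q_\eps)}\le C(p)\,\eps^{1/p}\|u\|_{\bbW^{1,p}(\bbS)}$, as claimed. I expect this norm equivalence between the ambient Cartesian-component Sobolev norm and the intrinsic vector Sobolev norm on $\bbS$ to be the main (though standard) obstacle; everything else is a direct transcription of the scalar lemma.
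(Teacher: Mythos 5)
Your proof is correct, and it fills in a gap the paper itself leaves open: for this lemma the paper offers no argument at all beyond the remark that it "can be deduced" from the scalar Lemma~\ref{lem:grad_ret}, reserving its detailed appendix computation for the Laplacian analogue, Lemma~\ref{lem:lap-ret-vec}. Your route differs from the one the paper implicitly has in mind. The paper's style (visible in the appendix proof of Lemma~\ref{lem:lap-ret-vec}) is to compute directly in the spherical frame: writing $\oldReps u=(0,u_\lambda/r,u_\varphi/r)$, the full Jacobian has entries $-u_\lambda/r^2$, $-u_\varphi/r^2$ in the radial slots and, in the angular slots, precisely $r^{-2}$ times the covariant-derivative components of $u$ on $\bbS$ (e.g. $\partial_\lambda u_\lambda$ and $\tfrac{1}{\sin\lambda}\partial_\varphi u_\lambda-\cot\lambda\, u_\varphi$), so that pointwise $|\nabla\oldReps u|^2=r^{-4}\bigl(2|u|^2+|\nablaS u|^2\bigr)$ and integrating $r^2\,dr$ over $(1,1+\eps)$ yields the factor $\eps$. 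You instead exploit the $r$-independence of the frame $\widehat{e_\lambda},\widehat{e_\varphi}$ to identify $[\oldReps u]_j=\newReps u_j$ in Cartesian components, apply the scalar lemma componentwise, and then pay for this convenience once, in the equivalence $\sum_j\|u_j\|^p_{W^{1,p}(\bbS)}\le C\|u\|^p_{\bbW^{1,p}(\bbS)}$, where the second-fundamental-form correction (on the unit sphere, $\nablaS(u\cdot e_j)\cdot X=(\nablaS_X u)\cdot e_j-(u\cdot X)(\bx\cdot e_j)$, a zeroth-order term bounded by $|u|$) is the entire geometric content. The two arguments confront the same fact — ambient versus intrinsic differentiation differs by bounded curvature terms — but yours packages it cleanly in a norm equivalence and avoids both the Christoffel bookkeeping and the coordinate singularities at the poles that would undermine a naive application of the scalar lemma to the spherical components $u_\lambda,u_\varphi$; the direct computation, in exchange, produces the exact pointwise identity and hence slightly sharper constants. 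Both are sound; your reduction is a perfectly acceptable, arguably more rigorous, substitute for the proof the paper omits.
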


\begin{lemma}
\label{lem:lap-ret-vec}
Let $u \in \hH^2(\bbS) \cap \rH$ and $\eps \in (0,1)$. Then
\begin{equation}
    \label{eq:lap-ret-vec1}
    \|\Delta \oldReps u\|^2_{\LQeps} \le \eps\|\DDelta u\|^2_{\LS},
\end{equation}
where $\DDelta$ is defined in \eqref{eq:C.1.5}.
\end{lemma}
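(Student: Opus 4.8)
The plan is to imitate the proof of the scalar Lemma~\ref{lem:lap_ret}. Everything will follow once the pointwise identity
\begin{equation}
\label{eq:planident}
[\Delta \oldReps u](\by) = \frac{1}{r^3}\,(\DDelta u)(\bx), \qquad \by = r\bx,\ r\in(1,1+\eps),\ \bx\in\bbS,
\end{equation}
is established, the right-hand side being read as a tangential field on $\bbS$. Granting \eqref{eq:planident}, I would integrate in spherical coordinates with $d\by = r^2\,dr\,d\sigma(\bx)$ to obtain
\[
\|\Delta \oldReps u\|^2_{\LQeps} = \inteps \int_\bbS \frac{1}{r^6}\,|(\DDelta u)(\bx)|^2\, r^2\,d\sigma(\bx)\,dr = \left(\inteps r^{-4}\,dr\right)\|\DDelta u\|^2_{\LS},
\]
and then bound the radial integral by $\inteps r^{-4}\,dr = \frac{(1+\eps)^3-1}{3(1+\eps)^3}\le \eps$ for $\eps\in(0,1)$, exactly as in Lemma~\ref{lem:lap_ret}; this last inequality is elementary, reducing to $3\eps^3+8\eps^2+6\eps\ge 0$.

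The heart of the matter is thus \eqref{eq:planident}, and here I would avoid the cumbersome vector Laplacian in spherical coordinates by exploiting that $u\in\rH$ forces $\oldReps u\in\rH_\eps$ to be divergence free (as recorded just before Lemma~\ref{lemma2.22}). The pointwise vector-calculus identity $\curl\curl V = \nabla(\ddiv V) - \Delta V$ then gives $-\Delta \oldReps u = \curl\curl \oldReps u$ (compare \eqref{eq:1.9}). First I would compute $\curl \oldReps u$ from the explicit formula \eqref{eq:oldReps2}: since each angular component carries the factor $1/r$ while $u_\lambda,u_\varphi$ are independent of $r$, the $\widehat{e_\lambda}$ and $\widehat{e_\varphi}$ parts of the curl cancel (the multiplier $r$ in $\partial_r(r\cdot \tfrac1r u)$ annihilates the scaling), leaving the purely radial field
\[
\curl \oldReps u = \frac{1}{r^2}\,(\curlS u)(\bx)\,\widehat{e_r}.
\]
Taking the curl once more of this radial field yields a tangential field carrying the overall factor $1/r^3$, whose angular part is precisely $\curlS(\curlS u)$.

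To close, I would invoke the definition \eqref{eq:C.1.5} of the Laplace--de Rham operator: for a co-closed (divergence free) tangential field one has $\curlS\curlS u = -\DDelta u$, whence $-\Delta\oldReps u = \frac{1}{r^3}\curlS\curlS u = -\frac{1}{r^3}\DDelta u$, which is \eqref{eq:planident}. The step I expect to be the main obstacle is exactly this last reduction: matching $\curlS\curlS u$ with $\DDelta u$ under the sign and normalisation conventions of \eqref{eq:C.1.5}, together with the careful verification that the tangential components of $\curl\oldReps u$ genuinely vanish. The radial scaling itself is routine and is already encoded in the scalar Lemma~\ref{lem:lap_ret}, where the analogous radial terms cancel as $\tfrac{2}{r^3}\psi-\tfrac{2}{r^3}\psi=0$; since what remains is a purely two-dimensional computation on $\bbS$, it is most cleanly relegated to Appendix~\ref{sec:appLadyzhenskaya}.
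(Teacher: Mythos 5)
Your proposal is correct, and it arrives at exactly the pointwise identity that the paper's own proof (in Appendix~\ref{sec:appLadyzhenskaya}) establishes, namely $\Delta(\oldReps u)(\by)=r^{-3}(\DDelta u)(\bx)$ for $\by=r\bx$, followed by the same integration in spherical coordinates and the same elementary bound on $\inteps r^{-4}\,dr$. The route to the identity, however, is genuinely different. The paper works directly with the second-order operator: it writes out the three spherical components of the three-dimensional vector Laplacian of $\oldReps u$ using the component formulas of Appendix~\ref{sec:vec-ana}, uses $\ddivS u=0$ once to make the radial component vanish identically, and then recognises the two angular components as $r^{-3}$ times the components of \eqref{eq:C.1.5}. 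You instead invoke \eqref{eq:2.17} on the divergence-free field $\oldReps u\in\rH_\eps$ to replace $-\Delta$ by $\curl\curl$, and your two first-order computations are correct: by \eqref{eq:curl} one indeed gets $\curl\oldReps u=r^{-2}(\curlS u)\,\widehat{e_r}$, and the curl of this radial field is $r^{-3}$ times the rotated gradient of the scalar $\curlS u$. What this buys is transparency: the 3D computations are first order, and the hypothesis $u\in\rH$ enters structurally rather than as a cancellation buried in a long formula. What it costs is that the step you yourself flag as the crux --- the surface identity $\curlS\curlS u=-\DDelta u$ for co-closed $u$, with the normalisation of \eqref{eq:C.1.5} --- is where the paper's computational content reappears: verified in the coordinates of \eqref{eq:C.1.4}--\eqref{eq:C.1.5} it requires using $\ddivS u=0$ twice and is a calculation of essentially the same length as the paper's (the identity is true with the paper's sign conventions, so your argument does close). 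Alternatively, one can get it more abstractly from \eqref{eq:stokes_sphere} together with the weak identity $(\rA u,\v)_\LS=(\curlS u,\curlS\v)_\LS$, but only after noting that $\DDelta$ maps co-closed fields to co-closed fields (so the Leray projection $P$ can be dropped on both sides); that is standard Hodge theory, but it is not stated in the paper, so it would need to be proved or cited. In short: no gap, a valid and somewhat more conceptual reorganisation, with the 2D identity deferred to the appendix playing the role of the paper's direct computation.
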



\section{Deterministic Navier--Stokes equations on thin spherical domains}
\label{sec:detNSE}

In this section we introduce the abstract form of the deterministic Navier--Stokes equations, i.e. the driving force $f^\eps$ is not random. The aim of this section is to obtain certain a priori estimates for $\alpha_\eps$ and  $\widetilde{\beta}_\eps$, as defined in \eqref{eq:notation}, using the a priori estimate for $u_\eps$, the solution of \eqref{eq:3.1}. The a priori estimates obtained in Lemma~\ref{lemma3.1} and Lemma~\ref{lemma3.2} will be used later along Theorem~\ref{thm4.1} to obtain certain strong convergence which will be used later to reprove Temam and Ziane's result \cite[Theorem~B]{[TZ97]}, Theorem~\ref{thm4.3}, regarding deterministic Navier--Stokes equations on thin spherical domains. We recall the result from \cite{[TZ97]} to show the reader that how natural it is to extend the proof from a deterministic setting to the stochastic framework.

The NSE on the thin spherical domain, using the operators from Section~\ref{sec:prelim}, can be written in the abstract form as
\begin{equation}
\label{eq:3.1}
\frac{d u_\eps}{dt} + \nu \rA_\eps u_\eps + B_\eps(u_\eps, u_\eps) = f^\eps, \qquad u_\eps(0) = u_0^\eps \qquad \mbox{on }\, Q_\eps,
\end{equation}
where $f^\eps \in L^2(0, T; V_\eps^\prime)$.

\begin{definition}
\label{defn_det_NSE_sol}
A function $u_\eps \in L^2(0,T; \rV_\eps) \cap L^\infty(0,T; \rH_\eps)$ is called a weak solution of \eqref{eq:3.1}, if $u_\eps(0) = u_0^\eps$ and for all $\v \in \rV_\eps$,
\begin{equation}
\label{eq:3.2}
\frac{d}{dt}(u_\eps, \v)_{\LQeps} + \nu \left\langle \rA_\eps u_\eps, \v \right\rangle_\eps + \left\langle B_\eps(u_\eps,u_\eps), \v \right\rangle_\eps = \left\langle f^\eps, \v \right\rangle_\eps
\end{equation}
holds.
\end{definition}

Moreover, the weak solution $u_\eps$ of \eqref{eq:3.1} satisfies the following energy inequality
\begin{equation}
\label{eq:3.3}
\|u_\eps(t)\|^2_{\LQeps} + \nu \int_0^t \|\curl u_\eps(s)\|^2_{\LQeps} \le \|u_0^\eps\|^2_{\LQeps} + \frac{1}{\nu} \int_0^t \|f^\eps(s)\|^2_{\rV_\eps^\prime}\,ds.
\end{equation}

Let $u_\eps = u_\eps(t),$ $t \ge 0$ be a weak solution of \eqref{eq:3.1}. We will use the following notations
\begin{equation}
\label{eq:notation}
\widetilde{\alpha}_\eps(t) := \newMepstilde [u_\eps(t)], \quad \widetilde{\beta}_\eps(t) := \widetilde{N}_\eps [u_\eps(t)], \quad \alpha_\eps(t) := \ocirc{M}_\eps [u_\eps(t)].
\end{equation}

\begin{lemma}
\label{lemma3.1}
Let $u_\eps$ be a weak solution of \eqref{eq:3.1} with $u_0^\eps \in \rH_\eps$ such that
\begin{equation}
\label{eq:3.4}
\|u_0^\eps\|_{\LQeps} \le \sqrt{\eps} C_1,
\end{equation}
for some $C_1 > 0$, $f^\eps \in L^2(0,T; \rV^\prime_\eps)$ and there exists a constant $C_2 > 0$ such that
\begin{equation}
\label{eq:3.5}
\int_0^T\|f^\eps(s)\|^2_{\rV^\prime_\eps}\,ds \le C_2 \eps.
\end{equation}
Then
\begin{equation}
\label{eq:3.6}
\|\ale(t)\|^2_{\LS} + \frac{4\nu}{9} \int_0^t \|\curlS \ale(s) \|^2_\LS \,ds \le C_1^2+ \frac{C_2}{\nu}, \quad t \ge 0 .
\end{equation}
\end{lemma}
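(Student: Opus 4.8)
The plan is to read off \eqref{eq:3.6} from the energy inequality \eqref{eq:3.3} by bounding each term on its left-hand side from below by the corresponding quantity on the sphere (carrying an explicit factor of $\eps$), and each term on the right-hand side from above via the smallness assumptions \eqref{eq:3.4} and \eqref{eq:3.5}; a single division by $\eps$ then yields the stated inequality. Throughout I would exploit the two orthogonal decompositions of $u_\eps$ furnished by $\newMepstilde$ and $\tNe$, together with the identity $\tale = \oldReps \ale$, which follows from $\newMepstilde = \oldReps \circ \oMe$.

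For the $\mathbb{L}^2$ term I would first apply the Pythagorean identity \eqref{eq:2.15} to get $\|u_\eps(t)\|^2_{\LQeps} \ge \|\tale(t)\|^2_{\LQeps}$, discarding the nonnegative $\tNe$-contribution, and then use the scaling Lemma~\ref{lemma2.13} to rewrite $\|\tale(t)\|^2_{\LQeps} = \eps\,\|\ale(t)\|^2_{\LS}$. For the dissipation term the decisive point is to pass to the \emph{weighted} norm $\|\cdot\|_r$ before scaling: by the equivalence \eqref{eq:2.22} one has $\|\curl u_\eps\|^2_{\LQeps} \ge \tfrac{4}{9}\|\curl u_\eps\|^2_r$, and Lemma~\ref{lemma2.14}, i.e.\ \eqref{eq:2.23}, gives $\|\curl u_\eps\|^2_r \ge \|\curl \tale\|^2_r$ after again dropping the nonnegative $\tNe$-part.

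The key computation is the weighted scaling identity $\|\curl \tale\|^2_r = \eps\,\|\curlS \ale\|^2_{\LS}$. Writing $\tale = \oldReps \ale$ and computing the curl of a purely tangential field whose angular components carry a factor $r^{-1}$, one finds in spherical coordinates that $\curl \oldReps w$ is purely radial with $|\curl \oldReps w| = r^{-2}|\curlS w|$ for a tangential field $w$ on $\bbS$. Hence the factor $r^2$ built into the weighted product \eqref{eq:2.21} exactly compensates the $r^{-4}$ from $|\curl \oldReps w|^2$ together with the $r^2$ of the volume element, and $\int_1^{1+\eps} dr = \eps$ delivers the clean factor $\eps$. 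This is precisely the reason the weighted inner product $(\cdot,\cdot)_r$ was introduced: in the unweighted norm $\|\cdot\|_{\LQeps}$ one would obtain instead the factor $\eps/(1+\eps)$, which is too small to yield the sharp constant $\tfrac{4\nu}{9}$.

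Assembling these estimates, the energy inequality \eqref{eq:3.3} becomes
\[
\eps\,\|\ale(t)\|^2_{\LS} + \frac{4\nu}{9}\,\eps \int_0^t \|\curlS \ale(s)\|^2_{\LS}\,ds \le \|u_0^\eps\|^2_{\LQeps} + \frac{1}{\nu}\int_0^t \|f^\eps(s)\|^2_{\rV_\eps^\prime}\,ds \le \eps\,C_1^2 + \frac{C_2\,\eps}{\nu},
\]
where the final inequality invokes \eqref{eq:3.4} and \eqref{eq:3.5}, and dividing by $\eps$ gives \eqref{eq:3.6}. I expect the main obstacle to be exactly the curl scaling of the preceding paragraph: one must carry out the scaling in the weighted norm $\|\cdot\|_r$ rather than in $\|\cdot\|_{\LQeps}$, since only the former produces the exact factor $\eps$ matching the asserted constant; the remainder is bookkeeping of powers of $\eps$.
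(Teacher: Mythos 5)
Your proof is correct, and it follows the same skeleton as the paper's own argument: start from the energy inequality \eqref{eq:3.3}, drop the nonnegative $\tNe$-contributions via the orthogonal decompositions \eqref{eq:2.15} and \eqref{eq:2.23}, rescale to the sphere, use \eqref{eq:3.4}--\eqref{eq:3.5}, and divide by $\eps$. The one place where you diverge from the paper's text is the dissipation term, and there your treatment is in fact the more precise one. The paper passes to the unweighted bound $\frac49\|\curl \tale\|^2_{\LQeps}\le\|\curl u_\eps\|^2_{\LQeps}$ (Corollary~\ref{cor_grad_alpha}) and then cites ``the scaling property in Lemma~\ref{lemma2.13}'' to replace $\|\curl \tale\|^2_{\LQeps}$ by $\eps\|\curlS\ale\|^2_{\LS}$; taken literally this is off by a factor, since $\curl\tale$ is a purely radial field of magnitude $r^{-2}|\curlS\ale|$, so that $\|\curl\tale\|^2_{\LQeps}=\frac{\eps}{1+\eps}\|\curlS\ale\|^2_{\LS}$ --- exactly the factor $\frac{\eps}{1+\eps}$ that the paper itself records in \eqref{eq:4.13} and Remark~\ref{rem_mod_equation}. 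Your route --- keeping the weighted norm, i.e. $\|\curl u_\eps\|^2_{\LQeps}\ge\frac49\|\curl u_\eps\|^2_r\ge\frac49\|\curl\tale\|^2_r$ by \eqref{eq:2.22} and \eqref{eq:2.23}, and then using the exact identity $\|\curl\tale\|^2_r=\eps\|\curlS\ale\|^2_{\LS}$ --- is precisely the intermediate chain appearing in the proof of Corollary~\ref{cor_grad_alpha}, and it is what actually delivers the clean factor $\eps$ and hence the stated constant $\frac{4\nu}{9}$ in \eqref{eq:3.6}. In short: same strategy as the paper, but your bookkeeping of the curl scaling is the rigorous version of a step the paper states loosely.
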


\begin{proof}
Let $u_\eps$ be a weak solution of \eqref{eq:3.1}, satisfying the energy inequality \eqref{eq:3.3}. Let us define functions as in \eqref{eq:notation}. From Eq.~\eqref{eq:2.15}, we have
\begin{equation}
\label{eq:3.7}
\|\tale(t)\|^2_{\LQeps} \le \|u_\eps(t)\|^2_{\LQeps}, \qquad t \ge 0.
\end{equation}
Moreover, by Corollary~\ref{cor_grad_alpha}
\begin{equation}
\label{eq:3.8}
\frac49\|\curl \tale(t)\|^2_\LQeps \le \|\curl u(t)\|^2_\LQeps, \qquad t \ge 0.
\end{equation}
Therefore, using \eqref{eq:3.7} and \eqref{eq:3.8} in the energy inequality \eqref{eq:3.3}, we get
\[\|\tale(t)\|^2_{\LQeps} + \frac{4 \nu}{9} \int_0^t \|\curl \tale(s)\|^2_{\LQeps}\,ds \le \|u_0^\eps\|^2_{\LQeps} + \frac{1}{\nu} \int_0^t \|f^\eps(s)\|^2_{\rV_\eps^\prime}\,ds, \quad t \ge 0.\]
Hence from the scaling property in Lemma~\ref{lemma2.13}, we have
\begin{equation}
\label{eq:3.9}
\eps \|\ale(t)\|^2_{\LS} + \frac{4 \nu}{9} \eps \int_0^t \|\curlS \ale(s)\|^2_{\LS}\,ds \le \|u_0^\eps\|^2_{\LQeps} + \frac{1}{\nu} \int_0^t \|f^\eps(s)\|^2_{\rV_\eps^\prime}\,ds.
\end{equation}
Therefore, using assumptions \eqref{eq:3.4} and \eqref{eq:3.5} in \eqref{eq:3.9} and cancelling $\eps$ on both sides, we infer \eqref{eq:3.6}.
\end{proof}

From the results of Lemma~\ref{lemma3.1}, we deduce that
\begin{equation}
\label{eq:alpha_bounded}
\left\{ \ale \right\}_{\eps > 0}\, \mbox{ is bounded in }\, L^\infty(0,T; \rH) \cap L^2(0,T; \rV).
\end{equation}

Since $\rV$ can be embedded into $\mathbb{L}^6(\bbS)$, by using interpolation between $L^\infty(0,T; \rH)$ and $L^2(0,T; \mathbb{L}^6(\bbS))$ we obtain
\[\int_0^T \|\ale(s)\|^4_{\mathbb{L}^3(\bbS)}\,ds \le C.\]

\begin{lemma}
\label{lemma3.2}
Let $u_\eps$ be a weak solution of \eqref{eq:3.1} with $u_0^\eps \in \rH_\eps$ and $f^\eps \in L^2(0,T; \rV^\prime_\eps)$. Assume further that the conditions \eqref{eq:3.4} and \eqref{eq:3.5} hold. Then
\begin{equation}
\label{eq:3.13}
\|\tble(t)\|^2_{\LQeps} + \nu \int_0^t \|\curl \tble(s) \|^2_{\LQeps}\,ds \le C_1^2 \eps + \frac{C_2 \eps}{\nu} , \quad t \ge 0.
\end{equation}
\end{lemma}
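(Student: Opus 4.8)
The plan is to repeat the argument of Lemma~\ref{lemma3.1}, but now extracting the fluctuating component $\tble=\tNe[u_\eps]$ instead of the radial average. The single ingredient is again the energy inequality \eqref{eq:3.3},
\begin{equation*}
\|u_\eps(t)\|^2_{\LQeps}+\nu\int_0^t\|\curl u_\eps(s)\|^2_{\LQeps}\,ds\le\|u_0^\eps\|^2_{\LQeps}+\frac1\nu\int_0^t\|f^\eps(s)\|^2_{\rV_\eps^\prime}\,ds,
\end{equation*}
and the first move is to bound its left-hand side from below by the quantities built from $\tble$. For the kinetic term this is immediate: Proposition~\ref{prop2.12} gives $\|u_\eps(t)\|^2_{\LQeps}=\|\tale(t)\|^2_{\LQeps}+\|\tble(t)\|^2_{\LQeps}$, and since only $\tble$ is to be estimated the averaged part $\|\tale(t)\|^2_{\LQeps}\ge0$ is simply discarded.

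For the dissipative term I would exploit that the splitting of $\curl u_\eps$ into averaged and fluctuating parts is orthogonal in the weighted inner product $(\cdot,\cdot)_r$ (Lemma~\ref{lemma2.14}, i.e.\ \eqref{eq:2.23}): writing $\|\curl u_\eps\|^2_r=\|\curl\tale\|^2_r+\|\curl\tble\|^2_r\ge\|\curl\tble\|^2_r$ and returning to the plain norm through the equivalence \eqref{eq:2.22} controls $\int_0^t\|\curl\tble(s)\|^2_{\LQeps}\,ds$ by $\int_0^t\|\curl u_\eps(s)\|^2_{\LQeps}\,ds$ — this is precisely the second estimate of Corollary~\ref{cor_grad_alpha}. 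Substituting both reductions into the energy inequality isolates $\|\tble(t)\|^2_{\LQeps}+\nu\int_0^t\|\curl\tble(s)\|^2_{\LQeps}\,ds$ on the left, with the data terms untouched on the right. The hypotheses \eqref{eq:3.4} and \eqref{eq:3.5} then give $\|u_0^\eps\|^2_{\LQeps}\le\eps C_1^2$ and $\tfrac1\nu\int_0^t\|f^\eps\|^2_{\rV_\eps^\prime}\,ds\le\eps C_2/\nu$, which is the right-hand side $C_1^2\eps+C_2\eps/\nu$ of \eqref{eq:3.13}. Unlike in Lemma~\ref{lemma3.1}, no rescaling by $\eps$ (Lemma~\ref{lemma2.13}) occurs, because $\tble$ remains a field on $Q_\eps$; this is exactly why the factor $\eps$ persists on the right rather than cancelling.

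The one genuinely delicate point is the constant in front of the dissipative integral. The $\curl$-orthogonality of the decomposition holds only in $(\cdot,\cdot)_r$, so passing back to $\LQeps$ through \eqref{eq:2.22} costs the norm-equivalence factor and yields \eqref{eq:3.13} with a coefficient slightly smaller than $\nu$. To recover the sharp constant I would instead carry the whole balance in the weighted norm: the $L^2$-splitting $\|u_\eps\|^2_r=\|\tale\|^2_r+\|\tble\|^2_r$ is orthogonal there too (a consequence of $\int_1^{1+\eps}r\,\tNe u_\eps\,dr=0$, Lemma~\ref{lemma2.7b}), so after discarding the averaged parts one downgrades $\|\cdot\|_{\LQeps}\le\|\cdot\|_r$ on the left to land exactly on \eqref{eq:3.13}. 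The price is that testing the equation against the weight $r^2$ generates the extra term $-2\nu\int_{Q_\eps}\by\cdot(\curl u_\eps\times u_\eps)\,d\by$ coming from $\nabla(r^2)$, whose control — using the boundary condition $\curl u_\eps\times\vecn=0$ and, if needed, the Poincar\'e inequality \eqref{eq:2.24}, $\|\tble\|_{\LQeps}\le2\eps\|\curl\tble\|_{\LQeps}$, to absorb it — is the step I expect to require the most care. That same Poincar\'e inequality also serves as an independent confirmation that $\tble$ is genuinely of order $\eps$.
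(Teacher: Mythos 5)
Your proof is essentially identical to the paper's: the energy inequality \eqref{eq:3.3}, the $\lL^2$-orthogonality \eqref{eq:2.15} to discard $\tale$, Corollary~\ref{cor_grad_alpha} (i.e.\ the weighted orthogonality \eqref{eq:2.23} combined with the norm equivalence \eqref{eq:2.22}) for the dissipative term, and then the hypotheses \eqref{eq:3.4}--\eqref{eq:3.5}, with no $\eps$-rescaling — exactly the paper's three-line argument. The delicate point you raise about the constant is real, but the paper does not resolve it either: its proof, like your first argument, only yields $\frac{4\nu}{9}\int_0^t\|\curl \tble(s)\|^2_{\LQeps}\,ds$ on the left while the statement \eqref{eq:3.13} displays $\nu$ (contrast Lemma~\ref{lemma3.1}, where the factor $\frac{4\nu}{9}$ is retained), so your main argument already matches the paper's own level of rigour, and the weighted-norm repair you sketch — which you leave unfinished at the extra term generated by $\nabla(r^2)$ — is additional material not carried out in the paper.
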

\begin{proof}
Let $u_\eps$ be a weak solution of \eqref{eq:3.1}, then it satisfies the energy inequality \eqref{eq:3.3}. From \eqref{eq:2.15}, we have
\begin{equation}
\label{eq:3.14}
\|\tble(t)\|^2_{\LQeps} \le \|u_\eps(t)\|^2_{\LQeps}, \qquad t \ge 0.
\end{equation}
Moreover, by Corollary~\ref{cor_grad_alpha}, for every $t > 0$
\begin{equation}
\label{eq:3.15}
\frac49 \|\curl \tble(t) \|^2_\LQeps \le \|\curl u_\eps(t)\|^2_\LQeps.
\end{equation}
Therefore, using assumptions \eqref{eq:3.4}--\eqref{eq:3.5}, inequalities \eqref{eq:3.14} and \eqref{eq:3.15} in the energy inequality \eqref{eq:3.3}, we infer \eqref{eq:3.13}.
\end{proof}

Recall that if $u_\eps$ is a weak solution of \eqref{eq:3.1}, then it satisfies the Eq.~\eqref{eq:3.2}.
In particular, for $\phi \in \rD(\rA)$, using \eqref{eq:2.20} we have
\begin{equation}
\label{eq:3.16}
\frac{d}{dt}\left(u_\eps, \oldReps \phi\right)_{\LQeps} + \nu \left(\curl u_\eps, \curl \test\right)_{\LQeps} + \left\langle B_\eps(u_\eps,u_\eps), \test\right \rangle_\eps = \left\langle f^\eps, \test\right \rangle_\eps.
\end{equation}

In the next lemma, we will show that the function $\ale$, as introduced in \eqref{eq:notation}, taking values in $\rD(\rA^{-1})$ is equicontinuous, which along with the uniform bound \eqref{eq:alpha_bounded} gives us a strongly converging subsequence.

\begin{lemma}
\label{lemma3.3}
Let $u_\eps$ be a weak solution of \eqref{eq:3.1} and $\phi \in \rD(\rA)$, then there exists a constant $C(\eps)\,\colon$ $\lim_{\eps \to 0} C(\eps) = 0$, such that, for $\theta > 0$
\begin{equation}
\label{eq:3.17}
\left| \left(\tale(t + \theta) - \tale(t), \test \right)_\LQeps \right| \le \eps C(\eps)\theta^{1/2} \|\phi\|_{\rD(\rA)}.
\end{equation}
In particular,
\begin{equation}
\label{eq:3.18}
\left\| \ale (t+\theta) - \ale (t)\right\|_{\rD(\rA^{-1})} \le C(\eps) \theta^{1/2}.
\end{equation}
\end{lemma}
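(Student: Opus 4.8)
The plan is to convert the time increment of $\tale$ into a time increment of $u_\eps$ tested against $\test$, integrate the abstract equation, and estimate the resulting viscous, nonlinear and forcing contributions separately. First I would record two structural facts. By the definitions of $\oMe$, $\newMepstilde$ and $\oldReps$, together with the identity $M_\eps\circ\newReps=\mathrm{Id}$ on $\LS$ (which is established inside the proof of Lemma~\ref{lemma2.7}), one has $\tale=\oldReps\ale$ and $\newMepstilde\,\test=\test$, i.e. $\test$ lies in the range of $\newMepstilde$. Since $\newMepstilde$ is idempotent by \eqref{eq:2.30} and its range is orthogonal to that of $\widetilde N_\eps$ by Proposition~\ref{prop2.12}, $\newMepstilde$ is the orthogonal (hence self-adjoint) projection of $\LQeps$ onto its range, so that $(\tale(s),\test)_\LQeps=(\newMepstilde u_\eps(s),\test)_\LQeps=(u_\eps(s),\newMepstilde\test)_\LQeps=(u_\eps(s),\test)_\LQeps$. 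Therefore the left-hand side of \eqref{eq:3.17} equals $\bigl|(u_\eps(t+\theta)-u_\eps(t),\test)_\LQeps\bigr|$, and integrating the tested equation \eqref{eq:3.16} over $[t,t+\theta]$ gives
\[
(u_\eps(t+\theta)-u_\eps(t),\test)_\LQeps = -\nu\!\int_t^{t+\theta}\!\!(\curl u_\eps,\curl\test)_\LQeps\,ds -\int_t^{t+\theta}\!\! b_\eps(u_\eps,u_\eps,\test)\,ds +\int_t^{t+\theta}\!\!\langle f^\eps,\test\rangle_\eps\,ds.
\]

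The two linear terms are routine. For the viscous term, Cauchy--Schwarz in space and then in time yields $\nu\theta^{1/2}\bigl(\int_t^{t+\theta}\|\curl u_\eps\|_\LQeps^2\bigr)^{1/2}\|\curl\test\|_\LQeps$; the energy inequality \eqref{eq:3.3} (as used in Lemma~\ref{lemma3.1}) bounds $\int_0^T\|\curl u_\eps\|^2_\LQeps$ by $C\eps$, while Lemma~\ref{lem:grad-ret-vec} with $p=2$ gives $\|\curl\test\|_\LQeps\le C\eps^{1/2}\|\phi\|_{\hH^1(\bbS)}\le C\eps^{1/2}\|\phi\|_{\rD(\rA)}$, so this term is $O(\eps\,\theta^{1/2}\|\phi\|_{\rD(\rA)})$. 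For the forcing term, $\int_t^{t+\theta}|\langle f^\eps,\test\rangle_\eps|\,ds\le\theta^{1/2}\bigl(\int_0^T\|f^\eps\|_{\rV_\eps'}^2\bigr)^{1/2}\|\test\|_{\rV_\eps}$, and using assumption \eqref{eq:3.5} together with $\|\test\|_{\rV_\eps}\le C\eps^{1/2}\|\phi\|_{\rD(\rA)}$ again produces an $O(\eps\,\theta^{1/2}\|\phi\|_{\rD(\rA)})$ bound.

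The nonlinear term is the main obstacle, and I expect the bookkeeping there to be where the real care is needed. After antisymmetrising via \eqref{eq:1.11}, $b_\eps(u_\eps,u_\eps,\test)=-b_\eps(u_\eps,\test,u_\eps)$, which I would bound by $\int_t^{t+\theta}\|u_\eps\|_{\mathbb L^3(Q_\eps)}^2\|\nabla\test\|_{\mathbb L^3(Q_\eps)}\,ds$; the test factor is controlled by Lemma~\ref{lem:grad-ret-vec} with $p=3$ and the embedding $\hH^2(\bbS)\hookrightarrow\bbW^{1,3}(\bbS)$, giving $\|\nabla\test\|_{\mathbb L^3(Q_\eps)}\le C\eps^{1/3}\|\phi\|_{\rD(\rA)}$. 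To handle $u_\eps$ one splits $u_\eps=\oldReps\ale+\tble$: the retract part scales as $\|\oldReps\ale\|_{\mathbb L^3(Q_\eps)}\le\eps^{1/3}\|\ale\|_{\mathbb L^3(\bbS)}$ (a computation in the spirit of Lemma~\ref{lemma2.22}), controlled in time by the interpolation bound $\int_0^T\|\ale\|_{\mathbb L^3(\bbS)}^4\,ds\le C$ obtained after Lemma~\ref{lemma3.1}, while the normal part is handled by Corollary~\ref{cor_L3ineq} (built on the Poincaré and Ladyzhenskaya inequalities, Lemmas~\ref{lemma2.15}--\ref{lemma2.16}) together with the smallness $\int_0^T\|\tble\|_{\rV_\eps}^2\,ds\le C\eps$ from Lemma~\ref{lemma3.2}/\eqref{eq:3.13}. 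Collecting the $\eps$-powers and using Cauchy--Schwarz in time for the $\theta^{1/2}$ factor, the nonlinear contribution is again $O(\eps\,\theta^{1/2}\|\phi\|_{\rD(\rA)})$; the delicate point is precisely that the $\tble$-contributions must simultaneously carry the uniform $\eps$-smallness and a $\theta^{1/2}$-factor, so one must be careful to place the $L^2$-in-time norms of $\curl\tble$ (not merely $L^1$-in-time) on the right factors.

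Summing the three estimates gives \eqref{eq:3.17} with a constant controlled uniformly in $\eps$ by the a priori bounds in \eqref{eq:3.6} and \eqref{eq:3.13}. Finally, to pass to \eqref{eq:3.18}, I would use the polarised form of Lemma~\ref{lemma2.22}, namely $(\oldReps a,\oldReps b)_\LQeps=\eps\,(a,b)_\LS$, together with $\tale=\oldReps\ale$, to rewrite the left-hand side of \eqref{eq:3.17} as $\eps\,\bigl|(\ale(t+\theta)-\ale(t),\phi)_\LS\bigr|$. Cancelling the factor $\eps$ yields $\bigl|(\ale(t+\theta)-\ale(t),\phi)_\LS\bigr|\le C(\eps)\theta^{1/2}\|\phi\|_{\rD(\rA)}$ for every $\phi\in\rD(\rA)$, and taking the supremum over $\|\phi\|_{\rD(\rA)}\le1$ identifies the left-hand side as the $\rD(\rA^{-1})$-norm of the increment, which is exactly \eqref{eq:3.18}. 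The explicit prefactor $\eps$ in \eqref{eq:3.17} is what makes this cancellation clean, and the resulting uniform-in-$\eps$ modulus of continuity $C(\eps)\theta^{1/2}$ is what will later furnish the equicontinuity of $\{\ale\}$ in $C([0,T];\rD(\rA^{-1}))$.
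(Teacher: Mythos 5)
Your overall skeleton coincides with the paper's: reduce the increment to the tested equation via the orthogonality $(\tNe u_\eps(s),\test)_\LQeps=0$ (this is exactly the paper's \eqref{eq:3.19}), integrate \eqref{eq:3.16} over $[t,t+\theta]$, estimate term by term using the retract and scaling lemmas together with the a priori bounds of Lemmas~\ref{lemma3.1} and~\ref{lemma3.2}, and finally rewrite the pairing on the sphere and divide by $\eps$ to get \eqref{eq:3.18}. Your viscous and forcing estimates are correct, and in fact slightly simpler than the paper's (which integrates the $\tale$-part by parts onto $\Delta(\test)$ via \eqref{eq:3.24}).

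The genuine gap is in the nonlinear term. After antisymmetrising and applying the symmetric H\"older bound $|b_\eps(u_\eps,\test,u_\eps)|\le\|u_\eps\|^2_{\lL^3(Q_\eps)}\|\nabla\test\|_{\lL^3(Q_\eps)}$, every factor of $\tble$ has to be converted into $\|\tble\|_{\rV_\eps}$ through Corollary~\ref{cor_L3ineq}, and the only time-integrability available for that quantity is $\int_0^T\|\tble\|^2_{\rV_\eps}\,ds\le C\eps$. Hence the $\tble\times\tble$ contribution is bounded by $C\eps^{1/3}\cdot\eps\int_t^{t+\theta}\|\tble(s)\|^2_{\rV_\eps}\,ds\le C\eps^{7/3}\|\phi\|_{\rD(\rA)}$ with \emph{no} positive power of $\theta$, and the cross terms, in which $\|\ale\|_{\lL^3(\bbS)}$ is only controlled in $L^4$ in time, yield at best a factor $\theta^{1/4}$ (H\"older in time with exponents $4,2,4$). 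So the claimed bound $\eps C(\eps)\theta^{1/2}\|\phi\|_{\rD(\rA)}$ does not follow from the estimates as you set them up; you flag this as ``the delicate point,'' but the symmetric $\lL^3\times\lL^3\times\lL^3$ splitting cannot resolve it, because it spends the $L^2$-in-time budget of $\|\tble\|_{\rV_\eps}$ twice. The paper's fix is the asymmetric H\"older estimate $\lL^2\times\lL^3\times\lL^6$ (see \eqref{eq:3.28}--\eqref{eq:3.30}): one velocity factor is placed in $\lL^2(Q_\eps)$, where the $L^\infty$-in-time energy bounds $\sup_t\|\tale(t)\|_\LQeps\le C$ (after scaling, $\sup_t\|\ale(t)\|_\LS\le C$) and $\sup_t\|\tble(t)\|_\LQeps\le C\eps^{1/2}$ from Lemmas~\ref{lemma3.1} and~\ref{lemma3.2} apply, so that only a single factor of $\|\tble\|_{\rV_\eps}$ remains under the time integral and Cauchy--Schwarz in time produces the $\theta^{1/2}$. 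With that substitution the rest of your argument, including the polarised scaling identity used to pass from \eqref{eq:3.17} to \eqref{eq:3.18}, goes through. (A minor point, affecting the paper as much as you: the forcing estimate gives only a constant $C$ uniformly bounded in $\eps$, not one vanishing as $\eps\to0$; this is harmless, since only uniform boundedness of $C(\eps)$ is used in the compactness argument of Theorem~\ref{thm4.3}.)
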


\begin{proof}
Let $u_\eps$ be a weak solution of \eqref{eq:3.1} and $\phi \in \rD(\rA)$. Then using the definition of the map $\oldReps$ and Lemma~\ref{lemma2.7b}, we have,
for all $t\in [0,T]$,
\begin{align}
\label{eq:3.19}
\left(\tNe u_\eps(t), \test \right)_{\LQeps} & = \int_\bbS \inteps r^2 \tNe u_\eps(t,r\bx) \phi(\bx)\frac{1}{r}\,dr\,d\sigma(\bx)  \\
& = \int_\bbS \phi(\bx) \left(\inteps r \tNe u_\eps(t,r \bx)\,dr\right)d\sigma(\bx) = 0. \nn
\end{align}
Similarly we have, also for all $t\in [0,T]$,
\begin{equation}
\label{eq:3.20}
\left \langle \tNe f^\eps(t), \test \right\rangle_\eps = 0.
\end{equation}
Thus, by Proposition~\ref{prop2.12}, equations \eqref{eq:3.16}, \eqref{eq:3.19} and \eqref{eq:3.20}, the weak solution $u_\eps$ satisfies the following equality
\begin{align}
\frac{d}{dt} \left(\tMe u_\eps, \test \right)_{\LQeps} = & - \nu \left(\curl \tMe u_\eps, \curl \test \right)_{\LQeps} - \nu \left(\curl \tNe u_\eps, \curl \test \right)_{\LQeps}
\nonumber
\\
\label{eq:3.21}
 & - \left \langle\left[\tMe u_\eps \cdot \nabla \right]\tMe u_\eps , \test \right \rangle_\eps - \left \langle \left[\tNe u_\eps \cdot \nabla \right]\tMe u_\eps , \test \right\rangle_\eps   \\
& - \left\langle\left[\tMe u_\eps \cdot \nabla \right]\tNe u_\eps , \test \right\rangle_\eps  - \left\langle\left[\tNe u_\eps \cdot \nabla \right]\tNe u_\eps, \test \right\rangle_\eps \nn \\
& + \left\langle \tMe f^\eps, \test \right\rangle_\eps. \nn
\end{align}
Now integrating both sides with respect to time in the interval $[t, t+\theta]$ and using the notations from \eqref{eq:notation}, we get
\begin{align}
\label{eq:3.23}
&\left(\tale(t+\theta) - \tale(t), \test \right)_{\LQeps} = -\nu \int_t^{t+\theta} \left(\curl \tale(s) , \curl \test \right)_{\LQeps}\,ds \\
& \quad - \nu \int_t^{t+\theta} \left(\curl \tble(s), \curl \test\right)_{\LQeps}\,ds - \int_t^{t+\theta} \left\langle\left[\tale(s) \cdot \nabla \right]\tale(s), \test \right\rangle_\eps\,ds  \nn \\
& \quad - \int_t^{t+\theta} \left\langle\left[\tble(s) \cdot \nabla \right]\tale(s), \test \right\rangle_\eps\,ds - \int_t^{t+\theta} \left\langle\left[\tale(s) \cdot \nabla \right]\tble(s), \test \right\rangle_\eps\,ds  \nn \\
& \quad - \int_t^{t+\theta} \left\langle\left[\tble(s) \cdot \nabla \right]\tble(s), \test \right\rangle_\eps\,ds + \int_t^{t+\theta} \left\langle \tMe f^\eps(s), \test \right\rangle_\eps\,ds. \nn
\end{align}
From Lemma~\ref{lem:lap-ret-vec}, we have for $\phi \in {\rD(\rA)}$,
\begin{equation}
\label{eq:3.24}
\left\|\Delta\left(\test\right)\right\|_\LQeps^2 \le C \eps \|\phi \|^2_{\rD(\rA)}.
\end{equation}
Now, in order to establish inequality \eqref{eq:3.17}, we will estimate each of the term in the righ hand side
of Eq.~\eqref{eq:3.23}. We will use the H\"older inequality, scaling property from Lemma~\ref{lemma2.13}, a priori estimates from Lemmas~\ref{lemma3.1}, \ref{lemma3.2} and the relation \eqref{eq:3.24}.\\
For the first term, we obtain
\begin{align}
\label{eq:3.25}
\left|\int_t^{t+\theta} \left(\curl \tale(s) , \curl \test \right)_{\LQeps}\,ds\right| & \le \int_t^{t+\theta} \left|\left(\tale(s), \Delta\left(\test\right)\right)_\LQeps\right|\,ds \nn \\
& \le \int_t^{t+\theta} \|\tale(s)\|_\LQeps \left\|\Delta\left(\test\right)\right\|_\LQeps\,ds\nn \\
& \le \eps C \int_t^{t+\theta} \|\ale(s)\|_\LS \|\phi\|_{\rD(\rA)}\,ds \nn \\
& \le  \eps C \left(\sup_{s \in [0,T]}\|\ale(s)\|^2_\LS\right)^{1/2} \|\phi	\|_{\rD(\rA)}\,\theta \nn\\
& \le \eps C \|\phi\|_{\rD(\rA)}\,\theta .
\end{align}
Similarly for the second term we have
\begin{align}
\label{eq:3.26}
\left|\int_t^{t+\theta} \left(\curl \tble(s) , \curl \test \right)_{\LQeps}\,ds\right| & \le \int_t^{t+\theta} \left|\left(\tble(s), \Delta\left(\test\right)\right)_\LQeps\right|\,ds \nn \\
& \le \int_t^{t+\theta} \|\tble(s)\|_\LQeps \left\|\Delta\left(\test\right)\right\|_\LQeps\,ds\nn \\
&\le \eps^{3/2} C \int_t^{t+\theta} \|\curl \tble(s)\|_{\LQeps}\|\phi\|_{\rD(\rA)}\,ds \nn\\
&\le \eps^{3/2} C \|\curl \tble\|_{L^2(0,T; \LQeps)}\|\phi\|_{\rD(\rA)}\,\theta^{1/2} \nn \\
& \le \eps^{3/2} C_\eps\|\phi\|_{\rD(\rA)}\, \theta^{1/2}.
\end{align}
Now we consider the non-linear terms. Since $\bbW^{2,2}(\bbS) \hookrightarrow \bbW^{1,6}(\bbS)$, using \eqref{eq:grad-ret-vec1} and the H\"older inequality repeatedly we have
\begin{align}
\label{eq:3.27}
\left|\int_t^{t+\theta} \left\langle\left[\tale(s) \cdot \nabla \right]\tale(s), \test \right\rangle_\eps\,ds\right| & = \left|\int_t^{t+\theta} \left(\tale(s), \left[\tale(s) \cdot \nabla \right]\test \right)_\LQeps\,ds\right| \nn \\
& \le  \int_t^{t+\theta} \|\tale(s)\|_\LQeps\|\tale(s)\|_{\mathbb{L}^3(Q_\eps)}\left\|\nabla\left(\test\right)\right\|_{\mathbb{L}^6(Q_\eps)} ds \nn \\
& \le \eps C \int_t^{t+\theta}\|\ale(s)\|_\LS\|\ale(s)\|_{\mathbb{L}^3(\bbS)}
\|\phi\|_{\bbW^{1,6}(\bbS)}\,ds \nn \\
& \le \eps C \|\ale\|_{L^\infty(0,T; \LS)} \|\ale\|_{L^4(0,T; \mathbb{L}^3(\bbS))}\|\phi\|_{\bbW^{2,2}(\bbS)}\,\theta^{3/4} \nn \\
& \le \eps C_\eps \|\phi\|_{\rD(\rA)}\,\theta^{3/4}.
\end{align}
Similarly for the next non-liner term, we have
\begin{align}
\label{eq:3.28}
\left|\int_t^{t+\theta} \left\langle\left[\tble(s) \cdot \nabla \right]\tale(s), \test \right\rangle_\eps\,ds\right| &= \left|\int_t^{t+\theta} \left(\tale(s), \left[\tble(s) \cdot \nabla \right]\test \right)_\LQeps\,ds\right| \nn \\
& \le  \int_t^{t+\theta} \|\tale(s)\|_\LQeps \|\tble(s)\|_{\mathbb{L}^3(Q_\eps)}\left\|\nabla\left(\test\right)\right\|_{\mathbb{L}^6(Q_\eps)}\,ds \nn \\
& \le \eps^{7/6} C \int_t^{t+\theta}\|\ale(s)\|_\LS\|\tble(s)\|_{\rV_\eps}\|\phi\|_{\bbW^{1,6}(\bbS)}\,ds \nn \\
& \le \eps^{7/6} C \|\ale\|_{L^\infty(0,T; \LS)}\|\tble\|_{L^2(0,T; \rV_\eps)}
\|\phi\|_{\bbW^{2,2}(\bbS)}\,\theta^{1/2} \nn \\
& \le \eps^{7/6} C_\eps \|\phi\|_{\rD(\rA)}\,\theta^{1/2}.
\end{align}
Now as in the previous case, for the second mixed non-linear term, we obtain
\begin{align}
\label{eq:3.29}
\left|\int_t^{t+\theta} \left\langle\left[\tale(s) \cdot \nabla \right]\tble(s), \test \right\rangle_\eps\,ds\right| & \le \eps^{7/6} C \|\ale\|_{L^\infty(0,T; \LS)} \|\tble\|_{L^2(0,T; \rV_\eps)}\|\phi\|_{\rD(\rA)}\,\theta^{1/2} \nn \\
& \le \eps^{7/6} C_\eps \|\phi\|_{\rD(\rA)}\,\theta^{1/2}.
\end{align}
Now for the last non-linear term, we get
\begin{align}
\label{eq:3.30}
\left|\int_t^{t+\theta} \left\langle\left[\tble(s) \cdot \nabla \right]\tble(s), \test \right\rangle_\eps\,ds\right| & = \left|\int_t^{t+\theta} \left(\tble(s), \left[\tble(s) \cdot \nabla \right]\test \right)_\LQeps\,ds\right| \nn \\
& \le  \int_t^{t+\theta} \|\tble(s)\|_\LQeps \|\tble(s)\|_{\mathbb{L}^3(Q_\eps)}\left\|\nabla\left(\test\right)\right\|_{\mathbb{L}^6(Q_\eps)}\,ds \nn \\
& \le \eps^{2/3} C \int_t^{t+\theta}\|\tble(s)\|_\LQeps \|\tble(s)\|_{\rV_\eps}\|\phi\|_{\bbW^{1,6}(\bbS)}\,ds \nn \\
& \le \eps^{2/3} C \|\tble\|_{L^\infty(0,T; \LQeps)}\|\tble\|_{L^2(0,T; \rV_\eps)}\|\phi\|_{\bbW^{2,2}(\bbS)}\,\theta^{1/2} \nn \\
& \le \eps^{5/3} C \|\phi\|_{\rD(\rA)}\,\theta^{1/2}.
\end{align}
We are now left to deal with the last term corresponding to the external forcing $f^\eps$. Using the definition of map $\ocirc{M}_\eps$, the H\"older inequality and assumption on $f^\eps$ (see \eqref{eq:3.5})
\begin{align}
\label{eq:3.31}
\left|\int_t^{t+\theta} \left\langle \tMe f^\eps(s), \test \right\rangle_\eps\,ds\right| &= \left|\int_t^{t+\theta} \left(\int_\bbS \inteps M_\eps f^\eps(s, r\bx)\phi(\bx)\,dr\,d\sigma(\bx)\right)\,ds\right| \nn \\
& = \eps  \left|\int_t^{t+\theta} \left(\int_\bbS \ocirc{M}_\eps f^\eps(s,\bx) \phi(\bx)\,d\sigma(\bx)\right)\,ds\right|\nn\\
& \le \eps \int_t^{t+\theta} \|\ocirc{M}_\eps f^\eps(s)\|_{\rV^\prime}\|\phi\|_\rV\,ds \nn \\
& \le \sqrt{\eps} C \|f^\eps\|_{L^2(0,T; \rV_\eps^\prime)}\|\phi\|_{\rD(\rA)}\,\theta^{1/2} \nn\\
& \le \eps C \|\phi\|_{\rD(\rA)}\,\theta^{1/2}.
\end{align}
By the Cauchy-Schwarz inequality and using \eqref{eq:3.25}--\eqref{eq:3.31} in \eqref{eq:3.23}, we infer the inequality \eqref{eq:3.17}.\\
Since
\[\left(\tale(t + \theta) - \tale(t), \test \right)_\LQeps = \eps \left(\ale(t + \theta) - \ale(t), \phi \right)_\LS,\]
we obtain
\begin{equation}
\label{eq:3.32}
\left|\left(\ale(t + \theta) - \ale(t), \phi \right)_\LS\right| \le C(\eps)\|\phi\|_{\rD(\rA)}\,\theta^{1/2}, \qquad \forall\, \phi\,\in {\rD(\rA)}.
\end{equation}
Hence, we can conclude \eqref{eq:3.18}.
\end{proof}


\section{Navier--Stokes equations on the sphere}
\label{sec:NSE_S}
In this section we introduce Navier--Stokes equations on the sphere and then using a result by Simon, see Theorem~\ref{thm4.1}, we reprove the result of Temam and Ziane, see \cite[Theorem~B]{[TZ97]}, also Theorem~\ref{thm4.3}.

The deterministic NSE on $\bbS$ is given by
\begin{equation}
\label{eq:4.6}
\begin{split}
\partial_t \widetilde{u} - \nu \DDelta \widetilde{u} + \left(\tu \cdot \nablaS\right)\tu + \nablaS \widetilde{p} = \widetilde{f} \qquad \mbox{in }\, \bbS\times (0,T), \\
\ddivS \tu = 0 \qquad \mbox{in }\, \bbS\times (0,T),
\end{split}
\end{equation}
with the initial data and boundary condition
\begin{equation}
\label{eq:4.7}
\tu (0, \bx) = \tu_0 \quad \mbox{in }\,\bbS,
\end{equation}
where $\tu = (0, \tu_\lambda, \tu_\varphi)$.

The space-time weak formulation of \eqref{eq:4.6} is the following$\colon$
\begin{equation}
\label{eq:4.8}
\langle \tu(t) , \phi\rangle = \langle \tu_0, \phi \rangle - \int_0^t \left[\nu \langle \curlS \tu (s), \curlS \phi \rangle + \langle (\tu(s) \cdot \nablaS)\tu(s), \phi \rangle + \langle f(s), \phi \rangle\right]\,ds,
\end{equation}
for all $\phi \in \rV$, where $\langle \cdot, \cdot \rangle$ has to be understood as the duality between $\rV^\prime$ and $\rV$.

\begin{definition}
\label{defn4.2} 
Let $\widetilde{u_0} \in \rH$ and $f \in \rV^\prime$. A function $\tu \in \ccal([0,T]; \rH) \cap L^2(0,T; \rV)$ is called a weak solution of \eqref{eq:4.6}, if for every $\phi \in \rV$, $\tu$ satisfies \eqref{eq:4.8}.
\end{definition}

The main result of this section is the following:

\begin{theorem}\label{thm4.3}
Let $T > 0$, and $u_\eps$ be the weak solution of \eqref{eq:3.1}. We assume that
\begin{equation}
\label{eq:4.9}
 \lim_{\eps \to 0} \oMe u_0^\eps = \widehat{u}_0 \quad  \mbox{ weakly in } \rH,
\end{equation}
and for every $t \in [0,T]$
\begin{equation*}
 \lim_{\eps \to 0} \oMe f^\eps(t) = \widehat{f}(t) \quad  \mbox{ weakly in } \rH.
\end{equation*}
Then there exists $\widehat{u}$ such that
\begin{equation}
\label{eq:4.10}
   \lim_{\eps \to 0} \ale = \widehat{u} \quad \mbox{ strongly in } {\ccal}([0,T];{\rV^\prime}) \cap L^2(0,T; \rH),
\end{equation}
and $\widehat{u}$ is the weak solution of \eqref{eq:4.6} with the initial condition $\widehat{u}(0) = \widehat{u}_0$ in $\bbS$.
\end{theorem}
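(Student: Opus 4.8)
The plan is to produce the limit $\widehat{u}$ by a compactness argument built on the a priori bounds of Section~\ref{sec:detNSE}, then to identify $\widehat{u}$ as a weak solution by passing to the limit in the weak formulation tested against $\test=\oldReps\phi$, and finally to use uniqueness for the sphere problem to promote subsequential convergence to convergence of the whole family.

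First I would assemble the two ingredients needed for compactness. By Lemma~\ref{lemma3.1} (recorded in \eqref{eq:alpha_bounded}) the family $\{\ale\}_{\eps>0}$ is bounded in $L^\infty(0,T;\rH)\cap L^2(0,T;\rV)$, so along a subsequence (not relabelled) $\ale\rightharpoonup\widehat{u}$ weakly-$\ast$ in $L^\infty(0,T;\rH)$ and weakly in $L^2(0,T;\rV)$. By Lemma~\ref{lemma3.3}, estimate \eqref{eq:3.18}, the family is uniformly H\"older-$\tfrac12$ equicontinuous in $\ccal([0,T];\rD(\rA^{-1}))$. Since on $\bbS$ the embedding $\rV\hookrightarrow\rH$ is compact (Rellich--Kondrachov), so is the dual embedding $\rH\hookrightarrow\rV'$, and $\rV'\hookrightarrow\rD(\rA^{-1})$ continuously. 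I would then invoke the compactness criterion of Simon (Theorem~\ref{thm4.1}): boundedness in $L^\infty(0,T;\rH)$ together with equicontinuity in $\ccal([0,T];\rD(\rA^{-1}))$ yields relative compactness in $\ccal([0,T];\rV')$, while boundedness in $L^2(0,T;\rV)$ together with the same equicontinuity (via $\rV\hookrightarrow\hookrightarrow\rH$) yields relative compactness in $L^2(0,T;\rH)$. This delivers exactly the strong convergence \eqref{eq:4.10}.

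Next I would pass to the limit in the weak formulation. Dividing the integrated identity \eqref{eq:3.23} by $\eps$, and using the scaling $(\tale,\test)_{\LQeps}=\eps(\ale,\phi)_\LS$ together with the retract identity $\newMepstilde=\oldReps\circ\oMe$, produces for each $\phi\in\rD(\rA)$ an equation for $\ale$ whose terms I treat one at a time. The viscous term $\tfrac1\eps(\curl\tale,\curl\test)_{\LQeps}$ reduces, through the curl scaling identities for $\oldReps$ and Lemma~\ref{lemma2.14}, to the surface form $(\curlS\ale,\curlS\phi)_\LS$ up to an error vanishing with $\eps$, and passes to $(\curlS\widehat{u},\curlS\phi)_\LS$ by the weak $L^2(0,T;\rV)$ convergence. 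Every term carrying the normal component $\tble$, namely the viscous contribution and the three mixed or purely-$\tble$ nonlinear contributions, is governed precisely by the bounds \eqref{eq:3.26} and \eqref{eq:3.28}--\eqref{eq:3.30}: after division by $\eps$ these are $O(\eps^{1/2})$ uniformly on $[0,T]$ (indeed smaller, since Lemma~\ref{lemma3.2} makes $\|\curl\tble\|_{L^2}$ of order $\eps^{1/2}$), hence drop out. The forcing term converges to $\int\langle\widehat{f},\phi\rangle$ by the weak-convergence hypothesis on $\oMe f^\eps$ combined with \eqref{eq:3.5}, and the initial value $(\ale(0),\phi)_\LS=(\oMe u_0^\eps,\phi)_\LS$ converges to $(\widehat{u}_0,\phi)_\LS$ by \eqref{eq:4.9}.

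The main obstacle is the surviving nonlinear term $\tfrac1\eps b_\eps(\tale,\tale,\test)$. Rewriting it via the antisymmetry \eqref{eq:1.11} as $-\tfrac1\eps b_\eps(\tale,\test,\tale)$, its integrand is quadratic in $\tale$ paired against $\nabla\test$; the strong convergence of $\ale$ in $L^2(0,T;\rH)$ obtained in the compactness step is exactly what lets this quadratic expression pass to the limit, while the gradient scaling identities for $\oldReps$ identify the limit with the sphere trilinear form $b(\widehat{u},\widehat{u},\phi)$. Collecting all the limits, $\widehat{u}$ satisfies \eqref{eq:4.8} for every $\phi\in\rD(\rA)$, and hence, by density of $\rD(\rA)$ in $\rV$, for every $\phi\in\rV$, with $\widehat{u}(0)=\widehat{u}_0$; the uniform bounds give $\widehat{u}\in\ccal([0,T];\rH)\cap L^2(0,T;\rV)$, so $\widehat{u}$ is a weak solution in the sense of Definition~\ref{defn4.2}. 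Finally, since the two-dimensional Navier--Stokes equation on $\bbS$ admits a unique weak solution, the limit is independent of the extracted subsequence, so the whole family $\{\ale\}$ converges, establishing \eqref{eq:4.10}.
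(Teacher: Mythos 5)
Your proposal is correct and follows essentially the same route as the paper's proof: the identical Simon-compactness setup (Lemma~\ref{lemma3.1} plus the equicontinuity of Lemma~\ref{lemma3.3}, applied once with $p=\infty$ for $\ccal([0,T];\rV')$ and once with $p=2$ for $L^2(0,T;\rH)$), the same $\eps$-rescaled weak formulation tested against $\oldReps\phi$, the same vanishing of all $\tble$-terms via Lemma~\ref{lemma3.2}, and the same use of strong $L^2(0,T;\rH)$ plus weak $L^2(0,T;\rV)$ convergence for the nonlinear term. The only addition is your closing appeal to uniqueness of the weak solution on $\bbS$ to upgrade subsequential convergence to convergence of the whole family, a step the paper leaves implicit; this is a legitimate (and slightly more careful) completion rather than a different method.
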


\begin{proof}
Using Lemmas~ \ref{lemma3.1}, \ref{lemma3.3} and Theorem~\ref{thm4.1} for $p=\infty$ and $\rH \xhookrightarrow{\text{compact}} \rV^\prime \xhookrightarrow{} \rD(\rA^{-1})$, we conclude there exists a subsequence, still denoted by $\ale$, and a function $\widehat{u}$ such that
\[ \ale \rightarrow \widehat{u} \quad \mbox{ strongly in }\, \ccal([0,T]; \rV^\prime).\]
Applying Theorem~\ref{thm4.1} for $p=2$ and $\rV \xhookrightarrow{\text{compact}} \rH \xhookrightarrow{} {\rD(\rA^{-1})}$, using Lemmas~\ref{lemma3.1}, \ref{lemma3.3} and the embedding $L^\infty(0,T; {\rD(\rA^{-1})}) \subset L^2(0,T; {\rD(\rA^{-1})})$, we get another subsequence, still denoted by $\ale$, such that
\[\ale \rightarrow \widehat{u} \quad \text{ strongly in }\, L^2(0,T; \rH).\]
Now we will prove that $\widehat{u}$ is the weak solution of the deterministic
NSE on $\bbS$ \eqref{eq:4.6}, i.e. it satisfies \eqref{eq:4.8} for any $\phi \in \rV$.
From Lemma~\ref{lemma3.1} we have
\begin{equation}\label{eq:4.11}
\{\ale\}_{\eps > 0} \text{ is bounded in }\, L^\infty(0,T;\rH) \cap L^2(0,T;\rV).
\end{equation}
So, there exists a subsequence, still denoted by $\ale$, so that
\[ \ale \rightarrow \widehat{u} \quad \text{ weakly$^\ast$ in }\, L^\infty(0,T; \rH).\]
Therefore, by the assumption \eqref{eq:4.9}, we have
\begin{equation}
\label{eq:4.12}
\widehat{u}(0,\bx) = \widehat{u}_0, \quad \bx \in \bbS.
\end{equation}
Let us take $\phi = (0, \phi_\lambda, \phi_\varphi) \in \rU$ as the test function in \eqref{eq:3.21}, where
\[\rU := \left\{\v \in \ccal^\infty(\bbS) \colon \ddivS \v = 0\, \mbox{ in }\, \bbS\right\}\]
With the notations as in \eqref{eq:notation}, we have
\begin{align}
\label{eq:4.13}
&\left(\tale(t) , \test \right)_{\LQeps} = \left(\tMe u_0^\eps, \test \right)_{\LQeps} -\nu \int_0^{t} \left(\curl \tale(s) , \curl \test \right)_{\LQeps}\,ds \\
& \quad - \nu \int_0^{t} \left(\curl \tble(s), \curl \test\right)_{\LQeps}\,ds - \int_0^{t} \left\langle \left[\tale(s) \cdot \nabla \right]\tale(s), \test \right\rangle_\eps\,ds  \nn \\
& \quad - \int_0^{t} \left\langle\left[\tble(s) \cdot \nabla \right]\tale(s), \test \right\rangle_\eps\,ds - \int_0^{t} \left\langle\left[\tale(s) \cdot \nabla \right]\tble(s), \test \right\rangle_\eps\,ds  \nn \\
& \quad - \int_0^{t} \left\langle\left[\tble(s) \cdot \nabla \right]\tble(s), \test \right\rangle_\eps\,ds + \int_0^{t} \left\langle \tMe f^\eps(s), \test \right\rangle_\eps\,ds \nn.
\end{align}
Since
\begin{align*}
\left(\tale , \test \right)_{\LQeps} & = \eps \left(\ale, \phi \right)_{\LS} ,\\
\left(\tMe u_0^\eps, \test \right)_{\LQeps} & = \eps \left(\oMe u_0^\eps, \phi \right)_{\LS}, \\
\left(\curl \tale , \curl \test \right)_{\LQeps} & = \frac{\eps}{1 + \eps} \left(\curlS \ale, \curlS \phi \right)_{\LS},
\end{align*}
\begin{align*}
\left\langle \left[\tale(s) \cdot \nabla \right]\tale(s), \test \right\rangle_\eps & = \frac{\eps}{1+\eps}\left\langle\left[\ale \cdot \nablaS\right] \ale, \phi \right\rangle,\\
\left\langle\tMe f^\eps, \test \right\rangle_\eps & = \eps \left\langle\oMe f^\eps, \phi \right\rangle,
\end{align*}
we can rewrite Eq.~\eqref{eq:4.13} as follows
\begin{align}
\label{eq:4.14}
&\left(\alpha_\eps(t), \phi \right)_{\LS} =  \left(\oMe u_0^\eps, \phi \right)_{\LS} -\frac{\nu}{1+\eps} \int_0^t \left(\curlS \alpha_\eps(s), \curlS \phi \right)_{\LS}\,ds \\
&\quad  - \frac{1}{1+\eps} \int_0^{t} \left\langle\left[\alpha_\eps(s) \cdot \nablaS \right]\alpha_\eps(s), \phi \right\rangle\,ds + \int_0^{t} \left\langle\oMe f^\eps(s), \phi \right\rangle\,ds \nn \\
&\quad - \frac{\nu}{\eps} \int_0^{t} \left(\curl \tble(s), \curl \test\right)_{\LQeps}\,ds - \frac{1}{\eps} \int_0^{t} \left\langle\left[\tble(s) \cdot \nabla \right]\tale(s), \test \right\rangle_\eps\,ds  \nn \\
&\quad - \frac{1}{\eps} \int_0^{t} \left\langle\left[\tale(s) \cdot \nabla \right]\tble(s), \test \right\rangle_\eps\,ds  - \frac{1}{\eps} \int_0^{t} \left\langle\left[\tble(s) \cdot \nabla \right]\tble(s), \test \right\rangle_\eps\,ds \nn.
\end{align}
We will show that the last four terms of Eq.~\eqref{eq:4.14} converge to $0$ as $\eps \to 0$. Using \eqref{eq:3.26} and Lemma~\ref{lemma3.2} we have
\begin{align}
\label{eq:4.15}
\left|\frac{\nu}{\eps} \int_0^{t} \left(\curl \tble(s), \curl \test\right)_{\LQeps}\,ds \right| \le c \nu \eps^{1/2}\|\tble\|_{L^2(0,T; \rV_\eps)}\|\DDelta \phi\|_\LS \to 0.
\end{align}
Using the H\"older inequality, Corollary~\ref{cor_L3ineq} and the scaling property (Lemma~\ref{lemma2.13}), we have
\begin{align}
\label{eq:4.16}
\Big|\frac{1}{\eps} &\int_0^{t} \left\langle\left[\tble(s) \cdot \nabla \right]\tale(s), \test \right\rangle_\eps\,ds \Big| \le \frac{1}{\eps} \|\nabla \oldReps \phi\|_{L^6(Q_\eps)} \int_0^t \|\tale(s)\|_\LQeps \|\tble(s)\|_{\lL^3(Q_\eps)} ds \\
 &\le c \eps^{1/6} \|\ale\|_{L^\infty(0,T; \rH)}\|\tble\|_{L^2(0,T; \rV_\eps)}\|\phi\|_{\bbW^{1,6}(\bbS)} \to 0, \nn
\end{align}
where the convergence holds because of uniform bounds (in $\eps$) obtained in Lemmas~\ref{lemma3.1}, \ref{lemma3.2}.\\
Similarly, we get by Lemmas~\ref{lemma3.1} and \ref{lemma3.2}
\begin{align}
\label{eq:4.17}
\Big|\frac{1}{\eps}& \int_0^{t} \left\langle\left[\tale(s) \cdot \nabla \right]\tble(s), \test \right\rangle_\eps\,ds\Big| \\
 &\le c \eps^{1/6} \|\ale\|_{L^\infty(0,T; \rH)}\|\tble\|_{L^2(0,T; \rV_\eps)}\|\phi\|_{\bbW^{1,6}(\bbS)} \to 0.\nn
\end{align}
Now for the final term, using the H\"older inequality , the Poincar\'e inequality \eqref{eq:2.24} and Lemma~\ref{lemma3.2}, we have (see \eqref{eq:3.30} for explicit calculations)
\begin{align}
\label{eq:4.18}
\Big|\frac{1}{\eps} \int_0^{t} \left\langle\left[\tble(s) \cdot \nabla \right]\tble(s), \test \right\rangle_\eps\,ds \Big| \le c \eps^{2/3} \|\tble\|^2_{L^2(0,T; \rV_\eps)}\| \phi\|_{\bbW^{1,6}(\bbS)} \to 0.
\end{align}
For the remaining three terms, we have
\begin{align}
\label{eq:4.19}
& \frac{\nu}{1+\eps} \int_0^t \left(\curlS \alpha_\eps(s), \curlS \phi \right)_{\LS}\,ds  = - \frac{\nu}{1+\eps} \int_0^t \left(\alpha_\eps(s), \DDelta \phi \right)_\LS\,ds \\
& \rightarrow \qquad \quad - \nu \int_0^t \left(\widehat{u}, \DDelta \phi \right)_\LS\,ds = \nu \int_0^t \left(\curlS \widehat{u}, \curlS \phi \right)_\LS\,ds, \nn
\end{align}
as $\eps \to 0$, because of \eqref{eq:2.20} and \eqref{eq:4.10}. For the remaining non-linear term, we obtain
\begin{align}
\label{eq:4.20}
&\left| \int_0^{t} \left\langle\left[\alpha_\eps(s) \cdot \nablaS \right]\alpha_\eps(s), \phi \right\rangle\,ds - \int_0^t \left\langle\left[\widehat{u}(s) \cdot \nablaS \right]\widehat{u}(s), \phi \right\rangle\,ds\right| \\
&\le \left|\int_0^t \left\langle\left[\left(\alpha_\eps(s) - \widehat{u}(s) \right)\cdot \nablaS \right]\hu(s) , \phi \right\rangle\,ds\right|
+ \left|\int_0^t \left\langle\left[\ale(s)\cdot \nablaS \right]\left(\alpha_\eps(s) - \widehat{u}(s)\right) , \phi \right\rangle\,ds\right| \nn \\
& \le \|\alpha_\eps - \widehat{u}\|_{L^2(0,T;\rH)} \|\hu\|_{L^2(0,T;\rV)}\|\phi\|_{\lL^\infty(\bbS)} + \left|\int_0^t \left\langle\nablaS\left(\alpha_\eps(s) - \widehat{u}(s)\right) , \ale(s)\phi \right\rangle\,ds\right| \nn  \to 0
\end{align}
as $\eps \to 0$, due to strong convergence of $\ale \to \widehat{u}$ in $L^2(0,T; \rH)$, uniform boundedness of $\ale$ in $L^2(0,T; \rV)$ and weak convergence of $\ale \to \widehat{u}$ in $L^2(0,T; \rV)$. Now by assumption of the theorem, we have
\begin{equation}
\label{eq:4.21}
\int_0^{t} \left\langle\oMe f^\eps(s), \phi \right\rangle\,ds \to \int_0^{t} (\widehat{f}(s), \phi )_\LS\,ds.
\end{equation}
Hence, from \eqref{eq:4.12}, and \eqref{eq:4.15}--\eqref{eq:4.21}, the right hand side in \eqref{eq:4.14} converges to
\begin{align*}
\left(\widehat{u}_0, \phi\right)_{\LS} - \int_0^t\left[\nu \left(\curlS \widehat{u}(s), \curlS \phi \right)_{\LS} - \left\langle\left[\widehat{u}(s) \cdot \nablaS \right]\widehat{u}(s), \phi \right\rangle + (\widehat{f}(s), \phi )_\LS\,ds \right]\,ds,
\end{align*}
as $\eps \to 0$ ($t$ is fixed). By \eqref{eq:4.11} $\forall\,\phi \in \rU$ and $t \in [0,T]$
\[\lim_{\eps \to 0} \left(\alpha_\eps (t), \phi \right)_{\LS} = \left(\widehat{u}(t), \phi \right)_{\LS}.\]
Therefore, $\forall\, \phi \in \rU$ and $t \in [0,T]$
\begin{align*}
\left( \widehat{u}(t) , \phi \right)_{\LS} = & \left( \widehat{u}_0, \phi \right)_{\LS} - \int_0^t \left[\nu \left( \curlS \widehat{u} (s), \curlS \phi \right)_{\LS} - \left\langle (\widehat{u}(s) \cdot \nablaS)\widehat{u}(s), \phi \right\rangle\right]\,ds \\
&\;\; + \int_0^t (\widehat{f}(s), \phi )_\LS\,ds\,ds . \nn
\end{align*}
We conclude the proof using the density of $\rU$ in $\rV$ under $\hH^1(\bbS)$-norm.
\end{proof}

\section{Stochastic NSE on thin spherical domains}
\label{sec:SNSE_shell}

This section deals with the proof of our main result, Theorem~\ref{thm:main_thm}. First we introduce our two systems; stochastic NSE in thin spherical domain and stochastic NSE on the sphere, then we present the definition of martingale solutions for both systems. We also state the assumptions under which we prove our result. In Section~\ref{sec:estimates_shell_sphere}, we obtain a priori estimates (formally) which we further use to establish some tightness criterion (see Section~\ref{sec:tightness_shell}) which along with Jakubowski's generalisation of Skorokhod Theorem gives us a converging (in $\varepsilon$) subsequence. At the end of this section we show that the limiting object of the previously obtained converging subsequence is a martingale solution of stochastic NSE on the sphere (see Section~\ref{sec:proof-theorem}).

In thin spherical domain $Q_\eps$, which was introduced in \eqref{def:Qeps}, we consider the following stochastic Navier--Stokes equations (SNSE)
\begin{align}
 d \tue - [ \nu \Delta \tue - (\tue \cdot \nabla) \tue - \nabla \widetilde{p}_\eps]dt = \tfe dt +  \widetilde{G}_\eps\, d\tWe(t) &\quad \text{ in } Q_\eps \times (0,T), \label{eq:5.1}\\
\ddiv \tue = 0 &\quad \text{ in } Q_\eps \times (0,T),\label{eq:5.2} \\
\tue \cdot \vec{n} = 0, \quad \curl \tue \times \vec{n} = 0 &\quad\text{ on } \partial Q_\eps \times (0,T), \label{eq:5.3} \\
\tue(0, \cdot) = \tu_0^\eps &\quad\text{ in } Q_\eps. \label{eq:5.4}
\end{align}
Recall that, $\tue=(\tue^r, \tue^\lambda, \tue^\varphi)$ is the fluid velocity field, $p$ is the pressure, $\nu>0$ is a (fixed) kinematic viscosity, $\tu_0^\eps$ is a divergence free vector field on $Q_\eps$ and $\vecn$ is the unit normal outer vector to the boundary $\partial Q_\eps$. 
We assume that\footnote{We could have considered the case  $N = \infty$. This case will be considered in the companion paper \cite{[BDL19]}.} $N \in \N$. 
We consider a family of maps 
\[\widetilde{G}_\eps : \Rp   \to \mathcal{T}_2(\R^N; \rH_\eps)\]
such that
\begin{equation}
\label{eq:5.5}
\widetilde{G}_\eps(t) k  := \sum_{j=1}^N \widetilde{g}^j_\eps(t) k_j,\;\; k=(k_j)_{j=1}^N \in \mathbb{R}^N,
\end{equation}
for some  $\widetilde{g}_\eps^j:  \Rp  \to \rH_\eps$, $j=1, \cdots,N$. The Hilbert--Schmidt norm of $\widetilde{G}_\eps$ is given by
\begin{equation}
\label{eq:HSnorm}
\|\widetilde{G}_\eps(s)\|^2_{\mathcal{T}_2(\R^N; \rH_\eps)} = \sum_{j=1}^N \|\widetilde{g}^j_\eps(s)\|^2_{\LQeps}.
\end{equation}

Finally  we assume that  $\tWe(t)$, $t \ge 0$ is an $\R^N$-valued Wiener process defined  on the probability space $\left(\Omega, \mathcal{F}, \mathbb{F}, \mathbb{P}\right)$. We assume that 
 $\left(\beta_j\right)_{j=1}^N$ are i.i.d real valued Brownian motions such that  $W(t)=\bigl(\beta_j(t)\bigr)_{j=1}^N$, $t\geq 0$.

In this section, we shall establish convergence of the radial averages of the martingale solution of the 3D stochastic equations \eqref{eq:5.1}--\eqref{eq:5.4}, as the thickness of the shell $\eps \rightarrow 0$, to a martingale solution $u$ of the following stochastic Navier--Stokes equations on the sphere $\bbS$:
\begin{align}
du - \left[ \nu \DDelta u - (u \cdot \nablaS) u  - \nablaS p \right]dt = f dt + G\,dW(t) & \quad\text{ in } \bbS \times (0,T), \label{eq:5.6}\\
\ddivS u = 0 & \quad \text{ in } \bbS \times (0,T),\label{eq:5.7} \\
u(0, \cdot ) = u_0 & \quad\text{ in } \bbS,\label{eq:5.9}
\end{align}
where $u=(0, u_\lambda, u_\varphi)$ and $\DDelta$, $\nablaS$ are as defined in \eqref{eq:C.1.1}--\eqref{eq:C.1.5}. Assumptions on initial data and external forcing will be specified later (see Assumptions \ref{ass5.1},~\ref{ass_sphere}). Here, $G : \Rp \to \mathcal{T}_2(\R^N; \rH)$ and $W(t)$, $t \ge 0$ is an $\R^N$-valued Wiener process such that
\begin{equation}
\label{eq:wiener}
G(t)dW(t) := \sum_{j=1}^N g^j(t) d\beta_j(t),
\end{equation}
where $N \in \N$, $\left(\beta_j\right)_{j=1}^N$ are i.i.d real valued Brownian motions as before and $\left\{g^j\right\}_{j=1}^N$ are elements of $\rH$, with certain relation to $\tge^j$, which is specified later in Assumption~\ref{ass_sphere}.

\begin{remark}
We are aware of other formulations of the Laplacian in \eqref{eq:4.6} such as the one with an additional
Ricci tensor term \cite{[Serrin59],[Taylor92]}. However, as it was written in \cite[p. 144]{[Serrin59]}, 
``Deriving appropriate equations of motion involves dynamical considerations which do not seem adapted to 
Riemannian space; in particular it is not evident how to formulate the principle of conservation of momentum.''
Therefore, in this paper, we follow the approach presented in \cite{[TZ97]}, that the Navier--Stokes 
equations on the sphere is the thin shell limit of the 3-dimensional Navier--Stokes equations defined on 
a thin spherical shell.
\end{remark}

Now, we specify assumptions on the initial data $\widetilde{u}_0^\eps$ and external forcing $\widetilde{f}_\eps$, $\widetilde{g}_\eps^j$.
\begin{assumption}\label{ass5.1}
Let $\left(\Omega, \mathcal{F}, \mathbb{F}, \mathbb{P}\right)$ be the given filtered probability space. Let us assume that $p \ge 2$ and that  $\tu_0^\eps \in \rH_\eps$, for $\eps \in (0,1]$,   such that for some $C_1 > 0$
\begin{equation}
\label{eq:5.10}
\|\tu_0^\eps\|_{\LQeps} = C_1\eps^{1/2}, \qquad \eps \in (0,1].
\end{equation}
We also assume that  $\tfe \in L^p([0,T]; \rV_\eps^\prime)$, for $\eps \in (0,1]$,  such that for some $C_2 > 0$,
\begin{equation}
\label{eq:5.11}
\int_0^T\|\tfe(s)\|^p_{\rV_\eps^\prime}\,ds \le C_2 \eps^{p/2}, \qquad \eps \in (0,1].
\end{equation}
Let $\wtd{W}^\eps$ be an $\R^N$-valued Wiener process as before and assume that 
\[
\widetilde{G}_\eps \in L^p(0,T; \mathcal{T}_2(\R^N; \rH_\eps)), \;\;\mbox{for $\eps \in (0,1]$,}
\]
such that, using convention \eqref{eq:5.5}, for each $j=1,\ldots,N$,
\begin{equation}\label{eq:5.12}
\int_0^T\|\tge^j(t)\|^p_{\LQeps}dt = O(\eps^{p/2}),\;\; \eps \in (0,1].
\end{equation}
\end{assumption}

Projecting the stochastic NSE (on thin spherical shell) \eqref{eq:5.1}--\eqref{eq:5.4} onto $\rH_\eps$ using the Leray-Helmholtz projection operator and using the definitions of operators from Section~\ref{sec:functional_setting_shell}, we obtain the following abstract It\^o equation in $\rH_\eps$, $t \in [0,T]$
\begin{equation}
\label{eq:5.13}
d \tue (t) + \left[\nu \rA_\eps \tue(t) + B_\eps(\tue(t), \tue(t))\right]dt = \tfe(t)\,dt + \widetilde{G}_\eps(t)\,d \tWe(t), \quad \tue(0) = \tu_0^\eps.
\end{equation}

\begin{definition}
\label{defn5.2}
Let $\eps \in (0,1]$. A martingale solution to \eqref{eq:5.13} is a system
\[\left(\Omega, \calF, \bbF, \bbP, \wtd{W}_\eps, \wtd{u}_\eps\right)\]
where $\left(\Omega, \calF, \bbP \right)$ is a probability space and $\bbF = \left(\calF_t\right)_{t \ge 0}$ is a filtration on it, such that
\begin{itemize}
\item $\wtd{W}_\eps$ is a $\R^N$-valued Wiener process on $\left(\Omega, \calF, \bbF, \bbP\right)$,
\item $\wtd{u}_\eps$ is $\rV_\eps$-valued progressively measurable process, $\rH_\eps$-valued weakly continuous $\bbF$-adapted process such that\footnote{The space $X^w$ denotes a topological space $X$ with weak topology. In particular, $\ccal([0,T]; X^w)$ is the space of weakly continuous functions $\v:[0,T] \to X$.} $\bbP$-a.s.
\[
    \tue(\cdot, \omega) \in \ccal([0,T],{\rm H}_{\eps}^w) \cap L^2(0,T; \rV_\eps),
\]
\[\bbE \left[\frac{1}{2} \sup_{0\le s \le T} \|\tue(s) \|^2_{\LQeps}
  + \nu \int_0^T \| \curl \tue (s) \|^2_{\LQeps}\,ds    \right] < \infty\]
and, for all $t \in [0,T]$ and $\v \in \rV_\eps$, $\bbP$-a.s.,

\begin{align}
\label{eq:5.14}
& (\tue(t),\v)_{\LQeps} +  \nu \int_0^t (\curl \tue(s), \curl \v)_{\LQeps} \,ds + \int_0^t \left\langle B_\eps( \tue(s), \tue(s)), \v\right\rangle_\eps\,ds \\
&\quad = (\tu_0^\eps, \v)_{\LQeps} + \int_0^t \left\langle\tfe(s),\v\right\rangle_{\eps}\,ds + \left( \int_0^t \widetilde{G}_\eps(s)\,d\wtd{W}_\eps(s),\v \right)_\LQeps . \nn
\end{align}
\end{itemize}
\end{definition}

In the following remark we show that a martingale solution $\tue$ of \eqref{eq:5.13}, as defined above, satisfies an equivalent equation in the weak form.
\begin{remark}
\label{rem_mod_equation}
Recall the definition of the processes $\tale$, $\ale$ and $\tble$ from \eqref{eq:notation}, where
\[\tue(t) = \tale(t) + \tble(t), \qquad t \in [0,T].\]
Then, for $\phi \in \rD(\rA)$, we have
\begin{align*}
\left(\tale , \test \right)_{\LQeps} & = \eps \left(\ale, \phi \right)_{\LS} ,\\
\left(\tMe u_0^\eps, \test \right)_{\LQeps} & = \eps \left(\oMe u_0^\eps, \phi \right)_{\LS}, \\
\left(\curl \tale , \curl \test \right)_{\LQeps} & = \frac{\eps}{1 + \eps} \left(\curlS \ale, \curlS \phi \right)_{\LS}, \\
\left\langle\left[\tale \cdot \nabla \right]\tale, \test \right\rangle_\eps & = \frac{\eps}{1+\eps}\left\langle\left[\ale \cdot \nablaS\right]\ale, \phi \right\rangle,\\
\left\langle\tMe f^\eps, \test \right\rangle_\eps & = \eps \left\langle\oMe f^\eps, \phi \right\rangle,\\
\left(\sum_{j=1}^N \tMe \left( \tge^j d\beta_j(\cdot)\right), \test \right)_\LQeps & = \eps \left(\sum_{j=1}^N \oMe \left( \tge^j d\beta_j(\cdot)\right), \phi \right)_\LS,
\end{align*}
and using Lemma~\ref{lemma2.7}, Proposition~\ref{prop2.12} and Lemma~\ref{lemma2.14}, we can rewrite the weak formulation identity \eqref{eq:5.14} as follows.
\begin{align}
\label{eq:snse_mod}
&\left(\alpha_\eps(t), \phi \right)_{\LS} =  \left(\oMe \tu_0^\eps, \phi \right)_{\LS} -\frac{\nu}{1+\eps} \int_0^t \left(\curlS \alpha_\eps(s), \curlS \phi \right)_{\LS}\,ds \\
&\quad  - \frac{1}{1+\eps} \int_0^{t} \left\langle\left[\alpha_\eps(s) \cdot \nablaS \right]\alpha_\eps(s), \phi \right\rangle\,ds + \int_0^{t} \left\langle\oMe \tfe(s), \phi \right\rangle\,ds \nn \\
&\quad + \int_0^t \left(\sum_{j=1}^N \ocirc{M}_\eps \left(\tge^j(s)d \beta_j(s)\right), \phi \right)_\LS - \frac{\nu}{\eps} \int_0^{t} \left(\curl \tble(s), \curl \test\right)_{\LQeps}\,ds \nn \\ & \quad - \frac{1}{\eps} \int_0^{t} \left\langle\left[\tble(s) \cdot \nabla \right]\tale(s), \test \right\rangle_\eps\,ds  - \frac{1}{\eps} \int_0^{t} \left\langle\left[\tale(s) \cdot \nabla \right]\tble(s), \test \right\rangle_\eps\,ds \nn \\
&\quad  - \frac{1}{\eps} \int_0^{t} \left\langle\left[\tble(s) \cdot \nabla \right]\tble(s), \test \right\rangle_\eps\,ds, \nn
\end{align}
where $\langle \cdot, \cdot \rangle$ denotes the duality between $\rV^\prime$ and $\rV$.
\end{remark}

Next, we present the definition of a martingale solution for stochastic NSE on $\bbS$.

\begin{definition}
\label{defn_mart_SNSE_sphere}
A {martingale solution} to equation \eqref{eq:5.6}--\eqref{eq:5.9} is a system
\[\left(\widehat{\Omega}, \widehat{\calF}, \widehat{\bbF}, \widehat{\bbP}, \widehat{W}, \widehat{u}\right)\]
where $\left(\widehat{\Omega}, \widehat{\calF},\widehat{\bbP} \right)$ is a probability space and $\widehat{\bbF} = \left(\widehat{\calF}_t\right)_{t \ge 0}$ is a filtration on it, such that
\begin{itemize}
\item $\what{W}$ is an $\R^N$-valued Wiener process on $\left(\what{\Omega}, \what{\calF}, \what{\bbF}, \what{\bbP}\right)$,
\item $\what{u}$ is $\rV$-valued progressively measurable process, $\rH$-valued continuous $\what{\bbF}$-adapted process such that
\[
    \widehat{u}(\cdot, \omega) \in \ccal([0,T],{\rm H}) \cap L^2(0,T; \rV),
\]
\[\what{\bbE} \left[ \sup_{0\le s \le T} \|\hu(s) \|^2_{\LS}
  + \nu \int_0^T \| \curlS \hu (s) \|^2_{\LS}\,ds    \right] < \infty\]
and 
\begin{align}
\label{eq:mart_sol_sphere}
\left(\hu(t), \phi\right)_\LS + \nu \int_0^t \left(\curlS \hu(s), \curlS \phi \right)_\LS\,ds + \int_0^{t} \left\langle\left[\hu(s) \cdot \nablaS \right]\hu(s), \phi \right\rangle\,ds  \\
 = \left(u_0, \phi \right)_\LS + \int_0^{t} \left\langle f(s), \phi \right\rangle\,ds + \left(\int_0^t G(s) d \widehat{W}(s), \phi \right)_\LS, \nn
\end{align}
for all $t \in [0,T]$ and $\phi \in \rV$.
\end{itemize}
\end{definition}

\begin{assumption}\label{ass_sphere}
Let $p \ge 2$. Let $\left(\widehat{\Omega}, \widehat{\mathcal{F}}, \widehat{\mathbb{F}}, \widehat{\mathbb{P}}\right)$ be the given probability space, $\u_0 \in \rH$ such that
\begin{equation}
\label{eq:initial_data_convg}
\lim_{\eps\rightarrow 0} \oMe\widetilde{u}_0^\eps = u_0 \quad \text{weakly in } \rH.
\end{equation}
Let $f \in L^p([0,T]; \rV^\prime)$, such that for every
$s \in [0,T]$,
\begin{equation}
\label{eq:convg_ext_force}
\lim_{\eps\rightarrow 0} \langle \oMe \tfe(s), \v \rangle
  = \langle f(s), \v \rangle \text{ for all } \v \in \rV.
\end{equation}
And finally, we assume that $G \in L^p(0,T; \mathcal{T}_2(\R^N; \rH))$, such that for each $j=1,\ldots,N$ and $s \in [0,T]$, $\oMe \tge^j(s)$ converges weakly to $g^j(s)$ in $\mathbb{L}^2(\bbS)$ as $\eps \to 0$ and
\begin{equation}\label{eq:bdd_coeff}
\int_0^T\|g^j(t)\|^2_{\LS}dt \le M,
\end{equation}
for some $M > 0$.
\end{assumption}

\begin{remark}[{\bf Existence of martingale solutions}]
\label{rem_mart_sol_SNSE_shell}
In a companion  paper \cite{[BDL19]} we will address an easier  question about  the existence of a martingale solution for \eqref{eq:0.1}--\eqref{eq:0.4} in a more general setting with multiplicative noise. The key idea of the proof is taken from \cite{[BM13]}, where authors prove existence of a martingale solution for stochastic NSE in unbounded 3D domains.

The existence of a pathwise unique strong solution (hence a martingale solution) for the stochastic NSE on a sphere $\bbS$ is already established by two of the authors and Goldys in \cite{[BGL15]}. Through this article we give another proof of the existence of a martingale solution for such a system.
\end{remark}

We end this subsection by stating the main theorem of this article.

\begin{theorem}
\label{thm:main_thm}
Let the given data $\widetilde{u}_0^\eps$, $u_0$, $\tfe$, $f$, $\tge^j$, $g^j$, $j \in \{1, \cdots, N\}$ satisfy Assumptions~\ref{ass5.1} and \ref{ass_sphere}. Let $\left( \Omega, \mathcal{F}, \mathbb{F}, \mathbb{P}, \wtd{W}_\eps, \tue\right)$
be a martingale solution of \eqref{eq:5.1}--\eqref{eq:5.4} as defined in Definition~\ref{defn5.2}. Then, the averages in the radial direction of this martingale solution i.e. $\hae := \oMe[\tue]$ converge to a martingale solution, $\left(\widehat{\Omega}, \widehat{\mathcal{F}}, \widehat{\mathbb{F}}, \widehat{\mathbb{P}}, \widehat{W}, \what{u}\right)$, of \eqref{eq:5.6}--\eqref{eq:5.9} in $L^2(\widehat{\Omega}\times[0,T]\times \bbS)$.
\end{theorem}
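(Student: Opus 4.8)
The plan is to follow the stochastic compactness method, mirroring the deterministic argument of Theorem~\ref{thm4.3} but replacing Simon's compactness theorem (Theorem~\ref{thm4.1}) by a tightness argument combined with Jakubowski's generalisation of the Skorokhod representation theorem. First I would derive, by applying the It\^o formula to $\|\tue(t)\|^2_{\LQeps}$ for the abstract equation \eqref{eq:5.13} and taking expectations, the stochastic analogues of the a priori estimates of Lemma~\ref{lemma3.1} and Lemma~\ref{lemma3.2}. Writing $\tue=\tale+\tble$ and using the orthogonal splitting of Proposition~\ref{prop2.12}, Corollary~\ref{cor_grad_alpha}, the scaling identity of Lemma~\ref{lemma2.13}, the bound \eqref{eq:5.12} on $\tge^j$ and the Burkholder--Davis--Gundy inequality, this should yield a bound
\[
\bbE\Big[\sup_{0\le t\le T}\|\hae(t)\|^2_{\LS} + \int_0^T\|\curlS \hae(s)\|^2_{\LS}\,ds\Big]\le C
\]
uniform in $\eps\in(0,1]$, together with a smallness estimate $\bbE\big[\sup_{t}\|\tble(t)\|^2_{\LQeps} + \int_0^T\|\curl\tble(s)\|^2_{\LQeps}\,ds\big]\le C\eps$ for the normal component. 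Higher moments, available since $p\ge2$ in Assumption~\ref{ass5.1}, are obtained in the same way and are needed below.

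Next I would prove tightness of the laws of $\{\hae\}_{\eps}$ on
\[
\calZ := \ccal([0,T];\rD(\rA^{-1})) \cap L^2_w(0,T;\rV) \cap L^2(0,T;\rH) \cap \ccal([0,T];\rH^w).
\]
The uniform bounds of the first step give tightness in the two weak-topology components immediately; for the strong components a time-regularity estimate is required. The drift contributions are controlled exactly as in the equicontinuity bound \eqref{eq:3.17} of Lemma~\ref{lemma3.3}, while the stochastic integral $\int_0^t\sum_j\oMe\big(\tge^j(s)\,d\beta_j(s)\big)$ is estimated through the Burkholder--Davis--Gundy inequality and \eqref{eq:5.12}. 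This gives, for some $\kappa\in(0,\tfrac12)$, a uniform bound on the Sobolev--Slobodetskii norm $\bbE\|\hae\|^2_{W^{\kappa,2}(0,T;\rD(\rA^{-1}))}$, which together with the uniform $L^2(0,T;\rV)$ bound yields compactness in $L^2(0,T;\rH)$ by a fractional Aubin--Lions--Simon compactness lemma, and hence tightness on $\calZ$.

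Because $\calZ$ carries non-metrisable weak topologies, I would then apply Jakubowski's generalisation of the Skorokhod theorem to the joint laws of $(\hae,\tWe)$, obtaining a new probability space $(\homega,\widehat{\calF},\hp)$, a subsequence $\eps_k\to0$ and random variables $(\bar\alpha_{\eps_k},\bar W_{\eps_k})$ with the same laws as $(\haek,\widetilde{W}_{\eps_k})$ and converging $\hp$-a.s.\ in $\calZ\times\ccal([0,T];\R^N)$ to a limit $(\hu,\widehat{W})$. One verifies that $\widehat{W}$ is an $\R^N$-valued Wiener process for the filtration $\widehat{\bbF}$ generated by $(\hu,\widehat{W})$ and that the a priori bounds pass to the limit, giving the integrability required in Definition~\ref{defn_mart_SNSE_sphere}.

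Finally I would pass to the limit in the weak form \eqref{eq:snse_mod} written for $\bar\alpha_{\eps_k}$ on the new space. The deterministic terms are handled precisely as in \eqref{eq:4.15}--\eqref{eq:4.21}: the four terms carrying the factor $1/\eps$ together with the normal component $\tble$ vanish thanks to the smallness estimate $\tble=O(\sqrt\eps)$ and the inequalities of Lemma~\ref{lemma2.15} and Corollary~\ref{cor_L3ineq}; the viscous term converges by \eqref{eq:2.20} and the strong $L^2(0,T;\rH)$ convergence; and the nonlinear term converges on combining strong convergence of $\bar\alpha_{\eps_k}$ in $L^2(0,T;\rH)$ with its uniform bound in $L^2(0,T;\rV)$. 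The genuinely stochastic point is to identify the limit of the martingale term $\int_0^t\sum_j\oMe\big(\tge^j(s)\,d\beta_j(s)\big)$ with $\int_0^t G(s)\,d\widehat{W}(s)$, which follows from the weak convergence $\oMe\tge^j\rightharpoonup g^j$ in $\LS$ of Assumption~\ref{ass_sphere}, the a.s.\ convergence of the driving noises, and the standard lemma on convergence of stochastic integrals; a final density argument ($\rU$ dense in $\rV$) extends the identity to all $\phi\in\rV$. I expect the time-regularity estimate for the stochastic integral in the tightness step and this convergence-of-stochastic-integrals step to be the principal obstacles, since both the integrands $\oMe\tge^j$ and the driving noises depend on $\eps$.
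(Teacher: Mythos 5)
Your overall scheme (energy and higher-moment estimates, tightness of the laws of $\hae$ on $\mathcal{Z}_T$, Jakubowski--Skorokhod, then term-by-term passage to the limit) is the same as the paper's, but there is a genuine gap at the Skorokhod step. You apply Jakubowski's theorem to the joint laws of $(\hae,\tWe)$ \emph{only}, and then propose to ``pass to the limit in the weak form \eqref{eq:snse_mod} written for $\bar\alpha_{\eps_k}$ on the new space''. This step is not well defined: equation \eqref{eq:snse_mod} contains the four singular terms $\frac{\nu}{\eps}\int_0^t(\curl\tble(s),\curl\test)_\LQeps\,ds$, $\frac{1}{\eps}\int_0^t\langle[\tble(s)\cdot\nabla]\tale(s),\test\rangle_\eps\,ds$, etc., and the normal component $\tble=\tNe\tue$ is \emph{not} a functional of $\alpha_\eps$ (only the tangential part $\tale=\oldReps\alpha_\eps$ is). Hence, after transferring only $(\hae,\what{W}_\eps)$, there is no process on the new probability space playing the role of $\tble$, the equation cannot even be stated there, and equality of the laws of the pair $(\alpha_\eps,\tWe)$ is insufficient to transfer it. The paper resolves precisely this point by adjoining the whole family of normal components to the Skorokhod argument: it forms $\eta=(\widetilde{\beta}_{\eps_1},\widetilde{\beta}_{\eps_2},\dots)$ with values in the countable product $\mathcal{Y}_T=\Pi_{k}\mathcal{Y}_T^{\eps_k}$, applies Jakubowski to the triple $(\alpha_{\eps_k},\eta_k,\widetilde{W})$, obtains processes $\what{\beta}_{\eps_{k_n}}$ on the new space with the correct joint laws, transfers the $\beta$-estimates to them (see \eqref{eq:beta_estimate_1}--\eqref{eq:beta_estimate_2}), defines $\underline{\hae}:=\oldReps\hae$, and only then shows in Lemma~\ref{lemma_convg_beta} that the singular terms vanish. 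Alternatively, your route could be repaired without enlarging the Skorokhod space: prove on the \emph{old} space that $\E\int_0^T\bigl|\bigl(\alpha_\eps(t),\phi\bigr)_\LS-\Lambda^0_\eps(\alpha_\eps,\tWe,\phi)(t)\bigr|\,dt\to 0$, where $\Lambda^0_\eps$ retains only the terms that are measurable functionals of $(\alpha_\eps,\tWe)$ (the stochastic integral is such a functional because $\widetilde{G}_\eps$ is deterministic) and the $\tble$-terms are absorbed into the error using Lemma~\ref{lemma5.5}, Lemma~\ref{lemma2.15} and Corollary~\ref{cor_L3ineq}; this quantity does transfer by equality of laws. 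Either way, an idea is missing from the proposal as written.

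A second, smaller point concerns tightness. The paper verifies the Aldous condition in $\rD(\rA^{-1})$ and invokes Corollary~\ref{corA.2.2}; you instead propose a bound on $\bbE\|\hae\|^2_{W^{\kappa,2}(0,T;\rD(\rA^{-1}))}$ with $\kappa<\tfrac12$ plus a fractional Aubin--Lions--Simon lemma. That yields compactness in $L^2(0,T;\rH)$ and handles the two weak-topology components, but it does \emph{not} by itself give tightness in $\ccal([0,T];\rD(\rA^{-1}))$ or $\ccal([0,T];\rH_w)$: a $W^{\kappa,2}$ bound with $\kappa<\tfrac12$ provides no equicontinuity in $\rD(\rA^{-1})$. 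To recover the two $\ccal$-components of $\mathcal{Z}_T$ you would need $W^{\kappa,p}$ bounds with $\kappa p>1$ (your higher moments make this feasible, in the Flandoli--Gatarek style) or simply the Aldous condition as in the paper. This is a fixable imprecision rather than a fatal flaw, but the phrase ``hence tightness on $\calZ$'' currently overreaches.
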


\begin{remark}
\label{rem.one_prob_space}
According to Remark~\ref{rem_mart_sol_SNSE_shell}, for every $\eps \in [0,1]$ there exists a martingale solution of \eqref{eq:5.1}--\eqref{eq:5.4} as defined in Definition~\ref{defn5.2}, i.e. we will obtain a tuple $\left(\Omega_\eps, \mathcal{F}_\eps, \mathbb{F}_\eps, \mathbb{P}_\eps, \wtd{W}_\eps, \tue\right)$ as a martingale solution. It was shown in \cite{[Jakubowski98]} that is enough to consider only one probability space, namely,
\[\left(\Omega_\eps, \mathcal{F}_\eps, \mathbb{P}_\eps\right) = \left([0,1], \mathcal{B}([0,1]), \mathcal{L}\right) \qquad \forall\, \eps \in (0,1],\]
where $\mathcal{L}$ denotes the Lebesgue measure on $[0,1]$. Thus, it is justified to consider the probability space $\left( \Omega, \mathcal{F}, \mathbb{P}\right)$ independent of $\eps$ in Theorem~\ref{thm:main_thm}.
\end{remark}

\subsection{Estimates}
\label{sec:estimates_shell_sphere}
From this point onward we will assume that for every $\eps \in (0,1]$ there exists a martingale solution $\left(\Omega, \calF, \bbF, \bbP, \wtd{W}_\eps, \wtd{u}_\eps\right)$ of \eqref{eq:5.13}.
Please note that we do not claim neither we use the uniqueness of this solution. \\
The main aim of this subsection is to obtain estimates for $\alpha_\eps$ and $\widetilde{\beta}_\eps$ uniform in $\eps$ using the estimates for the process $\tue$.

The energy inequality \eqref{eq:5.15} and the higher-order estimates \eqref{eq:hoe1}--\eqref{eq:hoe2}, satisfied by the process $\tue$, as obtained in Lemma~\ref{lemma5.3} and Lemma~\ref{lemma_higher_estimates_u} is actually a consequence (essential by-product) of the existence proof. In principle, one obtains these estimates (uniform in the approximation parameter $N$) for the finite-dimensional process $\tue^{(N)}$ (using Galerkin approximation) with the help of the It\^o lemma. Then, using the lower semi-continuity of norms, convergence result ($\tue^{(N)} \to \tue$ in some sense),  one can establish the estimates for the limiting process. Such a methodology was employed in a proof of Theorem 4.8 in the   recent paper \cite{[BMO17]} by the first named author, Motyl and Ondrej\'at.

In Lemma~\ref{lemma5.3} and Lemma~\ref{lemma_higher_estimates_u} we present a formal proof where we assume that one can apply (ignoring the existence of Lebesgue and stochastic integrals) the It\^o lemma to the infinite dimensional process $\tue$. The idea is to showcase (though standard) the techniques involved in establishing such estimates.

\begin{lemma}
\label{lemma5.3}
Let $\tu_0^\eps \in \rH_\eps$, $\tfe \in L^2([0,T]; \rV^\prime_\eps)$ and $\widetilde{G}_\eps \in L^2([0,T]; \mathcal{T}_2(\R^N; \rH_\eps))$. Then, the martingale solution $\tue$ of \eqref{eq:5.13} satisfies the following energy inequality
\begin{equation}\label{eq:5.15}
\begin{split}
&\bbE \left[\frac{1}{2} \sup_{0\le s \le T} \|\tue(s)\|^2_{\LQeps}
  + \nu \int_0^T \| \curl \tue (s) \|^2_{\LQeps}\,ds    \right] \\
&\quad \le \|\tu^\eps_0\|^2_{\LQeps} + \frac{1}{\nu}  \int_0^T \|\tfe(s)\|^2_{\rV^\prime_\eps}\,ds + K \int_0^T \|\widetilde{G}_\eps(s)\|_{\mathcal{T}_2(\R^N; \rH_\eps)}^2\,ds,
\end{split}
\end{equation}
where $K$ is some positive constant independent of $\eps$.
\end{lemma}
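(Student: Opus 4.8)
The plan is to establish the energy inequality \eqref{eq:5.15} by applying the It\^o formula to the function $\|\tue(t)\|^2_{\LQeps}$, exactly as one does for deterministic Navier--Stokes, but now picking up the It\^o correction term coming from the stochastic integral. Testing the abstract equation \eqref{eq:5.13} formally against $\tue(t)$ and using the It\^o lemma, I would obtain
\[
d\|\tue(t)\|^2_{\LQeps} + 2\nu \|\curl \tue(t)\|^2_{\LQeps}\,dt = 2\langle \tfe(t), \tue(t)\rangle_\eps\,dt + \|\widetilde{G}_\eps(t)\|^2_{\mathcal{T}_2(\R^N;\rH_\eps)}\,dt + 2\bigl(\tue(t), \widetilde{G}_\eps(t)\,d\tWe(t)\bigr)_{\LQeps},
\]
where the crucial points are that the Stokes term produces $\langle \rA_\eps \tue, \tue\rangle_\eps = \|\curl \tue\|^2_{\LQeps}$ by \eqref{eq:1.9a}--\eqref{eq:2.20}, and the nonlinear term vanishes identically since $\langle B_\eps(\tue,\tue), \tue\rangle_\eps = b_\eps(\tue,\tue,\tue) = 0$ by \eqref{eq:1.12}. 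This is why no convective contribution survives in \eqref{eq:5.15}.

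Next I would integrate over $[0,t]$, take the supremum over $t \in [0,T]$, and then take expectations. The deterministic forcing term is handled by the Cauchy--Schwarz and Young inequalities, $2|\langle \tfe, \tue\rangle_\eps| \le \nu\|\curl \tue\|^2_{\LQeps} + \tfrac{1}{\nu}\|\tfe\|^2_{\rV_\eps'}$, so that half of the dissipation is absorbed on the left and the forcing contributes the term $\tfrac1\nu\int_0^T\|\tfe(s)\|^2_{\rV_\eps'}\,ds$. The It\^o correction integrates directly to $\int_0^T \|\widetilde{G}_\eps(s)\|^2_{\mathcal{T}_2(\R^N;\rH_\eps)}\,ds$.

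\emph{The main obstacle} is controlling the supremum of the stochastic integral
\[
\sup_{0 \le t \le T}\left|\int_0^t \bigl(\tue(s), \widetilde{G}_\eps(s)\,d\tWe(s)\bigr)_{\LQeps}\right|.
\]
For this I would invoke the Burkholder--Davis--Gundy inequality, which bounds its expectation by a constant times
\[
\bbE\left(\int_0^T \|\tue(s)\|^2_{\LQeps}\,\|\widetilde{G}_\eps(s)\|^2_{\mathcal{T}_2(\R^N;\rH_\eps)}\,ds\right)^{1/2}.
\]
I would then pull the supremum of $\|\tue(s)\|_{\LQeps}$ out of the square root and apply Young's inequality to split the product, so that a small multiple of $\bbE\bigl[\sup_{s}\|\tue(s)\|^2_{\LQeps}\bigr]$ is absorbed into the left-hand side while the remaining factor is bounded by $C\,\bbE\int_0^T\|\widetilde{G}_\eps(s)\|^2_{\mathcal{T}_2(\R^N;\rH_\eps)}\,ds$. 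This absorption is exactly what fixes the constant $K$ and, crucially, makes it independent of $\eps$, since the BDG constant and the Young splitting are universal. The remaining care is simply to justify that the formal It\^o computation is legitimate; as the authors note in the surrounding text, one rigorously performs these steps on the Galerkin approximations $\tue^{(N)}$ and then passes to the limit using lower semicontinuity of the norms, so I would either follow that route or state the computation formally in keeping with the paper's stated convention.
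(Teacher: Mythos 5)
Your proposal is correct and follows essentially the same route as the paper's own proof: It\^o's formula applied to $\|\tue\|^2_{\LQeps}$ with the cancellation $\langle B_\eps(\tue,\tue),\tue\rangle_\eps=0$, Cauchy--Schwarz and Young for the forcing term so that half the dissipation is absorbed, and the Burkholder--Davis--Gundy inequality followed by Young's inequality to absorb $\tfrac14\bbE\sup_t\|\tue(t)\|^2_{\LQeps}$ into the left-hand side, yielding an $\eps$-independent constant $K$. Even your closing remark about justifying the formal It\^o computation via Galerkin approximation and lower semicontinuity matches the paper's stated convention preceding the lemma.
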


\begin{proof}
Using the It\^{o} formula for the function $\|\xi\|^2_{\LQeps}$ with the process $\tue$, for a fixed $t \in [0,T]$ we have
\begin{equation}\label{eq:5.16}
\begin{aligned}
& \|\tue(t)\|^2_{\LQeps} + 2\nu \int_0^t \|\curl \tue(s)\|^2_{\LQeps}\,ds  \le  \|\tu_0^\eps\|^2_{\LQeps} + 2\int_0^t \left\langle \tfe(s), \tue(s)\right\rangle_\eps\,ds \\
&\qquad \qquad \qquad  + 2 \int_0^t \left(\widetilde{G}_\eps(s)\,d\wtd{W}_\eps(s), \tue(s)\right)_{\LQeps} + \int_0^t \|\widetilde{G}_\eps(s)\|^2_{\mathcal{T}_2(\R^N; \rH_\eps)}\, ds.
\end{aligned}
\end{equation}
Using the Cauchy-Schwarz inequality and the Young inequality, we get the following estimate
\[
\left|\left\langle \tfe, \tue\right\rangle_\eps\right| \le \|\tue\|_{\rV_\eps} \|\tfe\|_{\rV^\prime_\eps} \le \frac{\nu}{2}  \|\tue\|^2_{\rV_\eps}  + \frac{1}{2\nu} \|\tfe\|^2_{\rV^\prime_\eps},
\]
which we use in \eqref{eq:5.16}, to obtain
\begin{equation}\label{eq:5.17}
\begin{split}
\|\tue(t)\|^2_{\LQeps} & + \nu \int_0^t \|\curl \tue(s)\|^2_{\LQeps}\,ds  \le \|\tu_0^\eps\|^2_{\LQeps} + \frac{1}{\nu}\int_0^t \|\tfe(s)\|^2_{\rV^\prime_\eps}\,ds \\
&\, + 2 \int_0^t \left(\widetilde{G}_\eps(s)\,d\wtd{W}_\eps , \tue(s)\right)_{\LQeps}  + \int_0^t \|\widetilde{G}_\eps(s)\|^2_{\mathcal{T}_2(\R^N; \rH_\eps)}\,ds.
\end{split}
\end{equation}
Using the Burkholder-Davis-Gundy inequality (see \cite[Prop. 2.12]{[Kru14]}), we have
\begin{align}\label{eq:5.18}
\bbE \sup_{0\le t \le T}& \left| \int_0^t (\widetilde{G}_\eps(s)\,d\wtd{W}_\eps(s) , \tue(s)  )_{\LQeps} \right| \\
& \le C \bbE \left( \int_0^T \|\tue(s)\|^2_{\LQeps} \|\widetilde{G}_\eps(s)\|_{\mathcal{T}_2(\R^N; \rH_\eps)}^2\,ds\right)^{1/2} \nn  \\
&\le C \bbE \left[ \left(\sup_{0\le t \le T} \|\tue(t)\|^2_{\LQeps} \right)^{1/2}
   \left( \int_0^T \|\widetilde{G}_\eps(s)\|^2_{\mathcal{T}_2(\R^N; \rH_\eps)}\,ds\right)^{1/2} \right] \nn \\
&\le \frac{1}{4} \bbE\left( \sup_{0 \le t \le T} \| \tue(t)\|^2_{\LQeps} \right)
+ C\int_0^T \|\widetilde{G}_\eps(s)\|^2_{\mathcal{T}_2(\R^N; \rH_\eps)}\,ds. \nn
\end{align}
Taking the supremum of \eqref{eq:5.17} over the interval $[0,T]$, then taking
expectation and using inequality \eqref{eq:5.18} we infer the energy inequality \eqref{eq:5.15}.
\end{proof}

Let us recall the following notations, which we introduced earlier, for $t \in [0,T]$
\begin{equation}
\label{eq:5.19}
\widetilde{\alpha}_\eps(t) := \newMepstilde [\tue(t)], \quad \widetilde{\beta}_\eps(t) := \widetilde{N}_\eps [\tue(t)], \quad \alpha_\eps(t) := \ocirc{M}_\eps [\tue(t)].
\end{equation}

\begin{lemma}\label{lemma5.4}
Let $\tue$ be a martingale solution of \eqref{eq:5.13} and Assumption~\ref{ass5.1} hold, in particular, for $p =2$. Then
\begin{equation}\label{eq:5.20}
\begin{split}
&\bbE \left[ \frac{1}{2} \sup_{t \in [0,T]} \|\alpha_\eps(t)\|^2_{\LS} +
\nu \int_0^T \| \curlS \alpha_\eps(s)\|^2_{\LS}\,ds \right] \le C_1^2 + \frac{C_2}{\nu} + C_3,
\end{split}
\end{equation}
where $C_1,C_2$ are positive constants from \eqref{eq:5.10} and \eqref{eq:5.11} and $C_3 > 0$ (determined within the proof) is another constant independent of $\eps$.
\end{lemma}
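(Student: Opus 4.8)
The plan is to push the energy inequality \eqref{eq:5.15} for the full velocity $\tue$, already established in Lemma~\ref{lemma5.3}, down to its radial average $\ale = \oMe[\tue]$, following verbatim the scheme of the deterministic Lemma~\ref{lemma3.1} and simply carrying along the additional noise contribution. First I would use the decomposition $\tue = \tale + \tble$ furnished by Proposition~\ref{prop2.12} together with the two pointwise-in-$(s,\omega)$ bounds that are already available: identity \eqref{eq:2.15} gives $\|\tale(s)\|^2_{\LQeps} \le \|\tue(s)\|^2_{\LQeps}$, and Corollary~\ref{cor_grad_alpha} gives $\tfrac{4}{9}\|\curl \tale(s)\|^2_{\LQeps} \le \|\curl \tue(s)\|^2_{\LQeps}$. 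As both hold for every $(s,\omega)$, taking $\sup_{0\le s\le T}$ and then $\bbE$ (both monotone) produces
\[
\bbE\Bigl[\tfrac12 \sup_{0\le s\le T}\|\tale(s)\|^2_{\LQeps} + \tfrac{4\nu}{9}\int_0^T \|\curl \tale(s)\|^2_{\LQeps}\,ds\Bigr] \le \bbE\Bigl[\tfrac12 \sup_{0\le s\le T}\|\tue(s)\|^2_{\LQeps} + \nu\int_0^T \|\curl \tue(s)\|^2_{\LQeps}\,ds\Bigr],
\]
whose right-hand side is controlled by the right-hand side of \eqref{eq:5.15}.

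Next I would convert the $Q_\eps$-norms of $\tale$ into $\bbS$-norms of $\ale$ via the scaling relations: Lemma~\ref{lemma2.13} yields $\|\tale(s)\|^2_{\LQeps} = \eps\|\ale(s)\|^2_{\LS}$, and the curl identity recorded in the computations preceding \eqref{eq:4.14} (see also \eqref{eq:3.9}) relates $\|\curl\tale(s)\|^2_{\LQeps}$ to $\eps(1+\eps)^{-1}\|\curlS\ale(s)\|^2_{\LS}$. Since $\eps$ is a deterministic constant it commutes with $\sup_s$ and $\bbE$, so both sides of the previous display acquire the common factor $\eps$; for $\eps\in(0,\tfrac12]$ the factor $(1+\eps)^{-1}$ stays bounded away from $0$, so the dissipation term retains a fixed positive multiple of $\nu$, which is all that \eqref{eq:5.20} needs.

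It then remains to estimate the right-hand side of \eqref{eq:5.15}. Assumption~\eqref{eq:5.10} gives $\|\tu_0^\eps\|^2_{\LQeps} = C_1^2\eps$; \eqref{eq:5.11} with $p=2$ gives $\tfrac1\nu\int_0^T\|\tfe(s)\|^2_{\rV_\eps^\prime}\,ds \le \tfrac{C_2}{\nu}\eps$; and for the genuinely new noise term I would combine \eqref{eq:HSnorm} with the $p=2$ case of \eqref{eq:5.12} and the finiteness of $N$ to obtain
\[
K\int_0^T \|\widetilde{G}_\eps(s)\|^2_{\mathcal{T}_2(\R^N;\rH_\eps)}\,ds = K\sum_{j=1}^N \int_0^T \|\tge^j(s)\|^2_{\LQeps}\,ds =: C_3\eps,
\]
with $C_3$ independent of $\eps$. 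Hence the right-hand side of \eqref{eq:5.15} is at most $\eps\bigl(C_1^2 + \tfrac{C_2}{\nu} + C_3\bigr)$, and dividing the resulting inequality by $\eps$ gives \eqref{eq:5.20}.

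I do not expect a genuine obstacle: the argument is the deterministic Lemma~\ref{lemma3.1} with one extra term. The only points demanding care are the bookkeeping of the numerical constants through Corollary~\ref{cor_grad_alpha} and the curl scaling (so that the coefficient of the dissipation term remains a fixed positive multiple of $\nu$, uniform in $\eps\in(0,\tfrac12]$), and the verification that the Hilbert--Schmidt term scales exactly like $\eps$ --- which is precisely the content of the $p=2$ case of \eqref{eq:5.12} once $N<\infty$. Because the domination and scaling identities hold pathwise, no measurability or stochastic-integration issues arise beyond those already settled in the proof of \eqref{eq:5.15}.
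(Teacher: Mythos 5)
Your proposal is correct and follows essentially the same route as the paper's own proof: the energy inequality \eqref{eq:5.15}, the pointwise bounds \eqref{eq:2.15} and Corollary~\ref{cor_grad_alpha}, the scaling of Lemma~\ref{lemma2.13}, Assumption~\ref{ass5.1} with $p=2$ (giving $C_3 = NKc$ for the noise term), and cancellation of $\eps$. If anything you are more careful than the paper on one point: the curl scaling carries the factor $\eps/(1+\eps)$ rather than $\eps$, so the dissipation term survives only with a fixed positive multiple of $\nu$ (the paper's own derivation likewise produces $\tfrac{4\nu}{9}$ rather than the $\nu$ stated in \eqref{eq:5.20}); this mismatch lies in the paper's statement, not in your argument.
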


\begin{proof}
Let $\tue$ be a martingale solution of \eqref{eq:5.13}, then it satisfies the energy inequality \eqref{eq:5.15}. From Eq.~\eqref{eq:2.15}, we have
\begin{equation}
\label{eq:5.21}
\|\tale(t)\|^2_{\LQeps} \le \|\tue(t)\|^2_{\LQeps}, \qquad t \in [0,T].
\end{equation}
Moreover, by Corollary~\ref{cor_grad_alpha}
\begin{equation}
\label{eq:5.22}
\frac49 \|\curl \tale(t)\|^2_\LQeps \le \|\curl \tue(t)\|^2_\LQeps, \qquad t \in [0,T].
\end{equation}
Therefore, using \eqref{eq:5.21} and \eqref{eq:5.22} in the energy inequality \eqref{eq:5.15}, we get
\begin{align*}
\bbE & \left[ \frac12 \sup_{t \in [0,T]} \|\tale(t)\|^2_{\LQeps} + \frac{4 \nu}{9} \int_0^T \|\curl \tale(s)\|^2_\LQeps\,ds\right]  \\
& \le \|\tu_0^\eps\|^2_{\LQeps} + \frac{1}{\nu} \int_0^T \|f_\eps(s)\|^2_{\rV_\eps^\prime}\,ds + K \int_0^T\|\widetilde{G}_\eps(s)\|^2_{\mathcal{T}_2}\,ds,
\end{align*}
and hence from the scaling property, Lemma~\ref{lemma2.13}, we have
\begin{align}
\label{eq:5.23}
\bbE & \left[\frac12 \eps \sup_{t \in [0,T]} \|\ale(t)\|^2_{\LS} + \frac{4 \nu}{9} \eps \int_0^T \|\curlS \ale(s)\|^2_{\LS}\,ds\right]  \\
& \le \|\tu_0^\eps\|^2_{\LQeps} + \frac{1}{\nu} \int_0^T \|f_\eps(s)\|^2_{\rV_\eps^\prime}\,ds + K \int_0^T\|\widetilde{G}_\eps(s)\|^2_{\mathcal{T}_2}\,ds.\nn
\end{align}
By the assumptions on $\tge^j$ \eqref{eq:5.12}, there exists a positive constant $c$ 
such that for every $j \in \{1, \cdots, N\}$
\begin{equation}
\label{eq:5.26}
\int_0^T\|\tge^j(t)\|_\LQeps^2\,dt \le c \eps.
\end{equation}
Therefore, using Assumption~\ref{ass5.1} and \eqref{eq:5.26} in \eqref{eq:5.23}, cancelling $\eps$ on both sides and defining $C_3 = NKc$, we infer inequality \eqref{eq:5.20}.
\end{proof}

From the results of Lemma~\ref{lemma5.4}, we deduce that
\begin{equation}
\label{eq:5.27}
\left\{ \ale \right\}_{\eps > 0}\, \mbox{ is bounded in }\,
L^2(\Omega;L^\infty(0,T; \rH) \cap L^2(0,T; \rV)).
\end{equation}
Since $\rV$ can be embedded into $\mathbb{L}^6(\bbS)$, by using interpolation between $L^\infty(0,T; \rH)$ and $L^2(0,T; \mathbb{L}^6(\bbS))$ we obtain
\begin{equation}
\label{eq:5.28}
\bbE \int_0^T \|\ale(s)\|^2_{\mathbb{L}^3(\bbS)}\,ds \le C.
\end{equation}

\begin{lemma}
\label{lemma5.5}
Let $\tue$ be a martingale solution of \eqref{eq:5.13} and Assumption~\ref{ass5.1} hold, in particular, for $p =2$. Then
\begin{equation}
\label{eq:5.29}
\bbE \left[ \frac12\sup_{t \in [0,T]}\|\tble(t)\|^2_{\LQeps} + \nu \int_0^T \|\curl \tble(s)\|^2_{\LQeps}\,ds\right] \le  C_1^2 \eps + \frac{C_2 \eps}{\nu} + C_3\eps.
\end{equation}
\end{lemma}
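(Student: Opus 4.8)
The plan is to mirror the proof of Lemma~\ref{lemma5.4}, replacing the averaged component $\tale$ by the complementary ``normal'' component $\tble = \tNe[\tue]$, while exploiting the crucial structural difference that $\tble$ carries \emph{no} rescaling factor. Whereas for $\ale$ the scaling identity of Lemma~\ref{lemma2.13} produced a factor $\eps$ on the left-hand side that cancelled the $\eps$ coming from the data and left an $O(1)$ bound, here $\tble$ lives directly in $\LQeps$, so the smallness in $\eps$ supplied by Assumption~\ref{ass5.1} is inherited without cancellation and yields the announced $O(\eps)$ estimate.

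First I would start from the energy inequality \eqref{eq:5.15} of Lemma~\ref{lemma5.3}, which has already absorbed the stochastic integral through the Burkholder--Davis--Gundy step \eqref{eq:5.18}; consequently no further stochastic analysis is required and the argument reduces to a pointwise comparison under the expectation. Next I would invoke the orthogonal decomposition \eqref{eq:2.15}, which gives $\|\tble(t)\|^2_{\LQeps} \le \|\tue(t)\|^2_{\LQeps}$ for every $t$, together with Corollary~\ref{cor_grad_alpha}, which gives $\|\curl\tble(t)\|^2_{\LQeps} \le \tfrac94 \|\curl\tue(t)\|^2_{\LQeps}$. Combining these two pointwise bounds one obtains
\[
\tfrac12 \sup_{t\in[0,T]} \|\tble(t)\|^2_{\LQeps} + \nu\int_0^T \|\curl\tble(s)\|^2_{\LQeps}\,ds \le \tfrac94\left( \tfrac12 \sup_{t\in[0,T]} \|\tue(t)\|^2_{\LQeps} + \nu\int_0^T \|\curl\tue(s)\|^2_{\LQeps}\,ds\right),
\]
so that taking expectations and applying \eqref{eq:5.15} bounds the left-hand side by a fixed multiple of $\|\tu_0^\eps\|^2_{\LQeps} + \tfrac1\nu\int_0^T\|\tfe(s)\|^2_{\rV_\eps^\prime}\,ds + K\int_0^T\|\widetilde{G}_\eps(s)\|^2_{\mathcal{T}_2(\R^N;\rH_\eps)}\,ds$.

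Finally I would insert the data estimates from Assumption~\ref{ass5.1} with $p=2$: namely $\|\tu_0^\eps\|^2_{\LQeps} = C_1^2\eps$ by \eqref{eq:5.10}, $\int_0^T\|\tfe(s)\|^2_{\rV_\eps^\prime}\,ds \le C_2\eps$ by \eqref{eq:5.11}, and, using \eqref{eq:HSnorm} together with \eqref{eq:5.12}--\eqref{eq:5.26}, $\int_0^T\|\widetilde{G}_\eps(s)\|^2_{\mathcal{T}_2(\R^N;\rH_\eps)}\,ds = \sum_{j=1}^N\int_0^T\|\tge^j(s)\|^2_{\LQeps}\,ds \le Nc\eps$. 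Setting $C_3 = NKc$ and folding the universal factor $\tfrac94$ into the constants then gives \eqref{eq:5.29}. I do not anticipate a genuine obstacle: the proof is a short deterministic-style estimate once the energy inequality is in hand, and the only point needing mild care is the constant from Corollary~\ref{cor_grad_alpha} multiplying the viscous term, which is harmless since it can be absorbed into $C_1^2,C_2,C_3$ without altering the order $O(\eps)$ that the lemma asserts.
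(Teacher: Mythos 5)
Your proposal is correct and follows essentially the same route as the paper's proof: start from the energy inequality \eqref{eq:5.15}, apply the orthogonality bound \eqref{eq:2.15} and Corollary~\ref{cor_grad_alpha} pointwise, then insert the data assumptions \eqref{eq:5.10}, \eqref{eq:5.11} and the noise bound \eqref{eq:5.26}, with $C_3 = NKc$. If anything, you are more careful than the paper about the factor $\tfrac94$ coming from Corollary~\ref{cor_grad_alpha}: the paper silently drops it (so its displayed bound really holds with $\tfrac{4\nu}{9}$ in place of $\nu$), whereas you keep $\nu$ on the left and absorb the $\tfrac94$ into the constants on the right, which changes nothing at the level of the asserted $O(\eps)$ estimate.
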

\begin{proof}
Let $\tue$ be a martingale solution of \eqref{eq:5.13}, then it satisfies the energy inequality \eqref{eq:5.15}. From \eqref{eq:2.15}, we have
\begin{equation}
\label{eq:5.30}
\|\tble(t)\|^2_{\LQeps} \le \|\tue(t)\|^2_{\LQeps}, \qquad t \in[0,T].
\end{equation}
Thus, by Corollary~\ref{cor_grad_alpha}
\begin{equation}
\label{eq:5.31}
\frac49 \|\curl \tble(t)\|^2_\LQeps \le \|\curl \tue(t)\|^2_\LQeps, \quad t \in [0,T].
\end{equation}
Therefore, using Assumption~\ref{ass5.1}, \eqref{eq:5.26}, inequalities \eqref{eq:5.30}--\eqref{eq:5.31}, in the energy inequality \eqref{eq:5.15}, we infer \eqref{eq:5.29}.
\end{proof}

In the following lemma we obtain some higher order estimates (on a formal level) for the martingale solution $\tue$, which will be used to obtain the higher order estimates for the processes $\ale$ and $\tble$.
\begin{lemma}
\label{lemma_higher_estimates_u}
Let Assumption~\ref{ass5.1} hold true and $\tue$ be a martingale solution of \eqref{eq:5.13}. Then, for $p > 2$ we have following estimates
\begin{equation}
\label{eq:hoe1}
\bbE\sup_{0\le s\le T} \|\tue(s)\|^p_{\LQeps}
 \le C_2\left(p, \widetilde{u}_0^\eps, \tfe,\widetilde{G}_\eps\right)\exp\left(K_p\, T\right)
\end{equation}
and
\begin{equation}
\label{eq:hoe2}
\E \int_0^T \|\tue(t)\|_{\LQeps}^{p-2}\|\tue(t)\|_{\rV_\eps}^2dt \le C_2\left(p, \widetilde{u}_0^\eps, \tfe,\widetilde{G}_\eps\right) \left[1 + K_{p}\,T\exp\left(K_{p}\,T\right) \right],
\end{equation}
where
\[
C_2\left(p, \widetilde{u}_0^\eps, \tfe, \widetilde{G}_\eps \right):=
\|\widetilde{u}_0^\eps\|^p_{\LQeps} + \nu^{-p/2}\|\tfe\|^p_{L^p(0,T; \rV'_\eps)}
   + \left(\frac{1}{4}p^2 (p-1) + \frac{K_1^2}{p}\right) \|\widetilde{G}_\eps\|^p_{L^p(0,T;\calT_2)},
\]
\[K_{p} := \left(\frac{K_1^2}{p}+p\right)\frac{(p-2)}{2},\]
and $K_1$ is a constant from the Burkholder-Davis-Gundy inequality.
\end{lemma}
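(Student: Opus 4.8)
The plan is to apply the It\^o formula to the functional $\xi \mapsto \|\xi\|^p_{\LQeps} = \bigl(\|\xi\|^2_{\LQeps}\bigr)^{p/2}$ along the process $\tue$ solving \eqref{eq:5.13}, in exactly the spirit of the energy estimate of Lemma~\ref{lemma5.3} but with the higher power. Writing $F(\xi) = \|\xi\|^p_{\LQeps}$, its first two derivatives are $F'(\xi)h = p\|\xi\|^{p-2}_{\LQeps}(\xi,h)_{\LQeps}$ and $F''(\xi)(h,h) = p\|\xi\|^{p-2}_{\LQeps}\|h\|^2_{\LQeps} + p(p-2)\|\xi\|^{p-4}_{\LQeps}(\xi,h)^2_{\LQeps}$. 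The decisive structural fact is that the nonlinear contribution vanishes: by \eqref{eq:1.12} we have $\left\langle B_\eps(\tue,\tue),\tue\right\rangle_\eps = 0$, so the cubic drift disappears and the $\|\tue\|^{p-2}$-weighted inequality can be closed. Using $\left\langle \rA_\eps \tue,\tue\right\rangle_\eps = \|\curl \tue\|^2_{\LQeps}$, the It\^o formula (applied formally, as announced) yields for $t \in [0,T]$
\begin{align*}
\|\tue(t)\|^p_{\LQeps} &+ \nu p \int_0^t \|\tue(s)\|^{p-2}_{\LQeps}\|\curl\tue(s)\|^2_{\LQeps}\,ds = \|\tu_0^\eps\|^p_{\LQeps} + p\int_0^t \|\tue(s)\|^{p-2}_{\LQeps}\left\langle\tfe(s),\tue(s)\right\rangle_\eps\,ds \\
&\quad + \frac{p}{2}\int_0^t \|\tue(s)\|^{p-2}_{\LQeps}\|\widetilde{G}_\eps(s)\|^2_{\calT_2}\,ds + \frac{p(p-2)}{2}\int_0^t \|\tue(s)\|^{p-4}_{\LQeps}\sum_{j=1}^N\left(\tue(s),\tge^j(s)\right)^2_{\LQeps}\,ds \\
&\quad + p\int_0^t \|\tue(s)\|^{p-2}_{\LQeps}\left(\widetilde{G}_\eps(s)\,d\tWe(s),\tue(s)\right)_{\LQeps}.
\end{align*}

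Next I would estimate the drift terms by Young's inequality. For the forcing term, $\left\langle\tfe,\tue\right\rangle_\eps \le \|\curl\tue\|_{\LQeps}\|\tfe\|_{\rV'_\eps}$, and two successive applications of Young's inequality — first splitting off a multiple of $\|\tue\|^{p-2}\|\curl\tue\|^2$ to be absorbed into the good term on the left, then converting $\|\tue\|^{p-2}\|\tfe\|^2_{\rV'_\eps}$ via exponents $\tfrac{p}{p-2}$ and $\tfrac{p}{2}$ — produce a multiple of $\|\tue\|^p_{\LQeps}$ plus $\nu^{-p/2}\|\tfe\|^p_{\rV'_\eps}$. The two It\^o-correction integrals are handled the same way: by Cauchy--Schwarz $\left(\tue,\tge^j\right)^2 \le \|\tue\|^2\|\tge^j\|^2$, so both reduce to $\|\tue\|^{p-2}\|\widetilde{G}_\eps\|^2_{\calT_2}$, and Young's inequality with the same exponents turns each into a multiple of $\|\tue\|^p$ plus a multiple of $\|\widetilde{G}_\eps\|^p_{\calT_2}$. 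Collecting these, the accumulated coefficient of the $\|\tue\|^p$ drift and the assembled source terms are precisely what define $K_p$ and $C_2(p,\tu_0^\eps,\tfe,\widetilde{G}_\eps)$ in the statement.

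To obtain \eqref{eq:hoe1}, I would take the supremum over $[0,T]$ (after the standard localisation by stopping times making the stochastic integral a genuine martingale) and then expectations. The martingale term is controlled by the Burkholder--Davis--Gundy inequality \cite[Prop.~2.12]{[Kru14]}: its supremum is bounded by $K_1\,\bbE\bigl(\int_0^T p^2\|\tue\|^{2p-2}_{\LQeps}\sum_{j}\|\tge^j\|^2_{\LQeps}\,ds\bigr)^{1/2}$; factoring out $\bigl(\sup_{[0,T]}\|\tue\|^p\bigr)^{1/2}$ and applying Young's inequality lets me absorb $\tfrac12\,\bbE\sup_{[0,T]}\|\tue\|^p_{\LQeps}$ into the left-hand side while contributing the $K_1$-dependent parts of $K_p$ and $C_2$. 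What survives is an integral inequality of Gronwall type, $\bbE\sup_{[0,t]}\|\tue\|^p_{\LQeps} \le C_2 + K_p\int_0^t \bbE\sup_{[0,s]}\|\tue\|^p_{\LQeps}\,ds$, and Gronwall's lemma gives \eqref{eq:hoe1}. For \eqref{eq:hoe2} I would return to the balance \emph{before} the good term was absorbed, take expectations (the stochastic integral then has zero mean), and use the already-established bound \eqref{eq:hoe1} to control the right-hand side; the surviving $\nu p\,\E\int_0^T\|\tue\|^{p-2}\|\curl\tue\|^2$ yields the estimate after recalling $\|\tue\|_{\rV_\eps} = \|\curl\tue\|_{\LQeps}$.

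The main obstacle is the careful bookkeeping of constants required to land on exactly $K_p$ and $C_2$; conceptually everything hinges on the vanishing of the nonlinear contribution via \eqref{eq:1.12}, without which the $\|\tue\|^{p-2}$-weighted inequality could not be closed. The genuine analytic subtlety — justifying the It\^o formula for the infinite-dimensional process $\tue$ — is deliberately bypassed, the proof being formal; rigorously one would run the identical argument on the Galerkin approximations $\tue^{(N)}$ and pass to the limit using lower semicontinuity of the norms, as indicated in the discussion preceding the lemma.
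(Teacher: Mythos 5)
Your proposal is correct and follows essentially the same route as the paper's proof: a formal It\^o expansion of $\|\cdot\|^p_{\LQeps}$, cancellation of the trilinear term via \eqref{eq:1.12}, Cauchy--Schwarz plus Young's inequality (your two successive applications are equivalent to the paper's single three-factor generalized Young inequality, and your exact Hessian reduces by Cauchy--Schwarz to the paper's bound $p(p-1)\|x\|^{p-2}_{\LQeps}$), then Burkholder--Davis--Gundy with absorption of $\tfrac12\,\bbE\sup\|\tue\|^p_{\LQeps}$ and Gronwall for \eqref{eq:hoe1}, with \eqref{eq:hoe2} recovered from the retained dissipation term. No gaps beyond the formality both you and the paper acknowledge (rigorously handled via Galerkin approximation).
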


\begin{proof}
Let $F(x) = \|x\|^p_{\LQeps}$ then
\[
  \frac{\partial F}{\partial x} = \nabla F
   = p \|x\|^{p-2}_{\LQeps} x,
\]
and
\begin{equation}\label{eqn:2}
 \left| \frac{\partial^2 F}{\partial x^2} \right|
  \le p(p-1) \|x\|^{p-2}_{\LQeps}.
\end{equation}
Applying the It\^{o} lemma with $F(x)$ and process $\widetilde{u}_\eps$ for
$t \in [0,T]$, we have
\begin{align*}
& \|\tue(t)\|^p_{\LQeps} = \|\tue(0)\|^p_{\LQeps}
  + p \int_0^t \|\tue(s)\|_{\LQeps}^{p-2}
\langle -\nu \rA_\eps \tue(s) - B_\eps(\tue(s),\tue(s))
+ \widetilde{f}_\eps(s), \tue(s) \rangle_\eps\, ds \\
&\quad + p \int_0^t \|\tue(s)\|^{p-2}_{\LQeps} \left( \tue(s), \widetilde{G}_\eps(s) d\widetilde{W}_\eps(s) \right)_{\LQeps}
+ \frac{1}{2}\int_0^t \mathrm{Tr}
\left( \frac{\partial^2 F}{\partial x^2} (\widetilde{G}_\eps(s),\widetilde{G}_\eps(s))\right)\,ds .
\end{align*}
Using the fact that
$\langle B_\eps(\tue,\tue), \tue \rangle_\eps = 0$ and
$ \langle A_\eps \tue,\tue \rangle_\eps = \|\tue\|_{\rV_\eps}^2 $ we arrive at
\begin{align*}
&\|\tue(t)\|^p_{\LQeps} = \|\tue(0)\|^p_{\LQeps}
 - p\nu \int_0^t \|\tue(s)\|_{\LQeps}^{p-2} \| \tue(s) \|_{\rV_\eps}^2
 + p \int_0^t \|\tue(s)\|_{\LQeps}^{p-2} \langle \widetilde{f}_\eps(s), \tue(s) \rangle_\eps\, ds \\
&\quad + p \int_0^t \|\tue(s)\|^{p-2}_{H_\eps}
 \left( \tue(s), \widetilde{G}_\eps(s) d\widetilde{W}_\eps(s)
\right)_{\LQeps}
+ \frac{1}{2}
\int_0^t \mathrm{Tr}
\left( \frac{\partial^2 F}{\partial x^2} (\widetilde{G}_\eps(s),\widetilde{G}_\eps(s))\right)ds.
\end{align*}
Using \eqref{eqn:2} and the Cauchy-Schwarz inequality, we get
\begin{align*}
&\|\tue(t)\|^p_{\LQeps}  + p \nu \int_0^t \|\tue(s)\|_{\LQeps}^{p-2} \|\tue(s)\|_{\rV_\eps}^2 ds\\
&\;\; \le \|\tue(0)\|^p_{\LQeps}
 + p \int_0^t \|\tue(s)\|_{\LQeps}^{p-2} \|\widetilde{f}_\eps(s)\|_{\rV'_\eps} \|\tue(s)\|_{\rV_\eps}\,ds \\
&\quad + p \int_0^t \|\tue(s)\|_{\LQeps}^{p-2} \left( \tue(s), \widetilde{G}_\eps(s) d\widetilde{W}_\eps(s) \right)_\LQeps + \frac{p(p-1)}{2}
\int_0^t \|\tue(s)\|_{\LQeps}^{p-2} \|\widetilde{G}_\eps(s)\|^2_{\calT_2(\R^N; \rH_\eps)}\, ds,
\end{align*}
where we recall
\[
\|\widetilde{G}_\eps(s)\|^2_{\calT_2(\R^N; \rH_\eps)} = \sum_{j=1}^N \|\tge^j(s)\|^2_{\LQeps}.
\]
Using the generalised Young inequality $abc \le a^q/q + b^r/r + c^s/s$ (where
$1/q+1/r+1/s=1$) with $a = \sqrt{\nu} \|\tue\|_{\LQeps}^{p/2-1} \|\tue\|_{\rV_\eps}$,
$b=\|\tue\|_{\LQeps}^{p/2-1}$, $c = \frac{1}{\sqrt{\nu}}\|f_\eps\|_{\rV'_\eps}$ and
exponents $q=2, r=p, s=2p/(p-2)$ we get
\begin{equation}\label{equ:3}
 \nu\|\tue\|_{\LQeps}^{p-2}
 \|f_\eps\|_{\rV'_\eps}
\|\tue\|_{\rV_\eps}
\le \frac{\nu}{2} \|\tue\|_{\LQeps}^{p-2} \|\tue\|_{\rV_\eps}^2
 + \frac{1}{p\nu^{p/2}} \|\widetilde{f}_\eps\|^p_{\rV'_\eps} + \frac{p-2}{2p} \|\tue\|_{\LQeps}^p.
\end{equation}
Again using the Young inequality with exponents $p/(p-2)$, $p/2$ we get
\begin{equation}\label{equ:4}
\|\tue\|_{\LQeps}^{p-2}\|\widetilde{G}
_\eps\|^2_{\calT_2(\R^N; \rH_\eps)}
\le \frac{p-2}{p} \|\tue\|_{\LQeps}^p + \frac{p}{2} \|\widetilde{G}_\eps\|^p_{\calT_2(\R^N; \rH_\eps)}.
\end{equation}
Using \eqref{equ:3} and \eqref{equ:4} we obtain
\begin{equation}\label{equ:5}
\begin{aligned}
&\|\tue(t)\|^p_{\LQeps}
 + \frac{p \nu}{2}
\int_0^t \|\tue(s)\|_{\LQeps}^{p-2} \|\tue(s)\|_{\rV_\eps}^2 ds
 \\
& \le \|\tue(0)\|^p_{\LQeps}
+ \frac{p(p-2)}{2}
\int_0^t \|\tue(s)\|_{\LQeps}^p\,ds
 + \nu^{-p/2} \int_0^t \|\widetilde{f}_\eps(s)\|^p_{\rV'_\eps}\, ds
\\
&\; + \frac{1}{4}p^2(p-1) \int_0^t \|\widetilde{G}_\eps(s)\|^p_{\calT_2(\R^N; \rH_\eps)}\, ds  + p \int_0^t \|\tue(s)\|_{\LQeps}^{p-2}
 \left( \tue(s),\widetilde{G}_\eps(s) d\widetilde{W}_\eps(s) \right)_\LQeps.
\end{aligned}
\end{equation}
Since $\tue$ is a martingale solution of \eqref{eq:5.13} it satisfies the energy inequality \eqref{eq:5.15}, hence the real-valued random variable
\[
\mu_\eps(t) = \int_0^t \|\tue(s)\|_{\LQeps}^{p-2} \left( \tue(s), \widetilde{G}_\eps(s) d\widetilde{W}_\eps(s) \right)_\LQeps
\]
is a $\mathcal{F}_t$-martingale. Taking expectation both sides of \eqref{equ:5}
we obtain
\begin{equation}
\begin{aligned}
& \bbE \|\tue(t)\|^p_{\LQeps}
+ \frac{p \nu}{2}
\bbE \int_0^t \|\tue(s)\|_{\LQeps}^{p-2} \|\tue(s)\|_{\rV_\eps}^2 ds \\
& \qquad \le
\|\tue(0)\|^p_{\LQeps}
+ \frac{p(p-2)}{2} \E \int_0^t \|\tue(s)\|_{\LQeps}^p ds
+ \nu^{-p/2} \int_0^t \|\widetilde{f}_\eps(s)\|^p_{\rV'_\eps}\,ds \\
& \qquad \quad + \frac{1}{4}p^2 (p-1) \int_0^t \|\widetilde{G}_\eps(s)\|^p_{\calT_2(\R^N; \rH_\eps)}\,ds.
\end{aligned}
\end{equation}
Therefore, by the Gronwall lemma we obtain
\[
\bbE\|\tue(t)\|^p_{\LQeps} \le C\left(\widetilde{u}_0^\eps, \tfe,\widetilde{G}_\eps\right)\exp\left( p\frac{(p-2)t}{2}\right),
\]
where
\[
C\left(\widetilde{u}_0^\eps, \tfe,\widetilde{G}_\eps\right) := \|\widetilde{u}_0^\eps\|_{\LQeps}^p
+ \nu^{-p/2} \|f\|^p_{L^p(0,T; \rV'_\eps)}
  + \frac{1}{4} p^2(p-1)
\| \widetilde{G}_\eps\|^p_{L^p(0,T;\calT_2(\R^N; \rH_\eps))} .
\]
By Burkholder-Davis-Gundy inequality, we have
\begin{align}\label{equ:9}
\bbE & \left(\sup_{0\le s \le t}\left| \int_0^s \|\tue(\sigma)\|^{p-2}_{\LQeps}
  \left( \tue(\sigma), \widetilde{G}_\eps(\sigma) d\widetilde{W}_\eps(\sigma) \right)_\LQeps \right|\right) \nonumber \\
& \le K_1 \bbE \left( \int_0^t \|\tue(s)\|_{\LQeps}^{2p-4}
 \|\tue(s)\|^2_{\LQeps} \|\widetilde{G}_\eps(s)\|^2_{\calT_2(\R^N; \rH_\eps)}\,ds \right)^{1/2} \nn \\
&\le K_1 \bbE \left[ \sup_{0\le s\le t} \|\tue(s)\|_{\LQeps}^{p/2}
 \left(\int_0^t \|\tue(s)\|_{\LQeps}^{p-2} \|\widetilde{G}_\eps(s)\|^2_{\calT_2(\R^N; \rH_\eps)} ds  \right)^{1/2}
 \right] \nn\\
&\le
 \frac{1}{2} \bbE \sup_{0\le s \le t} \|\tue(s)\|_{\LQeps}^p
+ \frac{K^2_1}{2} \bbE \int_0^t \|\tue(s)\|_{\LQeps}^{p-2} \|\widetilde{G}_\eps(s)\|^2_{\calT_2(\R^N; \rH_\eps)}\, ds \nn\\
&\le
 \frac{1}{2} \bbE \sup_{0\le s \le t} \|\tue(s)\|^p_{\LQeps}
+ \frac{K_1^2}{2} \frac{(p-2)}{p} \bbE \int_0^t \|\tue(s)\|_{\LQeps}^p ds +
\frac{K_1^2}{p} \int_0^t \|\widetilde{G}_\eps(s)\|^p_{\calT_2(\R^N; \rH_\eps)}\,ds
\end{align}
where in the last step we have used the Young inequality with exponents $p/(p-2)$
and $p/2$.

\noindent Taking supremum over $0\le s\le t$ in \eqref{equ:5}
and using \eqref{equ:9} we get
\begin{align}
\label{equ:10}
\frac{1}{2} \bbE & \sup_{0 \le s\le t} \|\tue(s)\|^p_{\LQeps}
+ \frac{\nu}{p} \bbE \int_0^t \|\tue(s)\|_{\LQeps}^{p-2} \|\tue(s)\|^2_{\rV_\eps} ds \\
&\le
\|\tue(0)\|^p_{\LQeps}
+ \left( \frac{K_1^2}{p} + p \right) \frac{(p-2)}{2}
\int_0^t \bbE \sup_{0 \le s \le \sigma} \|\tue(s)\|_{\LQeps}^p d\sigma  \nn \\
&\qquad + \nu^{-p/2} \int_0^t \|\widetilde{f}_\eps(s)\|^p_{\rV'_\eps} ds + \left(\frac{1}{4}p^2(p-1)
+ \frac{K_1^2}{p}\right) \int_0^t \|\widetilde{G}_\eps(s)\|^p_{\calT_2(\R^N; \rH_\eps)}\,ds. \nn
\end{align}
Thus using the Gronwall lemma, we obtain
\[
\bbE\sup_{0\le s\le t} \|\tue(s)\|^p_{\LQeps}
 \le C_2\left(p, \widetilde{u}_0^\eps, \tfe,\widetilde{G}_\eps\right)\exp\left(K_p\,t\right),
\]
where $C_2\left(p, \widetilde{u}_0^\eps, \tfe,\widetilde{G}_\eps\right)$ and $K_p$ are the constants as defined in the statement of lemma. We deduce \eqref{eq:hoe2} from \eqref{equ:10} and \eqref{eq:hoe1}.
\end{proof}

In the following lemma we will use the estimates from previous lemma to obtain higher order estimates for $\alpha_\eps$ and $\tble$.

\begin{lemma}
Let $p > 2$. Let $\tue$ be a martingale solution of \eqref{eq:5.13} and Assumption~\ref{ass5.1} hold with the chosen $p$. Then, the processes $\ale$ and $\tble$ (as defined in \eqref{eq:5.19}) satisfy the following estimates
\begin{equation}
\label{eq:hoe_alpha}
\E\sup_{t\in[0,T]}\|\ale(t)\|^p_{\LS} \le K(\nu, p)\exp\left(K_p\,T\right),
\end{equation}
and
\begin{equation}
\label{eq:hoe_beta}
\E\sup_{t\in[0,T]}\|\tble(t)\|^p_{\LQeps} \le \eps^{p/2}K(\nu, p)\exp\left(K_p\,T\right),
\end{equation}
where $K(\nu,p)$ is a positive constant independent of $\eps$ and $K_p$ is defined in Lemma~\ref{lemma_higher_estimates_u}.
\end{lemma}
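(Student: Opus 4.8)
The plan is to deduce both estimates directly from the higher-order bound \eqref{eq:hoe1} for $\tue$ established in Lemma~\ref{lemma_higher_estimates_u}, by transferring that bound to $\ale$ and $\tble$ through the orthogonal decomposition of Proposition~\ref{prop2.12} and the scaling identity of Lemma~\ref{lemma2.13}, and then checking that the constant $C_2\left(p,\widetilde{u}_0^\eps,\tfe,\widetilde{G}_\eps\right)$ carries a factor $\eps^{p/2}$ under Assumption~\ref{ass5.1}.

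First I would record the pointwise (in $\omega$ and $t$) inequalities. Since the martingale solution satisfies $\tue(t)\in\rH_\eps$ for every $t$ and $\bbP$-a.s., Proposition~\ref{prop2.12} gives
\[\|\tble(t)\|_{\LQeps} \le \|\tue(t)\|_{\LQeps}, \qquad \|\tale(t)\|_{\LQeps} \le \|\tue(t)\|_{\LQeps},\]
while Lemma~\ref{lemma2.13} yields $\|\tale(t)\|^2_{\LQeps} = \eps\,\|\ale(t)\|^2_{\LS}$, so that
\[\|\ale(t)\|^p_{\LS} = \eps^{-p/2}\|\tale(t)\|^p_{\LQeps} \le \eps^{-p/2}\|\tue(t)\|^p_{\LQeps}.\]
Taking the supremum over $t\in[0,T]$, then expectations, and invoking \eqref{eq:hoe1}, I obtain
\[\E\sup_{t\in[0,T]}\|\ale(t)\|^p_{\LS} \le \eps^{-p/2}\,C_2\!\left(p,\widetilde{u}_0^\eps,\tfe,\widetilde{G}_\eps\right)\exp\left(K_p\,T\right)\]
and
\[\E\sup_{t\in[0,T]}\|\tble(t)\|^p_{\LQeps} \le C_2\!\left(p,\widetilde{u}_0^\eps,\tfe,\widetilde{G}_\eps\right)\exp\left(K_p\,T\right).\]

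It then remains to show $C_2\!\left(p,\widetilde{u}_0^\eps,\tfe,\widetilde{G}_\eps\right) \le \eps^{p/2}K(\nu,p)$ for a constant independent of $\eps$. The first two summands are immediate from Assumption~\ref{ass5.1}: \eqref{eq:5.10} gives $\|\widetilde{u}_0^\eps\|^p_{\LQeps} = C_1^p\,\eps^{p/2}$ and \eqref{eq:5.11} gives $\nu^{-p/2}\|\tfe\|^p_{L^p(0,T;\rV'_\eps)} \le \nu^{-p/2}C_2\,\eps^{p/2}$. For the noise summand the only point requiring care is that the Hilbert--Schmidt norm enters quadratically; since $p\ge 2$, the power-mean inequality gives
\[\|\widetilde{G}_\eps(s)\|^p_{\calT_2} = \Bigl(\sum_{j=1}^N \|\tge^j(s)\|^2_{\LQeps}\Bigr)^{p/2} \le N^{p/2-1}\sum_{j=1}^N \|\tge^j(s)\|^p_{\LQeps},\]
and integrating in time together with \eqref{eq:5.12} shows $\|\widetilde{G}_\eps\|^p_{L^p(0,T;\calT_2)} = O(\eps^{p/2})$. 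Collecting the three contributions yields the claimed bound on $C_2$, and substituting it into the two displays above produces \eqref{eq:hoe_alpha} (the factors $\eps^{-p/2}$ and $\eps^{p/2}$ cancelling) and \eqref{eq:hoe_beta} (the factor $\eps^{p/2}$ surviving).

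The argument is essentially bookkeeping, so no step is genuinely hard; the two places that need attention are the $\omega$-wise application of the decomposition and scaling identities \emph{before} taking $\sup_t$ and expectation, and the passage from the quadratic Hilbert--Schmidt norm to its $p$-th power with the correct $\eps^{p/2}$ scaling, which is where the finite-dimensionality of the noise ($N<\infty$) is used.
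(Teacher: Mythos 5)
Your proposal is correct and follows essentially the same route as the paper, whose proof is only a one-sentence sketch citing exactly the ingredients you use: the orthogonal decomposition \eqref{eq:2.15} of Proposition~\ref{prop2.12}, the scaling identity of Lemma~\ref{lemma2.13}, Assumption~\ref{ass5.1}, and the higher-order bound \eqref{eq:hoe1} of Lemma~\ref{lemma_higher_estimates_u}. Your write-up supplies the bookkeeping the paper omits, including the one step that genuinely needs care, namely passing from $\bigl(\sum_{j=1}^N \|\tge^j(s)\|^2_{\LQeps}\bigr)^{p/2}$ to $N^{p/2-1}\sum_{j=1}^N\|\tge^j(s)\|^p_{\LQeps}$ so that \eqref{eq:5.12} yields $\|\widetilde{G}_\eps\|^p_{L^p(0,T;\calT_2)} = O(\eps^{p/2})$.
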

\begin{proof}
The lemma can be proved following the steps of Lemma~\ref{lemma5.4} and Lemma~\ref{lemma5.5} with the use of Proposition~\ref{prop2.12}, scaling property from Lemma~\ref{lemma2.13}, the Cauchy-Schwarz inequality, Assumptions~\ref{ass5.1}, \ref{ass_sphere} and the estimates obtained in Lemma~\ref{lemma_higher_estimates_u}.
\end{proof}

\subsection{Tightness}
\label{sec:tightness_shell}
In this subsection we will prove that the family of laws induced by the processes $\alpha_\eps$ is tight on an appropriately chosen topological space $\mathcal{Z}_T$. In order to do so we will consider the following functional spaces for fixed $T > 0$:

\noindent $\ccal([0,T]; {\rD(\rA^{-1})}) :=$ the space of continuous functions $u: [0,T] \to {\rD(\rA^{-1})}$ with the topology $\mathbf{T}_1$ induced by the norm $\|u\|_{\ccal([0,T]; {\rD(\rA^{-1})})} := \sup_{t \in [0,T]}\|u(t)\|_{{\rD(\rA^{-1})}}$,

\noindent $L^2_{\mathrm{w}}(0,T; \rV) :=$ the space $L^2(0,T; \rV)$ with the weak topology $\mathbf{T}_2$,

\noindent $L^2(0,T; \rH) :=$ the space of measurable functions $u : [0,T] \to \mathrm{H}$ such that
\[\|u\|_{L^2(0,T; \mathrm{H})} = \left(\int_0^T \| u(t)\|_{\LS}^2\,dt \right)^{\frac12} < \infty,\]
with the topology $\mathbf{T}_3$ induced by the norm $\|u\|_{L^2(0,T; \mathrm{H})}$.

\noindent Let $\rH_\mathrm{w}$ denote the Hilbert space $\rH$ endowed with the weak topology.

\noindent $\ccal([0,T]; \mathrm{H_w}) :=$ the space of weakly continuous functions $u: [0,T] \to \mathrm{H}$ endowed with the weakest topology $\mathbf{T}_4$ such that for all $h \in \mathrm{H}$ the mappings
\[\ccal([0,T]; \mathrm{H_w}) \ni u \to \left( u(\cdot), h \right)_{\LS} \in \ccal([0,T]; \R)\]
are continuous. In particular $u_n \to u$ in $\ccal([0,T]; \rH_\mathrm{w})$ iff for all $h \in \rH \colon$
\[\lim_{n \to \infty} \sup_{t \in [0,T]} \left|\left( u_n(t) - u(t), h \right)_{\LS} \right| = 0.\]

Let
\begin{equation}
\label{eq:5.32}
\mathcal{Z}_T = \ccal([0,T]; \rD(\rA^{-1})) \cap L^2_{\mathrm{w}}(0,T; \rV) \cap L^2(0,T; \rH) \cap \ccal([0,T]; \mathrm{H_w}),
\end{equation}
and let $\mathcal{T}$ be the supremum\footnote{$\mathcal{T}$ is the supremum of topologies $\mathbf{T}_1$, $\mathbf{T}_2$, $\mathbf{T}_3$ and $\mathbf{T}_4$, i.e. it is the coarsest topology on $\mathcal{Z}_T$ that is finer than each of $\mathbf{T}_1$, $\mathbf{T}_2$, $\mathbf{T}_3$ and $\mathbf{T}_4$} of the corresponding topologies.

\begin{lemma}
\label{lemma5.6}
The set of measures $\left\{\mathcal{L}(\alpha_\eps),\, \eps \in (0,1]\right\}$ is tight on $\left(\mathcal{Z}_T, \mathcal{T}\right)$.
\end{lemma}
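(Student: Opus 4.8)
The goal is to prove tightness of the laws of $\alpha_\eps = \ocirc{M}_\eps[\tue]$ on the space $\mathcal{Z}_T$ defined in (5.32), which is an intersection of four spaces carrying four different topologies. I should identify what kind of tightness criterion fits this product-of-topologies structure. The standard tool here is a criterion of the Mikulevicius–Rozovskii / Brzeźniak–Motyl type (as developed in the deterministic compactness lemma of Dubinskii / Lions and its stochastic analogues), which says: if a family of processes is bounded in the strong norms of the "good" spaces (here $L^2(\Omega; L^\infty(0,T;\rH) \cap L^2(0,T;\rV))$) and satisfies the Aldous condition for equicontinuity in the weak space $\rD(\rA^{-1})$, then the laws are tight on exactly this kind of intersection space.

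\medskip

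\textbf{Proof plan.}

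The plan is to verify the hypotheses of the tightness criterion appropriate to $\mathcal{Z}_T$ (which I expect the paper invokes as a known theorem, analogous to \cite{[BGL15]} or \cite{[Jakubowski98]}). First I would collect the uniform moment bounds: by Lemma~\ref{lemma5.4}, the family $\{\ale\}_{\eps>0}$ is bounded in $L^2(\Omega; L^\infty(0,T;\rH) \cap L^2(0,T;\rV))$, which is precisely (5.27). These bounds control the $\ccal([0,T];\rH_{\mathrm w})$, $L^2_{\mathrm w}(0,T;\rV)$ components and, together with a compactness argument, the $L^2(0,T;\rH)$ component, since $\rV \hookrightarrow \rH$ compactly. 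The genuinely new ingredient needed is an estimate on the time increments of $\ale$ in the weak space $\rD(\rA^{-1})$.

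\medskip

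Second, and this is where the bulk of the work lies, I would establish the Aldous condition [A] in $\rD(\rA^{-1})$: for every sequence of stopping times $(\tau_\eps)$ bounded by $T$ and every $\theta>0$,
\[
\widehat{\bbE}\,\|\ale(\tau_\eps+\theta) - \ale(\tau_\eps)\|_{\rD(\rA^{-1})} \longrightarrow 0 \quad \text{uniformly in } \eps \text{ as } \theta \to 0.
\]
To do this I would test the weak formulation (5.14), rewritten in the $\alpha_\eps$-form as in (\ref{eq:snse_mod}), against $\phi \in \rD(\rA)$ and estimate each term integrated over $[\tau_\eps, \tau_\eps+\theta]$. This is the stochastic analogue of the deterministic equicontinuity estimate already carried out in Lemma~\ref{lemma3.3}: the linear viscous term and the nonlinear terms are handled exactly as in (3.25)--(3.30), using the scaling property Lemma~\ref{lemma2.13}, the retraction estimates Lemmas~\ref{lem:grad-ret-vec} and \ref{lem:lap-ret-vec}, the Poincaré inequality (2.24), Corollary~\ref{cor_L3ineq}, and the a priori bounds of Lemmas~\ref{lemma5.4} and \ref{lemma5.5}. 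The new terms are the stochastic integral $\int \sum_j \ocirc{M}_\eps(\tge^j\,d\beta_j)$ and the forcing term; for the former I would take expectations, apply the Itô isometry, and use the bound (5.12) on $\|\tge^j\|_{\LQeps}$ together with the scaling to get a factor $\theta^{1/2}$, while the deterministic forcing is controlled by (5.11) as in (3.31). Each contribution produces a bound of the form $C\,\theta^{\gamma}$ with $\gamma>0$, uniformly in $\eps$, which yields [A].

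\medskip

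\textbf{Main obstacle.} The hardest part is controlling the four "$\beta$-correction" terms that appear with the singular prefactor $1/\eps$ in (\ref{eq:snse_mod}) — namely the $\tble$ viscous term and the three nonlinear terms involving $\tble$. The point is that these prefactors are compensated: Lemma~\ref{lemma5.5} shows $\tble$ is small, of order $\eps^{1/2}$ in the relevant norms, and Corollary~\ref{cor_L3ineq} gives an extra $\eps^{1/2}$ from the $\lL^3$-estimate, so that after the scaling each such term still carries a non-negative power of $\eps$ and a positive power of $\theta$. I would verify carefully, exactly as in the chain (4.15)--(4.18) used in the deterministic limit, that the $\eps$-powers are non-negative and the $\theta$-powers positive, so that the Aldous estimate survives uniformly in $\eps$. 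Once [A] and the uniform bounds (5.27) are in hand, tightness on $(\mathcal{Z}_T,\mathcal{T})$ follows from the cited compactness criterion, completing the proof.
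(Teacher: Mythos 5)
Your proposal is correct and follows essentially the same route as the paper: the moment bounds of Lemma~\ref{lemma5.4} furnish conditions $(a)$ and $(b)$ of the tightness criterion (Corollary~\ref{corA.2.2}), and the Aldous condition in $\rD(\rA^{-1})$ is obtained by testing the weak formulation against $\oldReps\phi$, $\phi\in\rD(\rA)$, and estimating the viscous, nonlinear, forcing and stochastic terms over $[\tau_\eps,\tau_\eps+\theta]$ exactly as in the deterministic Lemma~\ref{lemma3.3}, with the It\^o isometry handling the noise term and the smallness of $\tble$ (Lemma~\ref{lemma5.5}, Corollary~\ref{cor_L3ineq}) compensating the $1/\eps$ prefactors. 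The only cosmetic difference is that you phrase the Aldous condition via expectations of increments rather than probabilities, but the paper itself passes from the same expectation bounds to the probability form by Chebyshev's inequality, so the two presentations coincide.
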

\begin{proof}
Let $\tue$, for some fixed $\eps>0$,  be a martingale solution of problem \eqref{eq:5.13}. Let us choose and fix   $\phi \in \rD(\rA)$.  Then, recalling the definition \eqref{eq:2.8} of the operator $\tNe$,  by  Lemma~\ref{lemma2.7b}, we infer that for $t\in [0,T]$ we have
\begin{align}
\label{eq:5.33}
\left(\tNe \tue(t), \test \right)_\LQeps & = \int_\bbS \inteps r^2 \tNe \tue(t,r\bx) \phi(\bx)\frac{1}{r}\,dr\,d\sigma(\bx)  \\
& = \int_\bbS \phi(\bx) \left(\inteps r \tNe \tue(t,r \bx)\,dr\right)d\sigma(\bx) = 0. \nn
\end{align}
Similarly we have, also for $t\in [0,T]$,
\begin{equation}
\label{eq:5.34}
\left(\tNe f^\eps(t), \test \right)_\LQeps = 0, \quad \mbox{ and } \quad \left(\tNe \left[\widetilde{G}_\eps(t) d\tWe(t) \right], \test \right)_\LQeps = 0.
\end{equation}
Thus, by Proposition~\ref{prop2.12}, identity \eqref{eq:5.14}, equalities \eqref{eq:5.33}, \eqref{eq:5.34}, and the notations from \eqref{eq:5.19}, we infer that  martingale solution $\tue$ satisfies the following equality, for $t \in [0,T]$, $\bbP$-a.s.
\begin{align}
\label{eq:5.35}
&\left(\tale(t) - \tale(0), \test \right)_\LQeps = -\nu \int_0^{t} \left(\curl \tale(s) , \curl \test \right)_\LQeps\,ds \\
& \quad - \nu \int_0^t \left(\curl \tble(s), \curl \test\right)_\LQeps\,ds - \int_0^t \left(\left[\tale(s) \cdot \nabla \right]\tale(s), \test \right)_\LQeps\,ds  \nn \\
& \quad - \int_0^t \left(\left[\tble(s) \cdot \nabla \right]\tale(s), \test \right)_\LQeps\,ds - \int_0^t \left(\left[\tale(s) \cdot \nabla \right]\tble(s), \test \right)_\LQeps\,ds  \nn \\
& \quad - \int_0^t \left(\left[\tble(s) \cdot \nabla \right]\tble(s), \test \right)_\LQeps\,ds + \int_0^t \left(\tMe f^\eps(s), \test \right)_\LQeps\,ds \nn \\
& \quad +  \int_0^t \left( \tMe\left(\widetilde{G}_\eps(s) d \tWe(s)\right), \test \right)_\LQeps \nn \\
&=: \sum_{i=1}^8 J_i^\eps(t).\nn
\end{align}
The proof of lemma turns out to be  a direct application of Corollary~\ref{corA.2.2}. Indeed, by  Lemma~\ref{lemma5.4}, assumptions  $(a)$ and $(b)$ of Corollary~\ref{corA.2.2} are satisfied and therefore,
it is sufficient to show that the sequence $\left(\alpha_\eps\right)_{\eps > 0}$ satisfies the Aldous condition $[\textbf{A}]$, see Definition~\ref{defnA.1.8},  in space $\rD(\rA^{-1})$.

Let $\theta \in (0,T)$ and $\left(\tau_\eps\right)_{\eps > 0}$ be a sequence of stopping times such that $0 \le \tau_\eps \le \tau_\eps +\theta\le T$. We start by estimating each term in the R.H.S. of \eqref{eq:5.35}. We will use the H\"older inequality, the scaling property from Lemma~\ref{lemma2.13}, the Poincar\'{e} type inequality \eqref{eq:2.24}, the Ladyzhenskaya inequality \eqref{eq:2.25}, inequality \eqref{eq:2.26}, the a priori estimates from Lemmas~\ref{lemma5.4}, \ref{lemma5.5}, the result from Lemma~\ref{lem:grad-ret-vec} and the relation \eqref{eq:3.24}.

In what follows, we will prove that each of the eight process from equality \eqref{eq:5.35} satisfies the Aldous condition $[\textbf{A}]$.
In order to help the reader, we will divide the following part of the proof into eight  parts.
\begin{itemize}
\item
For the first term, we obtain
\begin{align}
\label{eq:5.36}
\bbE \left|J_1^\eps(\tau_\eps + \theta) - J_1^\eps(\tau_\eps)\right| & = \bbE \left|\int_{\tau_\eps}^{\tau_\eps+\theta} \left(\curl \tale(s) , \curl \test \right)_\LQeps\,ds\right|\nn \\
& \le \bbE \int_{\tau_\eps}^{{\tau_\eps}+\theta} \left|\left(\tale(s), \Delta\left(\test\right)\right)_\LQeps\right|\,ds \nn\\
& \le \bbE \int_{\tau_\eps}^{{\tau_\eps}+\theta} \|\tale(s)\|_\LQeps \left\|\Delta\left(\test\right)\right\|_\LQeps\,ds\nn \\
& \le \eps C \E \int_{\tau_\eps}^{{\tau_\eps}+\theta} \|\ale(s)\|_\LS ds
\|\phi\|_{\rD(\rA)} \nn \\
& \le  \eps C \left[\E\left(\sup_{s \in [0,T]}\|\ale(s)\|^2_\LS\right)\right]^{1/2} \|\phi\|_{\rD(\rA)}\,\theta \nn\\
& \le \eps C_{\nu} \|\phi\|_{\rD(\rA)}\,\theta := c_1 \eps \cdot \theta \|\phi\|_{\rD(\rA)} .
\end{align}

\item  Similarly for the second term we have
\begin{align}
\label{eq:5.37}
\bbE \left|J_2^\eps(\tau_\eps + \theta) - J_2^\eps(\tau_\eps)\right| & = \bbE \left|\int_{\tau_\eps}^{{\tau_\eps}+\theta} \left(\curl \tble(s) , \curl \test \right)_\LQeps\,ds\right| \nn \\
&  \le \bbE \int_{\tau_\eps}^{{\tau_\eps}+\theta} \left|\left(\tble(s), \Delta\left(\test\right)\right)_\LQeps\right|\,ds \nn \\
& \le \bbE \int_{\tau_\eps}^{{\tau_\eps}+\theta} \|\tble(s)\|_\LQeps \left\|\Delta\left(\test\right)\right\|_\LQeps\,ds\nn \\
&\le \eps^{3/2} C \|\phi\|_{\rD(\rA)} \bbE \left( \int_{\tau_\eps}^{{\tau_\eps}+\theta} \|\tble(s)\|_{\rV_\eps}^2\,ds\right)^{1/2} \theta^{1/2} \nn\\
&\le \eps^{3/2} C \left(\bbE \|\tble\|^2_{L^2(0,T; \rV_\eps)} \right)^{1/2}\|\phi\|_{\rD(\rA)}\,\theta^{1/2} \nn \\
&\le \eps^{2} C_{\nu}\|\phi\|_{\rD(\rA)}\, \theta^{1/2} := c_2 \eps^2 \cdot \theta^{1/2} \|\phi\|_{\rD(\rA)}.
\end{align}

\item
Now we consider the first non-linear term.
\begin{align}
\label{eq:5.38}
\bbE & \left|J_3^\eps(\tau_\eps + \theta) - J_3^\eps(\tau_\eps)\right| = \bbE \left|\int_{\tau_\eps}^{{\tau_\eps}+\theta} \left(\tale(s), \left[\tale(s) \cdot \nabla \right]\test \right)_\LQeps\,ds\right| \nn\\
& \le  \bbE \int_{\tau_\eps}^{{\tau_\eps}+\theta} \|\tale(s)\|_\LQeps \|\tale(s)\|_{\mathbb{L}^3(Q_\eps)}\left\|\nabla\left(\test\right)\right\|_{\mathbb{L}^6(Q_\eps)}\,ds \nn \\
&\le \eps C \bbE \int_{\tau_\eps}^{{\tau_\eps}+\theta}\|\ale(s)\|_\LS\|\ale(s)\|_{\mathbb{L}^3(\bbS)}\|\phi\|_{\bbW^{1,6}(\bbS)}\,ds \nn \\
& \le \eps \left[\bbE \left(\sup_{s \in [0,T]} \|\ale(s)\|^2_\LS \right) \right]^{1/2} \left[\bbE \|\ale\|^2_{L^2(0,T; \mathbb{L}^3(\bbS))} \right]^{1/2}\|\phi\|_{\bbW^{2,2}(\bbS)} \theta^{1/2} \nn \\
&\le \eps C_{\nu} \|\phi\|_{\rD(\rA)}\,\theta^{1/2} := c_3 \eps \cdot \theta^{1/2} \|\phi\|_{\rD(\rA)} .
\end{align}

\item

Similarly for the second  non-linear term, we have
\begin{align}
\label{eq:5.39}
\bbE & \left|J_4^\eps(\tau_\eps + \theta) - J_4^\eps(\tau_\eps)\right| = \bbE  \left|\int_{\tau_\eps}^{{\tau_\eps}+\theta} \left(\left[\tble(s) \cdot \nabla \right]\tale(s), \test \right)_\LQeps\,ds\right| \nn\\
&\qquad \qquad \qquad \qquad \quad = \bbE \left|\int_{\tau_\eps}^{{\tau_\eps}+\theta} \left(\tale(s), \left[\tble(s) \cdot \nabla \right]\test \right)_\LQeps\,ds\right| \nn \\
&\le  \bbE \int_{\tau_\eps}^{{\tau_\eps}+\theta} \|\tale(s)\|_\LQeps\|\tble(s)\|_{\mathbb{L}^3(Q_\eps)}\left\|\nabla\left(\test\right)\right\|_{\mathbb{L}^6(Q_\eps)}\,ds \nn \\
&\le \eps^{7/6} C \bbE\int_{\tau_\eps}^{{\tau_\eps}+\theta}\|\ale(s)\|_\LS\|\tble(s)\|_{\rV_\eps}\|\phi\|_{\bbW^{1,6}(\bbS)}\,ds \nn \\
&\le \eps^{7/6} C \left[\bbE \left(\sup_{s\in[0,T]}\|\ale(s)\|^2_\LS\right)\right]^{1/2} \left[\bbE\|\tble\|^2_{L^2(0,T; \rV_\eps)}\right]^{1/2} \|\phi\|_{\bbW^{2,2}(\bbS)}\,\theta^{1/2} \nn \\
&\le \eps^{5/3} C_{\nu} \|\phi\|_{\rD(\rA)}\,\theta^{1/2} := c_4 \eps^{5/3} \cdot \theta^{1/2} \|\phi\|_{\rD(\rA)} .
\end{align}

\item

Now as in the previous case, for the next  mixed non-linear term, we obtain
\begin{align}
\label{eq:5.40}
\bbE & \left|J_5^\eps(\tau_\eps + \theta) - J_5^\eps(\tau_\eps)\right| = \bbE \left|\int_{\tau_\eps}^{{\tau_\eps}+\theta} \left(\left[\tale(s) \cdot \nabla \right]\tble(s), \test \right)_\LQeps\,ds\right| \nn \\
& \le \eps^{7/6} C \left[\bbE \left(\sup_{s\in[0,T]}\|\ale(s)\|^2_\LS\right)\right]^{1/2} \left[\bbE\|\tble\|^2_{L^2(0,T; \rV_\eps)}\right]^{1/2} \|\phi\|_{\bbW^{2,2}(\bbS)}\,\theta^{1/2} \nn \\
& \le \eps^{5/3} C_\nu \|\phi\|_{\rD(\rA)}\,\theta^{1/2} := c_5 \eps^{5/3} \cdot \theta^{1/2} \|\phi\|_{\rD(\rA)} .
\end{align}

\item  Finally,  for the last non-linear term, we get
\begin{align}
\label{eq:5.41}
\bbE & \left|J_6^\eps(\tau_\eps + \theta) - J_6^\eps(\tau_\eps)\right| = \bbE \left|\int_{\tau_\eps}^{{\tau_\eps}+\theta} \left(\left[\tble(s) \cdot \nabla \right]\tble(s), \test \right)_\LQeps\,ds\right| \nn \\
&= \bbE \left|\int_{\tau_\eps}^{{\tau_\eps}+\theta} \left(\tble(s), \left[\tble(s) \cdot \nabla \right]\test \right)_\LQeps\,ds\right| \nn \\
&\le \bbE \int_{\tau_\eps}^{{\tau_\eps}+\theta} \|\tble(s)\|_\LQeps \|\tble(s)\|_{\mathbb{L}^3(Q_\eps)} \left\|\nabla\left(\test\right)\right\|_{\mathbb{L}^6(Q_\eps)}\,ds \nn \\
&\le \eps^{2/3} C \bbE \int_{\tau_\eps}^{{\tau_\eps}+\theta}\|\tble(s)\|_\LQeps\|\tble(s)\|_{\rV_\eps}\|\phi\|_{\bbW^{1,6}(\bbS)}\,ds \nn \\
&\le \eps^{2/3} C \left[\bbE \left(\sup_{s\in[0,T]}\|\tble(s)\|^2_\LQeps\right)\right]^{1/2} \left[\bbE\|\tble\|^2_{L^2(0,T; \rV_\eps)}\right]^{1/2} \|\phi\|_{\bbW^{2,2}(\bbS)}\,\theta^{1/2} \nn \\
&\le \eps^{5/3} C_\nu \|\phi\|_{\rD(\rA)}\,\theta^{1/2} =: c_6 \eps^{5/3} \cdot \theta^{1/2} \|\phi\|_{\rD(\rA)}.
\end{align}

\item
Now for the term corresponding to the external forcing $\tfe$, we have using the radial invariance of $M_\eps \wtd{f}_\eps$ and assumption \eqref{eq:5.11}
\begin{align}
\label{eq:5.42}
\bbE  \left|J_7^\eps(\tau_\eps + \theta) - J_7^\eps(\tau_\eps)\right|& = \bbE \left|\int_{\tau_\eps}^{{\tau_\eps}+\theta} \left(\tMe \tfe(s), \test \right)_\LQeps\,ds\right| \nn \\
&= \bbE \left|\int_{\tau_\eps}^{{\tau_\eps}+\theta} \left(\int_\bbS \inteps  M_\eps \tfe(s, r\bx)\phi(\bx)\,dr\,d\sigma(\bx)\right)\,ds\right| \nn \\
& = \eps \bbE\left|\int_{\tau_\eps}^{{\tau_\eps}+\theta} \left(\int_\bbS \ocirc{M}_\eps \tfe(s,\bx) \phi(\bx)\,d\sigma(\bx)\right)\,ds\right|\nn \\
& \le \eps \bbE \int_{\tau_\eps}^{{\tau_\eps}+\theta} \|\ocirc{M}_\eps \tfe(s)\|_{\rV^\prime}\|\phi\|_\rV\,ds \nn \\
& \le \sqrt{\eps} C \left[\bbE\|\tfe\|^2_{L^2(0,T; \rV_\eps^\prime)} \right]^{1/2}\|\phi\|_{\rD(\rA)}\,\theta^{1/2} \nn\\
& \le \eps C \|\phi\|_{\rD(\rA)}\,\theta^{1/2}  =: c_7 \eps \cdot \theta^{1/2} \|\phi\|_{\rD(\rA)}.
\end{align}

\item
At the very end  we are left to deal with the last term corresponding to the stochastic forcing. Using the radial invariance of $M_\eps\tge^j$, It\^o isometry, scaling (see Lemma~\ref{lemma2.13}) and assumption \eqref{eq:5.12}, we get
\begin{align}
& \bbE  \left|J_8^\eps(\tau_\eps + \theta) - J_8^\eps(\tau_\eps)\right|^2  = \bbE \left| \left\langle \int_{\tau_\eps}^{{\tau_\eps}+\theta} \tMe \left( \widetilde{G}_\eps(s)\,d \tWe(s)\right), \test \right\rangle\right|^2 \\
& \qquad = \bbE \left| \int_{\tau_\eps}^{{\tau_\eps}+\theta} \left( \int_\bbS \inteps \sum_{j=1}^N M_\eps \left(\tge^j(s,r\bx)\,d\beta_j(s)\right)\phi(\bx)\,dr\,d\sigma(\bx)\right)\right|^2 \nn\\
&\qquad= \eps \bbE \left|\int_{\tau_\eps}^{{\tau_\eps}+\theta} \sum_{j=1}^N \int_\bbS \ocirc{M}_\eps \left( \tge^j(s, \bx)d\beta_j(s)\right) \phi(\bx)\,d\sigma(\bx)\right|^2 \nn \\
&\qquad\le \eps \bbE  \left\|\sum_{j=1}^N \int_{\tau_\eps}^{{\tau_\eps}+\theta} \ocirc{M}_\eps \left(\tge^j(s)d\beta_j(s)\right)\right\|_\LS^2 \|\phi\|_\LS^2 \nn \\
&\qquad=  \E \left( \int_{\tau_\eps}^{{\tau_\eps}+\theta} \sum_{j=1}^N \eps\|\ocirc{M}_\eps \tge^j(s)\|_\LS^2\,ds \right) \|\phi\|^2_\LS \nn \\
&\qquad=  \E \left(\int_{\tau_\eps}^{{\tau_\eps}+\theta} \sum_{j=1}^N \|\tMe\tge^j(s)\|^2_\LQeps ds \right)\|\phi\|_\LS^2 \nn \\
\label{eq:5.43}
&\qquad \le  \E  \left(\int_{\tau_\eps}^{{\tau_\eps}+\theta} \sum_{j=1}^N \|\tge^j(s)\|^2_\LQeps\,ds\right)\|\phi\|_\LS^2 \nn \\
&\qquad \le \eps Nc \|\phi\|_{\rD(\rA)}^2 \theta := c_8 \eps \cdot \theta \|\phi\|_{\rD(\rA)}^2 .
\end{align}
\end{itemize}
After having proved what we had promised, we are ready to conclude the proof of Lemma \ref{lemma5.6}.  Since for every $t > 0$
\[ \left(\tale (t), \test \right)_\LQeps = \eps \left(\ale(t), \phi \right)_\LS,\]
one has for $\phi \in \rD(\rA)$,
\begin{align}
\label{eq:aldous_2}
\left\|\ale(\tau_\eps + \theta) - \ale(\tau_\eps)\right\|_{\rD(\rA^{-1})} & = \sup_{\|\phi\|_{\rD(\rA)} = 1} \left(\ale(\tau_\eps + \theta) - \ale(\tau_\eps), \phi \right)_\LS \\
& = \frac{1}{\eps} \sup_{\|\phi\|_{\rD(\rA)} = 1} \left(\tale(\tau_\eps + \theta) - \tale(\tau_\eps), \test \right)_\LQeps. \nn
\end{align}
Let us fix $\kappa > 0$ and $\gamma > 0$. By equality \eqref{eq:5.35}, the sigma additivity property of probability measure and \eqref{eq:aldous_2}, we have
\begin{align*}
\mathbb{P} \left(\left\{\|\ale(\tau_\eps + \theta) - \ale(\tau_\eps)\|_{{\rD(\rA^{-1})}} \ge \kappa \right\}\right) \le \frac{1}{\eps} \sum_{i=1}^8 \mathbb{P} \left(\left\{\sup_{\|\phi\|_{\rD(\rA)} = 1} \left|J_i^\eps(\tau_\eps + \theta) - J_i^\eps(\tau_\eps)\right| \ge \kappa\right\}\right).
\end{align*}
Using the Chebyshev's inequality, we get
\begin{align}
\label{eq:aldous_3}
\mathbb{P} \left(\left\{\|\ale(\tau_\eps + \theta) - \ale(\tau_\eps)\|_{{\rD(\rA^{-1})}} \ge \kappa \right\}\right) & \le \frac{1}{\kappa \eps} \sum_{i=1}^7 \bbE \left(\sup_{\|\phi\|_{\rD(\rA)} = 1} \left|J_i^\eps(\tau_\eps + \theta) - J_i^\eps(\tau_\eps)\right| \right) \\
& \;\; + \frac{1}{\kappa^2 \eps} \bbE \left(\sup_{\|\phi\|_{\rD(\rA)} = 1} \left|J_8^\eps(\tau_\eps + \theta) - J_8^\eps(\tau_\eps)\right|^2\right) \nn.
\end{align}
Thus, using estimates \eqref{eq:5.36}--\eqref{eq:5.43} in \eqref{eq:aldous_3}, we get
\begin{align}
\label{eq:aldous_4}
& \mathbb{P}  \left(\left\{\|\ale(\tau_\eps + \theta) - \ale(\tau_\eps)\|_{{\rD(\rA^{-1})}} \ge \kappa \right\}\right) \\
&\; \le \frac{1}{\kappa \eps} \eps \theta^{1/2} \left[c_1 \theta^{1/2} + c_2 \eps + c_3 + c_4 \eps^{2/3} + c_5 \eps^{2/3} + c_6 \eps^{2/3} + c_7 \right] + \frac{1}{\kappa^2 \eps} c_8 \eps \theta. \nn
\end{align}
Let $\delta_i = \left(\dfrac{\kappa}{8 c_i} \gamma\right)^2$, for $i = 1, \cdots, 7$ and $\delta_8 = \dfrac{\kappa^2}{8 c_8} \gamma$. Choose $ \delta = \max_{i \in \{1, \cdots, 8\}}\delta_i$. Hence,
\[ \sup_{\eps > 0} \sup_{0 \leq \theta \leq \delta} \mathbb{P}  \left(\left\{\|\ale(\tau_\eps + \theta) - \ale(\tau_\eps)\|_{{\rD(\rA^{-1})}} \ge \kappa \right\}\right) \leq \gamma.\]
Since $\ale$ satisfies the Aldous condition $[\textbf{A}]$ in ${\rD(\rA^{-1})}$, we conclude   the proof of Lemma  \ref{lemma5.6} by invoking Corollary~\ref{corA.2.2}.
\end{proof}

\subsection{Proof of Theorem~\ref{thm:main_thm}}
\label{sec:proof-theorem}
For every $\eps > 0$, let us define the following intersection of spaces
\begin{equation}
\label{eq:y_eps space}
\mathcal{Y}_T^\eps = L^2_w(0,T; \rV_\eps) \cap \ccal ([0,T]; \rH_\eps^w).
\end{equation}
Now, choose a countable subsequence $\left\{\eps_k\right\}_{k \in \N}$ converging to $0$. For this subsequence define a product space $\mathcal{Y}_T$ given by
\[\mathcal{Y}_T = \Pi_{k \in \N} \mathcal{Y}_T^{\eps_k},\]
and $\eta \colon \Omega \to \mathcal{Y}_T$ by
\[\eta(\omega) = \left(\widetilde{\beta}_{\eps_1}(\omega), \widetilde{\beta}_{\eps_2}(\omega), \cdots, \right) \in \mathcal{Y}_T.\]
Now with this $\mathcal{Y}_T$-valued function we define a constant $\mathcal{Y}_T$-sequence
\[\eta_k \equiv \eta, \quad k \in \N.\]

\noindent Then by Lemma~\ref{lemma5.6} and the definition of sequence $\eta_k$, the set of measures $\left\{\mathcal{L}\left(\alpha_{\eps_k}, \eta_k\right), k \in \N\right\}$ is tight on $\mathcal{Z}_T \times \mathcal{Y}_T$.

Thus, by the Jakubowski--Skorokhod theorem\footnote{The space $\mathcal{Z}_T \times \mathcal{Y}_T \times \ccal([0,T]; \R^N)$ satisfies the assumption of Theorem~\ref{thmA.1.4}. Indeed, since $Z_T$ and $\mathcal{Y}_T^\eps$, $\eps > 0$ satisfies the assumptions (see \cite[Lemma~4.10]{[BD18]}) and $\ccal([0,T]; \R^N)$ is a Polish space and thus automatically satisfying the required assumptions.} there exists a subsequence $\left(k_n\right)_{n \in \N}$, a probability space $(\widehat{\Omega}, \widehat{\mathcal{F}}, \hp)$ and, on this probability space, $\mathcal{Z}_T \times \mathcal{Y}_T \times \ccal([0,T]; \R^N)$-valued random variables 
 $(\widehat{u}, \widehat{\eta}, \widehat{W})$, $\left(\widehat{\alpha}_{\eps_{k_n}}, \widehat{\eta}_{k_n}, \widehat{W}_{\eps_{k_n}}\right), n \in \N$ such that
\begin{equation}
\label{eq:5.44}
\begin{split}
& \left(\widehat{\alpha}_{\eps_{k_n}}, \widehat{\eta}_{k_n}, \widehat{W}_{\eps_{k_n}}\right) \,\mbox{ has the same law as }\, \left({\alpha}_{\eps_{k_n}}, \eta_{k_n}, \widetilde{W}\right)\\
&\mbox{ on } \calB\left(\mathcal{Z}_T \times \mathcal{Y}_T \times \ccal([0,T]; \R^N)\right)
\end{split}
\end{equation}
and
\begin{equation}
\label{eq:5.44a}
\left(\widehat{\alpha}_{\eps_{k_n}}, \widehat{\eta}_{k_n}, \widehat{W}_{\eps_{k_n}}\right) \to \left(\widehat{u}, \widehat{\eta}, \widehat{W} \right) \,\mbox{ in }\,\mathcal{Z}_T \times \mathcal{Y}_T \times \ccal([0,T]; \R^N),\quad \hp\mbox{-a.s.}
\end{equation}
In particular, using marginal laws, and definition of the process $\eta_k$, we have
\begin{equation}
    \label{eq:law_alp_bet}
    \mathcal{L}\left(\what{\alpha}_{\eps_{k_n}}, \what{\beta}_{\eps_{k_n}}\right) = \mathcal{L}\left(\alpha_{\eps_{k_n}}, \widetilde{\beta}_{\eps_{k_n}}\right) \; \mbox{ on } \calB\left(\mathcal{Z}_T \times \mathcal{Y}_T^{{\eps_{k_n}}}\right)
\end{equation}
where $\what{\beta}_{\eps_{k_n}}$ is the $k_n$th component of $\mathcal{Y}_T$-valued random variable $\what{\eta}_{k_n}$. We are not interested in the limiting process $\widehat{\eta}$ and hence will not discuss it further.

Using the equivalence of law of $\widehat{W}_{\eps_{k_n}}$ and $\widetilde{W}$ on $\ccal([0,T];\R^N)$ for $n \in \N$ one can show that $\widehat{W}$ and $\widehat{W}_{\eps_{k_n}}$ are $\R^N$-valued Wiener processes (see \cite[Lemma~5.2 and Proof]{[BGJ13]} for details).

$\widehat{\alpha}_{\eps_{k_n}} \to \widehat{u}$ in $\mathcal{Z}_T $, $\hp\mbox{-a.s.}$ precisely means that
\begin{align*}
\widehat{\alpha}_{\eps_{k_n}} \to \hu \; \mbox{ in }\,\ccal([0,T]; \rD(\rA^{-1})), \qquad \qquad
&\widehat{\alpha}_{\eps_{k_n}} \rightharpoonup \hu \; \mbox{ in }\,L^2(0,T; \rV),\\
\widehat{\alpha}_{\eps_{k_n}} \to \hu \; \mbox{ in }\,L^2(0,T; \rH), \qquad \qquad
&\widehat{\alpha}_{\eps_{k_n}} \to \hu \; \mbox{ in }\,\ccal([0,T]; \rH_w),
\end{align*}
and
\[\widehat{W}_{\eps_{k_n}} \to \widehat{W}\, \mbox{in }\,\ccal([0,T]; \R^N).\]
Let us denote the subsequence $(\widehat{\alpha}_{\eps_{k_n}}, \what{\beta}_{\eps_{k_n}}, \widehat{W}_{\eps_{k_n}})$ again by $(\hae, \what{\beta}_\eps, \widehat{W}_\eps)_{\eps \in (0,1]}$.\\
Note that since $\calB\left(\mathcal{Z}_T \times \mathcal{Y}_T \times \ccal([0,T]; \R^N) \right) \subset \calB(\mathcal{Z}_T) \times \mathcal{B}(\mathcal{Y}_T)\times \calB\left(\ccal([0,T]; \R^N)\right)$, the functions $\hu$, $\widehat{\eta}$ are $\mathcal{Z}_T$, $\mathcal{Y}_T$ Borel random variables respectively.

Using the retract operator $\oldReps : \LS \to \LQeps$ as defined in \eqref{eq:2.36}-\eqref{eq:2.35}, we define new processes $\underline{\hae}$ corresponding to old processes $\widetilde{\alpha}_\eps$ on the new probability space as follows
\begin{equation}
\label{eq:new_proc}
\underline{\hae} := \oldReps \hae.
\end{equation}
Moreover, by Lemma~\ref{lemma2.22} we have the following scaling property for these new processes, i.e.
\begin{equation}
\label{eq:scaling_new_pro}
\|\underline{\hae}\|_{\LQeps} = \sqrt{\eps}\|\hae\|_{\LS}.
\end{equation}

The following auxiliary result which is needed in the proof of Theorem~\ref{thm:main_thm}, cannot be deduced directly from the Kuratowski Theorem (see Theorem~\ref{thmB.1.1}).

\begin{lemma}
\label{lemma5.7}
Let $T > 0$ and $\mathcal{Z}_T$ be as defined in \eqref{eq:5.32}. Then the following sets $\ccal([0,T];\rH) \cap \mathcal{Z}_T$, $L^2(0,T; \rV) \cap \mathcal{Z}_T$ are Borel subsets of $\mathcal{Z}_T$.
\end{lemma}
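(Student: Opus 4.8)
The plan is to realise each of the two sets as a set built from $\mathcal{T}$-closed sets by countably many unions and intersections; this is the natural route here because $\mathcal{Z}_T$ carries the non-metrisable topology $\mathcal{T}$ (both $\mathbf{T}_2$ and $\mathbf{T}_4$ are weak topologies), so the Kuratowski theorem cannot be invoked directly. Throughout I would use two elementary facts: since $\mathcal{T}$ is finer than each $\mathbf{T}_i$, every $\mathbf{T}_i$-closed set is $\mathcal{T}$-closed and every $\mathbf{T}_i$-lower semicontinuous functional is $\mathcal{T}$-lower semicontinuous, hence $\mathcal{T}$-Borel; and, by the definition of $\mathbf{T}_4$, for each $t\in[0,T]$ the evaluation $u\mapsto u(t)$ is $\mathcal{T}$-continuous from $\mathcal{Z}_T$ into $\rH$ endowed with its weak topology.

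For $L^2(0,T;\rV)\cap\mathcal{Z}_T$ the argument is short. The norm $u\mapsto\|u\|_{L^2(0,T;\rV)}$ is lower semicontinuous for the weak topology $\mathbf{T}_2$ of $L^2(0,T;\rV)$, hence $\mathcal{T}$-lower semicontinuous, so each sublevel set $\{u\in\mathcal{Z}_T:\|u\|_{L^2(0,T;\rV)}\le n\}$ is $\mathcal{T}$-closed. Consequently
\[
L^2(0,T;\rV)\cap\mathcal{Z}_T=\bigcup_{n\in\N}\bigl\{u\in\mathcal{Z}_T:\|u\|_{L^2(0,T;\rV)}\le n\bigr\}
\]
is an $F_\sigma$, and in particular a Borel subset of $(\mathcal{Z}_T,\mathcal{T})$.

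The substance of the lemma is the first set. The key observation is that, for a weakly continuous $u\in\ccal([0,T];\rH_{\mathrm w})$, strong continuity into $\rH$ is equivalent to the continuity of $t\mapsto\|u(t)\|_{\LS}$, because in a Hilbert space weak convergence together with convergence of norms forces strong convergence. Since $[0,T]$ is compact, this is in turn equivalent to uniform strong continuity, which I claim can be tested on the rationals. Using the $\mathcal{T}$-continuity of the evaluations into $\rH_{\mathrm w}$ and the weak lower semicontinuity of $\|\cdot\|_{\LS}$, each functional $u\mapsto\|u(t)-u(s)\|_{\LS}$ is $\mathcal{T}$-lower semicontinuous, so each set $\{u\in\mathcal{Z}_T:\|u(t)-u(s)\|_{\LS}\le 1/k\}$ is $\mathcal{T}$-closed. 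I would then establish the identity
\[
\ccal([0,T];\rH)\cap\mathcal{Z}_T
=\bigcap_{k\ge1}\ \bigcup_{m\ge1}\ \bigcap_{\substack{s,t\in\mathbb{Q}\cap[0,T]\\ |s-t|<1/m}}
\bigl\{u\in\mathcal{Z}_T:\|u(t)-u(s)\|_{\LS}\le 1/k\bigr\},
\]
whose right-hand side is Borel by the previous remarks (a countable intersection of countable unions of $\mathcal{T}$-closed sets).

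The main obstacle is proving this identity, and specifically the inclusion $\supseteq$: given $u$ satisfying the rational modulus-of-continuity condition, I must upgrade it to genuine uniform strong continuity on all of $[0,T]$. Here I would fix $s,t\in[0,T]$ with $|s-t|<1/m$, choose rationals $s_j\to s$ and $t_j\to t$ with $|s_j-t_j|<1/m$, use weak continuity of $u$ to get $u(s_j)-u(t_j)\rightharpoonup u(s)-u(t)$ in $\rH$, and then the weak lower semicontinuity of the norm to conclude $\|u(s)-u(t)\|_{\LS}\le 1/k$; the reverse inclusion is immediate from uniform continuity on the compact interval. The two points to handle carefully are that weak convergence plus convergence of norms indeed yields strong convergence (justifying the reformulation in terms of norm-continuity) and that all the functionals used are Borel for the full supremum topology $\mathcal{T}$ and not merely for one component topology; both are consequences of the finer-topology remark together with the defining property of $\mathbf{T}_4$.
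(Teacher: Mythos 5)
Your proof is correct, but it takes a genuinely different route from the paper's. The paper's own argument (Appendix~\ref{s:AppB}) is descriptive-set-theoretic: $\ccal([0,T];\rH)$ and $\ccal([0,T];\rD(\rA^{-1}))\cap L^2(0,T;\rH)$ are Polish spaces, the natural embedding of the first into the second is injective and continuous, hence Borel, so the Kuratowski Theorem (Theorem~\ref{thmB.1.1}) shows that $\ccal([0,T];\rH)$ is a Borel subset of the larger Polish space; a separate topological lemma (Lemma~\ref{lemmaB.1.2}, whose proof is outsourced to \cite{[BD19]}) is then needed to intersect with $\mathcal{Z}_T$ and carry the Borel property into the non-metrisable space $(\mathcal{Z}_T,\mathcal{T})$, and the same two-step scheme treats $L^2(0,T;\rV)$ via its embedding into $L^2(0,T;\rH)$. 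You bypass both ingredients by exhibiting each set explicitly as a countable Boolean combination of $\mathcal{T}$-closed sets: an $F_\sigma$ of weakly closed balls for $L^2(0,T;\rV)\cap\mathcal{Z}_T$, and the representation $\bigcap_{k}\bigcup_{m}\bigcap_{s,t}$ for $\ccal([0,T];\rH)\cap\mathcal{Z}_T$, whose nontrivial inclusion $\supseteq$ you handle correctly: the evaluations $u\mapsto u(t)$ are $\mathcal{T}$-continuous into $\rH$ with its weak topology by the defining property of $\mathbf{T}_4$, the norm is weakly lower semicontinuous, and the passage from rational to arbitrary times follows from weak continuity of $u$ together with lower semicontinuity of the norm along weakly convergent sequences. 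What the paper's route buys is brevity, at the price of citing two non-elementary facts; what yours buys is a self-contained argument that lives entirely inside $(\mathcal{Z}_T,\mathcal{T})$, needs no metrisability or Polishness anywhere, and identifies the explicit Borel class of the two sets as a bonus. Two minor observations: your preliminary reformulation of strong continuity via continuity of $t\mapsto\|u(t)\|_{\LS}$ (weak convergence plus norm convergence implies strong convergence) is never actually used, since the displayed identity is proved directly by the rational-approximation argument, so it can be deleted; and note that under the paper's definition \eqref{eq:5.32} every element of $\mathcal{Z}_T$ already lies in $L^2(0,T;\rV)$ as a set (the second factor is $L^2_{\mathrm{w}}(0,T;\rV)$), so your union over $n$ in fact exhausts $\mathcal{Z}_T$; this makes that half of the lemma nearly tautological, which is equally true of the paper's proof and is no fault of yours.
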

\begin{proof}
See Appendix~\ref{s:AppB.2}.
\end{proof}

By Lemma~\ref{lemma5.7}, $\ccal([0,T]; \rH)$ is a Borel subset of $\mathcal{Z}_T$. Since $\ale \in \ccal([0,T]; \rH)$, $\hp$-a.s. and $\hae$, $\ale$ have the same laws on $\mathcal{Z}_T$, thus
\begin{equation}
\label{eq:5.45}
\mathcal{L}(\hae)\left(\ccal([0,T]; \rH)\right) = 1, \quad \eps > 0,
\end{equation}
and from estimates \eqref{eq:5.20} and \eqref{eq:hoe_alpha}, for $p \in[2, \infty)$
\begin{equation}
\label{eq:5.46}
\sup_{\eps > 0} \widehat{\E} \left(\sup_{0 \le s \le T} \|\hae(s)\|^{p}_{\LS} \right) \le K_1(p).
\end{equation}
Since $L^2(0,T; \mathrm{V}) \cap \mathcal{Z}_T$ is a Borel subset of $\calZ_T$ (Lemma~\ref{lemma5.7}), $\ale$ and $\hae$ have same laws on $\calZ_T$; from \eqref{eq:5.20}, we have
\begin{equation}
\label{eq:5.47}
\sup_{\eps > 0} \widehat{\E} \left[ \int_0^T \|\curlS \hae(s)\|^2_{\LS}\,ds \right] \le K_2.
\end{equation}
Since the laws of $\eta_{k_n}$ and $\widehat{\eta}_{k_n}$ are equal on $\mathcal{Y}_T$,  we infer that  the corresponding marginal laws are also equal.  In other words, the laws on $\calB\left(\mathcal{Y}_T^{\eps_{k_n}}\right)$ of  $\mathcal{L}(\what{\beta}_{\eps_{k_n}})$ and $\mathcal{L}(\wtd{\beta}_{\eps_{k_n}})$ are equal for every ${k_n}$.

Therefore, from the estimates \eqref{eq:5.29} and \eqref{eq:hoe_beta} we infer for $p \in [2, \infty)$
\begin{equation}
\label{eq:beta_estimate_1}
\widehat{\E} \left(\sup_{0 \le s \le T} \|\hbe(s)\|^{p}_{\LQeps} \right) \le K_3(p) \eps^{p/2}, \qquad \eps \in (0,1]
\end{equation}
and
\begin{equation}
\label{eq:beta_estimate_2}
\widehat{\E} \left[ \int_0^T \|\curl \hbe(s)\|^2_{\LQeps}\,ds \right] \le K_4 \eps, \qquad \eps \in (0,1].
\end{equation}

By inequality \eqref{eq:5.47} we infer that the sequence $(\hae)_{\eps > 0}$ contains a subsequence, still denoted by $(\hae)_{\eps > 0}$ convergent weakly (along the sequence $\eps_{k_n}$) in the space $L^2([0,T] \times \homega; \rV)$. Since $\hae \to \hu$ in $\mathcal{Z}_T$ $\hp$-a.s., we conclude that $\hu \in L^2([0,T] \times \homega; \rV)$, i.e.
\begin{equation}
\label{eq:5.48}
\widehat{\E} \left[\int_0^T \|\curlS \hu(s)\|^2_{\LS}\,ds \right] < \infty.
\end{equation}
Similarly by inequality \eqref{eq:5.46}, for every $p \in [2, \infty)$ we can choose a subsequence of $(\hae)_{\eps>0}$ convergent weak star (along the sequence $\eps_{k_n}$) in the space $L^p(\homega; L^\infty(0,T; \rH))$ and, using \eqref{eq:5.44a}, we infer that
\begin{equation}
\label{eq:5.49}
\widehat{\E} \left(\sup_{0 \le s \le T} \|\hu(s)\|^{p}_{\LS} \right) < \infty.
\end{equation}

Using the convergence from \eqref{eq:5.44a} and estimates \eqref{eq:5.46}--\eqref{eq:5.49} we will prove certain term-by-term convergences which will be used later to prove Theorem~\ref{thm:main_thm}.
In order to simplify the notation, in the result below  we write  $\lim_{\eps \to 0}$ but we mean
$\lim_{k_n \to \infty }$.

Before stating the next lemma, we introduce a new functional space $\mathbb{U}$ as the space of compactly supported, smooth divergence free vector fields on $\bbS$:
\begin{equation}
\label{eq:spaceU}
\mathbb{U}:= \{ \v := (0,\v_\lambda, \v_\varphi) \in C^\infty_c(\bbS;\R^3) : \ddivS \v = 0 \mbox{ in } \bbS\}.
\end{equation}

\begin{lemma}
\label{lemma5.8}
For all $t \in [0,T]$, and {$\phi \in \mathbb{U},$} we have (along the sequence $\eps_{k_n}$)
\begin{itemize}
\item[(a)] $\lim_{\eps \to 0} \hE \left[\int_0^T \left|\left(\hae(t) - \hu(t),\phi \right)_\LS\right|\,dt\right] = 0$, 
\item[(b)] $\lim_{\eps \to 0} \;\left(\hae(0) - u_0,\phi \right)_\LS  = 0$, 
\item[(c)] $\lim_{\eps \to 0} \hE\left[\int_0^T \left| \int_0^t \left(\dfrac{\nu}{1+\eps} \curlS \hae(s) - \nu \curlS \hu(s), \curl \phi \right)_\LS ds\right|\,dt\right] = 0$,
\item[(d)] $\lim_{\eps \to 0} \hE \left[ \int_0^T \left| \int_0^t \left\langle\dfrac{1}{1+\eps} \left[\hae(s) \cdot \nablaS \right]\hae(s) - \left[\hu(s) \cdot \nablaS\right]\hu(s), \phi \right\rangle ds\right|\,dt\right] = 0$,
\item[(e)] $\lim_{\eps \to 0} \int_0^T \left|\int_0^t \left\langle\ocirc{M}_\eps \tfe (s) - f(s), \phi \right\rangle ds\right|\,dt = 0,$
\item[(f)] $\lim_{\eps \to 0} \hE \left[\int_0^T\left|  \left( \int_0^t \left[\ocirc{M}_\eps \left(\widetilde{G}_\eps(s)d\widehat{W}_\eps(s)\right) - G(s)d\widehat{W}(s)\right], \phi\right)_\LS\,\right|^2 dt\right] = 0.$
\end{itemize}
\end{lemma}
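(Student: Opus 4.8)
The plan is to pass to the limit separately in each term, relying on the $\hp$-almost sure convergences furnished by \eqref{eq:5.44a} — the strong convergence $\hae\to\hu$ in $L^2(0,T;\rH)$, the weak convergence in $L^2(0,T;\rV)$, and $\widehat{W}_\eps\to\widehat{W}$ in $\ccal([0,T];\R^N)$ — together with the uniform moment bounds \eqref{eq:5.46}, \eqref{eq:5.47} and \eqref{eq:5.49}. In every item carrying a time integral of the form $\int_0^t(\cdots)\,ds$ the outer integral is harmless, since $\int_0^T\bigl|\int_0^t g(s)\,ds\bigr|\,dt\le T\int_0^T|g(s)|\,ds$, so that it suffices to control the $s$-integral; the squared variant appearing in (f) will be handled by Doob's maximal inequality. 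In each case the passage from $\hp$-a.s.\ convergence to convergence in $\hE$ will be effected by the Vitali convergence theorem, the required uniform integrability being supplied by the higher moments \eqref{eq:5.46} and \eqref{eq:5.49}.

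For (a) one has $\int_0^T|(\hae(t)-\hu(t),\phi)_\LS|\,dt\le\sqrt{T}\,\|\phi\|_\LS\,\|\hae-\hu\|_{L^2(0,T;\rH)}\to0$ $\hp$-a.s., and Vitali concludes. Item (b) is deterministic: the initial datum being non-random, $\hae(0)=\oMe\tu_0^\eps$ $\hp$-a.s., and (b) is exactly the weak convergence \eqref{eq:initial_data_convg} tested against $\phi\in\rH$. For (c) I would split $\tfrac{\nu}{1+\eps}\curlS\hae-\nu\curlS\hu=\nu\bigl(\tfrac{1}{1+\eps}-1\bigr)\curlS\hae+\nu\,\curlS(\hae-\hu)$; the first piece carries the factor $\tfrac{\eps}{1+\eps}$ and is bounded in $L^2$ through \eqref{eq:5.47}, while for the second I integrate by parts, using $(\curlS(\hae-\hu),\curlS\phi)_\LS=-(\hae-\hu,\DDelta\phi)_\LS$ for the smooth field $\phi\in\mathbb{U}$, which reduces it to the situation of (a) with $\DDelta\phi\in\LS$ in place of $\phi$.

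For the nonlinear term (d) the essential device is the antisymmetry of the trilinear form. After removing the $\tfrac{\eps}{1+\eps}$ correction (whose factor is bounded via $\int_0^T\|\hae\|_\LS^2\,ds$), I write $b(\hae,\hae,\phi)-b(\hu,\hu,\phi)=b(\hae-\hu,\hae,\phi)+b(\hu,\hae-\hu,\phi)$ and move the gradient onto the smooth test field, for instance $b(\hae-\hu,\hae,\phi)=-b(\hae-\hu,\phi,\hae)$, so that each term is dominated by $\|\nablaS\phi\|_{\lL^\infty(\bbS)}\,\|\hae-\hu\|_{L^2(0,T;\rH)}\bigl(\|\hae\|_{L^2(0,T;\rH)}+\|\hu\|_{L^2(0,T;\rH)}\bigr)$; this tends to $0$ $\hp$-a.s.\ by the strong $L^2(0,T;\rH)$ convergence and the uniform bounds, and Vitali gives (d). Item (e) is deterministic: by \eqref{eq:convg_ext_force} the integrand $\langle\oMe\tfe(s)-f(s),\phi\rangle$ tends to $0$ for every $s$, while Lemma~\ref{lemma2.13} and \eqref{eq:5.11} bound $\oMe\tfe$ uniformly in $L^p(0,T;\rV^\prime)$, so a Vitali argument in $s$ yields $\int_0^T|\langle\oMe\tfe(s)-f(s),\phi\rangle|\,ds\to0$.

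The genuine difficulty is the stochastic term (f), in which both integrand and integrator depend on $\eps$. Since the coefficients $\tge^j,g^j$ are deterministic, setting $h^j_\eps(s):=(\oMe\tge^j(s),\phi)_\LS$ and $h^j(s):=(g^j(s),\phi)_\LS$ the relevant integrals are Wiener integrals, and componentwise I would split
\begin{equation*}
\begin{split}
\int_0^t h^j_\eps\,d\widehat{\beta}^\eps_j-\int_0^t h^j\,d\widehat{\beta}_j
&=\int_0^t(h^j_\eps-h^j)\,d\widehat{\beta}^\eps_j \\
&\quad+\Bigl(\int_0^t h^j\,d\widehat{\beta}^\eps_j-\int_0^t h^j\,d\widehat{\beta}_j\Bigr).
\end{split}
\end{equation*}
The first summand is controlled by the It\^o isometry and the convergence $h^j_\eps\to h^j$ in $L^2(0,T)$, itself a Vitali argument from the pointwise weak convergence $\oMe\tge^j(s)\rightharpoonup g^j(s)$ and the uniform $L^p(0,T)$ bound obtained from \eqref{eq:5.12} and Lemma~\ref{lemma2.13}. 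The second summand is treated by approximating $h^j$ with smooth functions and integrating by parts, $\int_0^t h^j\,d\widehat{\beta}^\eps_j=h^j(t)\widehat{\beta}^\eps_j(t)-\int_0^t\dot{h}^j(s)\widehat{\beta}^\eps_j(s)\,ds$, which converges $\hp$-a.s.\ to $\int_0^t h^j\,d\widehat{\beta}_j$ by the uniform convergence $\widehat{W}_\eps\to\widehat{W}$, the approximation error being uniformly controlled by the It\^o isometry. Gaussianity provides uniform bounds on all moments, so Vitali upgrades these convergences to $\hE\int_0^T|\cdots|^2\,dt\to0$. I expect the bookkeeping of the two distinct Brownian motions, and this smoothing and integration-by-parts step, to be the main obstacle of the whole lemma.
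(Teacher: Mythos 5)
Your proposal is correct. For items (a), (b), (c) and (e) it is essentially the paper's own argument: the $\hp$-a.s.\ convergences coming from \eqref{eq:5.44a}, used pointwise or after integration by parts onto the test field (the identity $(\curlS u,\curlS\phi)_\LS=-(u,\DDelta\phi)_\LS$ you invoke in (c) is exactly the paper's \eqref{eq:5.52}), upgraded to convergence in expectation by Vitali; the only cosmetic difference is that you exploit the strong $L^2(0,T;\rH)$ convergence where the paper uses the $\ccal([0,T];\rH_w)$ convergence, and your identification $\hae(0)=\oMe\widetilde{u}_0^\eps$ $\hp$-a.s.\ in (b) is justified, as in the paper, by equality of laws and the fact that $\alpha_\eps(0)$ is a constant random variable. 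The genuine divergences are (d) and (f). In (d) the paper splits the nonlinearity as $[(\hae-\hu)\cdot\nablaS]\hu+[\hae\cdot\nablaS](\hae-\hu)$ and must use the \emph{weak} convergence $\hae\rightharpoonup\hu$ in $L^2(0,T;\rV)$ (paired against $\hae\phi$) for the second piece; your antisymmetry device $b(v,w,\phi)=-b(v,\phi,w)$ pushes the derivative onto $\phi$, so both pieces are controlled by $\|\nablaS\phi\|_{\lL^\infty(\bbS)}$ and the strong $L^2(0,T;\rH)$ convergence alone --- legitimate, since the first arguments $\hae-\hu$ and $\hu$ are divergence-free and $\hae(s),\hu(s)\in\rV$ for a.e.\ $s$, and arguably cleaner because the $\rV$-weak convergence is never needed. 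In (f) your route is not merely different but more complete than the paper's: the chain \eqref{eq:5.54}--\eqref{eq:5.58} only treats $\int_0^t\bigl[\oMe\widetilde{G}_\eps(s)-G(s)\bigr]\,d\widehat{W}(s)$, i.e.\ both integrands against the \emph{single} limiting Wiener process $\widehat{W}$, and passes to the assertion (which involves the two distinct processes $\widehat{W}_\eps$ and $\widehat{W}$) with only the phrase ``by the properties of the It\^o integral''; your decomposition --- It\^o isometry for $\int_0^t(h^j_\eps-h^j)\,d\widehat{\beta}^\eps_j$, then smoothing, pathwise integration by parts and the a.s.\ uniform convergence $\widehat{W}_\eps\to\widehat{W}$ for $\int_0^t h^j\,d\widehat{\beta}^\eps_j-\int_0^t h^j\,d\widehat{\beta}_j$, valid precisely because the coefficients are deterministic --- supplies the argument that the paper leaves implicit. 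In short: the paper's route buys brevity, yours buys independence from the weak $\rV$-convergence in (d) and an honest treatment of the change of Brownian motion in (f).
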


\begin{proof} Let us fix $\phi \in \mathbb{U}$.

\noindent \textbf{(a)} We know that $\hae \to \hu$ in $\calZ_T$. In particular,
\[
 \hae \rightarrow \widehat{u} \quad \text{ in }\, \ccal([0,T]; \rH_w)\quad \hp\text{-a.s.}
\]
Hence, for $t \in [0,T]$
\begin{equation}
\label{eq:5.51}
\lim_{\eps \to 0} \left(\hae(t),\phi \right)_\LS = \left(\hu(t), \phi\right)_\LS, \quad \hp\text{-a.s.}
\end{equation}
Since by \eqref{eq:5.46}, for every $\eps > 0$, $\sup_{t\in[0,T]}\|\hae(s)\|^2_\LS \le K_1(2)$, $\hp$-a.s., using the dominated convergence theorem we infer that
\begin{equation}
\label{eq:convg1}
\lim_{\eps \to 0} \int_0^T \left|\left(\hae(t) - \hu(t), \phi\right)_\LS\right| dt = 0,\quad \hp\text{-a.s.}
\end{equation}
By the H\"older inequality, \eqref{eq:5.46} and \eqref{eq:5.49} for every $\eps > 0$ and every $r > 1$
\begin{equation}
\label{eq:convg2}
\begin{split}
& \hE \left[\left|\int_0^T \|\hae(t) - \hu(t)\|_\LS dt\right|^r\right] \\
& \qquad \le c \hE \left[\int_0^T\left(\|\hae(t)\|_\LS^{r} + \|\hu(t)\|^{r}_\LS\right)dt \right] \le cTK_1(r),
\end{split}
\end{equation}
where $c$ is some positive constant. To conclude the proof of assertion $(a)$ it is sufficient to use \eqref{eq:convg1}, \eqref{eq:convg2} and the Vitali's convergence theorem.

\noindent \textbf{(b)} Since $\hae \to \hu$ in $\ccal([0,T]; \rH_w)$ $\hp$-a.s.  we infer that
\begin{equation}\label{test-01}
\left(\hae(0),\phi \right)_\LS \to \left(\hu(0),\phi \right)_\LS, \quad \hp\text{-a.s.}
\end{equation}
Also, note that by condition \eqref{eq:initial_data_convg} in Assumption \ref{ass_sphere},
${\alpha}_\eps(0) = \oMe \tue(0)=\oMe[\widetilde{u}_0^\eps]$ converges weakly to $u_0$ in $\mathbb{L}^2(\bbS)$.  \\
On the other hand, by \eqref{eq:5.44} we infer that the laws of $\hae(0)$ and $\alpha_\eps(0)$ on $\rH$ are equal.  Since  $\alpha_\eps(0)$ is a constant  random variable  on the old probability space, we infer that
$\hae(0)$ is also a constant random variable (on the new probability space) and hence, by \eqref{eq:5.4} and \eqref{eq:5.19}, we infer that
$\hae(0)=\ocirc{M}_\eps [ \tu_0^\eps ]$ almost surely (on the new probability space).
Therefore we infer that
\[\left(\widehat{u}(0), \phi \right)_{\LS} = \left(u_0, \phi \right)_{\LS},\]
concluding the proof of assertion $(b)$.

\noindent \textbf{(c)} Since $\hae \to \hu$ in $\ccal([0,T]; \rH_w)$ $\hp$-a.s.,
\begin{align}
\label{eq:5.52}
\lim_{\eps \to 0}\frac{\nu}{1+\eps} \int_0^t \left(\curlS \hae(s), \curlS \phi \right)_\LS\,ds  & = - \lim_{\eps \to 0} \frac{\nu}{1+\eps} \int_0^t \left(\hae (s), \DDelta \phi \right)_\LS\,ds \\
= - \nu \int_0^t \left(\widehat{u}, \DDelta \phi \right)_\LS\,ds & = \nu \int_0^t \left(\curlS \widehat{u}, \curlS \phi \right)_\LS\,ds. \nn
\end{align}
The Cauchy-Schwarz inequality and estimate \eqref{eq:5.47} infer that for all $t \in [0,T]$ and $\eps \in (0,1]$
\begin{align}
\label{eq:convg3}
\hE & \left[\left|\int_0^t \left(\dfrac{\nu}{1+\eps}\curlS \hae(s), \curlS \phi\right)_\LS\,ds\right|^2\right]\\
& \quad \le \nu^2 \|\curlS \phi\|_\LS^2 \hE \left[\int_0^t \|\curlS \hae(s)\|^2_\LS\,ds\right] \le c K_2\, \nn
\end{align}
for some constant $c > 0$. By \eqref{eq:5.52}, \eqref{eq:convg3} and the Vitali's convergence theorem we conclude that for all $t \in [0,T]$
\[\lim_{\eps \to 0} \hE\left[ \left| \int_0^t \nu\left(\dfrac{1}{1+\eps} \curlS \hae(s) - \curlS \hu(s), \curlS \phi \right)_\LS\,ds\right|\right] = 0 .\]
Assertion $(c)$ follows now from \eqref{eq:5.47}, \eqref{eq:5.48} and the dominated convergence theorem.

\noindent \textbf{(d)} For the non-linear term, using the Sobolev embedding $\hH^2(\bbS) \hookrightarrow \lL^\infty(\bbS)$, we have
\begin{align}
\label{eq:5.53}
&\left| \int_0^{t} \left\langle\left[\hae(s)  \cdot \nablaS \right]\hae(s), \phi \right)_\LS\,ds - \int_0^t \left\langle\left[\widehat{u}(s) \cdot \nablaS \right]\widehat{u}(s), \phi \right\rangle\,ds\right| \\
&\le \left|\int_0^t \int_\bbS \left[\left(\hae(s,x) - \widehat{u}(s,x) \right) \cdot \nablaS \widehat{u}(s,x) \right]\cdot \phi(x) \,dx\,ds\right| \nn \\
&\quad + \left|\int_0^t \int_\bbS \left[\hae(s,x) \cdot \nablaS \left(\hae(s,x) - \widehat{u}(s,x)\right)\right] \cdot \phi(x) \,dx\,ds\right| \nn \\
& \le \|\hae - \widehat{u}\|_{L^2(0,T; \rH)}\|\widehat{u}\|_{L^2(0,T; \rV)}\|\phi\|_{\hH^2(\bbS)} \nn \\
& \quad + \left|\int_0^t \left(\nablaS \left(\hae(s,x) - \widehat{u}(s,x)\right), \hae(s)\phi\right)_\LS ds\right|. \nn
\end{align}
The first term converges to zero as $\eps \to 0$, since $\hae \to \hu$ strongly in $L^2(0,T; \rH)$ $\hp$-a.s., $\widehat{u} \in L^2(0,T; \rV)$ and the second term converges to zero too as $\eps \to 0$ because $\hae \to \widehat{u}$ weakly in $L^2(0,T; \rV)$. Using the H\"older inequality, estimates \eqref{eq:5.46} and the embedding $\hH^2(\bbS) \hookrightarrow \mathbb{L}^\infty(\bbS)$ we infer that for all $t \in [0,T]$, $\eps \in (0,1]$, the following inequalities hold
\begin{align}
\label{eq:convg4}
\hE & \left[\left|\int_0^t \tfrac{1}{1+ \eps}\left\langle\left[\hae(s)\cdot \nablaS\right]\hae(s),\phi\right\rangle ds\right|^2\right] \nn \\
&\le \hE \left[\left(\int_0^t\|\hae(s)\|_\LS^2 \| \nablaS \phi\|_{\mathbb{L}^\infty(\bbS)}\,ds\right)^2\right]
\le c \|\nablaS \phi\|_{\hH^2(\bbS)}\,t\, \hE \left[\int_0^t \|\hae(s)\|^4_\LS \,ds\right] \nn\\
& \le c \|\phi\|_{\hH^3(\bbS)}\, t\,  \hE \left[\sup_{s \in [0,t]}\|\hae(s)\|^4_\LS \right] \le \widetilde{c} K_1(4).
\end{align}
By \eqref{eq:5.53}, \eqref{eq:convg4} and the Vitali's convergence theorem we obtain for all $t \in [0,T]$,
\begin{equation}
\label{eq:convg5}
\lim_{\eps \to 0} \hE \left[\left|\int_0^t \left\langle\frac{1}{1+\eps} \left[\hae(s)\cdot\nablaS \right]\hae(s) - \left[\hu(s)\cdot \nablaS \right]\hu(s), \phi \right\rangle\,ds\right|\right] = 0.
\end{equation}
Using the H\"older inequality and estimates \eqref{eq:5.46}, \eqref{eq:5.49}, we obtain for all $t \in [0,T]$, $\eps \in (0,1]$
\begin{align*}\hE \left[\left|\int_0^t \frac{1}{1+\eps}\left\langle\left[\hae(s)\cdot \nablaS\right] \hae(s) - \left[\hu(s) \cdot \nablaS \right]\hu(s), \phi\right\rangle\,ds\right|\right] \nn \\
 \le c \|\phi\|_{\hH^3(\bbS)}\hE \left[\sup_{t \in [0,T]}\|\hae(s)\|^2_\LS + \sup_{t \in[0,T]}\|\hu(s)\|^2_\LS\right] \le 2\widetilde{c} K_1(2)
\end{align*}
where $c, \widetilde{c} > 0$ are constants. Hence by \eqref{eq:convg5} and the dominated convergence theorem, we infer assertion $(d)$.

\noindent \textbf{(e)} Assertion $(e)$ follows because by Assumption~\ref{ass_sphere} the sequence $\left(\oMe \tfe\right)$ converges weakly in $L^2(0,T; \LS)$ to $f$.

\noindent \textbf{(f)} By the definition of maps $\wtd{G}_\eps$ and $G$, we have
\begin{align*}
\int_0^t\left\|\left(\oMe\left(\widetilde{G}_\eps(s)\right) - G(s), \phi \right)_\LS\right\|_{\mathcal{T}_2(\R^N; \R)}ds = \int_0^t \sum_{j=1}^N\left|\left(\oMe[\tge^j(s)] - g^j(s), \phi\right)_\LS\right|^2\,ds.
\end{align*}
Since, by Assumption \ref{ass_sphere},  for every $j \in \{1, \cdots, N\}$, and $s \in [0,t]$, $\oMe[\tge^j(s)]$ converges weakly to $g^j(s)$ in $\LS$ as $\eps \to 0$, we get
\begin{equation}
\label{eq:5.54}
\lim_{\eps \to 0} \int_0^t \left\|\left(\oMe\left(\widetilde{G}_\eps(s)\right) - G(s), \phi \right)_\LS\right\|^2_{\mathcal{T}_2(\R^N; \R)}ds = 0.
\end{equation}
By assumptions on $\tge^j$, we obtain the following inequalities for every $t \in [0,T]$ and $\eps \in (0,1]$
\begin{align}
\label{eq:5.55}
\hE & \left[\left|\int_0^t \left\|\left(\oMe(\widetilde{G}_\eps(s)) , \phi \right)_\LS\right\|^2_{\mathcal{T}_2(\R^N;\R)}\,ds\right|^2\right] \nn\\
& \le c \|\phi\|_\LS^4 \left[\int_0^t\|\oMe(\widetilde{G}_\eps(s))\|^4_{\mathcal{T}_2(\R^N;\LS)}\,ds\right] \nn \\
& = c \|\phi\|^4_\LS \left[\int_0^t \dfrac{1}{\eps^2}\left(\sum_{j=1}^N \eps \|\oMe(\tge^j(s))\|^2_\LS\right)^2 ds\right] \nn \\
&  \le \dfrac{\widetilde{c}}{\eps^2}\|\phi\|^4_\LS \left[\sum_{j=1}^N \int_0^t \|\tge^j(s)\|^4_\LQeps ds\right] \le K,
\end{align}
where $c, \widetilde{c} > 0$ are some constants. Using the Vitali's convergence theorem, by \eqref{eq:5.54} and \eqref{eq:5.55} we infer
\begin{equation}
\label{eq:5.56}
\lim_{\eps \to 0} \hE \left[ \int_0^t \left\|\left(\oMe\left(\widetilde{G}_\eps(s)\right) - G(s), \phi \right)_\LS\right\|_{\mathcal{T}_2(\R^N; \R)}^2ds\right] = 0.
\end{equation}
Hence, by the properties of the It\^o integral we deduce that for all $t \in [0,T]$,
\begin{equation}
\label{eq:5.57}
\lim_{\eps \to 0} \hE \left[\left| \left( \int_0^t \left[\oMe \left(\widetilde{G}_\eps(s)\right) - G(s)\right] d \widehat{W}(s), \phi \right)_\LS \right|^2 \right] = 0.
\end{equation}
By the It\^o isometry and assumptions on $\tge^j$ and $g^j$ we have for all $t \in [0,T]$ and $\eps \in (0,1]$
\begin{align}
\label{eq:5.58}
\hE & \left[\left|\left(\int_0^t \left[\oMe\left(\widetilde{G}_\eps(s)\right) - G(s)\right]d\widehat{W}(s) , \phi \right)_\LS\right|^2\right] \nn \\
& = \hE \left[\int_0^t \left\|\left(\oMe \left(\widetilde{G}_\eps(s)\right) - G(s), \phi \right)_\LS\right\|^2_{\mathcal{T}_2(\R^N; \R)}\,ds\right] \nn \\
& \le c \|\phi\|^2_\LS \left[\sum_{j=1}^N \int_0^t  \left(\|\oMe(\tge^j(s))\|^2_\LS + \|g^j(s)\|^2_\LS\right)\,ds \right] \nn \\
& = c \|\phi\|^2_\LS \left[\sum_{j=1}^N \int_0^t  \left(\frac{1}{\eps}\|\tMe \tge^j(s)\|^2_\LQeps + \|g^j(s)\|^2_\LS\right)\,ds \right] \nn \\
& \le c \|\phi\|^2_\LS \left[\sum_{j=1}^N \int_0^t  \left(\frac{1}{\eps}\|\tge^j(s)\|^2_\LQeps + \|g^j(s)\|^2_\LS\right)\,ds \right] \le \widetilde{K},
\end{align}
where $c > 0$ is a constant. Thus, by \eqref{eq:5.57}, \eqref{eq:5.58} and the dominated convergence theorem assertion $(f)$ holds.
\end{proof}

\begin{lemma}
\label{lemma_convg_beta}
For all $t \in [0,T]$ and $\phi \in \mathbb{U}$, we have (along the sequence $\eps_{k_n}$)
\begin{itemize}
\item[(a)] $\lim_{\eps \to 0} \hE \left[ \int_0^T \left| \int_0^t \left( \frac{\nu}{\eps}\curl \hbe(s), \curl \test\right)_\LQeps\,ds \right|\,dt\right] = 0,$
\item[(b)] $\lim_{\eps \to 0} \hE \left[ \int_0^T \left|\frac{1}{\eps} \int_0^t \left\langle \left[\hbe(s)\cdot \nabla \right]\underline{\hae}(s), \test \right\rangle_\eps\,ds\right|\,dt \right] = 0,$
\item[(c)] $\lim_{\eps \to 0} \hE \left[ \int_0^T \left|\frac{1}{\eps}\int_0^t \left\langle\left[\underline{\hae}(s)\cdot \nabla \right]\tble(s), \test \right\rangle_\eps\,ds\right|\,dt\right] = 0,$
\item[(d)] $\lim_{\eps \to 0} \hE \left[ \int_0^T \left| \frac{1}{\eps} \int_0^t \left\langle\left[\tble(s) \cdot \nabla \right]\tble(s), \test \right\rangle_\eps\,ds \right|dt\right] = 0,$
\end{itemize}
where the process $\underline{\hae}$ is defined in \eqref{eq:new_proc}.
\end{lemma}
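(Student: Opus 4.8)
The plan is to handle each of the four terms exactly as its deterministic counterpart was handled in the proof of Theorem~\ref{thm4.3} (the estimates \eqref{eq:4.15}--\eqref{eq:4.18}), but now carrying the expectation $\hE$ and the outer time integral $\int_0^T\cdots\,dt$. The guiding principle is that the dangerous prefactor $\frac1\eps$ inherited from \eqref{eq:snse_mod} is always strictly overcompensated by a surplus of positive powers of $\eps$ arising from three independent sources: the scaling identity \eqref{eq:scaling_new_pro}, $\|\underline{\hae}\|_\LQeps=\sqrt{\eps}\,\|\hae\|_\LS$; the smallness of the normal component $\hbe$, quantified by the Poincar\'e inequality \eqref{eq:2.24} and by Corollary~\ref{cor_L3ineq}; and the retract estimates \eqref{eq:3.24} and Lemma~\ref{lem:grad-ret-vec} applied to the test field $\test$. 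Since laws are preserved under the Jakubowski--Skorokhod construction, I would use the uniform bounds \eqref{eq:5.46}, \eqref{eq:5.47}, \eqref{eq:beta_estimate_1}, \eqref{eq:beta_estimate_2} verbatim for $\hae$ and $\hbe$ on the new probability space (the beta processes in parts (c)--(d) are to be read as $\hbe$).

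For part (a) I would first rewrite $(\curl\hbe,\curl\test)_\LQeps=(\hbe,\Delta\test)_\LQeps$ as in \eqref{eq:3.26}, estimate it by $\|\hbe\|_\LQeps\|\Delta\test\|_\LQeps$, and then combine the Poincar\'e inequality \eqref{eq:2.24} with \eqref{eq:3.24} to obtain a pointwise-in-$s$ bound of order $\eps^{3/2}\|\curl\hbe\|_\LQeps\|\phi\|_{\rD(\rA)}$. Multiplying by $\frac{\nu}{\eps}$, integrating in $s$, applying the Cauchy--Schwarz inequality in time, taking $\hE$, and invoking \eqref{eq:beta_estimate_2} (so that $\hE\int_0^T\|\curl\hbe\|^2_\LQeps\,ds\le K_4\eps$) produces a final bound of order $\eps\,\|\phi\|_{\rD(\rA)}$, which vanishes.

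For the nonlinear terms (b)--(d) I would exploit the antisymmetry \eqref{eq:1.11} to transfer the derivative onto $\test$, and then apply H\"older's inequality with exponents $(2,3,6)$ to split the integrand into an $\|\cdot\|_\LQeps$ factor, an $\|\cdot\|_{\lL^3(Q_\eps)}$ factor, and $\|\nabla\test\|_{\lL^6(Q_\eps)}$. Each $\hbe$-factor in $\lL^3$ is controlled by $\sqrt{c_2\eps}\,\|\hbe\|_{\rV_\eps}$ through Corollary~\ref{cor_L3ineq}, each $\underline{\hae}$-factor in $\LQeps$ contributes $\sqrt{\eps}\,\|\hae\|_\LS$ through \eqref{eq:scaling_new_pro}, and $\|\nabla\test\|_{\lL^6(Q_\eps)}\le C\eps^{1/6}\|\phi\|_{\bbW^{1,6}(\bbS)}$ through Lemma~\ref{lem:grad-ret-vec}. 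In (b) and (c) this yields $\eps^{7/6}$ before dividing by $\eps$, hence $\eps^{1/6}$ afterwards; integrating in $s$ and taking $\hE$ with Cauchy--Schwarz to decouple $\hE\sup_t\|\hae\|^2_\LS$ (bounded by \eqref{eq:5.46}) from $\hE\|\hbe\|^2_{L^2(0,T;\rV_\eps)}$ (bounded by $K_4\eps$ via \eqref{eq:beta_estimate_2}) leaves a bound of order $\eps^{2/3}$. In (d) the second $\hbe$-factor in $\LQeps$ is treated by the Poincar\'e inequality \eqref{eq:2.24}, which gains an extra $\eps$, so after dividing by $\eps$ one is left with $\eps^{2/3}\|\hbe\|^2_{\rV_\eps}$ and, upon taking $\hE$ and using \eqref{eq:beta_estimate_2}, a bound of order $\eps^{5/3}$.

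Since no analytic ingredient beyond the deterministic computation is needed, the main obstacle is purely one of bookkeeping: one must track the $\eps$-powers precisely so that every $\frac1\eps$ is strictly beaten, and one must organise the expectations correctly, decoupling each product of a supremum-in-time norm of $\hae$ against an $L^2$-in-time norm of $\hbe$ by the Cauchy--Schwarz inequality in $\hE$ \emph{before} the uniform bounds \eqref{eq:5.46} and \eqref{eq:beta_estimate_2} are applied; the outer integral $\int_0^T\,dt$ then only contributes a harmless factor $T$ because all the intermediate bounds are uniform in $t\in[0,T]$.
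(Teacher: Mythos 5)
Your proposal is correct and follows essentially the same route as the paper's proof: antisymmetry to move the derivative onto $\test$, the H\"older $(2,3,6)$ splitting, the scaling identity \eqref{eq:scaling_new_pro}, Corollary~\ref{cor_L3ineq}, the retract estimates \eqref{eq:3.24} and \eqref{eq:grad-ret-vec1}, and the transferred bounds \eqref{eq:5.46}, \eqref{eq:beta_estimate_1}, \eqref{eq:beta_estimate_2}, with Cauchy--Schwarz in $\hE$ decoupling the $\hae$ and $\hbe$ factors, and the uniformity in $t$ handling the outer integral. The only (harmless) deviation is in part (d), where you apply the Poincar\'e inequality \eqref{eq:2.24} to the $\LQeps$-factor of $\hbe$ --- exactly as in the deterministic estimate \eqref{eq:4.18} --- and obtain a rate $\eps^{5/3}$, whereas the paper instead invokes $\hE\sup_{s}\|\hbe(s)\|^2_\LQeps \le K_3(2)\,\eps$ from \eqref{eq:beta_estimate_1} and settles for $\eps^{2/3}$; both vanish as $\eps \to 0$.
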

\begin{proof}
Let us fix $\phi  \in \mathbb{U}$.

\noindent \textbf{(a)} Let $t \in [0,T]$, then by the H\"older inequality, \eqref{eq:3.24}, Poincar\'e inequality and estimate \eqref{eq:beta_estimate_2}, we have the following inequalities
\begin{align*}
\widehat{\E} & \left|\int_0^t \left(\frac{\nu}{\eps} \curl \widehat{\beta}_\eps (s), \curl \test \right)_\LQeps\,ds \right| \le \widehat{\E} \int_0^t \left|\left(\frac{\nu}{\eps} \hbe (s),  \Delta \left(\test\right)\right)_\LQeps \right|\,ds\\
& \le \frac{\nu}{\eps} T^{1/2} \widehat{\E} \left[\int_0^t \|\hbe(s)\|^2_\LQeps\,ds\right]^{1/2} \left\|\Delta\left(\test\right)\right\|_\LQeps \\
& \le \frac{\nu}{\eps} T^{1/2}C \eps \left(\widehat{\E}\int_0^T \|\curl \hbe(t)\|^2_\LQeps\right)^{1/2}\sqrt{\eps} \|\phi\|_{{\rD(\rA)}} \\
& \le \nu \sqrt{\eps} C \sqrt{K_4 \eps} T^{1/2}\|\phi\|_{\rD(\rA)}.
\end{align*}
Thus
\begin{equation}
\label{eq:convg_beta_1}
\lim_{\eps \to 0} \widehat{\E} \left|\int_0^t \left(\frac{\nu}{\eps} \curl \widehat{\beta}_\eps (s), \curl \test \right)_\LQeps\,ds \right| = 0.
\end{equation}
We infer assertion $(a)$ by dominated convergence theorem, estimate \eqref{eq:beta_estimate_2} and convergence \eqref{eq:convg_beta_1}.

\noindent \textbf{(b)} Using the H\"older inequality, scaling property (Lemma~\ref{lemma2.13}), Corollary~\ref{cor_L3ineq}, relations \eqref{eq:grad-ret-vec1}, \eqref{eq:3.24}, and estimates \eqref{eq:5.46}, \eqref{eq:beta_estimate_2}, we get for $t \in [0,T]$
\begin{align*}
\frac{1}{\eps}\widehat{\E} & \left| \int_0^t \left\langle \left[\hbe(s) \cdot \nabla \right]\underline{\hae}(s), \test \right\rangle_\eps\,ds \right| = \frac{1}{\eps} \widehat{\E} \left| \int_0^t \left( \underline{\hae}(s), \left[\hbe(s) \cdot \nabla \right] \test \right)_\LQeps\,ds \right| \\
& \le \frac{1}{\eps} \widehat{\E} \int_0^t \|\underline{\hae}(s)\|_\LQeps \|\hbe(s)\|_{\mathbb{L}^3(Q_\eps)}\,ds\left\|\nabla \left(\test\right)\right\|_{\mathbb{L}^6(Q_\eps)}\\
& \le \frac{1}{\eps} \widehat{\E} \left[\int_0^t  \sqrt{\eps}\|\hae(s)\|_\LS \sqrt{\eps} \|\hbe(s)\|_{\rV_\eps}\,ds \right]\eps^{1/6}\|\phi\|_{\bbW^{1,6}(\bbS)} \\
& \le \eps^{1/6} T^{1/2} \left(\widehat{\E} \sup_{s \in [0,T]}\|\hae(s)\|_\LS^2\right)^{1/2} \left(\widehat{\E} \int_0^T \|\hbe(s)\|^2_{\rV_\eps}\,ds\right)^{1/2}\|\phi\|_{\bbW^{2,2}(\bbS)} \\
&\le \eps^{2/3} T^{1/2} K_1(2)^{1/2}K_4^{1/2} \|\phi\|_{\rD(\rA)}.
\end{align*}
Thus
\begin{equation}
\label{eq:convg_beta_2}
\lim_{\eps \to 0}\frac{1}{\eps}\widehat{\E}  \left| \int_0^t \left\langle \left[\hbe(s) \cdot \nabla \right]\underline{\hae}(s), \test \right\rangle_\eps\,ds \right| = 0.
\end{equation}
We infer assertion $(b)$ by dominated convergence theorem, estimates \eqref{eq:5.46}, \eqref{eq:beta_estimate_2} and convergence \eqref{eq:convg_beta_2}. Assertion $(c)$ can be proved similarly.

\noindent \textbf{(d)} Now for the last one, using the H\"older inequality, Corollary~\ref{cor_L3ineq}, relations \eqref{eq:grad-ret-vec1}, \eqref{eq:3.24}, and estimates \eqref{eq:beta_estimate_1}, \eqref{eq:beta_estimate_2}, we get for $t \in [0,T]$
\begin{align*}
\frac{1}{\eps}& \widehat{\E}  \left| \int_0^t \left\langle \left[\hbe(s) \cdot \nabla \right]\hbe(s), \test \right\rangle_\eps\,ds \right| = \frac{1}{\eps} \widehat{\E} \left| \int_0^t \left( \hbe(s), \left[\hbe(s) \cdot \nabla \right] \test \right)_\LQeps\,ds \right| \\
& \le \frac{1}{\eps} \widehat{\E} \int_0^t \|\hbe(s)\|_\LQeps \|\hbe(s)\|_{\mathbb{L}^3(Q_\eps)}\,ds\left\|\nabla \left(\test\right)\right\|_{\mathbb{L}^6(Q_\eps)}\\
& \le \frac{1}{\eps} \widehat{\E} \left[\int_0^t \|\hbe(s)\|_\LQeps \sqrt{\eps} \|\hbe(s)\|_{\rV_\eps}\,ds \right]\eps^{1/6}\|\phi\|_{\bbW^{1,6}(\bbS)} \\
& \le \eps^{-1/3}T^{1/2} \left(\widehat{\E} \sup_{s \in [0,T]}\|\hbe(s)\|_\LQeps^2\right)^{1/2} \left(\widehat{\E} \int_0^T \|\hbe(s)\|^2_{\rV_\eps}\,ds\right)^{1/2}\|\phi\|_{\bbW^{2,2}(\bbS)} \\
&\le \eps^{2/3} T^{1/2} K_3(2)^{1/2}K_4^{1/2} \|\phi\|_{\rD(\rA)}.
\end{align*}
Thus
\begin{equation}
\label{eq:convg_beta_3}
\lim_{\eps \to 0}\frac{1}{\eps}\widehat{\E}  \left| \int_0^t \left\langle \left[\hbe(s) \cdot \nabla \right]\hbe(s), \test \right\rangle_\eps\,ds \right| = 0.
\end{equation}
We infer assertion $(d)$ by dominated convergence theorem, estimates \eqref{eq:beta_estimate_1}, \eqref{eq:beta_estimate_2} and convergence \eqref{eq:convg_beta_3}.
\end{proof}

Finally, to finish the proof of Theorem~\ref{thm:main_thm}, we will follow the methodology as in \cite{[Motyl14]} and introduce some auxiliary notations (along sequence $\eps_{k_n}$)
\begin{align}
\label{eq:5.59}
&\Lambda_\eps(\hae,\hbe,\widehat{W}_\eps, \phi) :=  \left(\hae(0), \phi \right)_\LS -\frac{\nu}{1+\eps} \int_0^t \left(\curlS \hae(s), \curlS \phi \right)_\LS\,ds \\
&\quad  - \frac{1}{1+\eps} \int_0^{t} \left\langle\left[\hae(s) \cdot \nablaS \right]\hae(s), \phi \right\rangle\,ds + \int_0^{t} \left\langle\oMe \tfe(s), \phi \right\rangle\,ds \nn \\
&\quad + \left(\int_0^t\ocirc{M}_\eps \left[\widetilde{G}_\eps(s)d\widehat{W}_\eps(s) \right], \phi \right)_\LS - \frac{\nu}{\eps} \int_0^{t} \left(\curl \hbe(s), \curl \test\right)_\LQeps\,ds \nn \\ & \quad - \frac{1}{\eps} \int_0^{t} \left\langle\left[\hbe(s) \cdot \nabla \right]\underline{\hae}(s), \test \right\rangle_\eps\,ds  - \frac{1}{\eps} \int_0^{t} \left\langle\left[\underline{\hae}(s) \cdot \nabla \right]\hbe(s), \test \right\rangle_\eps\,ds \nn \\
&\quad  - \frac{1}{\eps} \int_0^{t} \left\langle\left[\hbe(s) \cdot \nabla \right]\hbe(s), \test \right\rangle_\eps\,ds \nn,
\end{align}

\begin{align}
\label{eq:5.60}
&\Lambda(\hu,\widehat{W}, \phi) :=  \left(u_0, \phi \right)_\LS -\nu \int_0^t \left(\curlS \hu(s), \curlS \phi \right)_\LS\,ds \\
&\quad  - \int_0^{t} \left(\left[\hu(s) \cdot \nablaS \right]\hu(s), \phi \right)_\LS\,ds + \int_0^{t} \left\langle f(s), \phi \right\rangle\,ds \nn \\
&\quad + \left(\int_0^t G(s) d \widehat{W}(s), \phi \right)_\LS \nn.
\end{align}

\begin{corollary}
\label{cor5.12}
Let $\phi \in \mathbb{U}$. Then (along the sequence $\eps_{k_n}$)
\begin{equation}
\label{eq:5.61}
\lim_{\eps \to 0} \left\|\left(\hae(\cdot), \phi \right)_\LS - \left(\hu(\cdot), \phi\right)_\LS\right\|_{L^1(\widehat{\Omega} \times [0,T])} = 0
\end{equation}
and
\begin{equation}
\label{eq:5.62}
\lim_{\eps \to 0} \left\|\Lambda_\eps(\hae, \hbe, \widehat{W}_\eps, \phi) - \Lambda(\hu, \widehat{W}, \phi)\right\|_{L^1(\widehat{\Omega} \times [0,T])} = 0.
\end{equation}
\end{corollary}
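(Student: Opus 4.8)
The plan is to obtain \eqref{eq:5.61} and \eqref{eq:5.62} by assembling, via the triangle inequality in $L^1(\homega\times[0,T])$, the term-by-term convergences already secured in Lemma~\ref{lemma5.8} and Lemma~\ref{lemma_convg_beta}. All the genuine analysis — the $\eps$-uniform a priori bounds \eqref{eq:5.46}--\eqref{eq:beta_estimate_2} and the individual limit passages — has been carried out in those two lemmas, so the corollary is essentially a bookkeeping step that records how the pieces fit together.

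Convergence \eqref{eq:5.61} is merely a restatement of Lemma~\ref{lemma5.8}(a), since by definition
\[
\big\|(\hae(\cdot),\phi)_\LS-(\hu(\cdot),\phi)_\LS\big\|_{L^1(\homega\times[0,T])}
=\hE\Big[\int_0^T\big|(\hae(t)-\hu(t),\phi)_\LS\big|\,dt\Big].
\]
For \eqref{eq:5.62} I would first compare the definitions \eqref{eq:5.59} and \eqref{eq:5.60} and note that $\Lambda_\eps(\hae,\hbe,\widehat{W}_\eps,\phi)-\Lambda(\hu,\widehat{W},\phi)$ splits into five ``matched'' differences (initial datum, viscous term, nonlinearity, deterministic forcing, stochastic forcing) plus the four residual terms of \eqref{eq:5.59} that have no counterpart in \eqref{eq:5.60}. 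Writing $R_6^\eps,\dots,R_9^\eps$ for the $L^1(\homega\times[0,T])$-norms of these four residual terms, the triangle inequality gives
\begin{align*}
\big\|\Lambda_\eps-\Lambda\big\|_{L^1(\homega\times[0,T])}
&\le T\,\big|(\hae(0)-u_0,\phi)_\LS\big|
+\hE\Big[\int_0^T\Big|\int_0^t\big(\tfrac{\nu}{1+\eps}\curlS\hae-\nu\curlS\hu,\curlS\phi\big)_\LS ds\Big|dt\Big]\\
&\quad+\hE\Big[\int_0^T\Big|\int_0^t\big\langle\tfrac{1}{1+\eps}[\hae\cdot\nablaS]\hae-[\hu\cdot\nablaS]\hu,\phi\big\rangle ds\Big|dt\Big]
+\hE\Big[\int_0^T\Big|\int_0^t\langle\oMe\tfe-f,\phi\rangle ds\Big|dt\Big]\\
&\quad+\hE\Big[\int_0^T\Big|\Big(\int_0^t[\oMe(\widetilde{G}_\eps)-G]\,d\widehat{W},\phi\Big)_\LS\Big|dt\Big]
+\sum_{i=6}^{9}R_i^\eps.
\end{align*}
Here I have already used, for the first summand, that (as established inside the proof of Lemma~\ref{lemma5.8}(b)) $\hae(0)=\oMe[\tu_0^\eps]$ is a deterministic constant, so its $L^1$-norm reduces to $T\,|(\hae(0)-u_0,\phi)_\LS|$.

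The five matched summands tend to zero along $\eps_{k_n}$ by, respectively, Lemma~\ref{lemma5.8}(b), (c), (d), (e) and (f), while the residual norms $R_6^\eps,\dots,R_9^\eps$ are exactly the quantities shown to vanish in Lemma~\ref{lemma_convg_beta}(a)--(d). The one place that requires a short additional remark — and which I expect to be the only non-mechanical step — is the stochastic summand: Lemma~\ref{lemma5.8}(f) delivers convergence to zero of the $L^2(\homega\times[0,T])$-norm of that term, and I would upgrade this to the required $L^1$-convergence by the Cauchy--Schwarz (Jensen) inequality on the finite measure space $\homega\times[0,T]$, all other summands being controlled directly in $L^1$. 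Summing, every term on the right-hand side converges to zero, which yields \eqref{eq:5.62} and completes the proof of Corollary~\ref{cor5.12}.
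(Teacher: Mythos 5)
Your proposal is correct and takes essentially the same route as the paper: both parts reduce to the term-by-term convergences already established, \eqref{eq:5.61} being Lemma~\ref{lemma5.8}$(a)$ restated, and \eqref{eq:5.62} following from Lemma~\ref{lemma5.8}$(b)$--$(f)$ together with Lemma~\ref{lemma_convg_beta}. The paper merely compresses your explicit bookkeeping (the five matched plus four residual terms, the deterministic initial-datum observation, and the Cauchy--Schwarz upgrade of the stochastic term from $L^2$ to $L^1$ on the finite measure space $\widehat{\Omega}\times[0,T]$) into a Fubini step and the phrase that each term converges ``at least in $L^1$''.
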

\begin{proof}
Assertion \eqref{eq:5.61} follows from the equality
\[\left\|\left(\hae(\cdot), \phi \right)_\LS - \left(\hu(\cdot), \phi\right)_\LS\right\|_{L^1(\widehat{\Omega} \times [0,T])} = \hE \left[ \int_0^T \left|\left(\hae(t) - \hu(t), \phi \right)_\LS\right|\,dt\right]\]
and Lemma~\ref{lemma5.8} $(a)$. To prove assertion \eqref{eq:5.62}, note that by the Fubini Theorem, we have
\[\left\|\Lambda_\eps(\hae, \hbe, \widehat{W}_\eps, \phi) - \Lambda(\hu, \widehat{W}, \phi)\right\|_{L^1(\widehat{\Omega} \times [0,T])} =  \int_0^T \hE \left|\Lambda_\eps(\hae, \hbe, \widehat{W}_\eps, \phi)(t) - \Lambda(\hu, \widehat{W}, \phi)(t)\right|dt.\]
To conclude the proof of the corollary, it is sufficient to note that by Lemma~\ref{lemma5.8} $(b)-(f)$ and Lemma~\ref{lemma_convg_beta}, each term on the right hand side of \eqref{eq:5.59} tends at least in $L^1(\widehat{\Omega} \times [0,T])$ to the corresponding term (to zero in certain cases) in \eqref{eq:5.60}.
\end{proof}

\begin{proof}[Conclusion of proof of Theorem~\ref{thm:main_thm}]
Let us fix $\phi \in \mathbb{U}$. Since $\ale$ is a solution of \eqref{eq:snse_mod}, for all $t \in [0,T]$,
\[\left(\ale(t), \phi \right)_\LS = \Lambda_\eps(\ale, \tble, \widetilde{W}_\eps, \phi)(t), \qquad \mathbb{P}\text{-a.s.}\]
In particular,
\[\int_0^T \E \left[\left|\left(\ale(t),\phi\right)_\LS - \Lambda_\eps(\ale, \tble, \widetilde{W}_\eps, \phi)(t)\right|\right]\,dt = 0.\]
Since $\mathcal{L}(\ale, \eta_{k_n}, \widetilde{W}_\eps) = \mathcal{L}(\hae, \what{\eta}_{k_n}, \widehat{W}_\eps)$, on $\calB\left(\mathcal{Z}_T \times \mathcal{Y}_T \times \mathcal{C}([0,T]; \R^N)\right)$ (along the sequence $\eps_{k_n}$),
\[\int_0^T \hE \left[\left|\left(\hae(t),\phi\right)_\LS - \Lambda_\eps(\hae, \hbe, \widehat{W}_\eps, \phi)(t)\right|\right]\,dt = 0.\]
Therefore by Corollary~\ref{cor5.12} and the definition of $\Lambda$, for almost all $t \in [0,T]$ and $\widehat{\mathbb{P}}$-almost all $\omega \in \widehat{\Omega}$
\[\left(\hu(t), \phi \right)_\LS - \Lambda(\hu, \widehat{W}, \phi)(t) = 0,\]
i.e. for almost all $t \in [0,T]$ and $\widehat{\mathbb{P}}$-almost all $\omega \in \widehat{\Omega}$
\begin{align}
\label{eq:5.63}
\left(\hu(t), \phi\right)_\LS + \nu \int_0^t \left(\curlS \hu(s), \curlS \phi \right)_\LS\,ds + \int_0^{t} \left\langle\left[\hu(s) \cdot \nablaS \right]\hu(s), \phi \right\rangle\,ds  \\
 = \left(u_0, \phi \right)_\LS + \int_0^{t} \left\langle f(s), \phi \right\rangle\,ds + \left(\int_0^t G(s) d \widehat{W}(s), \phi \right)_\LS \nn.
\end{align}
Hence \eqref{eq:5.63} holds for every $\phi \in  \mathbb{U}$. Since $\hu$ is a.s. $\rH$-valued continuous process, by a standard density argument, we infer that \eqref{eq:5.63} holds for every $\phi \in \rV$ ($\mathbb{U}$ is dense in $\rV$).

Putting $\widehat{\mathcal{U}} := \left(\widehat{\Omega}, \widehat{\mathcal{F}}, \widehat{\bbF}, \hp\right)$, we infer that the system $\left(\widehat{\mathcal{U}}, \widehat{W}, \hu\right)$ is a martingale solution to \eqref{eq:5.6}--\eqref{eq:5.9}.
\end{proof}

\appendix

\section{Vector analysis in spherical coordinates}
\label{sec:vec-ana}

In this appendix we collect some basic results from vector algebra and formulas for Laplace and gradient of scalar function and vector fields in spherical coordinates.

The following identities are very well known \cite[Appendix]{[TZ97]} in vector algebra$\colon$
Let $\u, \v$ and $\w$ be $\R^3$-valued smooth vector fields then
\begin{align}
\label{eq:2.17}
\curl \curl \u &= - \Delta \u + \nabla \ddiv \u ,\\
\label{eq:2.18}
\u \cdot \left(\v \times \w \right) &= \left(\u \times \v \right)\cdot \w ,\\
\label{eq:2.19}
\int_{Q_\eps} \v \cdot \curl \u \, d\by &= \int_{Q_\eps} \u \cdot \curl \v \, d\by + \int_{\partial Q_\eps} \left(\u \times \v \right)\cdot \vecn\,d\sigma.
\end{align}

The Laplace--Beltrami operator of a scalar function $\psi$, in spherical coordinates $(r, \lambda, \varphi)$ is given by
\begin{equation}
    \label{eq:lap-bel}
    \nabla^2 \psi = \frac{\pa^2 \psi}{\pa r^2} + \frac{2}{r} \frac{\pa \psi}{\pa r} + \frac{1}{r^2 \sin \lambda}\frac{\pa}{\pa \lambda}\left(\sin \lambda \frac{\pa \psi}{\pa \lambda}\right) + \frac{1}{r^2 \sin^2 \lambda}\frac{\pa^2\psi}{\pa \varphi^2},
\end{equation}
and its gradient is given by
\begin{equation}
    \label{eq:grad}
    \nabla \psi = \frac{\pa \psi}{\pa r}\widehat{e_r} + \frac{1}{r}\frac{\pa \psi}{\pa \lambda}\widehat{e_\lambda} + \frac{1}{r \sin \lambda}\frac{\pa \psi}{\pa \varphi}\widehat{e_\varphi}.
\end{equation}
For a vector field $u$ written in the spherical coordinates, $u = u_r \widehat{e_r} + u_\lambda \widehat{e_\lambda} + u_\varphi \widehat{e_\varphi}$, the curl and the divergence are given as follows
\begin{align}
\label{eq:curl}
    \curl u & = \frac{1}{r \sin \lambda} \left[\frac{\pa}{\pa \lambda}(\sin \lambda u_\varphi) - \frac{\pa u_\lambda}{\pa \varphi} \right]\widehat{e_r} + \left[\frac{1}{r \sin \lambda}\frac{\pa u_r}{\pa \varphi} - \frac{1}{r}\frac{\pa}{\pa r}(r u_\varphi)\right] \widehat{e_\lambda} \\
    &\qquad + \left[\frac{1}{r}\frac{\pa}{\pa r}(r u_\lambda) - \frac{1}{r}\frac{\pa u_r}{\pa \lambda}\right]\widehat{e_\varphi}, \nn\\
    \label{eq:diver}
    \ddiv u &= \frac{1}{r^2}\frac{\pa}{\pa r}(r^2 u_r) + \frac{1}{r \sin \lambda} \frac{\pa}{\pa \lambda}(u_\lambda \sin \lambda) + \frac{1}{r \sin \lambda}\frac{\pa u_\varphi}{\pa \varphi}.
\end{align}
The Laplacian of a vector field in spherical coordinates is
\begin{equation}
    \begin{split}
    \left(\Delta u\right)_r & = \nabla^2 u_r -\frac{2u_r}{r^2} - \frac{2}{r^2}\frac{\pa u_\lambda}{\pa \lambda} - 2 \frac{\cot \lambda}{r^2} u_\lambda - \frac{2}{r^2 \sin \lambda} \frac{\pa u_\varphi}{\pa \varphi},\\
    \left(\Delta u\right)_\lambda & = \nabla^2 u_\lambda + \frac{2}{r^2}\frac{\pa u_r}{\pa \lambda} - \frac{u_\lambda}{r^2 \sin^2 \lambda} - \frac{2 \cos \lambda}{r^2 \sin^2 \lambda} \frac{\pa u_\varphi}{\pa \varphi}, \\
    \left(\Delta u\right)_\varphi & = \nabla^2 u_\varphi - \frac{u_\varphi}{r^2 \sin^2 \lambda} + \frac{1}{r^2 \sin \lambda}\frac{\pa u_r}{\pa \varphi} + \frac{2 \cos \lambda}{r^2 \sin^2 \lambda} \frac{\pa u_\lambda}{\pa \varphi},
    \end{split}
\end{equation}
where $\nabla^2 u_r$, $\nabla^2 u_\lambda$ and $\nabla^2 u_\varphi$ are as in \eqref{eq:lap-bel}.

We recall some standard differential operators on the unit sphere $\bbS$.
For $\psi$ a scalar function defined on $\bbS$, the tangential gradient is given by
\begin{equation}
\label{eq:C.1.1}
\nabla^\prime \psi = \frac{\partial \psi}{\partial \lambda} \widehat{e_\lambda} + \frac{1}{\sin \lambda} \frac{\partial \psi}{\partial \varphi}\widehat{e_\varphi}.
\end{equation}

The Laplace--Beltrami of a scalar function $\psi$ is
\begin{equation}
\label{eq:C.1.2}
\Delta^\prime \psi = \frac{1}{\sin{\lambda}} \left[ \frac{\partial}{\partial \lambda}\left(\sin{\lambda} \frac{\partial \psi}{\partial \lambda}\right) + \frac{1}{\sin \lambda} \frac{\partial^2\psi}{\partial \varphi^2}\right].
\end{equation}
For a tangential vector field $\v$ defined on $\bbS$, $\v = \v_\lambda \hat{e}_\lambda + \v_\varphi\hat{e}_\varphi$, the tangential divergence is expressed by
\begin{equation}
\label{eq:C.1.3}
{\rm{div}}^\prime\v = \frac{1}{\sin{\lambda}}\frac{\partial}{\partial \lambda}\left(\v_\lambda \sin{\lambda}\right) + \frac{1}{\sin \lambda} \frac{\partial \v_\varphi}{\partial \varphi},
\end{equation}
and the ${\rm{curl}}^\prime \v$ is the scalar function defined by
\begin{equation}
\label{eq:C.1.4}
{\rm{curl}}^\prime\v = \frac{1}{\sin{\lambda}}\frac{\partial}{\partial \lambda}\left(\v_\varphi \sin{\lambda}\right) - \frac{1}{\sin \lambda} \frac{\partial \v_\lambda}{\partial \varphi}.
\end{equation}
The Laplace--de Rham operator applied to a vector field $\v$ is given by
\begin{equation}
\label{eq:C.1.5}
\DDelta\v = \left[\deltaS\v_\lambda - \frac{2 \cos{\lambda}}{\sin^2{\lambda}} \frac{\partial \v_\varphi}{\partial \varphi} - \frac{\v_\lambda}{\sin^2{\lambda}}\right]\widehat{e_\lambda} + \left[\deltaS\v_\varphi + \frac{2 \cos{\lambda}}{\sin^2{\lambda}} \frac{\partial \v_\lambda}{\partial \varphi} - \frac{\v_\varphi}{\sin^2{\lambda}}\right]\widehat{e_\varphi},
\end{equation}
where $\deltaS\v_\varphi$ and $\deltaS\v_\lambda$ are as in \eqref{eq:C.1.2}.


\section{The \textit{curl} and the Stokes operator}
\label{sec:curl-stokes}

In this section we present a integration by parts formula corresponding to $\mathrm{curl}$ operator and later we use it to give a relation between the Stokes operator $\rA_\eps$ and $\curl$.

Let $\mathcal{O} \subset \R^3$ be a bounded domain with a regular boundary $\partial \dom$. Define
\begin{equation}
    \label{eq:hcurl}
    \rH(\mathrm{curl}) := \{ u \in \lL^2(\mathcal{O}) : \curl u \in \lL^2(\mathcal{O}) \}.
\end{equation}
$\rH(\mathrm{curl})$ with the graph norm
\[\|\v\|_{\rH(\mathrm{curl})}^2 := \|\v\|^2_{\lL^2(\mathcal{O})} + \|\curl \v\|^2_{\lL^2(\mathcal{O})}, \quad \v \in \rH(\mathrm{curl}), \]
is a Hilbert space.

The following theorem is a reformulation of Lemma~4.2 from \cite[Pg~341]{[DL76]}.

\begin{theorem}
\label{thm:ibp}
Assume that $\mathcal{O} \subset \R^3$ be a bounded domain with a regular boundary and $\vec{n}$ be unit normal vector field on $\Gamma = \partial \mathcal{O}$ (directed towards exterior of $\mathcal{O}$). Then there exists a unique bounded linear map
\begin{equation}
    \label{eq:linmap}
    n \times \cdot \big|_\Gamma \colon \rH(\mathrm{curl}) \to \hH^{-1/2}(\Gamma),
\end{equation}
such that
\begin{equation}
    \label{eq:linmap2}
    \left(n \times \cdot \right)(u) = n \times u \big|_\Gamma,
\end{equation}
if $u \in C^1_0(\overline{\mathcal{O}})$ and
\begin{equation}
    \label{eq:ibp1}
    \int_\mathcal{O} \curl u \cdot \v \,dy - \int_\mathcal{O} u \cdot \curl \v \,dy = {}_{\hH^{-1/2}(\Gamma)}\langle (n \times \cdot )(u) \big|_\Gamma , \v\big|_\Gamma \rangle_{\hH^{1/2}(\Gamma)}
\end{equation}
for every $u \in \rH(\mathrm{curl})$ and $\v \in \hH^1(\mathcal{O})$.
\end{theorem}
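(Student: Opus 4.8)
The plan is to build the tangential trace map $n\times\cdot|_\Gamma$ by duality, with the classical Green formula serving as the guiding identity and the surjectivity of the ordinary trace operator doing the heavy lifting. First I would record the smooth version of the identity. For $u,\v\in\hH^1(\mathcal{O})$ (in particular for $u\in C^1(\overline{\mathcal{O}})$) the classical integration by parts, the analogue of \eqref{eq:2.19} on $\mathcal{O}$, reads
\[
\int_{\mathcal{O}}\curl u\cdot\v\,dy-\int_{\mathcal{O}}u\cdot\curl\v\,dy=\int_\Gamma(u\times\v)\cdot n\,d\sigma=\int_\Gamma(n\times u)\cdot\v\,d\sigma,
\]
where the last equality is the cyclic invariance of the scalar triple product \eqref{eq:2.18}, namely $n\cdot(u\times\v)=(n\times u)\cdot\v$. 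This shows that on smooth fields the boundary pairing is realised by the pointwise tangential trace $n\times u|_\Gamma$, which is exactly the consistency requirement \eqref{eq:linmap2}.

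Next I would extend this to an arbitrary $u\in\rH(\mathrm{curl})$ by duality. Let $\gamma:\hH^1(\mathcal{O})\to\hH^{1/2}(\Gamma)$ be the bounded, surjective trace operator and fix a bounded right inverse (lifting) $\ell:\hH^{1/2}(\Gamma)\to\hH^1(\mathcal{O})$ with $\gamma\circ\ell=\mathrm{Id}$. For $u\in\rH(\mathrm{curl})$ I define a functional $T_u$ on $\hH^{1/2}(\Gamma)$ by
\[
\langle T_u,g\rangle:=\int_{\mathcal{O}}\curl u\cdot\ell(g)\,dy-\int_{\mathcal{O}}u\cdot\curl(\ell(g))\,dy,\qquad g\in\hH^{1/2}(\Gamma).
\]
The key point is that $T_u$ is independent of the chosen lifting: if $w\in\hH^1_0(\mathcal{O})$, i.e. $\gamma w=0$, then approximating $w$ by fields in $C^\infty_c(\mathcal{O})$ and using the distributional definition of $\curl u$ (which gives $\int_{\mathcal{O}}\curl u\cdot w=\int_{\mathcal{O}}u\cdot\curl w$ with no boundary term) shows that replacing $\ell(g)$ by any other lift leaves $\langle T_u,g\rangle$ unchanged. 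The Cauchy--Schwarz inequality together with the boundedness of $\ell$ then yields
\[
|\langle T_u,g\rangle|\le\|u\|_{\rH(\mathrm{curl})}\,\|\ell(g)\|_{\hH^1(\mathcal{O})}\le C\,\|u\|_{\rH(\mathrm{curl})}\,\|g\|_{\hH^{1/2}(\Gamma)},
\]
so $T_u\in\hH^{-1/2}(\Gamma)$ and $u\mapsto T_u$ is linear and bounded from $\rH(\mathrm{curl})$ into $\hH^{-1/2}(\Gamma)$. Setting $(n\times\cdot)(u):=T_u$ and, for $\v\in\hH^1(\mathcal{O})$, taking $\v$ itself as a lift of $g=\gamma\v$ produces exactly \eqref{eq:ibp1}; comparing with the smooth Green formula of the first paragraph gives \eqref{eq:linmap2} for $u\in C^1_0(\overline{\mathcal{O}})$.

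Finally, uniqueness follows directly from \eqref{eq:ibp1}. If two bounded maps both satisfy that identity, then for each $u$ their difference pairs to zero against $\gamma\v$ for every $\v\in\hH^1(\mathcal{O})$; since $\gamma$ is surjective onto $\hH^{1/2}(\Gamma)$, the difference annihilates all of $\hH^{1/2}(\Gamma)$ and hence vanishes in $\hH^{-1/2}(\Gamma)$, so the two maps coincide. I expect the genuine obstacle to be the well-definedness step, verifying that $\langle T_u,g\rangle$ does not depend on the lifting $\ell$, since that is precisely where one must invoke the weak meaning of $\curl u$ for $u\in\rH(\mathrm{curl})$ and the density of $C^\infty_c(\mathcal{O})$ in $\hH^1_0(\mathcal{O})$; the remaining ingredients are the standard trace theorem (surjectivity and a bounded right inverse of $\gamma$) and elementary estimates.
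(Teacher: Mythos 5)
Your proposal is correct, but there is nothing in the paper to compare it against: the paper does not prove Theorem~\ref{thm:ibp} at all, stating it instead as a reformulation of Lemma~4.2 of Duvaut and Lions \cite[p.~341]{[DL76]} and delegating the proof to that reference. Your duality/lifting construction is precisely the standard argument behind that lemma: define $T_u$ through a bounded right inverse $\ell$ of the trace operator $\gamma$, check that $T_u$ is independent of the lifting by reducing to test fields in $C^\infty_c(\mathcal{O})$ via the weak definition of $\curl u$, and then read off boundedness from the estimate $|\langle T_u,g\rangle|\le C\|u\|_{\rH(\mathrm{curl})}\|g\|_{\hH^{1/2}(\Gamma)}$, the identity \eqref{eq:ibp1} by choosing $\v$ itself as a lift of $\gamma \v$, and consistency \eqref{eq:linmap2} from the classical Green formula. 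Two small points are worth making explicit if this were written out in full: the reduction in the well-definedness step uses that the kernel of $\gamma$ on $\hH^1(\mathcal{O})$ is exactly $\hH^1_0(\mathcal{O})$ (so that a difference of two lifts can be approximated by compactly supported smooth fields), and both sides of the identity $\int_{\mathcal{O}}\curl u\cdot w\,dy=\int_{\mathcal{O}}u\cdot\curl w\,dy$ must be observed to be continuous in $w$ with respect to the $\hH^1$-norm in order to pass from $C^\infty_c(\mathcal{O})$ to $\hH^1_0(\mathcal{O})$; you indicate both, so there is no gap. Your uniqueness argument via surjectivity of $\gamma$ is also the right choice, since it avoids invoking density of smooth fields in $\rH(\mathrm{curl})$, which is a separate (and less elementary) fact.
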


\begin{remark}
We will call formula \eqref{eq:ibp1} the generalised Stokes formula. From now on we will write $n \times u \big|_\Gamma$ instead of $(n \times \cdot )(u)\big|_\Gamma$ for $u \in \rH(\mathrm{curl})$.
\end{remark}

Recall that
\begin{equation}
    \label{eq:hspace1}
    \rH = \{ u \in \lL^2(\dom) \colon \ddiv u = 0\; \mbox{in } \dom \mbox{ and } u \cdot \vec{n} = 0 \mbox{ on } \Gamma\},
\end{equation}
and the Stokes operator is given by
\begin{equation}
    \label{eq:stokes1}
    D(A) = \{ u \in \hH^2(\dom) \colon \ddiv u = 0 \mbox{ in } \dom,\; u \cdot \vec{n} = 0 \mbox{ and } \vec{n} \times \curl u = 0 \mbox{ on } \Gamma\},
\end{equation}
\begin{equation}
    \label{eq:stokes2}
    A u = \pi(- \Delta u), \quad u \in D(A).
\end{equation}

\begin{remark}
To define $n \times \curl u$ as an element of $\hH^{-1/2}(\Gamma)$, we need to know that $\curl(\curl u) \in \lL^2(\dom)$. But if $u \in \hH^2(\dom)$, then obviously this condition is satisfied.
\end{remark}

\begin{theorem}
\label{thm:stokes-adj}
$A$ is self-adjoint and non-negative on $\rH$.
\end{theorem}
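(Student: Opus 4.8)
The plan is to prove the two elementary properties, symmetry and non\-negativity, by a direct computation resting on the generalised Stokes formula \eqref{eq:ibp1}, and then to upgrade symmetry to self\-adjointness by establishing that $A+\mathrm{Id}$ maps $\rH$ onto $\rH$, the surjectivity being produced by a Lax--Milgram argument combined with elliptic regularity. Throughout I would write $\rV := \hH^1(\dom) \cap \rH$ for the natural energy space and $a(u,\v) := (\curl u, \curl \v)_{\lL^2(\dom)}$ for the associated symmetric bilinear form.

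First I would establish symmetry. Fix $u, \v \in D(A)$. Since $\v \in \rH$ and $\pi$ is the orthogonal projection of $\lL^2(\dom)$ onto $\rH$, \eqref{eq:stokes2} gives $(A u, \v)_{\lL^2(\dom)} = (\pi(-\Delta u), \v)_{\lL^2(\dom)} = (-\Delta u, \v)_{\lL^2(\dom)}$. Because $\ddiv u = 0$, the identity \eqref{eq:2.17} yields $-\Delta u = \curl(\curl u)$, so $(A u, \v)_{\lL^2(\dom)} = (\curl(\curl u), \v)_{\lL^2(\dom)}$. Now $u \in \hH^2(\dom)$ implies $\curl u \in \rH(\mathrm{curl})$ with $\curl(\curl u) \in \lL^2(\dom)$, so applying the generalised Stokes formula \eqref{eq:ibp1} with $\curl u$ in place of $u$ and with the test field $\v \in \hH^1(\dom)$ gives
\begin{equation*}
(\curl(\curl u), \v)_{\lL^2(\dom)} - (\curl u, \curl \v)_{\lL^2(\dom)} = {}_{\hH^{-1/2}(\Gamma)}\langle \vecn \times \curl u, \v \rangle_{\hH^{1/2}(\Gamma)} = 0,
\end{equation*}
the boundary term vanishing since $\vecn \times \curl u = 0$ on $\Gamma$ for $u \in D(A)$. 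Hence $(A u, \v)_{\lL^2(\dom)} = a(u,\v)$, an expression symmetric in $u$ and $\v$; interchanging $u$ and $\v$ gives $(A u, \v)_{\lL^2(\dom)} = (u, A \v)_{\lL^2(\dom)}$, so $A$ is symmetric. Taking $\v = u$ yields $(A u, u)_{\lL^2(\dom)} = \|\curl u\|_{\lL^2(\dom)}^2 \ge 0$, which is non\-negativity.

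For self\-adjointness I would invoke the standard fact that a densely defined symmetric operator $A \ge 0$ with $\mathrm{Range}(A + \mathrm{Id}) = \rH$ is self\-adjoint: indeed $A + \mathrm{Id} \ge \mathrm{Id}$ forces $\|(A+\mathrm{Id})u\|_{\lL^2(\dom)} \ge \|u\|_{\lL^2(\dom)}$, so if $A + \mathrm{Id}$ is onto then $(A+\mathrm{Id})^{-1}$ is bounded, everywhere defined and, by the symmetry just proved, symmetric; a bounded symmetric everywhere\-defined operator is self\-adjoint, whence $A+\mathrm{Id}$ and therefore $A$ are self\-adjoint. (Density of $D(A)$ in $\rH$ is standard, since $D(A)$ is dense in $\rV$ and $\rV$ is dense in $\rH$.) It thus remains to solve $A u + u = f$ in $D(A)$ for each $f \in \rH$. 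The form $a(u,\v) + (u,\v)_{\lL^2(\dom)}$ is continuous and, by the Gaffney--Friedrichs type estimate $\|u\|_{\hH^1(\dom)}^2 \le C\big(\|\curl u\|_{\lL^2(\dom)}^2 + \|\ddiv u\|_{\lL^2(\dom)}^2 + \|u\|_{\lL^2(\dom)}^2\big)$ valid on fields with $u\cdot\vecn = 0$, coercive on $\rV$; the Lax--Milgram theorem then furnishes a unique $u \in \rV$ with $a(u,\v) + (u,\v)_{\lL^2(\dom)} = (f,\v)_{\lL^2(\dom)}$ for all $\v \in \rV$.

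The main obstacle is the final regularity step: promoting this weak solution $u \in \rV$ to $u \in \hH^2(\dom)$ and recovering the natural boundary condition $\vecn \times \curl u = 0$ on $\Gamma$, so that $u \in D(A)$ and $A u + u = f$. This is precisely the $\hH^2$ regularity theory for the div--curl system under the conditions $u\cdot\vecn = 0$ and $\vecn\times\curl u = 0$ on a regular boundary; it rests on the regularity of $\dom$ and is supplied by the Stokes/de Rham regularity results of Cattabriga and Temam \cite{[Cattabriga61],[Temam97]} together with the framework of \cite{[DL76]}. Once the interior regularity $u \in \hH^2(\dom)$ is known, the weak identity $a(u,\v) = (f-u,\v)_{\lL^2(\dom)}$ identifies $\curl(\curl u) = f-u$ in $\lL^2(\dom)$, and reading \eqref{eq:ibp1} backwards against arbitrary $\v \in \rV$ recovers $\vecn \times \curl u = 0$ as a natural boundary condition. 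Granting this, $u \in D(A)$, the surjectivity of $A + \mathrm{Id}$ holds, and the proof is complete.
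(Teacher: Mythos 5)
Your first half coincides exactly with the paper's own proof: the paper computes, for $u,\v \in D(A)$, that $(Au,\v)_{\lL^2(\dom)} = \left(\curl u, \curl \v\right)_{\lL^2(\dom)}$ via $-\Delta u = \curl(\curl u)$, the generalised Stokes formula \eqref{eq:ibp1} applied with $\curl u$ in place of $u$, and the vanishing of the boundary pairing because $\vecn \times \curl u = 0$ on $\Gamma$; symmetry and non-negativity follow just as in your argument. The genuine difference is that the paper stops there: its proof opens with the sentence ``Here we will only show that $A$ is symmetric and non-negative,'' so full self-adjointness is never actually proved in the paper and is implicitly delegated to the cited literature. Your second half supplies precisely this missing step by the standard range criterion: a densely defined symmetric operator with $A \ge 0$ and $\mathrm{Range}(A+\mathrm{Id}) = \rH$ is self-adjoint, and surjectivity comes from Lax--Milgram on $\rV = \hH^1(\dom)\cap\rH$ --- where the coercivity input is exactly the Gaffney--Friedrichs type inequality \eqref{eq:lad2} that the paper itself quotes from \cite{[DL76]} --- combined with $\hH^2$ regularity of the weak solution, which you cite to \cite{[Cattabriga61],[Temam97],[DL76]}; this is the same kind of external input the paper relies on, so your proposal is, if anything, more complete than the paper's proof. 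Two small points to tighten. First, from the weak identity $a(u,\v)+(u,\v)_{\lL^2(\dom)}=(f,\v)_{\lL^2(\dom)}$ for all $\v \in \rV$ one only concludes $\curl(\curl u) = f - u + \nabla p$ for some pressure gradient (i.e. $f-u$ is the Leray projection of $\curl\curl u$), not $\curl(\curl u) = f-u$ literally; this is harmless because $\nabla p$ pairs to zero against $\rV$-test fields, and since $\ddiv u = 0$ it still yields $\pi(-\Delta u) + u = f$. Second, recovering $\vecn\times\curl u = 0$ from $\;{}_{\hH^{-1/2}(\Gamma)}\langle \vecn \times \curl u, \v\rangle_{\hH^{1/2}(\Gamma)} = 0$ for all $\v \in \rV$ needs the (routine, for a regular boundary) fact that traces of $\rV$-fields are dense in the tangential part of $\hH^{1/2}(\Gamma)$. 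Finally, the density of $D(A)$ in $\rH$, which you assert parenthetically, is obtained most cleanly as a consequence of your own surjectivity step: if $f \perp D(A)$, write $f = (A+\mathrm{Id})u$ with $u \in D(A)$ and test against $u$ to get $0 = \|\curl u\|^2_{\lL^2(\dom)} + \|u\|^2_{\lL^2(\dom)}$, hence $u = 0$ and $f = 0$. None of these affect the soundness of your plan.
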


\begin{proof}
Here we will only show that $A$ is symmetric and non-negative. Let $u, \v \in D(A)$, then
\begin{align}
\label{eq:adj1}
    \left( Au, \v \right)_{\lL^2(\dom)} = \left( \pi(-\Delta u), \v \right)_{\lL^2(\dom)} = - \left( \Delta u, \pi \v \right)_{\lL^2(\dom)} = \left( - \Delta u, \v \right)_{\lL^2(\dom)}.
\end{align}
Recall that (from \eqref{eq:2.17}) for smooth $\R^3$-valued vector fields,
\[\curl(\curl u) = - \Delta u + \nabla (\ddiv u).\]
Using the above identity in \eqref{eq:adj1} along with the fact that $u \in D(A)$, in particular, $\ddiv u = 0$ and generalised Stokes formula \eqref{eq:ibp1}, we have
\begin{align*}
    \left( Au, \v \right)_{\lL^2(\dom)} & = \int_\dom \curl(\curl u) \cdot \v\,dy \\
    & = \int_\dom \curl u \cdot \curl \v\,dy + {}_{\hH^{-1/2}(\Gamma)}\langle \underbrace{\vec{n} \times (\curl u) \big|_\Gamma}_{=\,0, \mbox{ since } u \in D(A)} , \v\big|_\Gamma \rangle_{\hH^{1/2}(\Gamma)} \\
    & = \int_\dom \curl u \cdot \curl \v \,dy.
\end{align*}
Similarly
\[\left( u, A \v \right)_{\lL^2(\dom)} = \left( A\v, u \right)_{\lL^2(\dom)} = \int_\dom \curl \v \cdot \curl u\,dy = \int_\dom \curl u \cdot \curl \v\,dy.\]
This establishes that $A$ is symmetric on $\rH$. The non-negativity follows from the above identity by taking $\v = u \in D(A)$.
\end{proof}

Using the definition of $D(A)$, we can characterise $D(A^{1/2})$ as
\[D(A^{1/2}) = \{u \in \lL^2(\dom) \colon \ddiv u = 0,\, \curl u \in \lL^2(\dom) \mbox{ and } u \cdot \vec{n} = 0 \mbox{ on } \Gamma\}.\]
By Theorem~6.1 \cite[Pg~358]{[DL76]} we have
\[D(A^{1/2}) \subset \{ u \in \hH^1(\dom) : u \cdot \vec{n} = 0 \mbox{ on } \Gamma\}.\]

We use the following relation repeatedly in our calculations.

\begin{lemma}
\label{lem:curl-stokes}
Let $u \in D(A^{1/2})$ and $\v \in D(A)$. Then
\[\left(\curl u, \curl \v \right)_{\lL^2(\dom)} = \left(u, A \v \right)_{\lL^2(\dom)}.\]
\end{lemma}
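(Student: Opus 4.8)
The plan is to prove the identity $\left(\curl u, \curl \v\right)_{\lL^2(\dom)} = \left(u, A\v\right)_{\lL^2(\dom)}$ for $u \in D(A^{1/2})$ and $\v \in D(A)$ by reducing it to the generalised Stokes formula \eqref{eq:ibp1} established in Theorem~\ref{thm:ibp}. The key structural observation is that for $\v \in D(A)$ we already know from the definition \eqref{eq:stokes1}--\eqref{eq:stokes2} that $\ddiv \v = 0$ in $\dom$ and $\vec{n} \times \curl \v = 0$ on $\Gamma$, and that $A\v = \pi(-\Delta \v)$, which by the identity \eqref{eq:2.17} equals $\curl(\curl \v)$ since the divergence term vanishes.

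First I would handle the case where $u \in D(A^{1/2})$ is additionally smooth enough to apply \eqref{eq:ibp1} directly, and then pass to the general case by density. Concretely, for $\v \in D(A) \subset \hH^2(\dom)$ the field $\curl \v$ lies in $\hH^1(\dom)$, so $\curl(\curl \v) \in \lL^2(\dom)$ is well-defined and $\curl \v \in \rH(\mathrm{curl})$. I would then apply the generalised Stokes formula \eqref{eq:ibp1} with the roles arranged so that the field playing the part of "$u$" in that formula is $\curl \v$ (which is in $\rH(\mathrm{curl})$) and the test field playing the part of "$\v$" there is $u \in \hH^1(\dom)$ — recall $D(A^{1/2}) \subset \{w \in \hH^1(\dom) : w \cdot \vec{n} = 0 \text{ on } \Gamma\}$ by the embedding quoted just before the lemma. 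This yields
\[
\int_\dom \curl(\curl \v) \cdot u \,dy - \int_\dom \curl \v \cdot \curl u\,dy = {}_{\hH^{-1/2}(\Gamma)}\langle \vec{n} \times \curl \v \big|_\Gamma, u\big|_\Gamma \rangle_{\hH^{1/2}(\Gamma)}.
\]

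Second, I would observe that the boundary pairing on the right vanishes because $\vec{n} \times \curl \v = 0$ on $\Gamma$, exactly the boundary condition built into $D(A)$. Combining this with $A\v = \curl(\curl \v)$ (from \eqref{eq:2.17} together with $\ddiv \v = 0$, mirroring the computation already performed in the proof of Theorem~\ref{thm:stokes-adj}) gives
\[
\left(u, A\v\right)_{\lL^2(\dom)} = \int_\dom u \cdot \curl(\curl \v)\,dy = \int_\dom \curl u \cdot \curl \v\,dy = \left(\curl u, \curl \v\right)_{\lL^2(\dom)},
\]
which is the desired identity.

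The main obstacle I anticipate is justifying the application of \eqref{eq:ibp1} cleanly for $u$ only in $D(A^{1/2})$: the formula as stated requires the test field to be in $\hH^1(\dom)$, and while the embedding $D(A^{1/2}) \subset \hH^1(\dom)$ quoted above covers this, one should be careful that the trace $u\big|_\Gamma$ and the duality pairing are interpreted consistently. If one prefers to avoid relying on that embedding, the cleaner route is to first prove the identity for $u \in D(A)$ (where everything is manifestly smooth and \eqref{eq:ibp1} applies with room to spare), and then extend to $u \in D(A^{1/2})$ by density of $D(A)$ in $D(A^{1/2})$ together with the continuity of both sides in the $D(A^{1/2})$-norm — the left side is continuous because $\curl u \mapsto (\curl u, \curl \v)$ is bounded, and the right side because $u \mapsto (u, A\v)$ is bounded in $\lL^2(\dom)$. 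Either way, the analytic content is entirely carried by Theorem~\ref{thm:ibp} and the vanishing of the boundary term; no new estimates are needed.
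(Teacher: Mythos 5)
Your proof is correct, but it is not quite the paper's proof: the two arguments apply the generalised Stokes formula \eqref{eq:ibp1} with the roles of the two fields interchanged, and this changes how the boundary term dies. The paper takes $u \in \rH(\mathrm{curl})$ (which holds by the characterisation of $D(A^{1/2})$) as the field whose curl is integrated by parts, and $\curl \v \in \hH^1(\dom)$ as the test field; the resulting boundary pairing is ${}_{\hH^{-1/2}(\Gamma)}\langle \vec{n} \times u \big|_\Gamma, \curl \v \big|_\Gamma\rangle_{\hH^{1/2}(\Gamma)}$, which does not vanish for free, so the paper proves the auxiliary Lemma~\ref{lem:curl-stokes2} (a scalar-triple-product computation for smooth fields plus density), using both $u \cdot \vec{n} = 0$ and $\vec{n} \times \curl \v = 0$. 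You instead put $\curl \v$ in the $\rH(\mathrm{curl})$ slot (legitimate, since $\v \in \hH^2(\dom)$ gives $\curl(\curl \v) \in \lL^2(\dom)$) and use $u$ as the $\hH^1$ test field via the embedding $D(A^{1/2}) \subset \{w \in \hH^1(\dom): w \cdot \vec{n} = 0 \mbox{ on } \Gamma\}$; your boundary pairing is ${}_{\hH^{-1/2}(\Gamma)}\langle \vec{n} \times \curl \v \big|_\Gamma, u \big|_\Gamma\rangle_{\hH^{1/2}(\Gamma)}$, which is zero \emph{by definition} of $D(A)$ in \eqref{eq:stokes1}, so no analogue of Lemma~\ref{lem:curl-stokes2} is needed. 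What each approach buys: yours is shorter and pushes all the boundary information onto $\v$, where it is hypothesised outright; the paper's version isolates the triple-product boundary identity as a reusable lemma and only needs the weaker membership $u \in \rH(\mathrm{curl})$ at the integration-by-parts step (though it too invokes the $\hH^1$ embedding of $D(A^{1/2})$ when applying Lemma~\ref{lem:curl-stokes2}). Both proofs conclude identically, identifying $\left(u, \rA \v\right)_{\lL^2(\dom)}$ with $\int_\dom u \cdot \curl(\curl \v)\,dy$ via \eqref{eq:2.17}, $\ddiv \v = 0$, and the projection argument from the proof of Theorem~\ref{thm:stokes-adj}; your fallback of first proving the identity on $D(A)$ and extending by density in $D(A^{1/2})$ is also sound but unnecessary.
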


\begin{proof}
Note that for $u \in D(A^{1/2})$ and $\v \in D(A)$, the LHS makes sense. Using the generalised Stokes formula \eqref{eq:ibp1}, we get
\begin{align*}
    \int_\dom \curl u \cdot \curl \v\,dy = \int_\dom u \cdot \curl(\curl \v)\,dy + {}_{\hH^{-1/2}(\Gamma)}\langle \vec{n} \times  u \big|_\Gamma , \curl \v\big|_\Gamma \rangle_{\hH^{1/2}(\Gamma)}.
\end{align*}
To finish the proof we need the following lemma:
\begin{lemma}
\label{lem:curl-stokes2}
Let $u \in \hH^1(\dom)$ such that $u \cdot \vec{n} = 0$ on $\Gamma$ and $\Phi \in \hH^1(\dom)$ with $\vec{n}\times \Phi = 0$ on $\Gamma$. Then
\begin{equation}
    \label{eq:boundary1}
    {}_{\hH^{-1/2}(\Gamma)}\langle \vec{n} \times  u \big|_\Gamma , \Phi\big|_\Gamma \rangle_{\hH^{1/2}(\Gamma)} = 0.
\end{equation}
\end{lemma}
\begin{proof}
It is sufficient to prove \eqref{eq:boundary1} for $u \in C^1_0(\overline{\dom})$ with $u \cdot \vec{n} =0$ on $\Gamma$. In this case for all $x \in \Gamma$
\begin{align*}
    \left(\vec{n} \times u\right)\cdot \Phi = -\left(u \times \vec{n}\right)\cdot \Phi = - u \cdot \left(n \times \Phi\right) = 0.
\end{align*}
\end{proof}
The proof of Lemma~\ref{lem:curl-stokes} is finished by observing that
\[\left(u, A \v \right)_{\lL^2(\dom)} = \int_\dom u \cdot \curl(\curl \v)\,dy,\]
from the proof of Theorem~\ref{thm:stokes-adj}.
\end{proof}

Let us consider an abstract framework. Let $H$ be a Hilbert space and $A$ be a non-negative self-adjoint operator on $H$.

\begin{lemma}
\label{lem:adj2}
Let $u \in D(A^{1/2})$ and $\v \in D(A)$. Then
\[\left(A^{1/2} u, A^{1/2} \v\right)_H = (u, A \v)_H.\]
\end{lemma}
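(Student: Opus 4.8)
The plan is to use the spectral theorem for the non-negative self-adjoint operator $A$, which gives meaning to the fractional power $A^{1/2}$ and guarantees the identity $A^{1/2} A^{1/2} = A$ on the domain $D(A)$. The statement is the standard relation between the form $\left(A^{1/2}u, A^{1/2}\v\right)_H$ and the operator $A$; the only subtle point is matching the domains, since $u$ is only assumed to lie in $D(A^{1/2})$ while $\v$ lies in $D(A) \subset D(A^{1/2})$.

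First I would recall that since $A$ is non-negative and self-adjoint, $A^{1/2}$ is itself a non-negative self-adjoint operator satisfying $D(A) \subset D(A^{1/2})$ and $A = A^{1/2}A^{1/2}$, with $A^{1/2}$ self-adjoint so that $\left(A^{1/2}\right)^\ast = A^{1/2}$. The key computation is then a one-line application of self-adjointness of $A^{1/2}$: for $u \in D(A^{1/2})$ and $\v \in D(A)$, we have $A^{1/2}\v \in D(A^{1/2})$ (because $\v \in D(A)$ means $A^{1/2}\v \in D(A^{1/2})$), so both $u$ and $A^{1/2}\v$ lie in $D(A^{1/2})$ and the symmetric form is well defined. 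Writing
\[
\left(A^{1/2}u, A^{1/2}\v\right)_H = \left(u, A^{1/2}\left(A^{1/2}\v\right)\right)_H = \left(u, A\v\right)_H,
\]
where the first equality uses that $A^{1/2}$ is self-adjoint (moving one factor across the inner product, legitimate precisely because $u \in D(A^{1/2})$ and $A^{1/2}\v \in D(A^{1/2})$), and the second uses $A^{1/2}A^{1/2}\v = A\v$ for $\v \in D(A)$.

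The main obstacle, such as it is, is purely one of domain bookkeeping rather than of genuine mathematical difficulty: one must verify that $A^{1/2}\v \in D(A^{1/2})$ so that the self-adjointness identity applies, and this follows immediately from $\v \in D(A)$ together with the spectral characterisation $D(A) = \{\v \in D(A^{1/2}) : A^{1/2}\v \in D(A^{1/2})\}$. I would phrase the argument entirely through the spectral measure $E$ of $A$, representing $A^{1/2}w = \int_0^\infty \mu^{1/2}\,dE(\mu)w$, so that the equality reduces to $\int_0^\infty \mu\, d\left(E(\mu)u, \v\right)_H = \left(u, A\v\right)_H$, all integrals being finite because $u \in D(A^{1/2})$ and $\v \in D(A)$. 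This makes the proof rigorous and self-contained, invoking only the spectral theorem already implicit in the definition of $A^{1/2}$.
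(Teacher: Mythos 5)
Your proposal is correct and takes essentially the same approach as the paper: check that $A^{1/2}\v \in D(A^{1/2})$ because $\v \in D(A)$, move one factor of $A^{1/2}$ across the inner product by self-adjointness, and conclude with $A^{1/2}A^{1/2}\v = A\v$. The spectral-measure formulation you add at the end merely re-justifies those same standard facts and does not constitute a different argument.
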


\begin{proof}
Take $u \in D(A^{1/2})$, $\v \in D(A)$. Then $A^{1/2} \v \in D(A^{1/2})$. So by self-adjointness of $A^{1/2}$ and $A^{1/2}A^{1/2} = A$,
\[\left(A^{1/2} u , A^{1/2}\v\right)_H = \left(u, A^{1/2}(A^{1/2} \v)\right)_H = \left(u, A\v\right)_H.\]
\end{proof}

\begin{lemma}
\label{lem:stokes3}
Let $u, \v \in D(A^{1/2})$, then by Lemma~\ref{lem:adj2} and Lemma~\ref{lem:curl-stokes},
\[\left(\curl u , \curl \v \right)_{\lL^2(\dom)} = \left(A^{1/2}u, A^{1/2} \v\right)_{\lL^2(\dom)}.\]
\end{lemma}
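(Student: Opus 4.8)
The plan is to first establish the identity on the smaller domain $D(A^{1/2})\times D(A)$, where it follows by simply chaining the two preceding lemmas, and then to remove the restriction $\v\in D(A)$ by an approximation argument. For the first step, fix $u\in D(A^{1/2})$ and $\v\in D(A)$. Lemma~\ref{lem:curl-stokes} gives $(\curl u,\curl \v)_{\lL^2(\dom)}=(u,A\v)_{\lL^2(\dom)}$, while Lemma~\ref{lem:adj2} gives $(A^{1/2}u,A^{1/2}\v)_{\lL^2(\dom)}=(u,A\v)_{\lL^2(\dom)}$. Comparing the two right-hand sides yields the claimed identity for every $u\in D(A^{1/2})$ and every $\v\in D(A)$.

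For the second step I would pass from $\v\in D(A)$ to $\v\in D(A^{1/2})$ by density. Specializing the identity just proved to $u=\v\in D(A)$ shows $\|\curl \v\|_{\lL^2(\dom)}=\|A^{1/2}\v\|_{\lL^2(\dom)}$ for all $\v\in D(A)$; that is, on $D(A)$ the $\lL^2$-norm of the curl coincides with the $A^{1/2}$-seminorm. Since $A$ is non-negative and self-adjoint, $D(A)$ is a core for $A^{1/2}$, hence dense in $D(A^{1/2})$ for the graph norm $\|\cdot\|_{D(A^{1/2})}^2=\|\cdot\|^2_{\lL^2(\dom)}+\|A^{1/2}\cdot\|^2_{\lL^2(\dom)}$. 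Given $\v\in D(A^{1/2})$, I would choose $\v_n\in D(A)$ with $\v_n\to \v$ in $D(A^{1/2})$, so that $\v_n\to\v$ and $A^{1/2}\v_n\to A^{1/2}\v$ in $\lL^2(\dom)$. The isometry above applied to $\v_n-\v_m\in D(A)$ shows that $(\curl \v_n)$ is Cauchy in $\lL^2(\dom)$, so it converges to some limit $g$.

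It then remains to pass to the limit in $(\curl u,\curl \v_n)_{\lL^2(\dom)}=(A^{1/2}u,A^{1/2}\v_n)_{\lL^2(\dom)}$. The right-hand side converges to $(A^{1/2}u,A^{1/2}\v)_{\lL^2(\dom)}$ by Cauchy--Schwarz, and the left-hand side converges to $(\curl u,g)_{\lL^2(\dom)}$. Identifying $g=\curl\v$ gives the desired equality for all $u,\v\in D(A^{1/2})$.

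The only genuine obstacle is justifying that $g=\curl \v$, i.e. that $\curl \v_n$ converges to $\curl\v$ and not to some unrelated limit. This is where I would invoke the closedness of the curl operator: $\curl$ is defined through weak (distributional) derivatives, hence is a closed operator on $\lL^2(\dom)$ with domain $\rH(\mathrm{curl})$ (see \eqref{eq:hcurl}), and the characterization $D(A^{1/2})\subset\rH(\mathrm{curl})$ recorded just before Lemma~\ref{lem:curl-stokes} guarantees $\v\in\rH(\mathrm{curl})$. Since $\v_n\to\v$ and $\curl\v_n\to g$ in $\lL^2(\dom)$, closedness forces $g=\curl\v$. Everything else is a direct concatenation of Lemma~\ref{lem:curl-stokes} and Lemma~\ref{lem:adj2} together with the standard fact that $D(A)$ is a core for $A^{1/2}$.
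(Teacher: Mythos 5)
Your proof is correct and follows essentially the same route as the paper: chain Lemma~\ref{lem:curl-stokes} and Lemma~\ref{lem:adj2} on a dense class, then extend by density. The paper compresses the second step into the single phrase ``by density argument''; your version supplies exactly the details that phrase hides (the isometry $\|\curl \v\|_{\lL^2(\dom)} = \|A^{1/2}\v\|_{\lL^2(\dom)}$ on $D(A)$, the core property of $D(A)$ for $A^{1/2}$, and closedness of the distributional curl to identify the limit), and by keeping $u \in D(A^{1/2})$ throughout you only need to pass to the limit in one variable.
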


\begin{proof}
Let $u, \v \in D(A)$, then
\[\left(A^{1/2}u, A^{1/2}\v\right)_{\lL^2(\dom)} = \left(u, A \v \right)_{\lL^2(\dom)} = \left(\curl u, \curl \v \right)_{\lL^2(\dom)}.\]
By density argument, this is true for all $u, \v \in D(A^{1/2})$.
\end{proof}

As a consequence of the above lemma we have
\[\|\nabla u \|^2_{\lL^2(\dom)} = \|\curl u\|^2_{\lL^2(\dom)}, \quad u \in D(A^{1/2}).\]


\section{Proof of the Poincar\'e and the Ladyzhenskaya inequalities}
\label{sec:appLadyzhenskaya}

\textbf{Proof of Lemma~\ref{lemma2.15}:} We will establish the Poincar\'e inequality \eqref{eq:2.24} following the footsteps of Lemma~2.1 \cite{[TZ97]} with all the details. By density argument, it is enough to prove \eqref{eq:2.24} for smooth functions. Let $\psi \in \mathcal{C}(\overline{Q_\eps})$ be a real continuous function. We write for any $\xi, \eta \in [1, 1 + \eps]$:
\begin{equation}
\label{eq:poinc1}
\begin{split}
\xi^2 \psi^2(\xi, \bx) + \eta^2 \psi^2(\eta, \bx) & = 2 \xi \eta \psi(\xi, \bx)\psi(\eta, \bx) + \left[\xi \psi(\xi, \bx) - \eta \psi(\eta, \bx)\right]^2 \\
& = 2 \xi \eta \psi(\xi, \bx)\psi(\eta, \bx) + \left[ \int_\eta^\xi \frac{\partial \left(r \psi \right)}{\partial r}(r, \bx) dr \right]^2
\end{split}
\end{equation}
with $\bx = \frac{\by}{|\by|} \in \bbS$. We fix $\xi$ and integrate w.r.t. $\eta \in [1, 1+\eps]$ to obtain
\begin{equation}
\label{eq:poinc2}
\begin{split}
\eps \xi^2 \psi^2(\xi, \bx) + \int_1^{1+\eps} \eta^2 \psi^2(\eta, \bx)\,d\eta & = 2 \xi \psi(\xi, \bx) \int_1^{1+\eps} \eta \psi(\eta, \bx)\,d\eta \\
&\; + \int_1^{1+\eps} \left[ \int_\eta^\xi \frac{\partial \left(r \psi \right)}{\partial r}(r, \bx) dr \right]^2 d\eta.
\end{split}
\end{equation}
With $\psi = u_r$ and $\xi = 1$, observing that $u_r(1, \bx) = 0$ (because of the boundary condition $u \cdot \vec{n} = 0$ on $\partial Q_\eps$) from \eqref{eq:poinc2} we obtain
\begin{equation}
\label{eq:poinc3}
\int_1^{1+\eps} \eta^2 u_r^2(\eta, \bx)\,d\eta = \int_1^{1+\eps} \left[ \int_\eta^\xi \frac{\partial \left(r u_r \right)}{\partial r}(r, \bx) dr \right]^2 d\eta.
\end{equation}
Applying \eqref{eq:poinc2} with $\psi = \widehat{N}_\eps u_\lambda$, we get
\begin{equation}
\label{eq:poinc4}
\begin{split}
& \eps \xi^2 \left[\widehat{N}_\eps u_\lambda(\xi, \bx)\right]^2 + \int_1^{1+\eps} \eta^2 \left[\widehat{N}_\eps u_\lambda(\eta, \bx)\right]^2 d\eta \\
&\qquad = 2 \xi \widehat{N}_\eps u_\lambda(\xi, \bx) \int_1^{1+\eps} \eta \widehat{N}_\eps u_\lambda (\eta, \bx)\,d\eta + \int_1^{1+\eps} \left[ \int_\eta^\xi \frac{\partial \left(r \widehat{N}_\eps u_\lambda \right)}{\partial r}(r, \bx) dr \right]^2 d\eta.
\end{split}
\end{equation}
Observing from Lemma~\ref{lemma2.7} for every $\psi \in L^2(Q_\eps)$
\[\int_1^{1+\eps} r \widehat{N}_\eps \psi (r, \bx)\,dr = 0 \qquad \forall \bx \in \bbS,\]
and since the first term on the LHS of \eqref{eq:poinc4} is positive we can simplify \eqref{eq:poinc4} as follows
\begin{equation}
\label{eq:poinc5}
\int_1^{1+\eps} \eta^2 \left[\widehat{N}_\eps u_\lambda(\eta, \bx)\right]^2 d\eta \le \int_1^{1+\eps} \left[ \int_\eta^\xi \frac{\partial \left(r \widehat{N}_\eps u_\lambda \right)}{\partial r}(r, \bx) dr \right]^2 d\eta.
\end{equation}
Similarly for $\psi = \widehat{N}_\eps u_\varphi$, we have
\begin{equation}
\label{eq:poinc6}
\int_1^{1+\eps} \eta^2 \left[\widehat{N}_\eps u_\varphi(\eta, \bx)\right]^2 d\eta \le \int_1^{1+\eps} \left[ \int_\eta^\xi \frac{\partial \left(r \widehat{N}_\eps u_\varphi \right)}{\partial r}(r, \bx) dr \right]^2 d\eta.
\end{equation}
Thus, using \eqref{eq:poinc3}, \eqref{eq:poinc5} and \eqref{eq:poinc6} for each of the cases $\psi = u_r$, $\psi = \widehat{N}_\eps u_\lambda$ and $\psi = u_\varphi$, we obtain
\begin{equation}
\label{eq:poinc7}
\int_1^{1+\eps} \eta^2 \psi^2(\eta, \bx)\, d\eta \le \int_1^{1+\eps} \left[ \int_\eta^\xi \frac{\partial \left(r \psi \right)}{\partial r}(r, \bx) dr \right]^2 d\eta.
\end{equation}
Using the Cauchy-Schwarz inequality, we find
\begin{equation}
\label{eq:poinc8}
\begin{split}
\int_1^{1+\eps} \eta^2 \psi^2(\eta, \bx)\, d\eta & \le \int_1^{1+\eps}|\xi - \eta|\,d\eta \int_1^{1+\eps} \left|\frac{\partial \left(r \psi \right)}{\partial r}\right|^2 dr \le \eps^2 \int_1^{1+\eps} \left|\frac{\partial \left(r \psi \right)}{\partial r}\right|^2 dr \\
& \le 2\eps^2 \int_1^{1+\eps} \left|\frac{\partial \psi }{\partial r}\right|^2 r^2 dr + 2 \eps^2 \int_1^{1+\eps} |\psi|^2 dr \\
& \le 2\eps^2 \int_1^{1+\eps} \left|\frac{\partial \psi }{\partial r}\right|^2 r^2 dr + 2 \eps^2 \int_1^{1+\eps} |r \psi|^2 dr,
\end{split}
\end{equation}
the last inequality follows since $r \ge 1$. On rearranging, we obtain
\begin{equation}
\label{eq:poinc9}
\left(1 - 2 \eps^2\right) \int_1^{1+\eps} r^2 \psi^2(r, \lambda, \varphi)\, dr \le 2\eps^2 \int_1^{1+\eps} \left|\frac{\partial \psi }{\partial r}\right|^2 r^2 dr,
\end{equation}
which implies for $0 \le \eps < \frac12$
\begin{equation}
\label{eq:poinc10}
\int_1^{1+\eps} r^2 \psi^2(r, \lambda, \varphi)\, dr \le 4\eps^2 \int_1^{1+\eps} \left|\frac{\partial \psi }{\partial r}\right|^2 r^2 dr.
\end{equation}
We then integrate w.r.t. $\lambda$ and $\varphi$ to obtain
\begin{equation}
\label{eq:poinc11}
\int_{Q_\eps} \psi^2(\by)\, d\by \le 4\eps^2 \int_{Q_\eps} \left|\frac{\partial \psi }{\partial r}\right|^2 d\by.
\end{equation}
Adding \eqref{eq:poinc11} for $\psi = u_r$, $\psi = \widehat{N}_\eps u_\lambda$ and $\psi = \widehat{N}_\eps u_\varphi$; using finally
\[\int_{Q_\eps}\left|\frac{\partial}{\partial r} \widetilde{N}_\eps u\right|^2 d\by \le \|\nabla \widetilde{N}_\eps u\|^2_{\lL^2(Q_\eps)}  = \|\curl \widetilde{N}_\eps u\|^2_{\lL^2(Q_\eps)} \qquad \forall\, u \in \rV_\eps,\]
we conclude the proof of the inequality \eqref{eq:2.24}. \hfill $\square$

\noindent \textbf{Proof of Lemma~\ref{lemma2.16}:} We will prove the lemma for smooth vector fields $u \in \mathcal{C}^\infty(Q_\eps)$. By Lemma~2.3 \cite{[TZ97]} there exists a constant $c_0 > 0$ s.t.
\begin{equation}
\label{eq:lad1}
\|\widetilde{N}_\eps u\|_{\lL^6(Q_\eps)} \le c_0 \|\nabla \left(\widetilde{N}_\eps u\right)\|_{\lL^2(Q_\eps)}.
\end{equation}
By Lemma~6.1 \cite[Eq.~6.11, Pg 359]{[DL76]} for vector fields $\v \in \mathcal{C}^1(Q_\eps)$ with $\v \cdot \vec{n} = 0$ on $\partial Q_\eps$, we have a constant $c_2 > 0$ s.t.
\begin{equation}
\label{eq:lad2}
\|\nabla \v\|^2_{\lL^2(Q_\eps)} \le 2 \left[\|\ddiv \v\|^2_{\lL^2(Q_\eps)} + \|\curl \v\|^2_{\lL^2(Q_\eps)} + c_2\|\v\|^2_{\lL^2(Q_\eps)}\right].
\end{equation}
Also by Poincar\'e inequality \eqref{eq:2.24} for all $u \in \rV_\eps$, we have
\begin{equation}
\label{eq:lad3}
\|\widetilde{N}_\eps u\|^2_{\lL^2(Q_\eps)} \le 4 \eps^2 \|\curl \widetilde{N}_\eps u\|^2_{\lL^2(Q_\eps)}.
\end{equation}
Using \eqref{eq:lad2} with $\v = \widetilde{N}_\eps u$ along with the fact that $\ddiv \v = 0$ if $\ddiv u = 0$ and $\v \cdot \vec{n} = 0$ on $\partial Q_\eps$ if $u \cdot \vec{n} = 0$ on $\partial Q_\eps$ and combining it with the Poincar\'e inequality \eqref{eq:lad3}, we obtain
\[\|\widetilde{N}_\eps u\|^2_{\lL^6(Q_\eps)} \le 2c_0^2 \left[\|\widetilde{N}_\eps u\|^2_{\rV_\eps} + 4c_2 \eps^2 \|\widetilde{N}_\eps u \|^2_{\rV_\eps} \right].\]
Therefore, choosing $c_1 = \sqrt{2} c_0 \left(1 + 4 c_2 \right)^{1/2}$ we establish \eqref{eq:2.25} for every $u \in \mathcal{C}^1(Q_\eps)$ with $\ddiv u = 0$ on $Q_\eps$ and $u \cdot \vec{n} = 0$ on $\partial Q_\eps$. We finish the proof using density argument. \hfill $\square$
\vspace{6pt}

\noindent \textbf{Proof of Lemma~\ref{lem:lap-ret-vec}}
Let $u$ be a tangential vector field defined on $\bbS$, $u = (0, u_\lambda, u_\varphi)$. Then using the definition of the map $\oldReps$ (see \eqref{eq:2.36}), for $Q_\eps \ni \by = r \bx$, $r \in (1,1+\eps)$ and $\bx \in \bbS$, we have
\[\oldReps[u](\by) = \left(0, \newReps\left[u_\lambda\right](\by), \newReps\left[u_\varphi\right](\by)\right) = \left(0, \frac{u_\lambda(\bx)}{r}, \frac{u_\varphi(\bx)}{r}\right).\]
Using the definitions of Laplace ($\Delta$) for vector fields in spherical coordinates, Laplace--Beltrami ($\nabla^2$) for scalars, tangential Laplace ($\DDelta$) for tangential vector fields and Laplace--Beltrami ($\Delta^\prime$) for the scalar defined on $\bbS$, we have following relations:
\[\Delta\left(\oldReps u\right) = \left(\left(\Delta\left(\oldReps u\right)\right)_r, \left(\Delta\left(\oldReps u\right)\right)_\lambda, \left(\Delta\left(\oldReps u\right)\right)_\varphi\right),\]
where
\begin{align*}
    \left(\Delta\left(\oldReps u\right)\right)_r & = - \frac{2}{r^3}\frac{\pa u_\lambda}{\pa \lambda} - 2 \frac{\cot \lambda}{r^3} u_\lambda - \frac{2}{r^3 \sin \lambda} \frac{\pa u_\varphi}{\pa \varphi},\\
    \left(\Delta\left(\oldReps u\right)\right)_\lambda & = \nabla^2\left(\frac{u_\lambda}{r}\right) - \frac{u_\lambda}{r^3 \sin^2 \lambda} - \frac{2 \cos \lambda}{r^3 \sin^2 \lambda} \frac{\pa u_\varphi}{\pa \varphi}, \\
    \left(\Delta\left(\oldReps u\right)\right)_\varphi & = \nabla^2\left(\frac{u_\varphi}{r}\right) - \frac{u_\varphi}{r^3 \sin^2 \lambda} + \frac{2 \cos \lambda}{r^3 \sin^2 \lambda} \frac{\pa u_\lambda}{\pa \varphi},\\
    \nabla^2\left(\frac{u_\lambda}{r}\right) & = \frac{1}{r^3} \Delta^\prime u_\lambda, \qquad \nabla^2\left(\frac{u_\varphi}{r}\right)  = \frac{1}{r^3} \Delta^\prime u_\varphi.
\end{align*}
If $\ddivS u = 0$, then by the definition of $\ddivS$,
\[\frac{1}{\sin \lambda} \frac{\pa}{\pa \lambda}(u_\lambda \sin \lambda) + \frac{1}{\sin \lambda} \frac{\pa u_\varphi}{\pa \varphi} = 0\]
which is equivalent to
\[\frac{\pa u_\lambda}{\pa \lambda} + u_\lambda\cot{\lambda}  + \frac{1}{\sin \lambda} \frac{\pa u_\varphi}{\pa \varphi} = 0.\]
Hence using all the above relations, we obtain
\begin{align*}
    \|\Delta \left(\oldReps u\right)\|^2_{\LQeps} & = \int_{Q_\eps}\left|\Delta\left(\oldReps[u](\by)\right)\right|^2\,d\by \\
    & = \inteps \int_\bbS \frac{1}{r^6}\bigg(\left|\Delta^\prime u_\lambda(\bx) - \frac{u_\lambda(\bx)}{ \sin^2 \lambda} - \frac{2 \cos \lambda}{ \sin^2 \lambda} \frac{\pa u_\varphi(\bx)}{\pa \varphi}\right|^2 \\
    & \qquad \qquad + \left|\Delta^\prime u_\varphi(\bx) - \frac{u_\varphi(\bx)}{\sin^2 \lambda} + \frac{2 \cos \lambda}{\sin^2 \lambda} \frac{\pa u_\lambda(\bx)}{\pa \varphi}\right|^2\bigg)r^2 d\sigma(\bx)dr \\
    & = - \frac{1}{3 r^3}\bigg|^{1+\eps}_1 \|\DDelta u\|^2_\LS = \frac{\eps^3 + 3 \eps^2 + 3 \eps}{3(1+\eps)^3}\|\DDelta u\|^2_\LS.
\end{align*}
Since $\eps \in (0,1)$, the inequality \eqref{eq:lap-ret-vec1} holds. \hfill $\square$


\section{Compactness}
\label{sec:appA}

We use the following theorem by Simon \cite[Theorem~5]{[Simon87]} to obtain a strongly converging subsequence$\colon$

\begin{theorem}\label{thm4.1}
Let $T>0$, and let us assume the embedding of the Banach spaces
$X \xhookrightarrow{\text{compact}} B \xhookrightarrow{} Y$.
Let $(w_\eps)_{\eps>0}$ be a family of functions of $L^p(0,T;X)$,
$1\le p \le \infty$, with the extra condition
$(w_\eps)_{\eps>0} \subset \ccal([0,T]; Y)$ if $p = \infty$, such that
\begin{itemize}
\item[(H1)] $(w_\eps)_{\eps>0}$ is bounded in $L^p(0,T;X)$
\item[(H2)] $\|w_\eps(x,t+\theta) - w_\eps(x,t)\|_{L^p(0,T-\theta;Y)} \rightarrow 0$
as $\theta \rightarrow 0$, uniformly for all $w_\eps$.
\end{itemize}
Then the family $(w_\eps)_{\eps>0}$ possesses a cluster point in $L^p(0,T;B)$ and also
in $\ccal([0,T]; B)$ if $p = \infty$ as $\eps \rightarrow 0$.
\end{theorem}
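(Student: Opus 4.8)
The plan is to derive this from the vector-valued Kolmogorov--Riesz--Fr\'echet compactness criterion, whose decisive ingredient is an interpolation (Ehrling) inequality supplied by the compact embedding $X \xhookrightarrow{\text{compact}} B$. (This is in fact Simon's Theorem~5, so what follows is a sketch of its proof.) First I would record Ehrling's lemma: since $X \hookrightarrow B$ is compact and $B \hookrightarrow Y$ is continuous, for every $\eta > 0$ there is $C_\eta > 0$ with
\[
\|w\|_B \le \eta \|w\|_X + C_\eta \|w\|_Y, \qquad w \in X .
\]
The proof is the usual contradiction argument: were it false, a sequence $w_n$ with $\|w_n\|_B = 1$, $\|w_n\|_X \le 1/\eta$ and $\|w_n\|_Y \to 0$ would, by compactness of $X \hookrightarrow B$, converge in $B$ to a limit of $B$-norm $1$, while continuity of $B \hookrightarrow Y$ forces that limit to vanish. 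This inequality is precisely what lets me trade the (merely bounded) $X$-information against the (small) $Y$-information.

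Next I would invoke the criterion that a family $F \subset L^p(0,T;B)$, $1 \le p < \infty$, bounded in $L^p(0,T;B)$ (which holds here, since $X\hookrightarrow B$ continuously and (H1) holds), is relatively compact provided (i) for all $0 < t_1 < t_2 < T$ the set of mean values $\left\{\int_{t_1}^{t_2} f(t)\,dt : f \in F\right\}$ is relatively compact in $B$, and (ii) $\|f(\cdot + \theta) - f(\cdot)\|_{L^p(0,T-\theta;B)} \to 0$ as $\theta \to 0$, uniformly in $f \in F$. Condition (i) is immediate from (H1): by H\"older's inequality $\|\int_{t_1}^{t_2} w_\eps\,dt\|_X \le (t_2 - t_1)^{1/p'} \|w_\eps\|_{L^p(0,T;X)}$ is bounded uniformly in $\eps$, so the mean values lie in a bounded subset of $X$, which is relatively compact in $B$ by the compact embedding.

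For condition (ii) I would couple the two hypotheses through Ehrling's lemma: applying it to $w_\eps(t+\theta) - w_\eps(t)$ and integrating in $t$,
\[
\|w_\eps(\cdot+\theta) - w_\eps(\cdot)\|_{L^p(0,T-\theta;B)} \le \eta\, \|w_\eps(\cdot+\theta) - w_\eps(\cdot)\|_{L^p(0,T-\theta;X)} + C_\eta\, \|w_\eps(\cdot+\theta) - w_\eps(\cdot)\|_{L^p(0,T-\theta;Y)} .
\]
The first term is at most $2\eta\,\sup_\eps \|w_\eps\|_{L^p(0,T;X)}$ by (H1), hence uniformly small once $\eta$ is chosen small; with $\eta$ fixed, the second term tends to zero uniformly as $\theta \to 0$ by (H2). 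Thus (ii) holds and the criterion yields a cluster point in $L^p(0,T;B)$.

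The endpoint $p = \infty$, where the conclusion is relative compactness in $\ccal([0,T];B)$, is the part I expect to be the main obstacle, since $L^\infty$-compactness is not captured by the same mean-value criterion. Here I would argue by Arzel\`a--Ascoli in $\ccal([0,T];B)$: Ehrling applied to $w_\eps(t) - w_\eps(s)$, together with the $L^\infty$ bound (H1) and the uniform modulus of continuity in $Y$ coming from (H2) with $p=\infty$, gives equicontinuity of $(w_\eps)$ as $B$-valued functions, while for each fixed $t$ the boundedness of $\{w_\eps(t)\}$ in $X$ gives pointwise relative compactness in $B$. The delicate points are to upgrade the a.e.\ $X$-bound to a genuine pointwise bound (using the extra hypothesis $w_\eps \in \ccal([0,T];Y)$ and the equicontinuity to reach every $t$) and to verify that equicontinuity plus pointwise precompactness suffice for Arzel\`a--Ascoli with values in the Banach space $B$; granting these, the family admits a cluster point in $\ccal([0,T];B)$, which completes the proof.
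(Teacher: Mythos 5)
A preliminary remark: the paper contains no proof of this statement to compare against --- Theorem~\ref{thm4.1} is quoted verbatim from \cite[Theorem~5]{[Simon87]} and used as a black box --- so your attempt can only be measured against Simon's original argument, which, for $1\le p<\infty$, it reconstructs faithfully and correctly. The Ehrling inequality $\|w\|_B\le \eta\|w\|_X+C_\eta\|w\|_Y$ (your contradiction proof of it is right, using that the embeddings are injective), the verification of the mean-value condition from (H1) together with compactness of $X\hookrightarrow B$, and above all the correct handling of the translation condition --- the bound $2\eta\,\sup_\eps\|w_\eps\|_{L^p(0,T;X)}+C_\eta\cdot o(1)$, where one lets $\theta\to 0$ first and only then $\eta\to 0$ --- are exactly the ingredients of Simon's deduction of his Theorem~5 from his translation-compactness criterion (his Theorem~1).

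For $p=\infty$, two comments. First, the detour is unnecessary: the criterion you invoke is stated by Simon for all $1\le p\le\infty$, with conclusion in $\ccal([0,T];B)$ when $p=\infty$, and the very same Ehrling computation verifies the translation condition in the essential-supremum norm of $B$; so this case needs no separate Arzel\`a--Ascoli argument. Second, if you do follow the Arzel\`a--Ascoli route, one of your two ``delicate points'' is misstated: the a.e.\ $X$-bound cannot in general be upgraded to a pointwise $X$-bound (at an exceptional time $t$ the value $w_\eps(t)$ need not lie in $X$ at all; that would require, e.g., reflexivity of $X$). What is true, and suffices, is that $w_\eps(t)\in B$ for every $t$: approximating $t$ by times $s_k$ at which the $X$-bound holds, compactness of $X\hookrightarrow B$ yields a $B$-convergent subsequence of $w_\eps(s_k)$, whose limit is identified with $w_\eps(t)$ through the $Y$-continuity of $w_\eps$ and the injectivity of $B\hookrightarrow Y$. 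From this, the Ehrling estimate $\|w_\eps(t)-w_\eps(s)\|_B\le 2\eta M+C_\eta\,\omega(|t-s|)$ (with $\omega$ the uniform $Y$-modulus from (H2), which is a genuine modulus here because the $w_\eps$ are $Y$-continuous) extends from a.e.\ pairs of times to all pairs, and since $\eta>0$ is arbitrary it gives true uniform $B$-equicontinuity; pointwise precompactness in $B$ at every $t$ then follows by total boundedness, since $\{w_\eps(t)\}_\eps$ lies at small $B$-distance from the precompact set $\{w_\eps(s)\}_\eps$ for a nearby good time $s$. With these repairs (and the Banach-valued Arzel\`a--Ascoli theorem, which is indeed valid in the form you need), your route closes as well.
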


\subsection{Skorokhod Theorem and Aldous condition}
\label{sec:appA.1}

Let $E$ be a separable Banach space with the norm $\|\cdot\|_E$ and let $\calB(E)$ be its Borel $\sigma$-field. The family of probability measures on $(E, \calB(E))$ will be denoted by $\mathcal{P}$. The set of all bounded and continuous $E$-valued functions is denoted by $\ccal_b(E)$.

\begin{definition}
\label{defnA.1.1}
The family $\mathcal{P}$ of probability measures on $\left(E, \calB(E)\right)$ is said to be tight if for arbitrary $\varepsilon > 0$ there exists a compact set $K_\varepsilon \subset E$ such that
\[\mu(K_\varepsilon) \ge 1 - \varepsilon, \quad \mbox{for all}\,\, \mu \in \mathcal{P}.\]
\end{definition}

We used the following Jakubowski's generalisation of the Skorokhod Theorem, in the form given by Brze\'{z}niak and Ondrej\'{a}t \cite[Theorem~C.1]{[BO11]}, see also \cite{[Jakubowski98]}, as our topological space $\mathcal{Z}_T$ is not a metric space.

\begin{theorem}
\label{thmA.1.4}
Let $\mathcal{X}$ be a topological space such that there exists a sequence $\{f_m\}_{m \in \mathbb{N}}$ of continuous functions $f_m : \mathcal{X} \to \R$ that separates points of $\mathcal{X}$. Let us denote by $\mathcal{S}$ the $\sigma$-algebra generated by the maps $\{f_m\}$. Then
\begin{itemize}
\item[a)] every compact subset of $\mathcal{X}$ is metrizable,
\item[b)] if $(\mu_m)_{m \in \mathbb{N}}$ is a tight sequence of probability measures on $(\mathcal{X}, \mathcal{S})$, then there exists a subsequence $(m_k)_{k \in \mathbb{N}}$, a probability space $(\Omega, \mathcal{F}, \mathbb{P})$ with $\mathcal{X}$-valued Borel measurable variables $\xi_k, \xi$ such that $\mu_{m_k}$ is the law of $\xi_k$ and $\xi_k$ converges to $\xi$ almost surely on $\Omega$.
\end{itemize}
\end{theorem}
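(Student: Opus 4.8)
The plan is to transport the whole problem to the Polish space $\R^{\N}$ (the countable product of real lines with the product topology, which is separable and completely metrizable), where the classical machinery of Prokhorov and Skorokhod is available, and then to pull the conclusion back to $\mathcal{X}$. The bridge is the map $f:=(f_m)_{m\in\N}\colon \mathcal{X}\to\R^{\N}$. Each coordinate $f_m$ is continuous, so $f$ is continuous; and since $\{f_m\}$ separates points, $f$ is injective. In particular $\mathcal{X}$ is Hausdorff, and $f$ is $\mathcal{S}/\calB(\R^{\N})$-measurable because $\mathcal{S}=\sigma(f_m:m\in\N)$.

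For part (a), let $K\subset\mathcal{X}$ be compact. Then $f|_K\colon K\to f(K)$ is a continuous bijection onto $f(K)$, equipped with the subspace topology inherited from the metric space $\R^{\N}$; in particular the target is Hausdorff. A continuous bijection from a compact space onto a Hausdorff space is automatically a homeomorphism (it is a closed map), so $K$ is homeomorphic to $f(K)$. Since $f(K)$ is a subspace of a metrizable space it is metrizable, and hence so is $K$.

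For part (b) I would first extract from tightness an increasing sequence of compact sets $K_n\subset\mathcal{X}$ with $\inf_m\mu_m(K_n)\ge 1-1/n$. Because $f$ is injective, $K_n=f^{-1}(f(K_n))$, and since $\hat K_n:=f(K_n)$ is compact (hence Borel) in $\R^{\N}$, each $K_n\in\mathcal{S}$. Pushing the measures forward, $\nu_m:=\mu_m\circ f^{-1}$ satisfies $\nu_m(\hat K_n)=\mu_m(K_n)\ge 1-1/n$, so $(\nu_m)$ is tight on the Polish space $\R^{\N}$. By Prokhorov's theorem a subsequence converges weakly, $\nu_{m_k}\rightharpoonup\nu$, and by the classical Skorokhod representation theorem there are $\R^{\N}$-valued random variables $\eta_k,\eta$ on $([0,1],\calB([0,1]),\mathrm{Leb})$ with $\mathrm{law}(\eta_k)=\nu_{m_k}$, $\mathrm{law}(\eta)=\nu$, and $\eta_k\to\eta$ almost surely. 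The Portmanteau theorem (applied to the open sets $\hat K_n^{\,c}$) gives $\nu(\hat K_n)\ge 1-1/n$, so $\eta$ and every $\eta_k$ lie almost surely in the $\sigma$-compact set $\bigcup_n\hat K_n=f(\bigcup_nK_n)$. Setting $\xi_k:=(f|_{\mathcal K})^{-1}(\eta_k)$ and $\xi:=(f|_{\mathcal K})^{-1}(\eta)$, where $\mathcal K:=\bigcup_nK_n$, yields $\mathcal{S}$-measurable $\mathcal{X}$-valued variables; a change-of-variables computation using injectivity shows $\mathrm{law}(\xi_k)=\mu_{m_k}$, as required.

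The hard part will be upgrading $\eta_k\to\eta$ in $\R^{\N}$ to $\xi_k\to\xi$ in $\mathcal{X}$. The difficulty is that the original topology of $\mathcal{X}$ is in general strictly finer than the initial topology induced by $f$, so $f^{-1}$ is continuous only on each individual compact piece $\hat K_n$ (there $f|_{K_n}$ is a homeomorphism by the argument of part (a)), and not on the whole $\sigma$-compact union; a priori the points $\eta_k$ could approach $\eta\in\hat K_N$ while their preimages escape through higher and higher $K_n$, exactly as in the standard example of the continuous bijection $[0,2\pi)\to S^1$. The resolution is not a naive pullback but must exploit the \emph{uniform} tightness $\sup_k\nu_{m_k}(\hat K_n^{\,c})\le 1/n$: one shows that, almost surely, the sequence $(\eta_k)$ together with its limit eventually lies in a common compact $\hat K_n$, on which $f^{-1}$ is a homeomorphism and hence $\xi_k\to\xi$ follows. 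Making this last step rigorous --- for instance by augmenting the separating family with a suitable lower-semicontinuous gauge that is proper with respect to the exhaustion $(\hat K_n)$, or by the first-entrance selection used in Jakubowski's original construction --- is the technical crux of the theorem; everything else is a routine transfer between $\mathcal{X}$ and the Polish space $\R^{\N}$.
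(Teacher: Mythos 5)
The paper itself does not prove Theorem~\ref{thmA.1.4}: it quotes the result from Brze\'zniak--Ondrej\'at and Jakubowski, so your attempt has to be measured against Jakubowski's original argument rather than against anything in this article. Your part (a) is correct and is the standard argument. In part (b) the transfer to $\R^{\N}$ is also sound as far as it goes: the observation that $K_n=f^{-1}(f(K_n))\in\mathcal{S}$, the tightness of the pushforward measures $\nu_m$, Prokhorov's theorem, the portmanteau bound $\nu(f(K_n))\ge 1-1/n$, the Borel measurability of $(f|_{\mathcal{K}})^{-1}$ on $f(\mathcal{K})$ with $\mathcal{K}:=\bigcup_n K_n$ (it is continuous on each compact piece), and the identification of the laws of the $\xi_k$ are all correct.

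The gap is the one you flag yourself and then leave open: upgrading $\eta_k\to\eta$ a.s. in $\R^{\N}$ to $\xi_k\to\xi$ a.s. in $\mathcal{X}$. This is not a deferrable technicality; it is the entire content of the theorem, and it is provably \emph{not} a consequence of the properties your construction secures. Take $\mathcal{X}=[0,2\pi)$, $f(t)=(\cos t,\sin t,0,0,\dots)$, $t_k=2\pi-1/k$, and $\mu_k=(1-\tfrac1k)\delta_0+\tfrac1k\,\delta_{t_k}$. This sequence is uniformly tight (use the finite compacts $\{0\}\cup\{t_j: j\le J\}$), one has $\nu_k\Rightarrow\delta_{f(0)}$, and \emph{every} coupling with the correct marginals satisfies $\eta_k\to\eta$ almost surely in $\R^{\N}$, because $f(t_k)\to f(0)$ deterministically; moreover all variables are concentrated on $f(\mathcal{K})$. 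Yet if the $\xi_k$ are realized independently on $([0,1],\mathrm{Leb})$, the second Borel--Cantelli lemma gives $\mathbb{P}(\xi_k=t_k \mbox{ infinitely often})=1$, and since $t_k\notin[0,\pi)$ for every $k$, the pullbacks do not converge to $0$ almost surely. So a representation having every property your argument establishes can still fail; the conclusion holds only for suitably chosen couplings (here the nested one, $\xi_k=t_k$ iff $U\le 1/k$ for a single uniform $U$, works). Note also that passing to a further subsequence cannot rescue a Borel--Cantelli localization, because the escape bounds $\nu_{m_k}(f(K_n)^c)\le 1/n$ are uniform in $k$ but not summable over $k$ for fixed $n$. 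This is precisely why Jakubowski does not invoke the Polish-space Skorokhod theorem as a black box: his proof builds the representation on $([0,1],\mathrm{Leb})$ directly, adapted to the exhaustion by the compacts $f(K_n)$, so that the escape events are monotone in the sample point and the convergence is produced inside a compact set, where $f^{-1}$ is a homeomorphism by your part (a). Supplying that construction (or an equivalent selection argument) is exactly what is missing from your proposal.
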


Let $(\mathbb{S}, \varrho)$ be a separable and complete metric space.

\begin{definition}
\label{defnA.1.5}
Let $u \in \ccal([0,T]; \mathbb{S})$. The modulus of continuity of $u$ on $[0,T]$ is defined by
\[m(u, \delta) :=  \sup_{s,t \in [0,T],\,|t - s|\le \delta} \varrho (u(t), u(s)), \quad \delta > 0.\]
\end{definition}

Let $(\Omega, \mathcal{F}, \mathbb{P})$ be a probability space with filtration $\mathbb{F}:= (\mathcal{F}_t)_{t \in [0,T]}$ satisfying the usual conditions, see \cite{[Metivier82]},
and let $(X_n)_{n \in \mathbb{N}}$ be a sequence of continuous $\mathbb{F}$-adapted $\mathbb{S}$-valued processes.

\begin{definition}
\label{defnA.1.6}
We say that the sequence $(X_n)_{n \in \mathbb{N}}$ of $\mathbb{S}$-valued random variables satisfies condition $[\mathbf{T}]$ iff $\forall\, \varepsilon >0, \forall\, \eta > 0,\, \exists\, \delta > 0$:
\begin{equation}
\label{eq:A.1.1}
\sup_{n \in \mathbb{N}} \mathbb{P}\left\{m(X_n, \delta) > \eta\right\} \le \varepsilon.
\end{equation}
\end{definition}

\begin{lemma}\cite[Lemma~2.4]{[BM14]}
\label{lemmaA.1.7}
Assume that $(X_n)_{n \in \mathbb{N}}$ satisfies condition $[\mathbf{T}]$. Let $\mathbb{P}_n$ be the law of $X_n$ on $\ccal([0,T]; \mathbb{S})$, $n \in \mathbb{N}$. Then for every $\varepsilon > 0$ there exists a subset $A_\varepsilon \subset \ccal([0,T]; \mathbb{S})$ such that
\[\sup_{n \in \mathbb{N}} \mathbb{P}_n(A_\varepsilon) \ge 1 - \varepsilon\]
and
\begin{equation}
\label{eq:A.1.2}
\lim_{\delta \to 0} \sup_{u \in A_\varepsilon} m(u, \delta) = 0.
\end{equation}
\end{lemma}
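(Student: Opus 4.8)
The plan is to prove this via the classical Arzel\`a--Ascoli characterisation of relative compactness in $\ccal([0,T];\mathbb{S})$, recast probabilistically through a Borel--Cantelli / diagonal argument over a sequence of shrinking scales. The key observation is that condition $[\mathbf{T}]$ (Definition~\ref{defnA.1.6}) gives control, \emph{simultaneously in $n$}, of the probability that the modulus of continuity at a suitable scale exceeds a prescribed level, and that summing these controls geometrically produces a single set of uniformly equicontinuous paths carrying all but $\varepsilon$ of the mass of every $\mathbb{P}_n$.

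First I would fix $\varepsilon>0$ and, for each $k\in\mathbb{N}$, apply condition $[\mathbf{T}]$ with $\eta=1/k$ and with the tolerance $\varepsilon/2^{k}$ in place of $\varepsilon$. This produces a scale $\delta_k>0$ such that $\sup_{n\in\mathbb{N}}\mathbb{P}\{m(X_n,\delta_k)>1/k\}\le \varepsilon/2^{k}$. I would then set
\[
A_\varepsilon:=\bigcap_{k=1}^{\infty}\bigl\{u\in\ccal([0,T];\mathbb{S}):\, m(u,\delta_k)\le 1/k\bigr\},
\]
which is a Borel subset of $\ccal([0,T];\mathbb{S})$ since $u\mapsto m(u,\delta_k)$ is continuous (indeed Lipschitz for the sup metric), so each sublevel set is closed. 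Using that $\mathbb{P}_n$ is the law of $X_n$ together with countable subadditivity, I would estimate, for every fixed $n$,
\[
\mathbb{P}_n(A_\varepsilon^{c})\le\sum_{k=1}^{\infty}\mathbb{P}\{m(X_n,\delta_k)>1/k\}\le\sum_{k=1}^{\infty}\frac{\varepsilon}{2^{k}}=\varepsilon,
\]
so that $\mathbb{P}_n(A_\varepsilon)\ge 1-\varepsilon$ for all $n$, which in particular yields the claimed bound $\sup_{n\in\mathbb{N}}\mathbb{P}_n(A_\varepsilon)\ge 1-\varepsilon$ (in fact the stronger statement with an infimum).

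It then remains to verify the uniform vanishing of the modulus on $A_\varepsilon$, i.e.\ \eqref{eq:A.1.2}. Here I would exploit that $\delta\mapsto m(u,\delta)$ is nondecreasing: for $u\in A_\varepsilon$ and any $\delta\le\delta_k$ one has $m(u,\delta)\le m(u,\delta_k)\le 1/k$, whence $\sup_{u\in A_\varepsilon}m(u,\delta)\le 1/k$ for all $\delta\le\delta_k$. Given an arbitrary threshold $\rho>0$, choosing $k$ with $1/k<\rho$ shows that $\sup_{u\in A_\varepsilon}m(u,\delta)<\rho$ for all sufficiently small $\delta$, which is precisely \eqref{eq:A.1.2}. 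This monotonicity step is the only point requiring any care, but it is entirely routine; the argument contains no genuine obstacle beyond organising the scales $\delta_k$ and the geometric tolerances $\varepsilon/2^{k}$ so that the single set $A_\varepsilon$ works uniformly in $n$.
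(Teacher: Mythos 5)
Your proof is correct. The paper does not prove this lemma itself but quotes it from \cite[Lemma~2.4]{[BM14]}, and the argument there is essentially the one you give: apply condition $[\mathbf{T}]$ with $\eta = 1/k$ and tolerance $\varepsilon 2^{-k}$ to obtain scales $\delta_k$, let $A_\varepsilon$ be the intersection of the closed sublevel sets $\{u : m(u,\delta_k)\le 1/k\}$, and conclude by countable subadditivity together with the monotonicity of $\delta\mapsto m(u,\delta)$ --- note that you in fact establish the stronger statement with $\inf_{n}$ in place of $\sup_{n}$, which is what is actually used in the tightness argument.
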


Now we recall the Aldous condition $[\mathbf{A}]$, which is connected with condition $[\mathbf{T}]$ (see \cite{[Metivier88]} and \cite{[Aldous78]}). This condition allows to investigate the modulus of continuity for the sequence of stochastic processes by means of stopped processes.

\begin{definition}[Aldous condition]
\label{defnA.1.8}
A sequence $(X_n)_{n \in \mathbb{N}}$ satisfies condition $[\mathbf{A}]$ iff $\,\forall\, \varepsilon > 0$, $\forall\, \eta > 0$, $\exists \, \delta > 0$ such that for every sequence $(\tau_n)_{n \in \mathbb{N}}$ of $\mathbb{F}$-stopping times with $\tau_n \le T$ one has
\[\sup_{n \in \mathbb{N}} \sup_{0 \le \theta \le \delta} \mathbb{P}\left\{\varrho(X_n(\tau_n + \theta), X_n(\tau_n)) \ge \eta \right\} \le \varepsilon.\]
\end{definition}

\begin{lemma}
\label{lemmaA.1.9}
\cite[Theorem~3.2]{[Metivier88]}
Conditions $[\mathbf{A}]$ and $[\mathbf{T}]$ are equivalent.
\end{lemma}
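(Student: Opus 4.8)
The claimed equivalence has a trivial half and a substantial half, and essentially all the content is in the second.

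First I would dispose of the implication $[\mathbf{T}]\Rightarrow[\mathbf{A}]$ by a pathwise domination. Fix $\varepsilon,\eta>0$ and let $\delta>0$ be the number supplied by $[\mathbf{T}]$ for the pair $(\varepsilon,\eta)$. For any sequence $(\tau_n)$ of $\mathbb{F}$-stopping times bounded by $T$ and any deterministic $\theta\in[0,\delta]$, the instants $\tau_n$ and $(\tau_n+\theta)\wedge T$ lie in $[0,T]$ and differ by at most $\delta$, so on every path $\varrho(X_n(\tau_n+\theta),X_n(\tau_n))\le m(X_n,\delta)$. Hence $\{\varrho(X_n(\tau_n+\theta),X_n(\tau_n))\ge\eta\}\subseteq\{m(X_n,\delta)\ge\eta\}$; taking probabilities and the supremum over $n$ and over $\theta\in[0,\delta]$ gives $[\mathbf{A}]$ immediately (one may shrink $\eta$ slightly to turn $\ge$ into $>$).

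For the converse $[\mathbf{A}]\Rightarrow[\mathbf{T}]$, the plan is to recover control of the modulus $m(X_n,\delta)$ -- a supremum over all pairs of times within $\delta$ -- from increments evaluated only at stopping times, which is what $[\mathbf{A}]$ provides. I would run the argument contrapositively: assuming $[\mathbf{T}]$ fails, there are $\varepsilon_0,\eta_0>0$ and $\delta_m\downarrow0$ with $\mathbb{P}\{m(X_n,\delta_m)>\eta_0\}>\varepsilon_0$ along some subsequence. On $\{m(X_n,\delta_m)>\eta_0\}$ the path oscillates by more than $\eta_0$ over some window of width $\le\delta_m$, and I would capture the onset of such a window by the first-passage stopping time $\tau_n=\inf\{t:\sup_{(t-\delta_m)^+\le s\le t}\varrho(X_n(t),X_n(s))>\eta_0\}\wedge T$, using path continuity (the $X_n$ are continuous) to guarantee that the oscillation is actually attained near $\tau_n$. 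From this one aims to manufacture a deterministic shift $\theta\le\delta_m$ for which $\mathbb{P}\{\varrho(X_n(\tau_n+\theta),X_n(\tau_n))\ge\eta_0/4\}$ stays bounded below, contradicting $[\mathbf{A}]$.

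The hard part is precisely this last manufacturing step, and it is the whole content of the cited theorem. Two obstacles must be overcome. First, the oscillation over a window $[\,s^*,\tau_n]$ is witnessed by an \emph{earlier} time $s^*$, and looking backward in time destroys adaptedness, so $s^*$ is not a stopping time and the large increment $\varrho(X_n(\tau_n),X_n(s^*))$ is not directly of the form controlled by $[\mathbf{A}]$; one repairs this by splitting through deterministic grid points via the triangle inequality. Second, the residual shift left after such a split is still random and bounded by $\delta_m$, whereas $[\mathbf{A}]$ only allows a deterministic $\theta$; this is handled by discretising the admissible shifts into finitely many deterministic values and applying a pigeonhole together with a union bound. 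The delicate point throughout -- and the reason a naive summation over a grid of $O(1/\delta)$ subintervals does \emph{not} work -- is to keep every estimate uniform in $n$ while the number of windows blows up as $\delta\to0$; achieving this uniformity, rather than any individual inequality, is the genuine difficulty, which is why the statement is quoted from M\'etivier rather than reproved here.
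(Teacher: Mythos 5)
The paper contains no proof of this lemma at all: it is quoted as an external result, namely Theorem~3.2 of M\'etivier's book, so there is no internal argument to compare yours against. Your proof of $[\mathbf{T}]\Rightarrow[\mathbf{A}]$ is correct and complete: the pathwise bound $\varrho\bigl(X_n((\tau_n+\theta)\wedge T),X_n(\tau_n)\bigr)\le m(X_n,\delta)$ for $\theta\le\delta$, the inclusion of events, and the small shrinking of $\eta$ to pass from the strict inequality in $[\mathbf{T}]$ to the non-strict one in $[\mathbf{A}]$ are all exactly right, and this is the standard argument. Your treatment of $[\mathbf{A}]\Rightarrow[\mathbf{T}]$, however, is a plan rather than a proof: the contrapositive setup, the first-passage time capturing the onset of an oscillation window (which is indeed a stopping time under the usual conditions on the filtration), and the three obstacles you name (backward-looking witness times destroy adaptedness; the residual random shift must be replaced by finitely many deterministic ones via pigeonhole; all bounds must be uniform in $n$ as the window width shrinks) correctly identify the content of M\'etivier's proof, but you stop short of carrying out the splitting and discretisation, deferring to the citation. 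Since the paper defers the \emph{entire} lemma to the same citation, your proposal is strictly more complete than the paper's treatment and contains nothing incorrect; just be aware that, as a standalone argument, the hard implication remains unproven, which is acceptable here only because the lemma is genuinely being used as an imported result rather than re-established.
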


\subsection{Tightness criterion}
\label{sec:appA.2}
Now we formulate the compactness criterion analogous to the result due to Mikulevicus and Rozowskii \cite{[MR05]}, Brze\'zniak and Motyl \cite{[BM14]} for the space $\mathcal{Z}_T$, see also \cite[Lemma~4.2]{[BD18]}.

\begin{lemma}
\label{lemmaA.2.1}
Let $\mathcal{Z}_T$, $\mathcal{T}$ be as defined in \eqref{eq:5.32}. Then a set $\kcal \subset \mathcal{Z}_T$ is $\mathcal{T}$-relatively compact if the following three conditions hold
\begin{itemize}
\item[(a)] $\sup_{u \in \kcal} \sup_{s \in [0,T]} \|u(s)\|_{\LS} < \infty,$
\item[(b)] $\sup_{u \in \kcal} \int_0^T \|u(s)\|^2_\rV\,ds < \infty\,$, i.e. $\kcal$ is bounded in $L^2(0,T; \rV)$,
\item[(c)] $\lim_{\delta \to 0} \sup_{u \in \kcal} \sup_{\underset{|t-s| \le \delta}{s,t \in [0,T]}}\|u(t) - u(s)\|_{\rD(\rA^{-1})} = 0.$
\end{itemize}
\end{lemma}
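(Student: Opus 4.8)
The plan is to exploit the fact that, by definition, $\mathcal{T}$ is the supremum of the four topologies $\mathbf{T}_1,\dots,\mathbf{T}_4$, so a sequence converges in $\mathcal{T}$ precisely when it converges in each $\mathbf{T}_i$ to one and the same limit. Accordingly, I would establish sequential relative compactness: given an arbitrary sequence $(u_n)\subset \kcal$, I would extract a single subsequence (by successive extraction and diagonalisation) converging simultaneously in all four topologies to a common limit $u\in\mathcal{Z}_T$.

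First I would treat $\mathbf{T}_1$, the topology of $\ccal([0,T];\rD(\rA^{-1}))$, via the Arzel\`a--Ascoli theorem. Equicontinuity in $\rD(\rA^{-1})$ is exactly hypothesis (c). For the pointwise relative compactness I would use (a): for each $t$ the set $\{u(t):u\in\kcal\}$ is bounded in $\rH$, and since the Stokes operator $\rA$ has compact resolvent on $\bbS$, the embedding $\rH = \rD(\rA^0)\hookrightarrow \rD(\rA^{-1})$ is compact, so $\{u(t)\}$ is relatively compact in $\rD(\rA^{-1})$. Arzel\`a--Ascoli then yields a subsequence converging in $\ccal([0,T];\rD(\rA^{-1}))$ to some $u$. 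For $\mathbf{T}_2$ I would invoke (b): $\kcal$ is bounded in the Hilbert (hence reflexive) space $L^2(0,T;\rV)$, so along a further subsequence it converges weakly there; the weak limit is necessarily $u$, because the $\mathbf{T}_1$-convergence already identifies the limit in the larger space $\ccal([0,T];\rD(\rA^{-1}))$ while $\rV\hookrightarrow \rD(\rA^{-1})$ continuously.

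The crux is $\mathbf{T}_3$, namely strong convergence in $L^2(0,T;\rH)$, and this I expect to be the main obstacle. Here I would combine the uniform convergence in $\rD(\rA^{-1})$ from $\mathbf{T}_1$ with the $L^2(0,T;\rV)$ bound from (b) through the interpolation inequality
\[
\|w\|_{\rH} \le C\,\|w\|_{\rV}^{2/3}\,\|w\|_{\rD(\rA^{-1})}^{1/3},
\]
valid because $\rH = \rD(\rA^0)$ is the interpolation space of exponent $\theta = 2/3$ between $\rD(\rA^{-1})$ and $\rV = \rD(\rA^{1/2})$. Writing $w = u_n - u$ and applying H\"older in time gives
\[
\int_0^T \|u_n-u\|_{\rH}^2\,dt \le C\Big(\sup_{t\in[0,T]}\|u_n-u\|_{\rD(\rA^{-1})}^{2/3}\Big) T^{1/3}\Big(\int_0^T\|u_n-u\|_{\rV}^2\,dt\Big)^{2/3},
\]
and the right-hand side tends to $0$ since the supremum factor vanishes by $\mathbf{T}_1$-convergence while the $L^2(0,T;\rV)$-norms stay bounded by (b). This delivers $\mathbf{T}_3$.

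Finally, for $\mathbf{T}_4$ I would show $u_n\to u$ in $\ccal([0,T];\rH_\mathrm{w})$ using a standard weak-continuity lemma (as in \cite{[BM14]}): since $(u_n)$ is bounded in $\ccal([0,T];\rH)$ by (a) and converges in $\ccal([0,T];\rD(\rA^{-1}))$ with $\rH\hookrightarrow \rD(\rA^{-1})$ continuously and $\rH$ reflexive, it follows that $u\in\ccal([0,T];\rH_\mathrm{w})$ and that for every $h\in\rH$ the maps $t\mapsto (u_n(t),h)_{\LS}$ converge uniformly to $t\mapsto(u(t),h)_{\LS}$. Having produced a subsequence converging in all four topologies to the single limit $u$, I would conclude that $\kcal$ is relatively compact in $(\mathcal{Z}_T,\mathcal{T})$. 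The delicate points to verify are the compactness of the embedding $\rH\hookrightarrow\rD(\rA^{-1})$ and the interpolation inequality together with the weak-continuity lemma; the routine items (uniqueness of limits, the H\"older estimates) I would record only briefly.
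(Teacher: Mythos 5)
You should first note that the paper does not actually prove Lemma~\ref{lemmaA.2.1}: it only states it as analogous to results of Mikulevicius--Rozovskii and Brze\'{z}niak--Motyl, citing \cite{[MR05]}, \cite{[BM14]} and \cite[Lemma~4.2]{[BD18]}. Your argument reproduces, in essence, the proof from those references, and its analytic core is correct: Arzel\`a--Ascoli in $\ccal([0,T];\rD(\rA^{-1}))$ (equicontinuity is exactly (c); pointwise relative compactness follows from (a) together with the compactness of the embedding $\rH\hookrightarrow\rD(\rA^{-1})$, the Stokes operator on $\bbS$ having compact resolvent); weak sequential compactness of bounded sets in the separable, reflexive space $L^2(0,T;\rV)$; the interpolation inequality $\|w\|_{\rH}\le C\|w\|_{\rV}^{2/3}\|w\|_{\rD(\rA^{-1})}^{1/3}$, which is indeed valid because $\rH=\rD(\rA^{0})$ sits at exponent $2/3$ on the scale between $\rD(\rA^{-1})$ and $\rV=\rD(\rA^{1/2})$, followed by H\"older in time; and the standard Strauss-type lemma giving convergence in $\ccal([0,T];\rH_{\mathrm{w}})$ from a uniform $\rH$-bound plus convergence in $\ccal([0,T];\rD(\rA^{-1}))$. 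One point worth recording explicitly in your $\mathbf{T}_3$ step: the finiteness of $\int_0^T\|u_n-u\|^2_{\rV}\,dt$ requires $u\in L^2(0,T;\rV)$, which you get from the weak convergence established in the $\mathbf{T}_2$ step (weak limits lie in the space, with norm bounded by the $\liminf$).

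The genuine gap is your final inference. What you prove is \emph{sequential} relative compactness: every sequence in $\kcal$ admits a subsequence converging in $\mathcal{T}$ (equivalently, in each $\mathbf{T}_i$ to one common limit). In a non-metrizable topological space this does not imply relative compactness, and $(\mathcal{Z}_T,\mathcal{T})$ is not metrizable, precisely because of the two weak topologies $\mathbf{T}_2$ and $\mathbf{T}_4$. The missing --- and standard --- step is to observe that hypotheses (a) and (b) confine $\kcal$, and also its $\mathcal{T}$-closure (the quantities in (a) and (b) are $\mathcal{T}$-lower semicontinuous, so the corresponding sublevel sets are closed), inside a set $\mathcal{B}$ on which all four topologies are metrizable: $\mathbf{T}_1$ and $\mathbf{T}_3$ are metric topologies to begin with; the weak topology of $L^2(0,T;\rV)$ is metrizable on norm-bounded sets because the space is separable; and $\mathbf{T}_4$ restricted to functions with values in a fixed ball of $\rH$ is metrizable by $\sup_{t\in[0,T]}\rho\left(u(t),v(t)\right)$, where $\rho$ is a metric for the weak topology on that ball. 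The supremum of finitely many metrizable topologies on $\mathcal{B}$ is again metrizable (sum the metrics), so on $\mathcal{B}$ compactness and sequential compactness coincide, and your extraction argument then genuinely yields that the closure of $\kcal$ is compact. Without this remark the last sentence of your proof does not follow; with it, the proof is complete and agrees with the argument in the cited literature.
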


Using Section~\ref{sec:appA.1} and the compactness criterion from Lemma~\ref{lemmaA.2.1} we obtain the following corollary.

\begin{corollary}[Tightness criterion]
\label{corA.2.2}
Let $(\alpha_\eps)_{\eps > 0}$ be a sequence of continuous $\mathbb{F}$-adapted $\mathrm{H}$-valued processes such that
\begin{itemize}
\item[(a)] there exists a constant $C_1 > 0$ such that
\[\sup_{\eps > 0}\,\E \left[ \sup_{s \in [0,T]} \|\ale(s)\|^2_{\mathrm{H}} \right] \le C_1,\]

\item[(b)] there exists a constant $C_2 > 0$ such that
\[\sup_{\eps > 0}\,\E \left[ \int_0^T \|\curlS \ale(s)\|^2_{\LS}\,ds \right] \le C_2,\]

\item[(c)] $(\ale)_{\eps > 0}$ satisfies the Aldous condition $[\mathbf{A}]$ in $\rD(\rA^{-1})$.
\end{itemize}
Let $\widetilde{\mathbb{P}}_\eps$ be the law of $\ale$ on $\mathcal{Z}_T$. Then for every $\delta > 0$ there exists a compact subset $K_\delta$ of $\mathcal{Z}_T$ such that
\[ \sup_{\eps > 0} \widetilde{\mathbb{P}}_\eps(K_\delta) \ge 1 - \delta.\]
\end{corollary}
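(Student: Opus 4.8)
The plan is to construct, for each $\delta>0$, the set $K_\delta$ as the closure in $\mathcal{Z}_T$ of an intersection of three sets, one tailored to each of the hypotheses (a)--(c) so that the intersection meets the three conditions of the compactness criterion in Lemma~\ref{lemmaA.2.1}. Fix $\delta>0$. First I would exploit the uniform moment bounds via the Chebyshev--Markov inequality. By hypothesis (a),
\[
\widetilde{\mathbb{P}}_\eps\Bigl(\sup_{s\in[0,T]}\|\ale(s)\|_{\LS}>R_1\Bigr)\le\frac{1}{R_1^2}\,\E\Bigl[\sup_{s\in[0,T]}\|\ale(s)\|^2_{\LS}\Bigr]\le\frac{C_1}{R_1^2},
\]
so choosing $R_1$ large enough makes $B_1:=\{u\in\mathcal{Z}_T:\sup_{s\in[0,T]}\|u(s)\|_{\LS}\le R_1\}$ satisfy $\widetilde{\mathbb{P}}_\eps(B_1)\ge1-\delta/3$ uniformly in $\eps$. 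Next, using the coercivity of $a$ (the Poincar\'e inequality $\|u\|_\rV^2\le\alpha^{-1}\|\curlS u\|_{\LS}^2$) to turn hypothesis (b) into the bound $\E\int_0^T\|\ale(s)\|_\rV^2\,ds\le\alpha^{-1}C_2$, the Markov inequality produces an $R_2$ for which $B_2:=\{u:\int_0^T\|u(s)\|_\rV^2\,ds\le R_2\}$ has $\widetilde{\mathbb{P}}_\eps(B_2)\ge1-\delta/3$.

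Next I would convert the Aldous condition into deterministic equicontinuity. Each $\ale$ is a continuous $\rD(\rA^{-1})$-valued process, and by hypothesis (c) the family satisfies the Aldous condition $[\mathbf{A}]$ in $\rD(\rA^{-1})$; by Lemma~\ref{lemmaA.1.9} this is equivalent to condition $[\mathbf{T}]$. Applying Lemma~\ref{lemmaA.1.7} with $\mathbb{S}=\rD(\rA^{-1})$ and parameter $\delta/3$ yields a set $A_{\delta/3}\subset\ccal([0,T];\rD(\rA^{-1}))$ with $\sup_\eps\widetilde{\mathbb{P}}_\eps(A_{\delta/3})\ge1-\delta/3$ and
\[
\lim_{\sigma\to0}\ \sup_{u\in A_{\delta/3}}\ \sup_{\substack{s,t\in[0,T]\\|t-s|\le\sigma}}\|u(t)-u(s)\|_{\rD(\rA^{-1})}=0,
\]
that is, the paths in $A_{\delta/3}$ are uniformly equicontinuous in $\rD(\rA^{-1})$.

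Finally I would set $K_\delta:=\overline{B_1\cap B_2\cap A_{\delta/3}}$, the closure being taken in $(\mathcal{Z}_T,\mathcal{T})$. On $B_1\cap B_2\cap A_{\delta/3}$ the three conditions of Lemma~\ref{lemmaA.2.1} hold: condition (a) is exactly the uniform sup-bound defining $B_1$, condition (b) is the $L^2(0,T;\rV)$ bound defining $B_2$, and condition (c) is the uniform equicontinuity furnished by $A_{\delta/3}$. Hence $B_1\cap B_2\cap A_{\delta/3}$ is $\mathcal{T}$-relatively compact and its closure $K_\delta$ is compact in $\mathcal{Z}_T$. Since the complements of $B_1$, $B_2$ and $A_{\delta/3}$ each carry $\widetilde{\mathbb{P}}_\eps$-mass at most $\delta/3$, subadditivity gives
\[
\widetilde{\mathbb{P}}_\eps(K_\delta)\ge\widetilde{\mathbb{P}}_\eps\bigl(B_1\cap B_2\cap A_{\delta/3}\bigr)\ge1-\delta
\]
uniformly in $\eps$, which is the assertion.

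The main obstacle is the passage from the probabilistic Aldous condition to deterministic equicontinuity of a nearly-full-measure family of paths: this is precisely where the equivalence $[\mathbf{A}]\Leftrightarrow[\mathbf{T}]$ (Lemma~\ref{lemmaA.1.9}) and Lemma~\ref{lemmaA.1.7} do the real work, and one must first record that $\ale$ is indeed continuous as a $\rD(\rA^{-1})$-valued process, which follows from the continuous embedding $\rH\hookrightarrow\rD(\rA^{-1})$ together with the weak continuity of $\ale$ in $\rH$. A secondary point to verify is that $B_1$ and $B_2$ are genuine Borel subsets of $\mathcal{Z}_T$, so that the measure estimates are meaningful, and that passing to the closure in $\mathcal{Z}_T$ does not enlarge the mass beyond the bound already obtained for the intersection.
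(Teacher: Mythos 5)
Your proposal is correct and follows essentially the same route the paper intends: the paper states Corollary~\ref{corA.2.2} with only the remark that it follows from Section~\ref{sec:appA.1} together with Lemma~\ref{lemmaA.2.1}, and your argument is precisely the standard fleshing-out of that sketch (Chebyshev for hypotheses (a) and (b), the equivalence $[\mathbf{A}]\Leftrightarrow[\mathbf{T}]$ from Lemma~\ref{lemmaA.1.9} plus Lemma~\ref{lemmaA.1.7} for the equicontinuity set, then intersection, closure, and the deterministic compactness criterion). The secondary points you flag -- continuity of $\alpha_\eps$ in $\rD(\rA^{-1})$ via the embedding $\rH\hookrightarrow\rD(\rA^{-1})$, and Borel measurability of the sets involved -- are exactly the right ones to record, and they are handled the same way in the references the paper cites.
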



\section{Kuratowski Theorem and proof of Lemma~\ref{lemma5.7}}
\label{s:AppB}
This appendix is dedicated to the proof of Lemma~\ref{lemma5.7}. We will first recall the Kuratowski Theorem \cite{[Kuratowski52]} in the next subsection and prove some related results which will be used later to prove Lemma~\ref{lemma5.7} in Subsection~\ref{s:AppB.2}.

\subsection{Kuratowski Theorem and related results}
\label{s:AppB.1}

\begin{theorem}
\label{thmB.1.1}
Assume that $X_1, X_2$ are the Polish spaces with their Borel $\sigma$-fields denoted respectively by $\bcal(X_1), \bcal(X_2)$. If $\varphi \colon X_1 \to X_2$ is an injective Borel measurable map then for any $E_1 \in \bcal(X_1)$, $E_2 := \varphi(E_1) \in \bcal(X_2)$.
\end{theorem}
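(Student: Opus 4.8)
The plan is to deduce the statement from the classical Lusin--Suslin theorem of descriptive set theory, reducing first to the case of a \emph{continuous} injection and then invoking the theory of analytic sets together with Lusin's separation theorem.

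First I would remove the merely Borel (rather than continuous) character of $\varphi$ by a change--of--topology argument. There is a standard result (see e.g. Kechris, \emph{Classical Descriptive Set Theory}, or Srivastava) asserting that for any Borel map $\varphi\colon X_1\to X_2$ between Polish spaces, together with any countable family of Borel subsets of $X_1$, one may refine the topology of $X_1$ to a \emph{finer} Polish topology $\tau_1'$ which (i) generates the same Borel $\sigma$-field $\calB(X_1)$, (ii) makes $\varphi$ continuous, and (iii) makes the prescribed Borel sets clopen. Applying this with the single set $E_1$, I may therefore assume without loss of generality that $\varphi$ is continuous and that $E_1$ is clopen; in particular $E_1$ with the subspace topology is itself a Polish space and $g:=\varphi|_{E_1}\colon E_1\to X_2$ is continuous and injective. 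Since refining the topology does not alter $\calB(X_1)$, the conclusion $\varphi(E_1)\in\calB(X_2)$ is unaffected, and the statement is reduced to: \emph{the image of a Polish space under a continuous injection into a Polish space is Borel.}

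To prove this reduced statement I would build a Lusin scheme. Fixing a complete compatible metric on $E_1$, I would construct Borel sets $(E_s)_{s\in\mathbb{N}^{<\mathbb{N}}}$ with $E_\emptyset=E_1$, $E_s=\bigcup_n E_{s^\frown n}$, $\mathrm{diam}(E_s)\le 2^{-|s|}$, and with the children $E_{s^\frown n}$, $n\in\mathbb{N}$, pairwise disjoint. The images $A_s:=g(E_s)$ are then analytic subsets of $X_2$, and---this is where injectivity enters---for each fixed $s$ the sibling images $A_{s^\frown n}$, $n\in\mathbb{N}$, are pairwise disjoint analytic sets. By the countable form of the Lusin separation theorem one may separate them by Borel sets $B_{s^\frown n}\in\calB(X_2)$ with $A_{s^\frown n}\subseteq B_{s^\frown n}$; after intersecting with the previously built $B_s$ and passing to small-diameter refinements, these can be arranged into a Borel scheme $(B_s)$ that is decreasing along branches, has pairwise disjoint siblings, satisfies $A_s\subseteq B_s$, and has $\mathrm{diam}(B_s)\to 0$ along every branch. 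Completeness of the metric together with the vanishing diameters (used both in $E_1$ and in $X_2$) and injectivity of $g$ then identify the body of the scheme with the image, and one verifies that
\[
g(E_1)=\bigcap_{k\ge 0}\ \bigcup_{|s|=k} B_s ,
\]
which is Borel. Equivalently, one checks that $g(E_1)$ is both analytic and co-analytic and applies Suslin's theorem $\mathbf{\Delta}^1_1=\mathbf{B}$.

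The main obstacle is the analytic separation step and the bookkeeping required to turn the merely analytic images $A_s$ into a genuinely Borel scheme $(B_s)$ whose body is \emph{exactly} $g(E_1)$ and not some strictly larger analytic set; injectivity of $\varphi$ is used precisely to guarantee disjointness of sibling images, without which separation---and hence Borelness of the image---fails in general. Since the result is entirely classical, for the purposes of this paper it suffices to quote it from \cite{[Kuratowski52]} (or from the Lusin--Suslin theorem), the argument above being only the conceptual route by which one would establish it from first principles.
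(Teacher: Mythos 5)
You should first be aware that the paper contains no proof of Theorem~\ref{thmB.1.1} at all: it is recalled as a classical fact and quoted from Kuratowski's Topologie \cite{[Kuratowski52]}, so there is no internal argument to compare yours against. Your sketch is the standard descriptive-set-theoretic proof of the Lusin--Suslin theorem (cf.\ Kechris, \emph{Classical Descriptive Set Theory}, Theorems 13.1, 13.11 and 15.1): refine the Polish topology on $X_1$ so that $\varphi$ becomes continuous and $E_1$ clopen (this leaves $\mathcal{B}(X_1)$ unchanged), and then show that a continuous injective image of a Polish space is Borel via a Lusin scheme whose sibling images are pairwise disjoint analytic sets, separated by pairwise disjoint Borel sets. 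That is a correct and essentially complete route, and injectivity enters exactly where you say it does. Two small caveats: (i) you cannot in general arrange $\mathrm{diam}(B_s)\to 0$ along \emph{every} branch of the image-side scheme; the standard fix is to take $A_s \subseteq B_s\subseteq\overline{g(E_s)}$ and to use the vanishing diameters on the \emph{domain} side together with continuity of $g$ and completeness of the metric on $E_1$ when identifying the body of the scheme with $g(E_1)$ --- this is what actually makes your displayed identity true; (ii) the closing remark that one may ``equivalently'' check that $g(E_1)$ is both analytic and co-analytic and invoke Suslin's theorem is not an independent shortcut, since establishing co-analyticity of the image is not easier than the scheme argument itself. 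Neither point damages your main argument, and your final observation --- that for the purposes of this paper the statement may simply be cited --- is precisely what the authors do.
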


Next two lemmas are the main results of this appendix. For the proof of Lemma~\ref{lemmaB.1.2} please see \cite[Appendix~B]{[BD19]}.

\begin{lemma}
\label{lemmaB.1.2}
Let $X_1, X_2$ and $Z$ be topological spaces such that $X_1$ is a Borel subset of $X_2$. Then $X_1 \cap Z$ is a Borel subset of $X_2 \cap Z$, where $X_2 \cap Z$ is a topological space too, with the topology given by
\begin{equation}\label{eq:B.1.1}
\tau(X_2 \cap Z) = \left\{ A \cap B : A \in \tau(X_2), B \in \tau(Z)\right\}.
\end{equation}
\end{lemma}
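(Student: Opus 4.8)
The plan is to realize $X_1 \cap Z$ as the preimage of the Borel set $X_1$ under a continuous map, and then to invoke the elementary fact that continuous maps are Borel measurable, so that preimages of Borel sets are Borel. Viewing $X_1, X_2, Z$ as subsets of a common ambient set (so that the set-theoretic intersections make sense), I would work with the inclusion
\[
\iota \colon X_2 \cap Z \longrightarrow X_2, \qquad \iota(w) = w,
\]
where the source carries the topology $\tau(X_2 \cap Z)$ of \eqref{eq:B.1.1}.

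First I would verify that $\iota$ is continuous. For $A \in \tau(X_2)$ one has $\iota^{-1}(A) = A \cap (X_2 \cap Z) = A \cap Z$ because $A \subseteq X_2$; taking $B = Z \in \tau(Z)$ in \eqref{eq:B.1.1} exhibits $A \cap Z$ as a basic open subset of $X_2 \cap Z$. Put differently, the subspace topology inherited by $X_2 \cap Z$ from $X_2$, namely $\{A \cap Z : A \in \tau(X_2)\}$, is contained in $\tau(X_2 \cap Z)$, which is exactly the statement that $\iota$ is continuous.

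Next I would use the standard measure-theoretic step: the collection $\{E \subseteq X_2 : \iota^{-1}(E) \in \calB(X_2 \cap Z)\}$ is a $\sigma$-field containing every $A \in \tau(X_2)$, hence contains $\calB(X_2)$; thus $\iota^{-1}(\calB(X_2)) \subseteq \calB(X_2 \cap Z)$. Since $X_1 \subseteq X_2$,
\[
\iota^{-1}(X_1) = X_1 \cap (X_2 \cap Z) = X_1 \cap Z,
\]
and as $X_1 \in \calB(X_2)$ by hypothesis, this yields $X_1 \cap Z \in \calB(X_2 \cap Z)$, which is the claim.

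The only delicate point — and the step I would flag as the main obstacle, though a minor one — is that the family in \eqref{eq:B.1.1} is in general merely a basis (it need not be closed under arbitrary unions), so $X_2 \cap Z$ should be understood as equipped with the topology this family generates. This causes no trouble: the continuity of $\iota$ is checked on basic open sets, and the Borel $\sigma$-field generated by a basis equals the one generated by the resulting topology, so both the continuity and the measurability arguments go through unchanged.
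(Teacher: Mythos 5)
Your proof is correct. Note that the paper itself contains no argument for this lemma: it simply defers to \cite[Appendix~B]{[BD19]}, so there is no internal proof to compare against, and your write-up supplies exactly the kind of standard argument that reference contains. Your route --- continuity of the inclusion $\iota \colon X_2 \cap Z \to X_2$ with respect to the topology \eqref{eq:B.1.1}, the fact that continuous maps are Borel measurable (via the $\sigma$-field $\{E \subseteq X_2 : \iota^{-1}(E) \in \calB(X_2 \cap Z)\}$), and the identity $\iota^{-1}(X_1) = X_1 \cap Z$ --- is the natural trace-$\sigma$-field argument, and each step checks out: $\iota^{-1}(A) = A \cap Z$ is exhibited as a member of the family \eqref{eq:B.1.1} by taking $B = Z$, and $X_1 \in \calB(X_2)$ then gives $X_1 \cap Z \in \calB(X_2 \cap Z)$.

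One small inaccuracy, which is harmless only because you never actually use it: your closing remark asserts that ``the Borel $\sigma$-field generated by a basis equals the one generated by the resulting topology.'' In general this is false --- on an uncountable discrete space the singletons form a basis, and the $\sigma$-field they generate is the countable--cocountable $\sigma$-field, strictly smaller than the Borel $\sigma$-field of the generated topology, which is the full power set. Your argument does not need this claim: it only needs that each $\iota^{-1}(A) = A \cap Z$ is a \emph{basic} open set, hence open in the generated topology, hence an element of $\calB(X_2 \cap Z)$, after which the $\sigma$-field argument runs with $\calB(X_2 \cap Z)$ understood as the Borel $\sigma$-field of the generated topology throughout. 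Strike or weaken that sentence and the proof is clean.
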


\subsection{Proof of Lemma~\ref{lemma5.7}}
\label{s:AppB.2}

In this subsection we recall Lemma~\ref{lemma5.7} and prove it using the results from previous subsection.

\begin{lemma}
\label{lemmaB.2.1}
Let $T > 0$ and $\mathcal{Z}_T$ be as defined in \eqref{eq:5.32}. Then, the following sets $\ccal([0,T];\rH) \cap \mathcal{Z}_T$, $L^2(0,T; \rV) \cap \mathcal{Z}_T$ are Borel subsets of $\mathcal{Z}_T$.
\end{lemma}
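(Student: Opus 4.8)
The plan is to prove both statements by the same two-step scheme: first realize each set as a Borel subset of one of the Polish ``coordinate'' spaces making up $\mathcal{Z}_T$ via the Kuratowski Theorem (Theorem~\ref{thmB.1.1}), and then transfer this membership to $\mathcal{Z}_T$ itself by Lemma~\ref{lemmaB.1.2}.

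First I would record the decomposition of $\mathcal{Z}_T$ from \eqref{eq:5.32} as an intersection $X_2 \cap Z$ in two different ways. For the set $\ccal([0,T];\rH)\cap\mathcal{Z}_T$, take $X_2 = \ccal([0,T];\rD(\rA^{-1}))$ (carrying topology $\mathbf{T}_1$) and $Z = L^2_{\mathrm{w}}(0,T;\rV)\cap L^2(0,T;\rH)\cap\ccal([0,T];\rH_w)$ (carrying the supremum of $\mathbf{T}_2,\mathbf{T}_3,\mathbf{T}_4$); for $L^2(0,T;\rV)\cap\mathcal{Z}_T$, take $X_2 = L^2(0,T;\rH)$ and $Z$ the intersection of the remaining three factors. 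In each case $X_2\cap Z = \mathcal{Z}_T$ as sets, and the supremum topology $\mathcal{T}$ coincides with the topology $\tau(X_2\cap Z)=\{A\cap B : A\in\tau(X_2),\,B\in\tau(Z)\}$ of Lemma~\ref{lemmaB.1.2}, since the join of the two subspace topologies is generated precisely by such intersections.

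Next, for the first set, consider the natural inclusion $\iota\colon \ccal([0,T];\rH)\to\ccal([0,T];\rD(\rA^{-1}))$. Because the embedding $\rH\hookrightarrow\rD(\rA^{-1})$ is continuous and injective, $\iota$ is a continuous injection, hence Borel measurable, and both $\ccal([0,T];\rH)$ and $\ccal([0,T];\rD(\rA^{-1}))$ are Polish spaces (spaces of continuous maps into separable Hilbert spaces). Applying Theorem~\ref{thmB.1.1} with $E_1$ equal to the whole space $\ccal([0,T];\rH)$ shows that its image, i.e. $\ccal([0,T];\rH)$ viewed inside $\ccal([0,T];\rD(\rA^{-1}))$, is a Borel subset of $\ccal([0,T];\rD(\rA^{-1}))$. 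Lemma~\ref{lemmaB.1.2}, applied with $X_1=\ccal([0,T];\rH)$, $X_2=\ccal([0,T];\rD(\rA^{-1}))$ and $Z$ as above, then yields that $\ccal([0,T];\rH)\cap Z = \ccal([0,T];\rH)\cap\mathcal{Z}_T$ is a Borel subset of $X_2\cap Z = \mathcal{Z}_T$.

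The second set is handled identically: the inclusion $L^2(0,T;\rV)\to L^2(0,T;\rH)$ is a continuous injection since $\rV\hookrightarrow\rH$ is, both spaces are Polish, so Theorem~\ref{thmB.1.1} gives that $L^2(0,T;\rV)$ is Borel in $L^2(0,T;\rH)$, and Lemma~\ref{lemmaB.1.2} (now with $X_2=L^2(0,T;\rH)$) transfers this to $\mathcal{Z}_T$. The only genuinely delicate point, and the one I would verify most carefully, is the topological identification in the second paragraph: one must check that the coarsest topology on $\mathcal{Z}_T$ finer than all of $\mathbf{T}_1,\dots,\mathbf{T}_4$ agrees with the two-factor intersection topology $\tau(X_2\cap Z)$ of Lemma~\ref{lemmaB.1.2}, so that ``Borel in $X_2\cap Z$'' genuinely means ``Borel in $(\mathcal{Z}_T,\mathcal{T})$''. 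Everything else reduces to the routine facts that the relevant function spaces are Polish and that the inclusions induced by $\rV\hookrightarrow\rH\hookrightarrow\rD(\rA^{-1})$ are continuous injections.
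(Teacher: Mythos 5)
Your proposal is correct and follows essentially the same route as the paper: both apply the Kuratowski Theorem (Theorem~\ref{thmB.1.1}) to the continuous injections induced by $\rV \hookrightarrow \rH \hookrightarrow \rD(\rA^{-1})$ between Polish path spaces, and then transfer Borel membership to $\mathcal{Z}_T$ via Lemma~\ref{lemmaB.1.2}. The only (immaterial) differences are that the paper embeds $\ccal([0,T];\rH)$ into the Polish space $\ccal([0,T];\rD(\rA^{-1})) \cap L^2(0,T;\rH)$ rather than into $\ccal([0,T];\rD(\rA^{-1}))$ alone, and that you make explicit the compatibility of the supremum topology $\mathcal{T}$ with the two-factor topology of Lemma~\ref{lemmaB.1.2}, a point the paper leaves implicit.
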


\begin{proof}
First of all $\ccal([0,T]; \rH) \subset \ccal([0,T]; \rD(\rA^{-1})) \cap L^2(0,T; \rH)$. Secondly, $\ccal([0,T]; \rH)$ and $\ccal([0,T]; \rD(\rA^{-1})) \cap L^2(0,T; \rH)$ are Polish spaces. And finally, since $\rH$ is continuously embedded in $\rD(\rA^{-1})$, the map
\[i \colon \ccal([0,T]; \rH) \to \ccal([0,T]; \rD(\rA^{-1})) \cap L^2(0,T; \rH),\]
is continuous and hence Borel. Thus, by application of the Kuratowski Theorem (see Theorem~\ref{thmB.1.1}), $\ccal([0,T]; \rH)$ is a Borel subset of $\ccal([0,T]; \rD(\rA^{-1})) \cap L^2(0,T; \rH)$. Therefore, by Lemma~\ref{lemmaB.1.2}, $\ccal([0,T]; \rH) \cap {\mathcal{Z}}_T$ is a Borel subset of $\ccal([0,T]; \rD(\rA^{-1})) \cap L^2(0,T; \rH) \cap {\mathcal{Z}}_T$ which is equal to ${\mathcal{Z}}_T$.

Similarly we can show that $L^2(0,T; \rV) \cap {\mathcal{Z}}_T$ is a Borel subset of ${\mathcal{Z}}_T$. $L^2(0,T; \rV) \hookrightarrow L^2(0,T; \rH)$ and both are Polish spaces thus by application of the Kuratowski Theorem, $L^2(0,T; \rV)$ is a Borel subset of $L^2(0,T; \rH)$. Finally, we can conclude the proof of lemma by Lemma~\ref{lemmaB.1.2}.
\end{proof}

\end{document}